\documentclass[11pt]{amsart}

\usepackage{amsmath, amsthm, amssymb}

\usepackage{mathtools}
\usepackage{leftindex}
\usepackage{subcaption}
\usepackage{bm}

\usepackage[T1]{fontenc}                    
\usepackage{lmodern}
\usepackage{palatino}                       
\usepackage{inconsolata}                    
\linespread{1.09}                           

\usepackage{comment}

\usepackage[abs]{overpic}		  
\usepackage{import}
\usepackage{transparent}

\usepackage{xcolor}
\definecolor{lred}{RGB}{226, 106, 106}
\definecolor{nred}{RGB}{237, 28, 36}
\definecolor{ddred}{RGB}{255, 0, 0}
\definecolor{lblue}{RGB}{52, 152, 219}
\definecolor{nblue}{RGB}{0, 174, 239}
\definecolor{lyellow}{RGB}{232, 197, 91}
\definecolor{dgreen}{RGB}{0, 148, 68}
\definecolor{l1yellow}{RGB}{217, 224, 33}
\definecolor{l2yellow}{RGB}{216, 177, 64}
\definecolor{lgrey}{RGB}{179, 179, 179}
\definecolor{indigo}{rgb}{0.29, 0.0, 0.51}  
\usepackage[colorlinks, urlcolor=indigo, linkcolor=indigo, citecolor=indigo]{hyperref}

\usepackage[hcentering, vcentering, total={5.9in, 8.2in}]{geometry}  

\usepackage{tikz}
\usetikzlibrary{trees}
\usetikzlibrary{arrows}
\usepackage[all,cmtip]{xy}
 
\theoremstyle{plain}
\newtheorem{theorem}{Theorem}
\newtheorem{corollary}[theorem]{Corollary}
\newtheorem{proposition}[theorem]{Proposition}
\newtheorem{lemma}[theorem]{Lemma}
\newtheorem{question}[theorem]{Question}

\theoremstyle{definition}
\newtheorem{definition}[theorem]{Definition}

\theoremstyle{remark}
\newtheorem{remark}[theorem]{Remark}

\numberwithin{theorem}{section}

\usepackage{thmtools}
\usepackage{thm-restate}

\newcommand{\dfn}[1]{{\em #1}}        
\newcommand{\R}{\mathbb{R}}           
\newcommand{\ZZ}{\mathbb{Z}}           
\newcommand{\FF}{\mathbb{F}}           

\newcommand{\A}{\mathcal{A}}           
\newcommand{\C}{\mathcal{C}}           
\newcommand{\F}{\mathcal{F}}           
\newcommand{\HH}{\mathcal{H}}          
\newcommand{\OO}{\mathcal{O}}          
\newcommand{\SSS}{\mathcal{S}}          
\newcommand{\Y}{\mathcal{Y}}           
\newcommand{\Z}{\mathcal{Z}}           

\DeclareMathOperator{\interior}{int}  

\makeatletter
\newcommand*\bigcdot{\mathpalette\bigcdot@{0.6}}
\newcommand*\bigcdot@[2]{\mathbin{\vcenter{\hbox{\scalebox{#2}{$\m@th#1\bullet$}}}}}
\makeatother




\DeclareMathOperator\tb{tb}                               
\DeclareMathOperator\rot{rot}                             
\DeclareMathOperator\chat{\widehat {c}}                 
\DeclareMathOperator\Lhat{\widehat{\mathfrak{L}}}                 
\DeclareMathOperator\LOSS{\mathfrak{L}}                



\DeclareMathOperator{\HFhat}{\widehat{\mathit{HF}}}       
\DeclareMathOperator{\CFhat}{\widehat{\mathit{CF}}}       
\DeclareMathOperator{\HFK}{{\mathit{HFK}}}               
\DeclareMathOperator{\HFKhat}{\widehat{\mathit{HFK}}}     
\DeclareMathOperator{\CFK}{\mathit{CFK}}                 
\DeclareMathOperator{\CFKhat}{\widehat{\mathit{CFK}}}     
\DeclareMathOperator{\CFA}{\widehat{\mathit{CFA}}}        
\DeclareMathOperator{\CFD}{\widehat{\mathit{CFD}}}        

\DeclareMathOperator{\SFC}{\mathit{SFC}}       
\DeclareMathOperator{\SFH}{\mathit{SFH}}       
\newcommand{\EH}{\mathit{EH}}       

\newcommand{\DD}{\mathit{DD}}       
\newcommand{\DA}{\mathit{DA}}       
\newcommand{\AAA}{\mathit{AA}}       
\newcommand{\AD}{\mathit{AD}}       
    
\DeclareMathOperator{\BSA}{\widehat{\mathit{BSA}}}        
\DeclareMathOperator{\BSD}{\widehat{\mathit{BSD}}}        
\DeclareMathOperator{\BSAA}{\widehat{\mathit{BSAA}}}        
\DeclareMathOperator{\BSDD}{\widehat{\mathit{BSDD}}}        
\DeclareMathOperator{\BSDA}{\widehat{\mathit{BSDA}}}        
\DeclareMathOperator{\BSAD}{\widehat{\mathit{BSAD}}}        

\DeclareMathOperator\rk{rk}                     
\DeclareMathOperator{\AZ}{\mathsf{AZ}}          
\newcommand{\Aoo}{\mathcal{A}_{\infty}}         
\DeclareMathOperator\TW{\mathcal{TW}}           

\begin{document}

\title{On contact invariants in bordered Floer homology}

\author{Hyunki Min}
\author{Konstantinos Varvarezos}

\address{Department of Mathematics \\ University of California \\ Los Angeles, CA}
\email{hkmin27@math.ucla.edu}
\email{kkv@math.ucla.edu}


\begin{abstract}
In this paper, we define contact invariants in bordered sutured Floer homology. Given a contact 3-manifold with convex boundary, we apply a result of Zarev \cite{Zarev:joining} to derive contact invariants in the bordered sutured modules $\BSA$ and $\BSD$, as well as in bimodules $\BSAA,$ $\BSDD,$ $\BSDA$ in the case of two boundary components. In the connected boundary case, our invariants appear to agree with bordered contact invariants defined by Alishahi--F\"oldv\'ari--Hendricks--Licata--Petkova--V\'ertesi \cite{AFHLPV:borderedInvariants} whenever the latter are defined, although ours can be defined in broader contexts. We prove that these invariants satisfy a pairing theorem, which is a bordered extension of the Honda--Kazez--Mati\'c gluing map \cite{HKM:suturedInvariant} for sutured Floer homology.  We also show that there is a correspondence between certain $\Aoo$ operations in bordered type-$A$ modules and bypass attachment maps in sutured Floer homology. As an application, we characterize the Stipsicz--V\'ertesi map  from $\SFH$ to $\HFKhat$ as an $\Aoo$ action on $\CFA$. We also apply the immersed curve interpretation of Hanselman--Rasmussen--Watson \cite{HRW:immersed} to prove results involving contact surgery. 
\end{abstract}

\maketitle

\section{Introduction}\label{sec:intro}

The study of $3$-dimensional contact topology has been greatly advanced by Heegaard Floer-theoretic contact invariants. The first such invariant was introduced by Ozsav\'ath and Szab\'o \cite{OS:contactInvariant} for closed contact $3$-manifolds, inspired by Kronheimer and Mrowka's contact invariant in Monopole Floer homology \cite{KM:invariants}. This invariant provided various applications, such as classification of tight contact structures \cite{GLS:classification, Tosun:classification} and fillability of contact structures \cite{Ghiggini:fillability, LS:surgery}. Later, Honda, Kazez and Mati\'c \cite{HKM:suturedInvariant} introduced a version of the invariant for sutured contact $3$-manifolds. More recently, Alishahi, F\"oldv\'ari, Hendricks, Licata, Petkova and V\'ertesi \cite{AFHLPV:borderedInvariants, ALPV:friendly} defined contact invariants for multipointed bordered manifolds.  

In this paper, we define invariants of a contact structure on a bordered sutured manifold. These invariants are cycles in bordered sutured (bi)modules, defined by Zarev \cite{Zarev:borderedSutured}. We prove that these are well-defined invariants and satisfy a pairing theorem. Then we show that the $\Aoo$ operation $m_2$ in bordered Floer homology is in fact a bypass attachment map, which provides a new connection between contact topology and Floer theory. Lastly, we provide some examples and applications involving contact surgery.

\subsection{Bordered contact invariants}\label{subsec:intro1}
Let $\Y = (Y,\Gamma,\F_1 \sqcup \F_2)$ be a bordered sutured manifold where $\F_1$ and $\F_2$ are parametrized by arc diagrams $\Z_1$ and $\Z_2$ of rank $n_1$ and $n_2$, respectively. To each $I \subset \{1,\ldots,n_1\}$ and $J \subset \{1,\ldots,n_2\}$, we can associate \emph{elementary dividing sets} $\Gamma_I$ and $\Gamma_J$, respectively. See Section~\ref{subsec:elementary} for more details. By adding these dividing sets onto $\F_1$ and $\F_2$, we can obtain a sutured manifold $C(\Y) = (Y, \Gamma_I \cup \Gamma \cup \Gamma_J)$. If $\xi$ is a contact structure on $C(\Y)$, then we say $\xi$ is \emph{compatible with $\Y$}, and call $(\Y,\xi)$ a \emph{bordered sutured contact manifold}.

To define various flavors of contact invariants, we need to tweak the manifold by attaching a \emph{twisting slice} of a sutured surface $\F=(F,\Lambda)$: 
\[
  \TW_{\mathcal{F}}^{\pm} = (F\times [0,1], \Gamma_{\pm}, -\mathcal{F}\cup -\overline{\mathcal{F}}).
\]
Here, we identify $-\mathcal{F}$ with $F \times \{0\}$ and $-\overline{\mathcal{F}}$ with $F\times \{1\}$. The dividing set $\Gamma_{\pm}$ is obtained from $\Lambda \times [0,1]$ by applying $1/n$-th of a positive (\dfn{resp.} negative) Dehn twist along each component of $\partial F \times \{1/2\}$. See Figure~\ref{fig:twisting} for an example. Now we are ready to state our first main theorem. 

\begin{theorem}\label{thm:main}
  Let $\Y = (Y,\Gamma,\F_1 \sqcup \F_2)$ be a bordered sutured manifold and $\xi$ a contact structure compatible with $\Y$. Also, suppose $\TW_1^+$ and $\TW_2^+$ are positive twisting slices for $\F_1$ and $\F_2$, respectively. 
  Then there exist contact invariants of $\xi$ in bordered bimodules:   
  \begin{alignat*}{3}
    &c_{\AAA}(\xi) \in \BSAA(-\Y),\quad
    &&c_{\AD}(\xi) \in \BSAD(-\Y \cup \TW_2^+),\\
    &c_{\DA}(\xi) \in \BSDA(\TW_1^+ \cup -\Y),\quad
    &&c_{\DD}(\xi) \in \BSDD(\TW_1^+ \cup -\Y \cup \TW_2^+)
  \end{alignat*}
  If $\Y = (Y,\Gamma,\F)$, there exist contact invariants of $\xi$ in bordered modules: 
  \begin{align*}
    c_A(\xi) \in \BSA(-\Y) \quad \text{and}\quad
    c_D(\xi) \in \BSD(\TW^+_{\F} \cup -\Y)  
  \end{align*} 
\end{theorem}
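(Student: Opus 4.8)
The plan is to reduce everything to Zarev's theorem identifying bordered sutured modules with sutured Floer homology of appropriate gluings, and then invoke the Honda--Kazez--Mati\'c contact class in $\SFH$. Concretely, given $(\Y,\xi)$ with $\Y = (Y,\Gamma,\F_1\sqcup\F_2)$, I would first form the closed-up sutured contact manifold by gluing the twisting slices and the ``cap'' pieces prescribed by Zarev's construction: the point is that for a suitable choice of parametrizing data, $\BSAA(-\Y)$ is isomorphic (as a bimodule over the relevant algebras) to $\SFH$ of a sutured manifold obtained from $Y$ by gluing on standard pieces along $\F_1$ and $\F_2$, and similarly for the other three flavors with the twisting slices $\TW_i^+$ inserted on the type-$D$ sides. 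Because $\xi$ is compatible with $\Y$, it extends over $C(\Y)$, and the standard pieces (and twisting slices, which carry a tight $I$-invariant contact structure compatible with $\Gamma_\pm$) can be filled in with a fixed tight contact structure; this produces a well-defined closed-up sutured contact manifold $(Y',\xi')$. I would then \emph{define} $c_{\AAA}(\xi)$, etc., to be the image of the HKM contact class $EH(\xi') \in \SFH(-Y')$ under Zarev's isomorphism.

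The key steps, in order: (1) spell out precisely which gluing pieces Zarev's theorem attaches for each of $\BSAA,\BSAD,\BSDA,\BSDD$ (and $\BSA,\BSD$), and check that each carries a canonical tight contact structure rel the dividing set it induces on the gluing region — for the type-$A$ boundary this is the ``elementary'' cap, for the type-$D$ boundary it is the cap composed with the twisting slice $\TW^+$; (2) verify that gluing $\xi$ (on $C(\Y)$) to these tight pieces along convex surfaces with matching dividing sets yields a well-defined contact structure $\xi'$ up to isotopy, using Honda's gluing/edge-rounding machinery so the result is independent of the auxiliary choices; (3) transport $EH(\xi')$ through Zarev's isomorphism and the naturality of $\SFH$ under the contactomorphisms coming from different valid parametrizations, to conclude the resulting class in the bordered (bi)module is independent of all choices; (4) observe that the twisting slice insertions are exactly what is needed for the $D$-side algebra actions to match, i.e. the class lands in the bimodule over the correct pair of algebras. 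For the single-boundary statement, $c_A(\xi)\in\BSA(-\Y)$ and $c_D(\xi)\in\BSD(\TW^+_\F\cup-\Y)$ are obtained the same way, and in fact follow from the bimodule case by pairing with the identity/trivial piece on the second boundary, or directly by the rank-one specialization of Zarev's theorem.

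The main obstacle will be step (2)--(3): proving genuine \emph{invariance}, i.e. that the class does not depend on the chosen arc diagrams parametrizing $\F_1,\F_2$, on the representative of $\xi$ within its isotopy class on $C(\Y)$, or on the precise model of the twisting slice and cap pieces. This requires knowing that two different parametrizations related by a sequence of arc-slides induce the ``same'' closed-up sutured contact manifold up to a contactomorphism inducing the identity (or a known chain-homotopy equivalence) on $\SFH$, and that the HKM class is natural under such contactomorphisms — this is where one leans on the functoriality of $\SFH$ under sutured cobordisms and on the fact that the $EH$ class is preserved by contactomorphisms and by attaching a standard tight collar. I also need to be careful that the twisting slice $\TW^+$ (a $1/n$-th Dehn twist, not a full Dehn twist) is the correct amount to realize Zarev's type-$D$ cap as a \emph{contact} piece; matching this normalization against Zarev's combinatorial description is the delicate bookkeeping point. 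Once invariance under arc-slides is in hand, everything else is formal: the four bimodule invariants and the two module invariants are all images of a single, well-defined contact class $EH(\xi')$.
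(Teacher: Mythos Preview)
Your core idea for $c_{\AAA}$ is correct and matches the paper: one uses Zarev's identification of the idempotent summand $\iota_I\cdot \BSAA(-\Y)\cdot \iota_J$ with $\SFC(-C(\Y))$, and defines $c_{\AAA}(\xi)$ so that its class in that summand is $\EH(\xi)$. However, several parts of your plan are either imprecise or unnecessarily complicated compared to what the paper actually does.

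First, your statement that ``$\BSAA(-\Y)$ is isomorphic (as a bimodule over the relevant algebras) to $\SFH$ of a sutured manifold'' is not right: there is no single sutured manifold whose $\SFH$ is the whole bimodule. Zarev's theorem gives an identification of each \emph{idempotent summand} with an $\SFC$ of a capped manifold, and $\xi$ lives on exactly one such capping $C(\Y)$ (for the specific $I,J$ coming from the compatibility hypothesis). The invariant is a cycle in that one summand.

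Second, your steps (2)--(3) are largely a red herring. The arc diagram is part of the bordered sutured data, so there is no invariance under arc-slides to prove. The only invariance needed is independence of the bordered Heegaard diagram for the \emph{fixed} $\Y$, and this is immediate: any two diagrams differ by Heegaard moves supported in the interior (Zarev), these induce an $\Aoo$-homotopy equivalence whose $f_{0,0}$ restricts on the idempotent summand to the JTZ-naturality isomorphism on $\SFC(-C(\Y))$, and naturality of $\EH$ finishes the job. No edge-rounding or contactomorphism bookkeeping is required.

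Third, and this is the main divergence, the paper does \emph{not} build $c_{\DA},c_{\AD},c_{\DD}$ by gluing a tight contact structure onto the twisting slice and taking $\EH$ of the result. Instead it defines them purely algebraically from $c_{\AAA}$:
\[
c_{\DA}(\xi)=\iota_I^{\vee}\boxtimes c_{\AAA}(\xi),\quad c_{\AD}(\xi)=c_{\AAA}(\xi)\boxtimes \iota_J^{\vee},\quad c_{\DD}(\xi)=\iota_I^{\vee}\boxtimes c_{\AAA}(\xi)\boxtimes \iota_J^{\vee},
\]
where $\iota^{\vee}$ is a specific cycle in $\BSDD(\AZ)$. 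Well-definedness is then a one-line check (box-tensoring a cycle with a cycle is a cycle). Your geometric approach would in principle yield the same classes---this is essentially the content of the pairing theorem and Claim~1 in the proof of Lemma~\ref{lem:pairing}---but it front-loads exactly the analysis (contact structures on $\TW^+$, matching the $1/n$-twist normalization) that the paper postpones to the pairing theorem, where it is actually needed. Also, your description of $\TW^+$ as carrying a ``tight $I$-invariant contact structure'' is off: $\TW^+$ is only partially sutured, and the relevant contact structure is on its capped-off version $(\Sigma\times[0,1],\Gamma_{I\to I})$, which is what $\iota_I$ encodes via Proposition~\ref{prop:contactAZ}.
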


Let $\Y_1 = (Y_1,\Gamma_1,\mathcal{F})$ and $\Y_2  = (Y_2,\Gamma_2,-\mathcal{F})$ be bordered sutured manifolds. Suppose $\xi_1$ and $\xi_2$ are contact structures compatible with $\Y_1$ and $\Y_2$, respectively. We can \emph{glue} two sutured manifolds $C(\Y_1)$ and $C(\Y_2)$ together along $\F$ by inserting a negative twisting slice between $\Y_1$ and $\Y_2$:  
\[
  C(\Y_1) \Cup_{\F} C(\Y_2) := \Y_1 \cup \TW^-_{\F} \cup \Y_2.
\]
Here we use the symbol $\Cup$ to distinguish the gluing operation from an ordinary union. There is also a well-defined glued contact structure $\xi_1\Cup_{\F} \xi_2$ on $C(\Y_1) \Cup_{\F} C(\Y_2)$. See Section~\ref{subsec:gluing} for more details. Our second main theorem is about pairing two contact invariants; in particular, we recover the HKM contact invariant of the gluing.

\begin{theorem}\label{thm:pairing}
  Let $\Y_1 = (Y_1,\Gamma_1,\mathcal{F}_1,\mathcal{F})$ and $\Y_2  = (Y_2,\Gamma_2,-\mathcal{F},\mathcal{F}_2)$ be bordered sutured manifolds. Also, suppose $\xi_1$ and $\xi_2$ are contact structures compatible with $\Y_1$ and $\Y_2$, respectively. Then we can pair two contact invariants of $\xi_1$ and $\xi_2$, and obtain a contact invariant of $\xi_1 \Cup_{\F} \xi_2$.
  \begin{align*}
    c_{\AAA}(\xi_1) \boxtimes c_{\DA}(\xi_2) = c_{\AAA}(\xi_1 \Cup_{\mathcal{F}} \xi_2) \in \BSAA(-\Y_1 \cup \TW^+_{\F} \cup -\Y_2)
  \end{align*}
  Any combination of bimodules for $\Y_1$ and $\Y_2$ can be used, where one is type–$A$ for $\A(\Z)$, and the other is type–$D$ for $A(\Z)$. If $\F_1 = \F_2 = \varnothing$, then we have
  \begin{align*}
    c_{A}(\xi_1) \boxtimes c_{D}(\xi_2) = c(\xi_1 \Cup_{\mathcal{F}} \xi_2) \in \SFC(-\Y_1 \cup \TW^+_{\F} \cup -\Y_2).
  \end{align*} 
  where $c(\xi)$ is a contact class in sutured Floer complex such that $[c(\xi)] = \EH(\xi)$.
\end{theorem}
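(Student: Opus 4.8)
The plan is to reduce Theorem~\ref{thm:pairing} to Zarev's gluing (``joining'') theorem \cite{Zarev:joining} for sutured Floer homology together with the naturality of the Honda--Kazez--Mati\'c contact class under gluing of contact pieces \cite{HKM:suturedInvariant}. The point to keep in mind is that the bordered contact invariants of Theorem~\ref{thm:main} are not ad hoc elements: from the proof of that theorem, $c_A(\xi)$ (resp.\ $c_D(\xi)$) is obtained by capping off the bordered sutured contact manifold $(\Y,\xi)$ with a standard contact piece, taking the HKM class of the resulting sutured contact manifold, and transporting it through Zarev's chain-level identification of the sutured complex of the capped-off manifold with a box tensor product having $\BSA(-\Y)$ (resp.\ $\BSD(\TW^+_\F\cup-\Y)$) as one factor. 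Thus the content of the pairing theorem is that performing these two capping-and-transporting procedures on $(\Y_1,\xi_1)$ and $(\Y_2,\xi_2)$ separately and then composing with $\boxtimes$ over $\A(\Z)$ reassembles into the single such procedure that defines $c(\xi_1\Cup_\F\xi_2)$.

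First I would glue the pieces to form the sutured contact manifold $M := C(\Y_1)\Cup_\F C(\Y_2) = \Y_1\cup\TW^-_\F\cup\Y_2$, and recall from Section~\ref{subsec:gluing} that $\xi_1\Cup_\F\xi_2$ is built from $\xi_1$ and $\xi_2$ by inserting the tight contact structure carried by the negative twisting slice. The key geometric input is that the HKM contact class is natural under this operation: the sutured gluing map associated to inserting $\TW^-_\F$ carries a product of chains representing the ``partial-closure'' contact classes of $\xi_1$ and $\xi_2$ to a chain representing $\EH(\xi_1\Cup_\F\xi_2)$. On the Floer-theoretic side, Zarev's pairing theorem applied to the decomposition of $M$ along $\F$ (with the twisting slice absorbed into the $\Y_2$ side) yields a chain homotopy equivalence $\SFC(-M)\simeq \BSA(-\Y_1)\boxtimes_{\A(\Z)}\BSD(\TW^+_\F\cup-\Y_2)$, since $-\Y_1\cup\TW^+_\F\cup-\Y_2 = -M$; the task is to check that this equivalence matches the two descriptions.

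The heart of the argument is to run both constructions from a \emph{common} bordered sutured Heegaard diagram. I would build a single diagram for $M$, adapted to a contact handle decomposition (equivalently a partial open book) compatible with $\xi_1\Cup_\F\xi_2$ and with the splitting surface $\F$, that simultaneously: (i) computes $\SFC(-M)$ and carries the distinguished generator representing $\EH(\xi_1\Cup_\F\xi_2)$, and (ii) restricts on the two sides of $\F$ to diagrams for $\BSA(-\Y_1)$ and $\BSD(\TW^+_\F\cup-\Y_2)$ carrying the generators defining $c_A(\xi_1)$ and $c_D(\xi_2)$. On such a diagram the box tensor product of the two side-generators is, by the definition of $\boxtimes$, the glued generator, so the identity $c_A(\xi_1)\boxtimes c_D(\xi_2)=c(\xi_1\Cup_\F\xi_2)$ becomes tautological, and $[c(\xi_1\Cup_\F\xi_2)]=\EH(\xi_1\Cup_\F\xi_2)$ is then just the defining property of the sutured contact cycle. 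The remaining assertions follow formally: any mixture of type-$A$ and type-$D$ bimodules is handled by the same argument with the roles of $\Y_1$ and $\Y_2$ interchanged (using the $\BSAA\leftrightarrow\BSDD$ and $\BSDA\leftrightarrow\BSAD$ dualities), and the degeneration to the module statement when $\F_1=\F_2=\varnothing$ is immediate; associativity of $\boxtimes$ ensures that ``form a bimodule, then pair'' agrees with ``pair everything at once.''

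The main obstacle is entirely at the chain level. The invariants of Theorem~\ref{thm:main} are canonical only up to the equivalence generated by arc-diagram changes and Heegaard moves, and Zarev's joining map is a homotopy equivalence rather than a literal isomorphism, so the real work is to produce the common diagram above and to verify that all the identifications in play for that choice — Zarev's chain-level joining isomorphism, the identifications used to define $c_A$ and $c_D$ in the proof of Theorem~\ref{thm:main}, and the HKM gluing map \cite{HKM:suturedInvariant} — are mutually compatible. Concretely, one must exhibit a contact-compatible bordered sutured Heegaard diagram for $M$ whose distinguished contact generator restricts to the distinguished contact generators on each side; once this compatibility is pinned down the pairing identity is forced, but arranging it carefully is where essentially all the subtlety lies.
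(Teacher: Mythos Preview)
Your overall strategy---build a common bordered sutured Heegaard diagram on which the glued contact generator factors as the box tensor of the two side generators, and invoke the Juh\'asz--Zemke diagrammatic description of the HKM gluing map---is exactly the shape of the paper's argument. But you have correctly identified, and then left unresolved, the step where all the work happens: producing that diagram. Your plan to take a partial open book for $\xi_1\Cup_\F\xi_2$ ``compatible with the splitting surface $\F$'' so that it restricts to contact-compatible diagrams on each side is not obviously feasible; there is no general reason a partial open book for the glued contact structure should split along $\F$ into partial open books for the pieces.

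The paper sidesteps this by an \emph{asymmetric} construction. One takes an \emph{arbitrary} admissible diagram $\HH_1$ for $-\Y_1$. On the other side, one starts not from a partial open book of $\xi_2$, but from the explicit diagram $\overline{\AZ}(\F)\cup\HH_I$ for $\F\times[0,1]\cup\C_I$ (the double of the elementary cap), on which the $I$-invariant contact class is the diagonal element $m_I^{\vee}\boxtimes\iota_I\boxtimes m_I$ by Proposition~\ref{prop:contactAZ}. One then builds $\HH_2$ for $-\Y_2$ by attaching Juh\'asz--Zemke diagrammatic contact handles to this starting diagram, away from the bordered part; this simultaneously produces a representative of $c_A(\xi_2)$. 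The glued diagram is $\HH_1\cup\AZ\cup\HH_2$. The crucial remaining point is that $\HH_1\cup\AZ\cup\overline{\AZ}\cup\HH_I$ is another diagram for $-C(\Y_1)$, and one must check that the generator $c_A(\xi_1)\boxtimes\iota_{I^c}^{\vee}\boxtimes\iota_I\boxtimes m_I$ there agrees with $c_A(\xi_1)\boxtimes m_I$ under the natural identification. This is done via \cite[Lemma~5.6]{Zarev:joining}, which gives an explicit type-$D$ homotopy equivalence $\BSD(\AZ\cup\overline{\AZ}\cup\HH_I)\to\A\otimes\BSD(\HH_I)$ sending $\iota_{I^c}^{\vee}\boxtimes\iota_I\boxtimes m_I\mapsto 1\otimes m_I$; since $\BSD(\HH_I)$ has a single generator and no differential, this map is forced to be the one induced by Heegaard moves. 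Once that is in hand, applying the same sequence of diagrammatic handle attachments to both descriptions yields the pairing identity. This use of the $\AZ/\overline{\AZ}$ pair together with Zarev's Lemma~5.6 is the missing technical ingredient in your outline.
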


Let $\Y = (Y,\Gamma_D, \F)$ be a bordered sutured manifold where $\Gamma_D$ is a dividing set with a single suture on a disk $D$, called a \emph{disk suture}. In this case, we can consider $D$ as a basepoint and $\Y$ as a bordered manifold with a single basepoint. Zarev \cite{Zarev:borderedSutured} showed that
\[
  \BSA(\Y) \cong \CFA(\Y), \quad \BSD(\Y) \cong \CFD(\Y).
\]
Under this identification, we can define contact invariants in bordered Floer homology, and recover the Ozsv\'ath--Szab\'o contact invariant via pairing.

\begin{corollary}\label{cor:main} Let $\mathcal{Y} = (Y, \mathcal{F})$ be a bordered $3$-manifold. Also, suppose $\xi$ is a contact structure compatible with $\Y$. Then there exist bordered contact invariants 
\[
  c_A(\xi) \in \CFA(-\Y)\quad\text{and}\quad c_D(\xi) \in \CFD(\TW^+_{\F} \cup -\Y).
\] 
Also, let $\Y_1 = (Y_1,\mathcal{F})$ and $\Y_2  = (Y_2,-\mathcal{F})$ be two bordered manifolds. Suppose $\xi_1$ and $\xi_2$ are contact structures compatible with $\Y_1$ and $\Y_2$, respectively. Then we have
\[
  c_A(\xi_1) \boxtimes c_D(\xi_2) = c(\xi_1 \Cup_{\F} \xi_2) \in \CFhat(-\Y_1 \cup \TW_{\F}^+ \cup -\Y_2)
\]
where $c(\xi)$ is a contact class in the hat version of Heegaard Floer complex such that $[c(\xi)] = \widehat{c}(\xi)$.
\end{corollary}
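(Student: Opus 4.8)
The plan is to deduce the corollary directly from Theorems~\ref{thm:main} and~\ref{thm:pairing} using Zarev's identifications $\BSA(\Y)\cong\CFA(\Y)$ and $\BSD(\Y)\cong\CFD(\Y)$ for bordered sutured manifolds whose non-$\F$ boundary carries a single disk suture \cite{Zarev:borderedSutured}. First I would view a bordered $3$-manifold $\Y=(Y,\F)$ (with its basepoint) as the bordered sutured manifold $(Y,\Gamma_D,\F)$, so that a contact structure $\xi$ compatible with $\Y$ is exactly a contact structure on $C(\Y)$. Applying Theorem~\ref{thm:main} produces cycles $c_A(\xi)\in\BSA(-\Y)$ and $c_D(\xi)\in\BSD(\TW^+_\F\cup-\Y)$. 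Pushing these forward along Zarev's equivalences---which in the disk-suture case are canonical up to the usual $\Aoo$ and differential graded homotopy equivalences, and which already incorporate the change of boundary parametrization encoded by the twisting slice---yields classes $c_A(\xi)\in\CFA(-\Y)$ and $c_D(\xi)\in\CFD(\TW^+_\F\cup-\Y)$. Since $c_A(\xi)$ and $c_D(\xi)$ are invariants of $\xi$ in the bordered sutured modules and the equivalences are functorial, their images are invariants of $\xi$; this gives the first assertion.

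For the pairing I would first unwind the topology. When the auxiliary boundary pieces are empty, the sutured manifold $-\Y_1\cup\TW^+_\F\cup-\Y_2$ of Theorem~\ref{thm:pairing} has as its only boundary the sphere $D_1\cup D_2$ obtained by gluing the two capping disks along the circle $\partial D_1=\partial D_2$, equipped with the single connected suture formed by the two disk-suture arcs; this is the standard convex sphere. Capping it with the unique tight ball gives the closed manifold $Y_1\cup Y_2$ (glued along their boundary surfaces), and this is precisely the sutured manifold whose $\SFC$ Zarev identifies with $\CFhat(Y_1\cup Y_2)$---the complex denoted $\CFhat(-\Y_1\cup\TW^+_\F\cup-\Y_2)$ in the statement. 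Now specialize Theorem~\ref{thm:pairing} to $\F_1=\F_2=\varnothing$, which gives
\[
  c_A(\xi_1)\boxtimes c_D(\xi_2)=c(\xi_1\Cup_\F\xi_2)\in\SFC(-\Y_1\cup\TW^+_\F\cup-\Y_2),\qquad [c(\xi_1\Cup_\F\xi_2)]=\EH(\xi_1\Cup_\F\xi_2).
\]
Because Zarev's equivalences intertwine the box tensor product $\BSA\boxtimes_{\A(\Z)}\BSD$ with $\CFA\boxtimes_{\A(\Z)}\CFD$ and are compatible with the pairing theorem, transporting the left side to $\CFA(-\Y_1)$ and $\CFD(\TW^+_\F\cup-\Y_2)$ and computing the box tensor product recovers the image of $c(\xi_1\Cup_\F\xi_2)$ under $\SFC\cong\CFhat$. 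Calling that image $c(\xi_1\Cup_\F\xi_2)$ again, it remains only to identify its homology class. Under the $\SFH\cong\HFhat$ isomorphism, the $\EH$ class of a contact structure on a sutured manifold with standard convex sphere boundary maps to the Ozsv\'ath--Szab\'o invariant $\widehat{c}$ of the closed contact manifold obtained by filling in the tight ball \cite{HKM:suturedInvariant, OS:contactInvariant}, so $[c(\xi_1\Cup_\F\xi_2)]=\widehat{c}(\xi_1\Cup_\F\xi_2)$, proving the second assertion.

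The only real work is bookkeeping, and the point to be careful about is that Zarev's disk-suture dictionary genuinely carries our cycle-level invariants to the asserted ones: one must confirm that the positive twisting slice $\TW^+_\F$ appearing on the type-$D$ side of Theorem~\ref{thm:main} is exactly the slice built into the equivalence $\BSD\cong\CFD$, and that after gluing the two disk sutures really amalgamate into a single circle on $S^2$ (rather than producing, say, an extra $S^1\times S^2$ summand), so that the target complex is $\CFhat$ of the expected closed manifold. Both facts follow once the conventions of Section~\ref{subsec:gluing} are unpacked, and neither requires any Floer-theoretic input beyond Theorems~\ref{thm:main} and~\ref{thm:pairing} and \cite{Zarev:borderedSutured}.
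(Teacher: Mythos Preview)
Your approach is correct and matches the paper's: the corollary is stated as an immediate consequence of Theorems~\ref{thm:main} and~\ref{thm:pairing} together with Zarev's identifications $\BSA\cong\CFA$ and $\BSD\cong\CFD$ in the disk-suture setting, and the paper gives no further argument beyond the sentence preceding the corollary. One minor clarification: the twisting slice $\TW^+_\F$ is not ``built into'' Zarev's equivalence $\BSD\cong\CFD$; rather, it is already part of the bordered sutured manifold $\TW^+_\F\cup -\Y$ on which $c_D$ lives, and Zarev's identification simply transports $\BSD(\TW^+_\F\cup -\Y)$ to $\CFD(\TW^+_\F\cup -\Y)$ without any additional twisting---so that particular bookkeeping concern dissolves.
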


\begin{remark}
  Alishahi, F\"oldv\'ari, Hendricks, Licata, Petkova, and V\'ertesi \cite{AFHLPV:borderedInvariants} also defined contact invariants in bordered sutured Floer homology, although their construction necessarily works only with multiple disk sutures and hence can be thought of as defining invariants in multipointed bordered Floer homology $\widetilde{CFA}(-\Y)$ and $\widetilde{CFD}(-\Y)$. Moreover, the invariants they defined appear to be a special case of the ones described here; see Remark \ref{rmk:friendly} below.
\end{remark}

\subsection{\texorpdfstring{$\Aoo$}{Aoo} operations and bypass attachments}\label{subsec:intro2}
Let $\Sigma$ be a compact oriented connected surface with boundary which is parametrized by an arc diagram $\Z$ of rank $n$. Mathews \cite{Mathews:contactCategory} and Zarev \cite{Zarev:joining} (see also \cite{Cooper:contactCategory}) showed that there exist the following natrual isomorphisms:
\begin{align} \label{eq:iso}
  CA(\Sigma,\Z) \cong H_*(\A(\Z)) \cong H_*(\BSAA(\TW_{\F}^-)) \cong \bigoplus_{I,J \subset\{1,\ldots,n\}} \SFH(-\Sigma\times[0,1],-\Gamma_{I\to J})   
\end{align}
where $CA(\Sigma, \Z)$ is a \emph{contact category algebra}, an algebra over $\mathbb{Z}_2$ generated by isotopy classes of tight contact structures on $(\Sigma \times [0,1],\Gamma_{I\to J})$ for $I ,J \subset \{1,\ldots,n\}$. Here, $\Gamma_{I\to J}$ is a dividing set consisting of $-\Gamma_I$ on $\Sigma\times\{0\}$, $\Gamma_J$ on $\Sigma\times\{1\}$ and vertical dividing curves on $\partial\Sigma \times [0,1]$. This algebra is equipped with a formal addition and the multiplication is stacking two contact structures. To establish the first isomorphism in (\ref{eq:iso}), Mathews constructed a contact structure $\xi_a$ on $(\Sigma \times [0,1],\Gamma_{I\to J})$ for each generator $a = a_i(S,T,\phi) \in \A(\Z)$ and showed that if $a$ is not a cycle, $\xi_a$ is overtwisted. 

Now we establish the equivalence between bypass attachment maps and $\Aoo$ operations. 

\begin{restatable}{theorem}{bypass}\label{thm:bypass}
  Let $\Y = (Y,\Gamma,\F)$ be a bordered sutured manifold where $\F$ is parametrized by an arc diagram $\Z$ and $\xi$ a contact structure compatible with $\Y$. Consider a cycle $a = a_i(S,T,\phi) \in \A(\Z)$, and the corresponding contact structure $\xi_a$ on $(\Sigma\times [0,1],\Gamma_{I\to J})$. Then we have  
  \[
    m_{2}(c_{A}(\xi), a) = c_{A}(\xi \Cup \xi_a).
  \] 
  A similar statement also holds for $(\BSAA, m_{1|1|0}, m_{0|1|1})$, $(\BSDA,m_{0|1|1})$ and $(\BSAD,m_{1|1|0})$.
\end{restatable}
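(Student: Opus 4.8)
The plan is to reduce the identity $m_2(c_A(\xi),a) = c_A(\xi \Cup \xi_a)$ to the pairing theorem (Theorem~\ref{thm:pairing}) together with the contact-geometric interpretation of the box tensor product as a gluing of sutured contact manifolds. First I would recall how $m_2$ on $\BSA(-\Y)$ is computed: the right action of $H_*(\A(\Z))$ on $\BSA$ is, up to the isomorphism (\ref{eq:iso}), realized by box-tensoring with a bimodule for the identity cobordism of $\F$ carrying the appropriate dividing set. Concretely, the element $a = a_i(S,T,\phi)$ corresponds under Zarev's isomorphism $H_*(\A(\Z)) \cong H_*(\BSAA(\TW_\F^-)) \cong \bigoplus_{I,J} \SFH(-\Sigma\times[0,1], -\Gamma_{I\to J})$ to the class of the contact structure $\xi_a$ on $(\Sigma\times[0,1], \Gamma_{I\to J})$, i.e.\ to the Honda--Kazez--Mati\'c class $\EH(\xi_a)$ sitting in the summand indexed by $(I,J)$. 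So the strategy is: \emph{(i)} identify $m_2(c_A(\xi),a)$ as the box tensor product $c_A(\xi) \boxtimes c_{\DA}(\xi_a)$ for a suitable type-$DA$ representative of the class of $\xi_a$ in $\BSDA(\TW_\F^+ \cup -(\Sigma\times[0,1]) \cup \dots)$; \emph{(ii)} apply Theorem~\ref{thm:pairing} (in its bimodule form, pairing a type-$A$ module against a type-$DA$ bimodule) to conclude that this box tensor product equals $c_A(\xi \Cup_\F \xi_a)$; \emph{(iii)} check that the gluing $\Y \cup \TW_\F^- \cup (\Sigma\times[0,1],\Gamma_{I\to J})$ with the glued contact structure is exactly the bordered sutured contact manifold $(\Y', \xi\Cup\xi_a)$ appearing on the right-hand side, using that stacking $\Sigma\times[0,1]$ onto $Y$ along $\F$ does not change the underlying manifold (up to the collar identification) and composes the dividing sets correctly.

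The first step requires some care because $m_2$ is an $A_\infty$ module operation, not literally a box tensor product; however, since $a$ is a cycle, $m_2(-,a)$ agrees on homology with the action induced by the bimodule $\BSDA$ of the mapping cylinder of $\F$ equipped with the dividing set $\Gamma_{I\to J}$, and the chain-level representative of the class of $\xi_a$ in that bimodule is, by the construction of $c_{\DA}$ in Theorem~\ref{thm:main} and the identification (\ref{eq:iso}), a cycle whose homology class is $\EH(\xi_a) = [a]$. Here I would invoke Zarev's joining map result \cite{Zarev:joining} — the same tool used to prove Theorem~\ref{thm:main} — to identify the gluing/joining of the sutured Floer invariants with the bordered box tensor product on the nose, so that $m_2(c_A(\xi),a)$ is computed as the image of $c(\xi) \otimes c(\xi_a)$ under Zarev's gluing map $\SFH(C(\Y)) \otimes \SFH(\Sigma\times[0,1],\Gamma_{I\to J}) \to \SFH(C(\Y'))$. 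By naturality of that gluing map with respect to contact structures (the HKM class of a glued contact manifold is the image of the tensor product of the HKM classes, \cite{HKM:suturedInvariant}), the image is precisely $c(\xi\Cup\xi_a)$, which under Zarev's isomorphism is $c_A(\xi\Cup\xi_a)$.

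The main obstacle I expect is \emph{step (i)}: matching the purely algebraic $A_\infty$ operation $m_2$ on the bordered module with the geometric gluing, i.e.\ proving that Zarev's identification of $H_*(\A(\Z))$ with a sum of sutured Floer groups of product pieces intertwines the algebra multiplication / module action with the contact-gluing (stacking) operation at the chain level, and that this is compatible with the specific cycle representatives $c_A(\xi)$ and $[\xi_a]$ we have chosen. This is essentially the assertion that the equivalences in (\ref{eq:iso}) are not just isomorphisms of groups but are compatible with the relevant algebraic structures and with the EH/contact classes; I would either cite this compatibility from Zarev \cite{Zarev:joining} and Mathews \cite{Mathews:contactCategory} directly, or re-derive the piece I need by tracking a bordered Heegaard diagram for $\Y$ through the stabilization that realizes the action of $a$, exactly as in the proof of the pairing theorem. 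The remaining bimodule cases $(\BSAA, m_{1|1|0}, m_{0|1|1})$, $(\BSDA, m_{0|1|1})$, $(\BSAD, m_{1|1|0})$ follow by the identical argument, applying the bimodule version of Theorem~\ref{thm:pairing} on the appropriate boundary component and leaving the other boundary component (and its twisting slice, if any) untouched.
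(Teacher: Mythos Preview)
Your proposal is correct and follows essentially the same route as the paper: apply the pairing theorem (Theorem~\ref{thm:pairing}) to identify $c_A(\xi \Cup \xi_a)$ with the box tensor $c_A(\xi) \boxtimes \iota_{I^c}^{\vee} \boxtimes a$ (where $a$ sits in $\BSAA(\overline{\AZ})$ via Proposition~\ref{prop:AZ} and Proposition~\ref{prop:contactAZ}), and then collapse this to $m_2(c_A(\xi),a)$ using Zarev's result. The precise input you flagged as the ``main obstacle'' in step~(i) is exactly \cite[Lemma~5.6]{Zarev:joining}, which gives a type-$D$ homotopy equivalence $g\colon \BSD(\AZ \cup \overline{\AZ} \cup \HH_J) \to \A \otimes \BSD(\HH_J)$ sending $\iota_{I^c}^{\vee} \boxtimes a \boxtimes m_J \mapsto a \otimes m_J$; pairing $\mathrm{Id} \boxtimes g$ with $\BSA(-\Y)$ then produces $m_2(c_A(\xi),a) \boxtimes m_J$, and the paper already argued (in the proof of Lemma~\ref{lem:pairing}) that $g$ is induced by Heegaard moves, so the definition of $c_A$ finishes the argument.

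One small remark: in your second paragraph you sketch an alternative path that goes through Zarev's gluing map $\Psi_F$ directly and appeals to ``naturality with respect to contact structures'' citing \cite{HKM:suturedInvariant}. That naturality is a property of the \emph{HKM} gluing map, so this route implicitly uses the equivalence of Zarev's and HKM's gluing maps (Leigon--Salmoiraghi \cite{LS:HKMequivZarev}). This works too, but the paper avoids it by staying on the bordered side and using its own Theorem~\ref{thm:pairing}, whose proof was done via the Juh\'asz--Zemke diagrammatic handle maps rather than via the Zarev $=$ HKM identification.
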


According to Mathews \cite{Mathews:contactCategory}, strands in $a_i = a_i(S,T,\phi)$ correspond to bypasses in $\xi_a$ and $\xi_a$ can be obtained by attaching theses bypasses to $\Sigma$ (more generally, any contact structure on $\Sigma \times [0,1]$ can be obtained by a sequence of bypass attachments, see \cite{Giroux:convex, HH:convex}). Therefore, $m_{2}(\cdot, a)$ can be considered as a bordered (possibly a sequence of) bypass attachment map.

In \cite{Mathews:Ainifinity}, Mathews applied the construction of Kadeishvili \cite{Kadeishvili:Aoo} to equip an $\Aoo$ structure on $H_*(\A(\Z))$, and provided a contact geometric interpretation of the higher $\Aoo$ operations $\mu_n$. We can ask the same question for general bordered modules.

\begin{question}
  Is there a contact geometric interpretation for higher $\Aoo$ operations
  \[
    m_n\colon \BSA(-\Y) \otimes \A(\Z)^{\otimes (n-1)} \to \BSA(-\Y)
  \]
  for $n > 2$?  
\end{question}

\subsection{Torus boundary}\label{subsec:intro3}
For 3-manifolds with torus boundary, we can provide an explicit description of bypass attachment maps. Let $\Y = (Y, \Gamma_D, \F)$ be a bordered sutured $3$-manifold where $\Gamma_D$ is a disk suture and $\F = (T^2_{\bullet},\Lambda)$ is a punctured torus parametrized by an arc diagram $\Z$ shown in Figure~\ref{fig:parametrizedTorus}.

For $I \subset \{0,1\}$, there exists an elementary dividing set $\Gamma_I$ and we obtain a sutured manifold $C(Y) = (Y,\Gamma_D \cup \Gamma_I)$. When $I = \varnothing$ and $\{0,1\}$,  however, the dividing set $\Gamma_D \cup \Gamma_I$ contains a contractible dividing curve and any contact structure on $C(Y)$ is overtwisted. Thus we only need consider $I = \{0\}$ and $\{1\}$. We will denote 
\[
  \Gamma_0 := \Gamma_D \cup \Gamma_{\{0\}}, \quad \Gamma_1 := \Gamma_D \cup \Gamma_{\{1\}}.
\]
For $i=0,1$, the dividing set $\Gamma_i$ consists of two closed curves parallel to the parametrizing arc $\alpha_i$. 

According to Theorem~\ref{thm:bypass}, $m_2(\cdot,a)$ is a bypass attachment map corresponding to the contact structure $\xi_a$ on $(T_{\bullet} \times [0,1], \Gamma_{I\to J})$. Since tight contact structures on $T^2 \times I$ with convex boundary having two dividing curves on each component were thoroughly studied by Honda \cite{Honda:classification1,Honda:classification2}, we can explicitly describe the contact structures $\xi_a$.

\begin{theorem}\label{thm:contactTorusAlg} 
  Let $\Z$ be an arc diagram shown in Figure~\ref{fig:parametrizedTorus}. Then each generator of $\A(\Z,0)$ corresponds to a tight contact structure on $T^2 \times [0,1]$ as follows:

  \begin{itemize}
    \item $\iota_0$: an $I$-invariant neighborhood of $(T^2, \Gamma_{0})$. 
    \item $\iota_1$: an $I$-invariant neighborhood of $(T^2, \Gamma_{1})$. 
    \item $\rho_1$: a positive basic slice from $\Gamma_{0}$ to $\Gamma_{1}$.
    \item $\rho_2$: a negative basic slice from $\Gamma_{1}$ to $\Gamma_{0}$.
    \item $\rho_3$: a negative basic slice from $\Gamma_{0}$ to $\Gamma_{1}$.
    \item $\rho_{12}$: a union of two basic slices corresponding to $\rho_1$ and $\rho_2$.
    \item $\rho_{23}$: a union of two basic slices corresponding to $\rho_2$ and $\rho_3$.
    \item $\rho_{123}$: a union of three basic slices corresponding to $\rho_1$, $\rho_2$ and $\rho_3$.
  \end{itemize}
\end{theorem}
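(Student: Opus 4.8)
The plan is to push each algebra generator through the chain of isomorphisms~(\ref{eq:iso}) and then recognize the resulting sutured contact structure using Honda's classification of tight contact structures on $T^2\times[0,1]$~\cite{Honda:classification1}. Recall that for the genus-one arc diagram $\Z$ of Figure~\ref{fig:parametrizedTorus} the algebra $\A(\Z,0)$ has trivial differential and exactly the eight basis elements $\iota_0,\iota_1,\rho_1,\rho_2,\rho_3,\rho_{12},\rho_{23},\rho_{123}$, with $\rho_{12}=\rho_1\rho_2$, $\rho_{23}=\rho_2\rho_3$, $\rho_{123}=\rho_1\rho_2\rho_3$, and every other product of the $\rho_i$ equal to zero. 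Since each basis element is a cycle, (\ref{eq:iso}) together with Mathews' construction~\cite{Mathews:contactCategory} sends it to the sutured contact class of a tight contact structure $\xi_a$ on $(T^2_{\bullet}\times[0,1],\Gamma_{I\to J})$. Capping off the puncture by a standard tight piece turns $\xi_a$ into a contact structure on $T^2\times[0,1]$ whose convex boundary carries $\Gamma_I$ on the bottom and $\Gamma_J$ on the top; as recalled in the excerpt, the only relevant idempotents are $I,J\in\{\{0\},\{1\}\}$, and $\Gamma_0,\Gamma_1$ are pairs of parallel copies of $\alpha_0,\alpha_1$, adjacent in the Farey graph. Thus every generator becomes a tight contact structure on $T^2\times[0,1]$ with two dividing curves on each boundary torus --- exactly Honda's setting.

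First I would dispose of the idempotents: $\iota_0$ (resp.\ $\iota_1$) is the unit of the $I=J=\{0\}$ (resp.\ $\{1\}$) summand, and Mathews' model for a unit is the product, hence $I$-invariant, contact structure on $(T^2,\Gamma_0)\times[0,1]$ (resp.\ $(T^2,\Gamma_1)\times[0,1]$). Next, $\rho_1,\rho_2,\rho_3$ are the length-one Reeb chords; by Mathews a single strand corresponds to a single bypass attachment, and attaching one bypass to a boundary torus carrying two dividing curves produces a basic slice. Reading off left and right idempotents, $\xi_{\rho_1}$ and $\xi_{\rho_3}$ run from $\Gamma_0$ to $\Gamma_1$ while $\xi_{\rho_2}$ runs from $\Gamma_1$ to $\Gamma_0$, each being a single basic slice. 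What remains is to pin down the sign of each one. I would read this off from Mathews' explicit dividing-set picture of $\xi_{a_i(S,T,\phi)}$: the sign of a basic slice is detected by its relative half-Euler class, equivalently by the $\spinc$-grading of its contact class in $\SFH$, and the two basic slices between a given adjacent pair of slopes are distinguished precisely by this datum; as an independent check one can glue in the standard neighborhood of a Legendrian curve and compare $\tb$, $\rot$, or the $d_3$-invariant of the resulting closed contact manifold. This produces: $\rho_1$ is the positive basic slice from $\Gamma_0$ to $\Gamma_1$, $\rho_3$ the negative basic slice from $\Gamma_0$ to $\Gamma_1$, and $\rho_2$ the negative basic slice from $\Gamma_1$ to $\Gamma_0$.

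Finally, because the isomorphism~(\ref{eq:iso}) carries the algebra product to vertical stacking of contact structures, $\xi_{\rho_{12}}=\xi_{\rho_1}\Cup\xi_{\rho_2}$, $\xi_{\rho_{23}}=\xi_{\rho_2}\Cup\xi_{\rho_3}$, and $\xi_{\rho_{123}}=\xi_{\rho_1}\Cup\xi_{\rho_2}\Cup\xi_{\rho_3}$ --- the stackings of the basic slices found above. These are tight, since they lie in the image under~(\ref{eq:iso}) of cycles (equivalently, they represent nonzero products in $\A(\Z,0)$), so Honda's classification identifies them as precisely these unions of basic slices; the vanishing of all other products of the $\rho_i$ matches the overtwistedness of the corresponding stackings. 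I expect the sign determination of the second step to be the main obstacle: the positive/negative labels hinge on Mathews' orientation conventions and on the precise way the arc diagram $\Z$ is glued onto the parametrized punctured torus of Figure~\ref{fig:parametrizedTorus}, so this bookkeeping must be carried out carefully. Once it is settled, the theorem follows formally from~(\ref{eq:iso}), Mathews' bypass interpretation of strands, and Honda's classification of tight contact structures on $T^2\times[0,1]$.
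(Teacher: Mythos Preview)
Your overall framework---pushing generators through the isomorphism~(\ref{eq:iso}), identifying idempotents with $I$-invariant structures, recognizing single Reeb chords as basic slices via Mathews' bypass interpretation, and reading off the composite elements from the multiplicative structure---matches the paper's setup. The point of divergence is exactly where you flag it: the sign determination for $\rho_1,\rho_2,\rho_3$.

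You propose to read the signs directly from Mathews' explicit dividing-set model via the relative half-Euler class or $\spinc$-grading, with a possible cross-check by capping off and computing $d_3$. This is in principle a valid route, but you do not carry it out, and as you yourself note, it is sensitive to orientation and gluing conventions between Mathews' setup and the particular arc diagram of Figure~\ref{fig:parametrizedTorus}. The paper sidesteps this bookkeeping entirely by a test-case argument: it takes the Legendrian right-handed trefoil $K$ with $\tb=1$ in $(S^3,\xi_{std})$, uses the Stipsicz--V\'ertesi identification $\Phi(\EH(\xi),\EH(\xi_{\pm}))=\Lhat(K)$ or $\Lhat^*(K)$, and then inspects the explicit type-$A$ module of the trefoil complement (Figure~\ref{fig:RHT_typeA}) to see which of the $\rho_i$-multiplications hits $\Lhat$ versus $\Lhat^*$. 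Since $\Lhat$ and $\Lhat^*$ sit in distinct Alexander gradings, this pins down the signs without ever unwinding Mathews' conventions. The argument for $\rho_1$ and $\rho_3$ uses the same trefoil with the roles of meridian and longitude swapped.

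What the paper's approach buys is that the sign question becomes a finite computation in a single, well-understood bordered module, rather than a convention-tracing exercise; it also implicitly uses Theorem~\ref{thm:bypass} (that $m_2(\cdot,\rho_i)$ is the corresponding bypass map), so the argument is internally self-referential in a way yours is not. Your approach, if completed, would be more self-contained and would not require the trefoil input or the LOSS machinery---but the ``if completed'' is doing real work here, and the paper's choice reflects that the direct route is genuinely delicate.
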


\begin{remark}
  Notice that the positive basic slice from $\Gamma_1$ to $\Gamma_0$ does not correspond to any algebra element. This is due to the fact that the contact structure actually lives on $T^2_{\bullet}\times [0,1]$ instead of the thickened torus.
  In \cite[Chapter 11]{LOT:bordered}, a generalized torus algebra, which includes $\rho_0$ as a generator, is described. We suspect $\rho_0$ is a candidate for this ``missing'' basic slice, under a suitable extension of the bordered modules.
\end{remark}

Let $K$ be a null-homologous Legendrian knot with $\tb(K) = n$ in a closed contact 3-manifold $(Y,\xi)$. We denote by $(Y(K), \Gamma_n, \xi_n)$ the complement of a standard neighborhood of $K$. 
Consider a negative basic slice $(T^2 \times [0,1], \Gamma_{n\to\mu}, \xi_{-})$ where $\Gamma_{\mu}$ consists of two parallel meridional sutures. Stipsicz and V\'ertesi \cite{SV:LOSS} considered the following HKM gluing map   
\begin{align*}
  \SFH(-Y(K), -\Gamma_n) &\to \SFH(-Y(K), -\Gamma_{\mu})\\
  x &\mapsto \Phi(x, \EH(\xi_{-})) 
\end{align*}
According to Juh\'asz \cite{Juhasz:sutured}, there is a canonical identification of $\SFH(-Y(K), -\Gamma_{\mu})$ and $\HFKhat(-Y,K)$. Under this identification, we obtain the \emph{Stipsicz--V\'ertesi map} 
\begin{align*}
  \Phi_{\mathit{SV}}\colon \SFH(-Y(K), -\Gamma_n) \to \HFKhat(-Y,K)
\end{align*}
which maps $\EH(\xi_n) \mapsto \Lhat(K).$
Using Theorem~\ref{thm:bypass}, \ref{thm:contactTorusAlg} and \ref{thm:CFA=SFC}, we can characterize the Stipsicz--V\'ertesi map on bordered Floer homology as follows.

\begin{corollary}\label{cor:StipsiczVertesi}
  Let $\Y$ be a knot complement such that the boundary is parametrized by an $\alpha$-type arc diagram as shown in Figure~\ref{fig:parametrizedTorus.c}. Suppose that $\alpha_0$ is isotopic to a meridian and $\alpha_1$ is isotopic to a preferred longitude. Then the Stipsicz--V\'ertesi map is equivalent to a $\rho_2$ action on $\CFA$, i.e.,  
  \[
    \Phi_{\mathit{SV}}([x]) =  [m_2(x, \rho_2)].
  \]
  If $\alpha_0$ is isotopic to a longitude and $\alpha_1$ is isotopic to a meridian, then the Stipsicz--V\'ertesi map is equivalent to a $\rho_3$ action on $\CFA$.
\end{corollary}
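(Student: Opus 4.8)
The plan is to recognize $\Phi_{\mathit{SV}}$ as a partial-gluing (box-tensor) map in Zarev's bordered sutured framework and then read off which torus-algebra element encodes the glued-in basic slice. Recall that $\Phi_{\mathit{SV}}$ is by definition the Honda--Kazez--Mati\'c gluing map $x \mapsto \Phi(x,\EH(\xi_-))$ followed by Juh\'asz's identification $\SFH(-Y(K),-\Gamma_\mu)\cong\HFKhat(-Y,K)$, where $\xi_-$ is the negative basic slice from $\Gamma_n$ to $\Gamma_\mu$. Using Theorem~\ref{thm:CFA=SFC}, identify the idempotent summands $\CFA(-\Y)\iota_0$ and $\CFA(-\Y)\iota_1$ with the sutured complexes $\SFC(-Y(K),-\Gamma_0)$ and $\SFC(-Y(K),-\Gamma_1)$, compatibly with contact classes. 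Under the parametrization of Figure~\ref{fig:parametrizedTorus.c} with $\alpha_0$ a meridian and $\alpha_1$ a longitude, $\Gamma_i$ is two parallel copies of $\alpha_i$, so that $\Gamma_\mu = \Gamma_0$ and (after choosing the parametrizing longitude to be the slope-$n$ curve) $\Gamma_n = \Gamma_1$; thus the source of $\Phi_{\mathit{SV}}$ becomes $H_*(\CFA(-\Y)\iota_1)$ and its target becomes $H_*(\CFA(-\Y)\iota_0)$.

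Next I would show that, under these identifications, $\Phi(\cdot,\EH(\xi_-))$ is exactly $[x]\mapsto[m_2(x,a)]$, where $a\in\A(\Z,0)$ is the algebra element attached to $\xi_-$ by the isomorphism (\ref{eq:iso}). The basic slice sits on a collar $T^2_{\bullet}\times[0,1]$ glued to $\Y$ along the parametrized torus; by Zarev's bordered sutured pairing theorem (the same input as in Theorem~\ref{thm:pairing}) the resulting sutured complex is $\CFA(-\Y)$ box-tensored with $\BSD$ of this collar, and the Honda--Kazez--Mati\'c inclusion map is realized at the chain level by box-tensoring with the contact cycle $c_D(\xi_-)$ of the collar. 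By the module version of Theorem~\ref{thm:bypass} together with (\ref{eq:iso}), box-tensoring with this contact cycle is precisely the $\Aoo$ operation $m_2(\cdot,a)$ on $\CFA(-\Y)$. On contact classes this is the content of Theorem~\ref{thm:bypass}; what is needed in addition is that the same identification of maps holds on all of $\SFH$, which is exactly what the proof of the pairing theorem provides. (As a consistency check, $m_2(c_A(\xi_n),a)=c_A(\xi_n\Cup\xi_-)$ returns the contact class of the complement of $K$ with meridional sutures, matching the known value $\Phi_{\mathit{SV}}(\EH(\xi_n))=\Lhat(K)$.)

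It then remains only to identify $a$ via Theorem~\ref{thm:contactTorusAlg}. In the first parametrization $\xi_-$ is a negative basic slice from $\Gamma_1$ to $\Gamma_0$, which is the generator $\rho_2$, so $\Phi_{\mathit{SV}}([x])=[m_2(x,\rho_2)]$. In the second parametrization ($\alpha_0$ a longitude and $\alpha_1$ a meridian) the labels $\Gamma_0$ and $\Gamma_1$ are interchanged, so $\xi_-$ is a negative basic slice from $\Gamma_0$ to $\Gamma_1$, which is $\rho_3$, giving $\Phi_{\mathit{SV}}([x])=[m_2(x,\rho_3)]$.

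I expect the main obstacle to be the second step: promoting the contact-class equalities of Theorems~\ref{thm:pairing} and~\ref{thm:bypass} to an equality of maps defined on all of $\SFH$, i.e.\ verifying that Zarev's partial-pairing construction reproduces the Honda--Kazez--Mati\'c inclusion map on the nose. The rest is bookkeeping that nonetheless must be handled carefully: tracking orientations, reconciling Juh\'asz's meridional-suture model of $\HFKhat$ with the disk-suture convention on $\Y$, and carrying out the framing normalization identifying $\Gamma_n$ with $\Gamma_1$ — these choices are what distinguish $\rho_2$ from $\rho_3$, and are where a sign error would most easily hide.
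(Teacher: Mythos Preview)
Your approach is essentially the same as the paper's: identify the idempotent summands of $\CFA(-\Y)$ with the relevant sutured complexes via Theorem~\ref{thm:CFA=SFC}, recognize the HKM gluing map as $m_2(\cdot,a)$, and then read off $a=\rho_2$ (or $\rho_3$) from Theorem~\ref{thm:contactTorusAlg}.  The paper does not write out a separate proof of the corollary; it simply points to Theorems~\ref{thm:bypass}, \ref{thm:contactTorusAlg}, and~\ref{thm:CFA=SFC}.

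The step you correctly flag as the main obstacle --- promoting the contact-class identity of Theorem~\ref{thm:bypass} to an equality of maps on all of $\SFH$ --- is not something the paper extracts from the proof of the pairing theorem.  Rather, it is handled by two results already recorded in the preliminaries: Zarev's Theorem~\ref{thm:m2}, which states that the Zarev gluing map $\Psi_F$ equals $m_2$ on the nose (not just on contact classes), together with the Leigon--Salmoiraghi equivalence (Theorem~\ref{thm:digramGluing} and the surrounding discussion) identifying the HKM and Zarev gluing maps.  Once you invoke those, the argument is complete; the proof of Theorem~\ref{thm:pairing} by itself would only give you the statement on contact classes, as you suspected.  One also needs Proposition~\ref{prop:contactAZ} to know that the element of $H_*(\A(\Z))$ fed into $m_2$ really is $\EH(\xi_-)$, which is implicit in your use of equation~(\ref{eq:iso}).
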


\subsection{Applications on contact surgery}\label{subsec:intro4}
Various tools in Heegaard Floer homology, such as the surgery exact triangle and mapping cone formula have previously been used to prove many interesting properties about contact surgery. For instance, Oszv\'ath and Szab\'o \cite{OS:contactInvariant} showed that a negative contact surgery preserves a non-vanishing contact invariant. Following that, there have been several studies on the behavior of contact invariants under positive contact surgery. Lisca and Stipsicz \cite{LS:surgery} showed that any positive contact surgery on a Legendrian representative of an algebraic knot with maximal Thurston--Bennequin invariant preserves a non-vanishing contact invariant. Hedden and Plamenevskaya \cite{HP:surgery} proved a similar result for fibered knots that support $(S^3,\xi_{std})$ (see also \cite{Conway:surgery}). Eventually, Golla \cite{Golla:surgery} and Mark--Tosun \cite{MT:surgery} completely characterized the behavior of a contact invariant under positive contact surgery on any Legendrian knot in $(S^3,\xi_{std})$.  

The bordered contact invariants we have defined are, in the case of a manifold with torus boundary, amenable to the immersed curve techniques of Hanselman, Rasmussen, and Watson \cite{HRW:immersed}, which have proved to be convenient for performing bordered calculations. Using them, we can extend the results of Mark--Tosun and 
Golla using a more concise argument.

\begin{restatable}{theorem}{surgery} \label{thm:surgery}
  Let $(Y,\xi)$ be a contact L-space with $\chat(\xi) \neq 0$ and $K$ a null-homologous Legendrian knot in $Y$. Denote by $\xi_{(r)}$ the contact structure obtained by a positive contact $(r)$-surgery on $K$, in which all stabilizations are chosen to be negative. Let $s = r + \tb(K)$ be the corresponding smooth surgery coefficient.
  \begin{itemize}
    \item If $\tb(K) - \rot(K) < 2\tau_{\xi}(K) - 1$, then $\chat(\xi_{(r)}) = 0$,
    \item if $\tb(K) - \rot(K) = 2\tau_{\xi}(K) - 1$ and
    \begin{itemize}
      \item if $\epsilon_{\xi}(K) = 0,1$, then $\chat(\xi_{(r)})\neq 0$ if  $s \geq 2\tau_{\xi}(K)$ and $\chat(\xi_{(r)}) = 0$ if $s \leq 2\tau_{\xi}(K) -1$,
      \item If $\epsilon_{\xi}(K) = -1$, then $\chat(\xi_{(r)})=0$ 
    \end{itemize}
  \end{itemize}
  where $\tau_{\xi}$ is the \emph{contact tau-invariant} and $\epsilon_{\xi}$ is the \emph{contact epsilon-invariant}.
\end{restatable}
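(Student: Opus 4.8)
The plan is to realize the surgered contact manifold $(Y_{(r)}(K),\xi_{(r)})$ as a gluing of the Legendrian knot complement with a contact solid torus, apply the pairing theorem (Corollary~\ref{cor:main}), and then convert the resulting box tensor product into the immersed‑curve calculus of Hanselman--Rasmussen--Watson \cite{HRW:immersed}. First, I would remove a standard neighborhood of $K$ to obtain the bordered sutured contact manifold $(Y(K),\Gamma_n,\xi_n)$ with $n=\tb(K)$, parametrizing $\partial Y(K)$ by the $\alpha$‑type arc diagram of Figure~\ref{fig:parametrizedTorus.c} with $\alpha_0$ a meridian and $\alpha_1$ a longitude, so as to land in the first case of Corollary~\ref{cor:StipsiczVertesi}. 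Positive contact $(r)$‑surgery, with smooth coefficient $s=r+\tb(K)$, amounts to gluing in a contact solid torus $(S^1\times D^2,\xi')$, which I would decompose as a chain of basic slices running from dividing slope $n$ to dividing slope $s$, capped off by the unique tight $I$‑invariant solid torus. Since every stabilization is chosen to be negative, the Ding--Geiges algorithm for reducing contact surgery to $(\pm1)$‑surgeries, together with Honda's classification \cite{Honda:classification1,Honda:classification2}, forces this chain to consist entirely of negative basic slices; by Theorem~\ref{thm:contactTorusAlg} each such slice is realized by the algebra element $\rho_2$, so $c_D(\xi')$ is the explicit type‑$D$ class obtained by applying these $\rho_2$'s to the idempotent generator of the $I$‑invariant piece, read off from the negative continued fraction expansion of $s$.

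By Corollary~\ref{cor:main} we then have $\chat(\xi_{(r)}) = \big[\,c_A(\xi_n)\boxtimes c_D(\xi')\,\big]$ in $\CFhat$ of the closed surgered manifold, after inserting the required twisting slice. I would then translate both factors into immersed curves. Because $(Y,\xi)$ is an L‑space with $\chat(\xi)\neq0$ and $K$ is null‑homologous, the HRW invariant of $Y(K)$ is a single loop‑type immersed curve $\gamma(K)$ in the punctured torus $\partial Y(K)\setminus z$, and the contact class $c_A(\xi_n)$ corresponds to a distinguished marked point $p_\xi$ on $\gamma(K)$; its location is recorded precisely by the contact invariants $\tau_\xi(K)$ and $\epsilon_\xi(K)$, which one takes to be defined through the position of the LOSS class $\Lhat(K)=[m_2(c_A(\xi_n),\rho_2)]$ in the knot Floer filtration (Corollary~\ref{cor:StipsiczVertesi}, Theorem~\ref{thm:CFA=SFC}). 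Under the pairing theorem, tensoring with $c_D(\xi')$ becomes the Lagrangian Floer homology $HF(\gamma(K),\ell_s)$ of $\gamma(K)$ with the oriented line $\ell_s$ of slope $s$, and $\chat(\xi_{(r)})$ is the image in this homology of the pair consisting of $p_\xi$ and a marked point on $\ell_s$ determined by $c_D(\xi')$; the ``all negative stabilizations'' hypothesis pins that second point to an extremal position on $\ell_s$.

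It would then remain to decide combinatorially when this distinguished intersection point is a cycle but not a boundary. Since $\gamma(K)$ is loop‑type and $\HFhat(Y)$ has rank one, the Floer complex of $\gamma(K)$ with $\ell_s$ has a standard staircase form, and $\chat(\xi_{(r)})\neq0$ precisely when $\ell_s$ does not ``cut off'' $p_\xi$ together with its cancelling partner along $\gamma(K)$. Comparing $s$ with the turning behavior of $\gamma(K)$ near $p_\xi$ --- governed by $\tau_\xi(K)$, with the borderline slopes $2\tau_\xi(K)-1$ and $2\tau_\xi(K)$ distinguished by $\epsilon_\xi(K)$ --- should produce exactly the three cases: when $\gamma(K)$ never turns back past $p_\xi$, i.e.\ $\tb(K)-\rot(K)<2\tau_\xi(K)-1$ and $p_\xi$ is not extremal, the class vanishes for all $s$; when it turns back exactly at the threshold, $\tb(K)-\rot(K)=2\tau_\xi(K)-1$, survival is equivalent to $s\geq2\tau_\xi(K)$ if $\epsilon_\xi(K)\in\{0,1\}$, and never occurs if $\epsilon_\xi(K)=-1$. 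I expect the main obstacle to be the solid‑torus bookkeeping of the first step: one must match the continued‑fraction decomposition of the surgery with a basic‑slice chain, track which basic slices are positive versus negative in the chosen boundary parametrization, and verify that the ``all negative'' choice produces the extremal marked point on $\ell_s$ rather than an interior one. Given the loop‑type description forced by the L‑space hypothesis, the immersed‑curve count itself should then be routine, recovering the Golla and Mark--Tosun results \cite{Golla:surgery,MT:surgery} when $(Y,\xi)=(S^3,\xi_{std})$.
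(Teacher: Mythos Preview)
Your overall strategy---express the surgery as a gluing, apply the pairing theorem, and compute in immersed curves---is exactly what the paper does. The gap is in your simplifying assumptions about the immersed curve and the contact class.

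You assert that the HRW invariant of $Y(K)$ is ``a single loop-type immersed curve $\gamma(K)$'' and that $c_A(\xi_n)$ is ``a distinguished marked point $p_\xi$ on $\gamma(K)$.'' Neither holds in general. For a null-homologous knot in an L-space, the immersed multicurve has one distinguished component $\gamma_{\mathfrak{s}}$ per spin$^c$ structure (wrapping once longitudinally, trivial local system), but there may be many additional closed components constrained to the midline, possibly with nontrivial local systems. Likewise, in a reduced basis the class $c_A$ is typically a sum $a_1+\cdots+a_N$ of generators, not a single one; there is no reason for the contact class to be a basis element of a simplified curve presentation. Your ``routine'' immersed-curve count therefore does not get off the ground: you must first argue that all contributions $a_j\boxtimes \iota_0^{\vee} x_1$ except one are null-homologous, and that the surviving one lies on the non-vertical segment of the distinguished component.

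The paper handles exactly this. It proves several local vanishing lemmas (Lemmas~\ref{lem:surg-V-U} and~\ref{lem:surg-V-D}) showing that any generator whose curve turns the ``wrong way'' immediately above or below it pairs to a boundary; this kills every contribution coming from a non-distinguished component and all but one generator on $\gamma_{\mathfrak{s}}$. A separate lemma (Lemma~\ref{lem:loss_gen}) then identifies, via the Alexander grading of $\LOSS(K)$, whether that surviving generator actually appears in the sum expressing $c_A$: it does precisely when $\tb-\rot=2\tau_\xi-1$. Only after this reduction does the case analysis on $\epsilon_\xi$ and the slope comparison proceed (Lemmas~\ref{lem:surg-NV-1} and~\ref{lem:surg-NV-2}). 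Your sketch skips straight to the last step; to make it a proof you would need to supply the analogues of these lemmas, and in particular treat the possibility of extra curve components and nontrivial local systems.

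A smaller point: your description of the solid-torus side as ``applying $\rho_2$'s'' and reading off a continued-fraction expansion of $s$ is not how the computation goes. The paper reduces to integer contact surgery, computes $\CFD$ of the $n$-framed solid torus explicitly by tensoring the known $\CFA$ with $\BSDD(\AZ)$, and identifies the ``all negative stabilizations'' filling with the specific generator $\iota_0^{\vee} x_1$ (Figure~\ref{fig:D-solid-tori-imm}). The bookkeeping you anticipate as the main obstacle is in fact straightforward; the real work is the vanishing argument above.
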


See Section~\ref{subsec:knots} for the definitions of $\tau_{\xi}$ and $\epsilon_{\xi}$.
 
\begin{remark}
  The statement of Theorem~\ref{thm:surgery} for the case of knots in $S^3$ is equivalent to the result of Golla \cite{Golla:surgery}, but slightly weaker than the result of Mark--Tosun \cite{MT:surgery}. Theorem~\ref{thm:surgery} does not consider the case when $\epsilon_{\xi}(K) = 0,1$ and $2\tau_{\xi}(K)-1 < s < 2\tau_{\xi}(K)$. 
\end{remark}

Wand \cite{Wand:surgery} showed that Legendrian surgery preserves tightness. However, the effects of Legendrian surgery on knots in overtwisted contact manifolds have not been well studied. We investigate Legendrian surgery on certain \emph{strongly non-loose Legendrian knots}, of which the complement does not contain a boundary-parallel half Giroux torison (Legendrian surgery on knots that are not strongly non-loose immediately produces an overtwisted contact structure). It is well-known that Legendrian surgery on any strongly non-loose unknot results in a tight contact structure. Also, it follows easily from \cite{EMM:nonloose} that Legendrian surgery on any strongly non-loose positive torus knot $T_{2,2n+1}$ produces a tight contact structure. We extend this result by showing that Legendrian surgery on any strongly non-loose $T_{3,4}$ produces a tight contact structure. This exemplifies certain kinds of computations which will play an important role in classifying tight contact structures on surgeries on torus knots \cite{EMTV:torus}.

\begin{restatable}{theorem}{ptorus}\label{thm:T34}
  Let $K$ be a strongly non-loose Legednrian $T_{3,4}$ in an overtwisted contact 3-sphere. Then Legendrian surgery on $K$ yields a tight contact structure with a non-vanishing contact invariant.
\end{restatable}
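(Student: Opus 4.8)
The plan is to split the argument into three stages: (i) reduce to a finite list of cases via the classification of strongly non-loose Legendrian $T_{3,4}$; (ii) for each such knot $K$, with $n=\tb(K)$, identify the bordered contact invariant $c_D(\xi_n)\in\CFD$ of the complement $(Y(K),\Gamma_n,\xi_n)$ as an explicit immersed-curve generator; and (iii) compute the effect of Legendrian surgery by the pairing theorem and check that the contact class survives. Since a closed contact $3$-manifold with non-vanishing Ozsv\'ath--Szab\'o invariant $\chat$ is tight \cite{OS:contactInvariant}, and since Legendrian surgery on a knot that is not strongly non-loose immediately yields an overtwisted structure (so that $\chat=0$ there and there is nothing to prove), the theorem follows once stage (iii) produces $\chat\neq 0$ in every case arising in (i). For (i) I would invoke the convex-surface-theoretic classification of non-loose Legendrian torus knots (cf.\ \cite{EMM:nonloose} and the forthcoming \cite{EMTV:torus}): the strongly non-loose representatives of $T_{3,4}$ --- those whose complement contains no boundary-parallel half Giroux torsion --- form a finite family, each with tight complement, and each cut out from the $I$-invariant contact structure on a convex vertical annulus in the Seifert fibered complement by a controlled sequence of basic-slice attachments.

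\textbf{Computing the complement's invariant.} For stage (ii) I would use Theorem~\ref{thm:bypass} together with the dictionary of Theorem~\ref{thm:contactTorusAlg}: each basic-slice (bypass) attachment in the decomposition of $(Y(K),\xi_n)$ translates into an $m_2(\,\cdot\,,\rho_i)$ action, so that $c_D(\xi_n)$ is obtained from the contact class of the ``seed'' piece by applying an explicit word in the $\rho_i$. Transporting everything to Hanselman--Rasmussen--Watson immersed curves \cite{HRW:immersed}, the invariant of $Y(K)$ is the standard torus-knot multicurve of $T_{3,4}$ --- a single embedded curve, since $T_{3,4}$ is an L-space knot of genus $3$, with turning points governed by $\Delta_{T_{3,4}}$ --- decorated by a distinguished marked point $p_{\xi}$ recording $c_D(\xi_n)$, whose position is pinned down by the $\rho_i$-word. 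This refines, and is consistent with, Corollary~\ref{cor:StipsiczVertesi}, which identifies the $\rho_2$ (or $\rho_3$) action with the Stipsicz--V\'ertesi map onto $\HFKhat(T_{3,4})$ \cite{SV:LOSS}; in particular the basic-slice classification of Honda \cite{Honda:classification1,Honda:classification2} makes the relevant $\rho_i$-words unambiguous.

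\textbf{Surgery and non-vanishing.} For stage (iii), by Corollary~\ref{cor:main} the contact invariant of the surgered manifold equals $c_A(\mathcal{T})\boxtimes c_D(\xi_n)$, where $\mathcal{T}$ is the standard Legendrian-surgery solid torus, whose immersed-curve invariant is the oriented line realizing the $(n-1)$-surgery slope through the puncture. Its Floer pairing with the $T_{3,4}$-curve is computed by minimal intersection in the punctured torus, and $\chat$ of the surgery is non-zero precisely when the marked point $p_{\xi}$ survives to this minimal-position count --- i.e.\ when the surgery line meets the curve efficiently through $p_{\xi}$. Using the explicit location of $p_{\xi}$ from (ii) and the known shape of the $T_{3,4}$ curve, I would verify this intersection condition for each $(\tb,\rot)$ in the finite list from (i), concluding $\chat\neq 0$ and hence tightness.

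\textbf{Main obstacle.} The step I expect to be hardest is stage (ii): faithfully tracking the contact class through the complement of a genus-$3$ torus knot. Unlike $T_{2,2n+1}$, whose complement is small and whose bypass decomposition is transparent, $T_{3,4}$ has a larger $\HFKhat$ and a more intricate convex decomposition, so determining the precise $\rho_i$-word --- equivalently, the exact turning point of the immersed curve on which $p_{\xi}$ sits --- requires genuine care with the basic-slice bookkeeping. A secondary difficulty is making the classification in stage (i) fully explicit; this is exactly the sort of computation that feeds into \cite{EMTV:torus}, and handling it cleanly here is part of what the theorem is meant to illustrate.
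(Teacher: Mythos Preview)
Your overall architecture (classify, locate the contact class on the immersed curve, pair with a solid torus) matches the paper's, but two steps in your reduction are not right, and the paper's shortcuts in those places are what make the argument go through.

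First, the strongly non-loose representatives of $T_{3,4}$ do \emph{not} form a finite family. In $(S^3,\xi^{-1})$ there are $L^i_{\pm}$ for every $i\in\mathbb{Z}$, none with half Giroux torsion (Proposition~\ref{prop:34classification}(2)), so your stage~(i) cannot terminate in a finite case check. The paper deals with this by a further reduction you omit: Legendrian surgery on $L^{n+k}_{\pm}$ is obtained from Legendrian surgery on $L^{n}_{\pm}$ by additional negative contact surgeries, which preserve non-vanishing of $\chat$; hence it suffices to treat sufficiently negative $n$, and there the immersed-curve pairing is a single uniform picture (Figure~\ref{fig:imm_T34}).

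Second, for the families in $\xi^1$ and $\xi^{-5}$ (the knots with $\tb\geq 7$), the paper does not run the immersed-curve pairing at all. Instead, by \cite[Lemmas~7.1, 7.4]{EMM:nonloose}, Legendrian surgery on those knots coincides with a \emph{positive} contact surgery on the Legendrian $T_{3,4}$ with $\tb=5$ in $(S^3,\xi_{std})$, so Theorem~\ref{thm:surgery} applies directly. Your plan would force you to locate $c_D$ separately for each of these complements and pair; that is substantially more work and you give no mechanism for it.

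Finally, your anticipated ``main obstacle'' in stage~(ii) dissolves in the paper's treatment: the only case requiring an explicit contact class is $\xi^{-1}$, where the complement decomposes as $\xi_{in}\cup B_{\pm}(0,\infty)$ --- a single basic slice, not a long $\rho_i$-word --- and $c_A(\xi_{in})$ is pinned down immediately as the Alexander-grading-$0$ generator because $\xi_{in}$ has self-conjugate spin$^c$ structure. So the bookkeeping you worry about is essentially absent once the right reductions are in place.
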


Matkovi\v{c} \cite{Matkovic:nonloose} showed that Legendrian surgery on any strongly non-loose negative torus knot produces a tight contact structure. We give a concise alternative proof of this result for the left-handed trefoil.

\begin{restatable}{theorem}{lht} \label{thm:lht}
  Let $K$ be a strongly non-loose Legendrian left-handed trefoil in an overtwisted contact 3-sphere. Then Legendrian surgery on $K$ yields a tight contact structure with a non-vanishing contact invariant.
\end{restatable}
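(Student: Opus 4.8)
The plan is to pass to the (tight) exterior of $K$, compute the bordered contact invariant there, and recover the contact invariant of the surgered manifold by pairing with a tight contact solid torus, using the Hanselman--Rasmussen--Watson immersed-curve calculus to make the pairing explicit. First I would recall the classification of strongly non-loose Legendrian left-handed trefoils in an overtwisted $S^3$ \cite{Matkovic:nonloose, EMM:nonloose}: up to Legendrian isotopy there are only finitely many, each obtained from the maximal-$\tb$ representative(s) by a controlled number of negative stabilizations, and for each the induced contact structure $\xi_n$ on the exterior $(Y(K),\Gamma_n)$, with $n=\tb(K)$, is tight with no boundary-parallel half Giroux torsion. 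Parametrizing $\partial Y(K)$ by an $\alpha$-type arc diagram as in Figure~\ref{fig:parametrizedTorus} with $\alpha_0$ a meridian and $\alpha_1$ a Seifert longitude, I would compute the bordered contact class $c_D(\xi_n)\in\CFD(\TW^+_{\F}\cup -\Y(K))$ by decomposing $(Y(K),\Gamma_n,\xi_n)$ into a standard base piece and a sequence of basic slices and applying Theorem~\ref{thm:bypass}, translating each step through the torus-algebra dictionary of Theorem~\ref{thm:contactTorusAlg}, just as in the proof of Corollary~\ref{cor:StipsiczVertesi}. Passing to immersed curves via \cite{HRW:immersed}, this identifies $c_D(\xi_n)$ with a specific nonzero generator at a definite point of the immersed multicurve $\gamma$ of the left-trefoil exterior in the punctured torus; in particular $c_D(\xi_n)$ is a cycle which is nonzero in homology.

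Next I would realize Legendrian surgery on $K$ as a gluing: removing the standard neighborhood of $K$ and regluing a contact solid torus whose convex boundary has dividing slope the image of the $\tb$-framing yields the closed contact manifold $(S^3_{n-1}(K),\xi'_K)$, and the reglued torus carries the unique tight contact structure with that boundary, by standard convex-surface theory. Theorem~\ref{thm:pairing}, in the hat flavor of Corollary~\ref{cor:main}, then gives
\[
  \chat(\xi'_K) = \bigl[\, c_A(\xi_n)\boxtimes c_D(\eta)\,\bigr]\in\CFhat(-S^3_{n-1}(K)),
\]
where $\eta$ denotes that tight solid-torus structure. Under the immersed-curve correspondence the factor $c_D(\eta)$ is a line $\ell$ of slope $n-1$ (possibly carrying a local system), and the box tensor product computes the Lagrangian Floer homology of $\gamma$ with $\ell$; hence $\chat(\xi'_K)$ is represented by the intersection point of $\gamma\cap\ell$ singled out by the location of $c_A(\xi_n)$ on $\gamma$. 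The theorem thus reduces to the claim that this distinguished point represents a nonzero class in $\HFhat(-S^3_{n-1}(K))$.

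The main obstacle is precisely this reduction, and it has two halves. The easier half is survival: once $\gamma$ and $\ell$ are isotoped into minimal position there are no bigons, so every transverse intersection point represents a nonzero homology class; one only has to check that the contact class sits at an honest transverse intersection point rather than along a segment shared by $\gamma$ and $\ell$, which follows from the explicit location found above together with the fact that $\gamma$ is genuinely wrapped around the puncture while $\ell$ is straight. To bookkeep this it helps to identify $S^3_{n-1}(\text{left trefoil})$ --- a small Seifert fibered space, the $0$-surgery torus bundle for the maximal-$\tb$ case and Brieskorn-type or lens-space-like surgeries for the negatively stabilized ones --- and to match $\#(\gamma\cap\ell)$ with $\dim\HFhat$. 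The subtler half is determining which generator $c_D(\xi_n)$ is; here I would exploit the stabilization structure of the classification, since each negative stabilization acts on the bordered contact class by an $m_2$ with a negative basic slice (Theorems~\ref{thm:bypass} and \ref{thm:contactTorusAlg}) while correspondingly decreasing the surgery slope by one, so that all cases collapse to a single base computation followed by an iterated, fully explicit $m_2$-action on $\gamma$. Once the distinguished intersection point is seen to survive, $\chat(\xi'_K)\neq 0$, and hence by Ozsv\'ath--Szab\'o \cite{OS:contactInvariant} the surgered structure $\xi'_K$ is tight with non-vanishing contact invariant.
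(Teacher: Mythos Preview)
Your outline follows the paper's strategy in spirit (pair the bordered contact class of the trefoil exterior with a solid-torus class and read off the answer in the immersed-curve model), but there are two genuine problems.

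First, the classification input is misstated. The strongly non-loose Legendrian left-handed trefoils are \emph{not} finitely many: by \cite[Theorem~1.12]{EMM:nonloose} there are representatives $L^n_{\pm}$ for every $n\in\mathbb{Z}$, with $(Y(K),\Gamma_n,\xi^{\pm}_n)=(Y(K),\Gamma_\mu,\xi_{in})\cup B_{\pm}(n,\infty)$. The paper handles this by observing that Legendrian surgery on $L^{n+k}_{\pm}$ is obtained from Legendrian surgery on $L^n_{\pm}$ by further negative contact surgeries, so it suffices to treat sufficiently negative $n$; you would need an analogous reduction.

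Second, and more seriously, the object you call $c_A(\xi_n)$ is not defined for your parametrization. With $\alpha_0$ a meridian and $\alpha_1$ a Seifert longitude, the only elementary dividing sets are $\Gamma_0$ and $\Gamma_1$ (slopes $\infty$ and $0$); a contact structure with dividing slope $n$ is not ``compatible with $\Y$'' in the sense of the paper, so there is no $c_A(\xi_n)$ to locate on $\gamma$. One must first peel off basic slices to reach $\Gamma_0$ (or $\Gamma_\mu$) and absorb them into the solid-torus side, which is precisely what the paper does: it pairs $c_A(\xi^{\pm}_0)$ on the exterior with the type-$D$ class of an $(n-1)$-framed solid torus carrying a \emph{specific} (not unique) tight structure. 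At that point your ``minimal position $\Rightarrow$ no bigons $\Rightarrow$ nonzero'' shortcut fails: the solid-torus type-$D$ curve with its contact generator visible is \emph{not} in minimal position (it carries the finger move of Section~\ref{subsec:solid}), and in the actual pairing one has $\partial x = c(\xi)+y$. The class survives because $[c(\xi)]=[y]$ and $y$ is in no other differential, not because there are no bigons. Tracking the contact class through a homotopy to minimal position is equivalent to doing this bigon count, so the ``easier half'' you describe is the entire computation, and your proposal does not carry it out.
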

   
\noindent
{\bf Acknowledgements.} 
The authors appreciate Ko Honda and Sucharit Sarkar for helpful discussions.

\section{Preliminaries}

We gather the relevant facts about contact topology and Heegaard Floer homology for the later sections. We assume readers have a basic understanding of 3–dimensional contact topology, in particular convex surface theory and open book decompositions. We also assume readers are familiar with Heegaard Floer homology. We recommend that readers refer to \cite{Etnyre:openbook, Geiges:book, LOT:bordered, OS:HF2, OS:HF1} for background.

\subsection{Gluing sutured manifolds} \label{subsec:gluing}
We first recall the definitions of sutured $3$-manifolds and sutured surfaces with a dividing set.

\begin{definition}
  A pair $(Y,\Gamma)$ is a \emph{sutured 3-manifold} if   
  \begin{itemize}
    \item $Y$ is a compact oriented $3$-manifold without a closed component, 
    \item $\Gamma$ is a collection of oriented simple closed curves in $\partial Y$, called \emph{sutures},
    \item $\partial Y$ is divided by $\Gamma$ into two regions $R_+(\Gamma)$ and $R_-(\Gamma)$ such that $\partial R_{\pm}(\Gamma) = \pm\Gamma$,
    \item $R_+(\Gamma)$ and $R_-(\Gamma)$ have no closed components,
    \item $Y$ is \emph{balanced} if $\chi(R_+(\Gamma)) = \chi(R_-(\Gamma))$
  \end{itemize}
\end{definition}

\begin{definition}
  A pair $\F = (F,\Lambda)$ is a \emph{sutured surface} if 
  \begin{itemize}
    \item $F$ is a compact oriented surface without a closed component,
    \item $\Lambda \subset \partial F$, called \emph{sutures}, is a finite collection of points with sign,
    \item $\partial F$ is divided by $\Lambda$ into two regions $S_+(\Lambda)$ and $S_-(\Lambda)$ such that $\partial S_{\pm}(\Lambda) = \pm\Lambda$,
    \item $S_+(\Lambda)$ and $S_-(\Lambda)$ have no closed components.
  \end{itemize} 
\end{definition}

\begin{definition}
  Let $\F = (F, \Lambda)$ be a sutured surface. A \emph{dividing set} $\Gamma$ for $\F$ is a finite collection of properly embedded arcs and simple closed curves in $F$ such that
  \begin{itemize}
    \item $\partial \Gamma = -\Lambda$ as an oriented boundary,
    \item $F$ is divided by $\Gamma$ into two regions $R_+(\Gamma)$ and $R_-(\Gamma)$ with $\partial R_{\pm}(\Gamma) = \pm\Gamma \cup S_{\pm}(\Lambda)$.
  \end{itemize}
\end{definition}

Given a sutured surface $\F = (F,\Lambda)$, we can construct two sutured surfaces $-\F = (-F, -\Lambda)$ and $\overline{\F} = (-F, \Lambda)$. Both $-\F$ and $\overline{\F}$ reverse the orientation of the surface, but the difference is that the roles of $S_+$ and $S_-$ are reversed in $\overline{\F}$. 

To glue two sutured $3$-manifolds, we first need to \dfn{sharpen} the boundaries of the sutured $3$-manifolds by turning them into manifolds with corners. Let $(Y,\Gamma)$ be a sutured $3$-manifold and $F \subset \partial Y$ a subsurface such that all components of $\partial F$ intersect $\Gamma$ transversely and nontrivially. We may \emph{sharpen} $\partial Y$ along $\partial F$ into a corner so that the sutures on $F$ and $\partial Y \setminus F$ interleave along the corner and the sutures on $F$ are to the right of the sutures on $\partial Y \setminus F$. See Figure~\ref{fig:sharpening} for an example. We can also \dfn{round the edges} by reversing the sharpening operation.

\begin{figure}[htbp]{\scriptsize
  \begin{overpic}[tics=20]{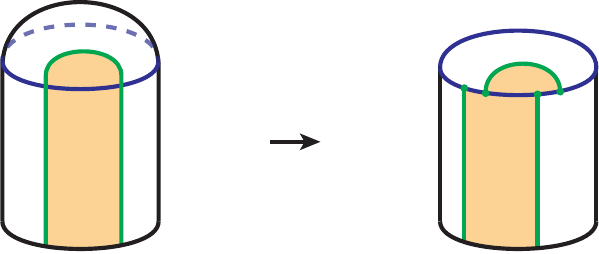}
  \end{overpic}}
  \caption{Sharpening the boundary of a sutured manifold}
  \label{fig:sharpening}
\end{figure}

Now let $(Y_i,\Gamma_i)$ for $i=1,2$ be sutured $3$-manifolds. Suppose there are subsurfaces $(F, \Gamma) \subset (\partial Y_1, \Gamma_1)$ and $(-F, -\Gamma) \subset (\partial Y_2, \Gamma_2)$. Then the \dfn{gluing} of $(Y_1,\Gamma_1)$ and $(Y_2,\Gamma_2)$ along $F$ is defined as follows: first sharpen $\partial Y_i$ along $\partial F$ for $i=1,2$. Then glue two sharpened manifolds by identifying $(F,\Gamma)$ and $(-F, -\Gamma)$; we denote the result using $\Cup$ by 
\[
  (Y_1,\Gamma_1) \Cup (Y_2,\Gamma_2) = (Y_1 \cup_F Y_2, \Gamma_1 \cup_{\Gamma} \Gamma_2).
\]
Note that the gluing operation introduces some twisting on the sutures since we sharpen the boundaries before gluing. See Figure~\ref{fig:gluing} for an example.

\begin{figure}[htbp]{\scriptsize
  \vspace{0.2cm}
  \begin{overpic}[tics=20]{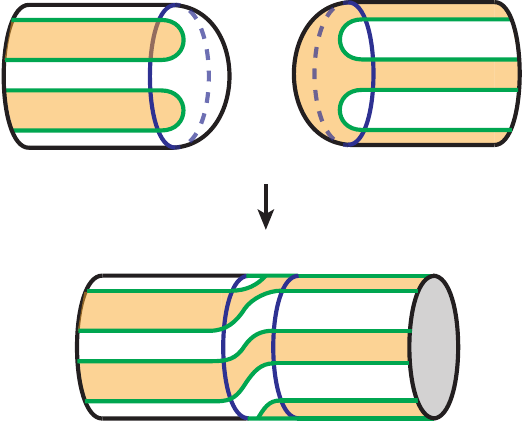}
  \end{overpic}}
  \caption{gluing two sutured manifolds}
  \label{fig:gluing}
\end{figure}

In \cite{Zarev:joining}, Zarev introduced the \dfn{joining} operation on two sutured manifolds, which is a generalization of the gluing operation. Here we define the gluing operation as a special case of the joining operation.

\begin{definition}
  A \emph{partially sutured manifold} is a triple $\Y = (Y,\Gamma,\F)$ such that
  \begin{itemize}
    \item $Y$ is a compact oriented $3$-manifold with corners,
    \item $\F = (F,\Lambda)$ is a sutured surface such that $F \subset \partial Y$ and the corner of $Y$ is $\partial F$.
    \item $\Gamma$ is a dividing set for $(\partial Y \setminus F, -\Lambda)$.
  \end{itemize}
\end{definition}

Let $\Y=(Y,\Gamma,\F_1\sqcup\F_2)$ and $\Y'=(Y',\Gamma',-\F_2\sqcup\F_3)$ be two partially sutured manifolds. We can \emph{pair} $\Y$ and $\Y'$ along $\F_2$ and $-\F_2$ and obtain
\[
  \Y \cup \Y' = (Y \cup Y', \Gamma \cup \Gamma',\F_1 \sqcup \F_3).
\]
We use the term \emph{pairing} to distinguish from the \emph{gluing} operation of two sutured manifolds defined previously. We now recall some important partially sutured manifolds.

\begin{definition} \label{def:cap}
  Let $\mathcal{F} = (F,\Lambda)$ be a sutured surface and $\Gamma$ a dividing set for $\F$. The \dfn{cap for $\mathcal{F}$ associated with $\Gamma$} is a partially sutured manifold defined to be 
  \[
    \mathcal{C} = (C, \Gamma\times\{1\}, (-F\times\{0\}, -\Lambda \times \{0\})),
  \]
  where $C = F \times [0,1]/\sim$, where $(p,t)\sim (p,t')$ for $p\in \partial F$ and $t,t' \in [0,1]$. 
\end{definition}

As mentioned earlier, gluing two sutured manifolds results in a twisting of sutures, while pairing does not. Therefore, we need a partially sutured manifold that adds a twisting. 

\begin{definition}
  A \dfn{positive (resp. negative) twisting slice of a sutured surface $\mathcal{F} = (F,\Lambda)$} is a partially sutured manifold 
  \[
    \TW_{\mathcal{F}}^{\pm} = (F\times [0,1], \Gamma_{\pm}, -\mathcal{F}\cup -\overline{\mathcal{F}})
  \]
  where we identify $-\mathcal{F}$ with $F \times \{0\}$ and $-\overline{\mathcal{F}}$ with $F\times \{1\}$. The dividing set $\Gamma$ is obtained from $\Lambda \times [0,1]$ by applying $1/n$-th of a positive (\dfn{resp.} negative) Dehn twist along each component of $\partial F \times \{1/2\}$, containing $n$ points of $\Lambda$.
\end{definition}

See Figure~\ref{fig:twisting} for examples of twisting slices.

\begin{figure}[htbp]{\scriptsize
  \begin{overpic}[tics=20]{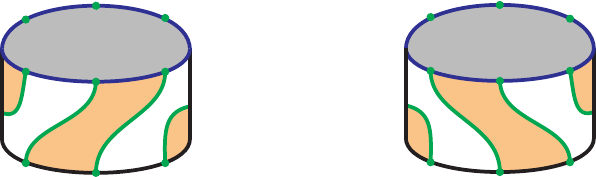}
  \end{overpic}}
  \caption{Positive and negative twisting slices}
  \label{fig:twisting}
\end{figure}

Now we are ready to define the gluing operation via pairing. Let $(Y_i,\Gamma_i)$ for $i=1,2$ be sutured $3$-manifolds. Suppose there are subsurfaces $(F, \Gamma) \subset (\partial Y_1, \Gamma_1)$ and $(-F, -\Gamma) \subset (\partial Y_2, \Gamma_2)$. Let $\F = (F,\Lambda)$ and $\Lambda = -(\partial F \cap \Gamma)$. Then the caps $\C_1$ for $\F$ associated with $\Gamma$ embeds in $(Y_1,\Gamma_1)$. Similarly, the cap $\C_2$ for $-\F$ associated with $-\Gamma$ embeds in $(Y_2,\Gamma_2)$. Then we can glue $(Y_1,\Gamma_1)$ and $(Y_2,\Gamma_2)$ together along $F$ as follows:
\[
  (Y_1,\Gamma_1) \Cup_{F} (Y_2,\Gamma_2) = (Y_1 \setminus \mathcal{C}_1) \cup \TW_{\F}^- \cup (Y_2 \setminus \mathcal{C}_2).
\]

\subsection{The contact invariant in sutured Floer homology}\label{subsec:invariant}
In this subsection, we review the invariant of a contact structure on a sutured $3$-manifold defined by Honda, Kazez, and Mati\'c \cite{HKM:suturedInvariant}. 

Let $Y$ be a compact oriented $3$-manifold with boundary and $\xi$ a contact structure on $Y$ that induces a convex boundary with a dividing set $\Gamma$. If $Y$ has corners, we further assume that the corners are Legendrian. We say $(Y,\Gamma,\xi)$ is a \emph{sutured contact manifold} in short. 

The construction of the contact invariant of a sutured contact $3$-manifold $(Y,\Gamma,\xi)$ starts with a \emph{partial open book decomposition}. 

\begin{definition}
  A \emph{partial open book decomposition} is a triple $(S,P,h)$ where 
  \begin{enumerate}
    \item $S$ is a compact oriented surface with boundary,
    \item $\displaystyle P = \bigcup P_i$ is the union of $1$-handles for $i=1,\ldots,n$ such that $S$ is obtained from $\overline{S\setminus P}$ by attaching $1$-handles $P_i$, 
    \item $h\colon P \to S$ is an embedding that is the identity along $\partial P \cap \partial S$.
  \end{enumerate}
\end{definition}

We can construct a balanced sutured $3$-manifold $(Y,\Gamma)$ from a partial open book $(S,P,h)$ as follows. Let 
\begin{align*}
  H &= S \times [-1,0] / \sim, \\
  N &= P \times [0,1] / \sim
\end{align*}
be two handlebodies where $(x,t) \sim (x,t')$ for $x \in \partial S$ and $t,t' \in [-1,0]$, and $(y,t) \sim (y,t')$ for $y \in \partial S \cap \partial P$ and $t,t' \in [0,1]$, respectively. Then we define the $3$-manifold $Y$ to be
\begin{align*} 
  Y = H \cup N / \sim 
\end{align*}
where we identify $P\times \{0\} \subset \partial H$ with $P\times \{0\} \subset \partial N$, and $h(P) \times \{-1\} \subset \partial H$ with $P \times \{1\} \subset \partial N$. We also define the suture $\Gamma$ to be
\[
  \Gamma = \overline{\partial S \setminus \partial P} \times \{0\} \cup -\overline{\partial P \setminus \partial S} \times \{1/2\}. 
\]

Now we define a contact structure on $(Y,\Gamma)$ that is compatible with the partial open book $(S,P,h)$ as follows.

\begin{definition}
  A partial open book $(S,P,h)$ \emph{supports} a contact structure $\xi$ on $(Y,\Gamma)$ if   

  \begin{enumerate}
    \item $(H,\xi|_H)$ and $(N,\xi|_N)$ are tight.
    \item $\partial H$ is a convex surface with dividing set $\partial S \times \{0\}$
    \item $\partial N$ is a convex surface with dividing set $\partial P \times \{1/2\}$.
  \end{enumerate}
\end{definition}

Honda, Kazez and Mati\'c \cite{HKM:suturedInvariant} showed that the relative Giroux correspondence; see also Etg\"{u} and \"{O}zba\u{g}c\i \cite{EO:GriouxCorrespondence, EO:pobd}.

\begin{theorem}[Honda--Kazez--Mati\'c \cite{HKM:suturedInvariant}]
  For a sutured contact $3$-manifold $(Y,\Gamma,\xi)$, there exists a partial open book decomposition $(S,P,h)$ of $(Y,\Gamma)$ that supports $\xi$. Also, there exists a unique contact structure on $(Y,\Gamma)$ supported by $(S,P,h)$ up to isotopy.  
\end{theorem}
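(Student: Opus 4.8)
The plan is to prove the two assertions in turn, following Honda--Kazez--Mati\'c \cite{HKM:suturedInvariant} (see also Etg\"{u}--\"{O}zba\u{g}c\i \cite{EO:GriouxCorrespondence, EO:pobd}): existence of a supporting partial open book comes from building a \emph{relative contact cell decomposition} of $(Y,\Gamma,\xi)$ and reading off a page and a monodromy from it; uniqueness of the supported contact structure comes from observing that on each piece of the resulting decomposition $Y = H \cup N$ a tight contact structure with the prescribed convex boundary is forced, and then invoking convex gluing.

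\emph{Existence.} First I would construct a properly embedded Legendrian graph $K \subset Y$ whose endpoints lie on the sutures of $\Gamma$ and which is positioned so that $R_-(\Gamma) \cup K$ is a deformation retract of $Y$ and so that a regular neighborhood $\nu(R_-(\Gamma)\cup K)$ has complement a disjoint union of standard ``half-balls'' on which $\xi$ is the unique compatible tight contact structure. This is the relative analogue of Giroux's contact cell decomposition; the tools are convex surface theory \cite{Giroux:convex} --- the (relative) Legendrian realization principle to place $K$ compatibly with $\Gamma$, Giroux flexibility to normalize the dividing sets on the cut-open boundary, and Eliashberg's uniqueness of tight contact structures on the $3$-ball to identify each complementary piece as standard. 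Taking $S$ to be the ribbon of $K$ together with $R_-(\Gamma)$ --- so that $\overline{S\setminus P} = R_-(\Gamma)$ and $P=\bigcup P_i$ are the $1$-handles dual to the edges of $K$ --- realizes $H = S\times[-1,0]/\!\sim$ as $\nu(S)$ and $N = \overline{Y\setminus H} = P\times[0,1]/\!\sim$ as the reassembled half-balls. The monodromy $h\colon P\to S$ is read off from the contact-geometric attaching data: $h(P)\times\{-1\}$ is the region of the lower copy of the page onto which $P\times\{1\}\subset\partial N$ is glued, and it is determined up to isotopy by the requirement that $\partial N$ be convex with dividing set $\partial P\times\{1/2\}$. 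That $(H,\xi|_H)$ and $(N,\xi|_N)$ are tight with the stated convex boundaries is then immediate from the construction, so $(S,P,h)$ supports $\xi$.

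\emph{Uniqueness.} Now fix $(S,P,h)$, hence $(Y,\Gamma)$, and let $\xi$ be any supported contact structure. On $H=S\times[-1,0]/\!\sim$ the boundary is convex with dividing set $\partial S\times\{0\}$, which is a union of $\partial$-parallel arcs with no closed components; by convex surface theory (Giroux \cite{Giroux:convex}, Honda \cite{Honda:classification1}) this forces $H$ to be a standard vertically invariant neighborhood of the convex page $S\times\{-1/2\}$, so $\xi|_H$ is determined up to isotopy. On $N=\bigsqcup_i (P_i\times[0,1])$ each summand is a $3$-ball whose convex boundary has connected dividing set $\partial P_i\times\{1/2\}$, so by Eliashberg's theorem $\xi|_N$ is the unique tight contact structure on the ball and is likewise determined. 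Finally $\xi$ is recovered by gluing $(H,\xi|_H)$ to $(N,\xi|_N)$ along the convex subsurfaces $P\times\{0\}$ and $h(P)\times\{-1\}$ of $\partial H$, whose dividing sets match those of $P\times\{0,1\}\subset\partial N$ by the definition of ``supports''; after perturbing both sides to be $I$-invariant near the gluing region, Honda's convex gluing theorem shows that the isotopy class of the result depends only on the pieces, so $\xi$ is unique up to isotopy.

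\emph{Main obstacle.} The genuinely delicate step is the relative contact cell decomposition in the existence half: one must choose $K$ so that it meets $\partial Y$ correctly relative to $\Gamma$, so that the complement is honestly a union of standard half-balls, and so that $P$ and $h$ assemble into a bona fide partial open book --- this needs the relative Legendrian realization principle together with careful bookkeeping of dividing sets on cut-open surfaces. In the uniqueness half the only subtlety is the gluing step, namely that gluing along convex surfaces with matching dividing sets is well defined up to isotopy; this is Honda's gluing lemma, resting on the uniqueness of $I$-invariant neighborhoods of convex surfaces and of tight contact structures on balls.
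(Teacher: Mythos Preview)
The paper does not supply its own proof of this theorem: it is stated as a background result attributed to Honda--Kazez--Mati\'c (with a pointer also to Etg\"{u}--\"{O}zba\u{g}c\i) and then used without further argument. Your outline is essentially the argument from those references, and the overall strategy---relative contact cell decomposition for existence, uniqueness on $H$ and $N$ separately followed by convex gluing for uniqueness---is correct.

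One small correction in the uniqueness half: the boundary $\partial H$ is the double of $S$ (a closed surface), so the dividing set $\partial S\times\{0\}$ consists of closed curves, not $\partial$-parallel arcs. The uniqueness of $\xi|_H$ therefore does not follow from an ``arcs only'' criterion but rather from the fact that $(H,\partial S\times\{0\})$ is product-disk decomposable, or equivalently---as you also say---that $H$ is forced to be the $I$-invariant neighborhood of the convex page. This does not affect the validity of your argument, only the phrasing of that step.
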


From a partial open book decomposition $\OO = (S,P,h)$ of $(Y,\Gamma)$, we can construct a Heegaard diagram $\HH_{\OO}$ of $(-Y,-\Gamma)$ as follows. First, we define the Heegaard surface $\Sigma \subset Y$ to be 
\[
  \Sigma := -S \times \{-1\} \cup P \times \{0\} / \sim
\]
where $(\partial S \cap \partial P) \times \{-1\}$ is identified with $(\partial S \cap \partial P) \times \{0\}$. Let $(a_1, \ldots, a_n)$ be the set of co-cores of $P_i \times \{0\}$ for $i = 1,\ldots,n$, and $(b_1,\ldots,b_n)$ the set of properly embedded arcs on $P \times \{0\}$ where $b_i$ is obtained from $a_i$ by a small perturbation such that   
\begin{enumerate}
  \item the endpoints of $a_i$ are isotoped along $\partial S$ in the direction given by the boundary orientation of $S$.
  \item $a_i$ intersects $b_i$ transversely and positively in one point $x_i$. 
\end{enumerate}
Now we define $\bm{\alpha} = (\alpha_1, \ldots, \alpha_n)$ and $\bm{\beta} = (\beta_1, \ldots, \beta_n)$ to be the sets of simple closed curves on $\Sigma$ such that
\begin{align*}
  \alpha_i &= a_i \times \{0\} \cup a_i \times \{-1\},\\ 
  \beta_i &= b_i \times \{0\} \cup h(b_i) \times \{-1\}.
\end{align*}
Then $(\Sigma, \bm{\alpha}, \bm{\beta})$ is a Heegaard diagram of $(Y,-\Gamma)$. By switching the role of $\bm{\alpha}$ and $\bm{\beta}$, we obtain a Heegaard diagram $\HH_{\OO} = (\Sigma, \bm{\beta},\bm{\alpha})$ of $(-Y,-\Gamma)$. Let $\xi$ be the contact structure on $(Y,\Gamma)$ supported by $\OO = (S,P,h)$. Now we are ready to define the contact invariant of $\xi$. 

\begin{definition}[HKM contact invariant]
  Suppose $(Y,\Gamma,\xi)$, $\OO=(S,P,h)$, and $\HH_{\OO}$ are as above. Then the contact invariant of $(Y,\Gamma,\xi)$ associated to $\OO$ is defined to be a homology class in the sutured Floer homology:
  \[
    \EH(\xi,\HH_{\OO}) = [(x_1,\ldots,x_n)] \in \SFH(\HH_{\OO}).   
  \]
\end{definition}

Honda, Kazez, and Mati\'c \cite{HKM:suturedInvariant} proved that the contact invariant $\EH$ is well-defined. In particular, they showed that $\EH$ is invariant under the positive stabilization of the open book decomposition. We restate these using the naturality of Heegaard Floer homology \cite{JTZ:naturality}.

Juh\'asz, Thurston and Zemke \cite{JTZ:naturality} proved that the sutured Floer homology is a strong Heegaard invariant. That is, for a balanced sutured $3$-manifold $(Y,\Gamma)$ and its Heegaard diagrams $\HH$ and $\HH'$ there exists a sequence of Heegaard moves consisting of a sequence of handleslides and (de)stabilizations that sends $\HH$ to $\HH'$. Then this path induces a graded isomorphism of sutured Floer homology and it is independent of the choice of paths. We denote the isomorphism by 
\[
  F_{\HH\to\HH'}\colon \SFH(\HH) \to \SFH(\HH').
\]
Let $(Y,\Gamma,\xi)$ be a sutured contact $3$-manifold and $\OO=(S,P,h)$, $\OO' = (S',P',h')$ two partial open book decompositions of $(Y,\Gamma)$ that support $\xi$. Let $\HH_{\OO}$ and $\HH_{\OO'}$ be the Heegaard diagrams induced from the open book $\OO$ and $\OO'$. Then according to \cite{Juhasz:sutured}, there are Heegaard moves from $\HH_{\OO}$ to $\HH_{\OO'}$. Then by the naturality of Heegaard Floer homology \cite{JTZ:naturality}, we can restate the invariance of $\EH$ \cite{HKM:suturedInvariant} as   
\begin{align*}
  \EH(\xi,\HH_{\OO'}) = F_{\HH_{\OO}\to \HH_{\OO'}}(\EH(\xi,\HH_{\OO}))
\end{align*}

Now let $\HH$ be another Heegaard diagram for $(Y,\Gamma)$. Then we can define $\EH$ for this Heegaard diagram by 
\begin{align*}
  \EH(\xi,\HH) := F_{\HH_{\OO}\to \HH}(\EH(\xi,\HH_{\OO}))
\end{align*}
Now we have well-defined $\EH(\xi,\HH)$ for any Heegaard diagram $\HH$ of $(Y,\Gamma)$ and indeed obtain well-defined $\EH(\xi) \in \SFH(-Y,-\Gamma)$.

\subsection{The HKM gluing map}
Honda, Kazez and Mati\'c \cite{HKM:suturedTQFT}  constructed a map $\Phi_{\xi}$, which we call the \emph{HKM map}. Let $(Y,\Gamma)$ and $(Y',\Gamma')$ be balanced sutured $3$-manifolds such that $Y' \subset \interior(Y)$. Let $Z = Y \setminus \interior(Y')$ and assume that there is no \emph{isolated component}, a connected component of $Z$ that does not contain any component of $\partial Y$. For a contact structure $\xi$ on $(Z, \Gamma \sqcup \Gamma')$, there exists an HKM map associated to the contact structure $\xi$
\[
  \Phi_{\xi} \colon \SFH(-Y',-\Gamma') \to \SFH(-Y, -\Gamma)
\]
with the following property: If $\eta$ is a contact structure on $(Y',\Gamma')$, then 
\[
  \Phi_{\xi}(\EH(\eta)) = \EH(\eta \cup \xi).
\]

Honda, Kazez and Mati\'c also defined a gluing map \cite[Theorem~1.3]{HKM:suturedTQFT}. Let $(X,\Gamma_X)$, $(Y,\Gamma_Y)$ and $(Z,\Gamma_Z)$ be compact oriented sutured $3$-manifolds. Suppose there is a surface with dividing curves $(F,\Gamma_F)$ such that there are embeddings $(F,\Gamma_F) \hookrightarrow (X,\Gamma_X)$ and $(-F,-\Gamma_F) \hookrightarrow (Z,\Gamma_Z)$. Assume that $(Y,\Gamma_Y)$ can be obtained by gluing $(X,\Gamma_X)$ and $(Z,\Gamma_Z)$ together along $F$: 
\[
  (Y,\Gamma_Y) = (X,\Gamma_X) \Cup_F (Z,\Gamma_Z).
\]
then there exists an \emph{HKM gluing map} 
\[
  \Phi_F \colon \SFH(-X,-\Gamma_X) \otimes \SFH(-Z,-\Gamma_Z) \to \SFH(-Y, -\Gamma_Y).
\]
with the following property: Let $\xi_X$ and $\xi_Z$ be contact structure on $(X,\Gamma_X)$ and $(Z,\Gamma_Z)$, respectively. Then we have 
\[
  \Phi_F(\EH(\xi_X) \otimes \EH(\xi_Z)) = \EH(\xi_X \Cup \xi_Z).
\]

We recall a combinatorial description of HKM gluing map in terms of contact handle attachments by Juh\'asz and Zemke \cite{JZ:contact_handle}. We first review the definition of $3$-dimensional contact $k$-handles due to Giroux \cite{Giroux:convex}; see also \"{O}zba\u{g}c\i~\cite{Ozbagci:handle}. 

\begin{definition}
  For $0 \leq k \leq 3$, a $3$-dimensional \emph{contact $k$-handle} is a sutured contact $3$-manifold  $(H,\Gamma,\xi)$ where $H = D^k \times D^{3-k}$ and $\xi$ is tight. For $k=0,3$, the dividing set $\Gamma$ consists of a single closed curve on $\partial H$. For $k = 1,2$, the dividing set $\Gamma$ consists of one properly embedded arc on each $\partial D^1 \times D^2$ and two properly embedded arcs on $D^1 \times \partial D^2$ that connects two boundary components of $D^1 \times \partial D^2$. 
\end{definition}

Next, we recall the contact handle maps due to Juh\'asz and Zemke \cite{JZ:contact_handle}; see also Leigon and Salmoiraghi \cite{LS:HKMequivZarev}. Since we assume  there are no isolated components, we will only focus on $1$- and $2$-handles.

Let $h^1$ be a contact $1$-handle and $(Y,\Gamma)$ a balanced sutured $3$-manifold. Denote by $(Y',\Gamma')$ the result of contact $1$-handle attachment. Let $\HH = (\Sigma,\bm{\alpha},\bm{\beta})$ be a Heegaard diagram of $(-Y,-\Gamma)$. Then the new Heegaard surface $\Sigma_1$ is obtained by attaching 2-dimensional 1-handle along the points on $\partial\Sigma$ where the attaching sphere of the $h^1$ is located. There are no new $\alpha$ and $\beta$ curves. Thus we obtain a new Heegaard diagram $\HH_1 = (\Sigma_1, \bm{\alpha}, \bm{\beta})$. Then the contact handle map for $h^1$ is defined to be   
\begin{align*}
  C_{h^1} \colon \SFH(\HH) &\to \SFH(\HH_1)\\
  C_{h^1}([\mathbf{x}]) &= [\mathbf{x}]
\end{align*}

Let $h^2$ be a contact $2$-handle and $(Y,\Gamma)$ a balanced sutured $3$-manifold. Denote by $(Y',\Gamma')$ the result of contact $2$-handle attachment. Let $\HH = (\Sigma,\bm{\alpha},\bm{\beta})$ be a Heegaard diagram of $(-Y,-\Gamma)$. The attaching sphere of $h^2$ intersects the dividing set twice in the points $p, q$. Let $\lambda$ be a curve on $Y$ that the attaching sphere is identified and $\lambda_{\pm} := \lambda \cap R_{\pm}(\Gamma)$. We can assume $\lambda_{\pm}$ lie on $\Sigma$ and do not intersect $\alpha$ and $\beta$. Now the surface $\Sigma_2$ is obtained by attaching a $2$-dimensional $1$-handle along $p$ and $q$. Let $c$ be the core of the 2-dimensional 1-handle and define $\alpha_0 := \lambda_+ \cup c$ and $\beta_0 = \lambda_- \cup c$. By perturbing $\alpha_0$ and $\beta_0$, they intersect transversely at one point $x_0$. We obtain a new Heegaard diagram $\HH_2 = (\Sigma_2, \bm{\alpha}\cup\{\alpha_0\}, \bm{\beta}\cup\{\beta_0\})$ for $(-Y_2,-\Gamma_2)$. Then the contact handle map for $h^2$ is defined to be
\begin{align*}
  C_{h^2} \colon \SFH(\HH) &\to \SFH(\HH_2)\\
  C_{h^2}([\mathbf{x}]) &= [\mathbf{x},x_0].
\end{align*}

Using these contact handle maps, Juh\'asz and Zemke \cite{JZ:contact_handle} decomposed the HKM gluing map; see also Leigon and Salmoiraghi \cite{LS:HKMequivZarev}.

\begin{theorem}[Juh\'asz--Zemke \cite{JZ:contact_handle}, Leigon--Salmoiraghi \cite{LS:HKMequivZarev}]\label{thm:digramGluing}
  Let $(Y,\Gamma)$ and $(Y',\Gamma')$ be balanced sutured $3$-manifolds such that $Y' \subset \interior(Y)$. Assume further that there are no isolated components in $Z = Y \setminus \interior(Y')$. For a contact structure $\xi$ on $(Z, \Gamma \sqcup \Gamma')$, suppose that $Z$ admits a contact handle decomposition $N \cup h_1 \cup h_2 \cdots \cup h_n$ where $N$ is a neighborhood of $\partial Y'$ such that $\xi|_N$ is an $I$-invariant neighborhood. Then the HKM map associated with the contact structure $\xi$ can be decomposed into 
  \[
    \Phi_{\xi} = C_{h_n} \circ \cdots \circ C_{h_1} \circ \Phi_{\xi|_N}.
  \] 
  Also, assume that $(Y,\Gamma_Y)$ can be obtained by gluing $(X,\Gamma_X)$ and $(Z,\Gamma_Z)$ together along a subsurface $F$. Suppose that $\xi_Z$ is a contact structure on $(Z,\Gamma_Z)$ and admits a contact handle decomposition $F\times[0,1] \cup h_1 \cup h_2 \cdots \cup h_n$. Then the HKM gluing map associated to $F$ can be decomposed into
  \[
    \Phi_F(\mathbf{x}, \EH(\xi_Z)) = C_{h_n} \circ \cdots \circ C_{h_1} \circ \Phi_F(\mathbf{x}, \EH(\xi|_{F\times I})).
  \]
\end{theorem}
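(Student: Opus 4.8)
The plan is to deduce both displayed formulas from two standard inputs, following Juh\'asz--Zemke \cite{JZ:contact_handle} and Leigon--Salmoiraghi \cite{LS:HKMequivZarev}: the functoriality of the HKM (gluing) map under stacking contact cobordisms, and the identification of the HKM map of a single contact $1$- or $2$-handle with the combinatorial maps $C_{h^1}$ and $C_{h^2}$ recalled above. Since the theorem already hypothesizes a contact handle decomposition $N \cup h_1 \cup \cdots \cup h_n$ (and, because there are no isolated components, this uses only contact $1$- and $2$-handles, no $0$- or $3$-handles), no existence statement is needed; the work is entirely to track the induced maps.

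First I would use that HKM's map is natural with respect to cutting a contact cobordism along a separating convex surface: if the cobordism $(Z,\xi)$ decomposes as $(Z',\xi') \cup (Z'',\xi'')$, then $\Phi_\xi = \Phi_{\xi''}\circ\Phi_{\xi'}$. Applying this to the given decomposition, peeling off one handle at a time, gives $\Phi_\xi = \Phi_{h_n}\circ\cdots\circ\Phi_{h_1}\circ\Phi_{\xi|_N}$, where $\Phi_{h_k}$ denotes the HKM map of the one-handle cobordism $h_k$. It then remains to show $\Phi_{h_k} = C_{h_k}$. For this I would fix a partial open book for the piece built so far and extend it across the handle, as in \cite{JZ:contact_handle}: attaching a contact $1$-handle corresponds to adding a $2$-dimensional $1$-handle to the Heegaard surface with no new $\bm{\alpha}$ or $\bm{\beta}$ curves, under which the contact generator is unchanged, so $\Phi_{h^1} = C_{h^1}$; attaching a contact $2$-handle corresponds to adding a $2$-dimensional $1$-handle together with the curves $\alpha_0 = \lambda_+ \cup c$ and $\beta_0 = \lambda_- \cup c$ meeting transversely at $x_0$, under which the contact generator acquires the coordinate $x_0$, so $\Phi_{h^2} = C_{h^2}$. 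Composing yields the first formula. The second formula is the same argument with $X$ in place of $Y'$ and the collar $F\times[0,1]$ in place of $N$, once one recalls that by the definition of the HKM gluing map \cite{HKM:suturedTQFT} the map $\Phi_F(\mathbf{x}, \EH(\xi_Z))$ is precisely the corresponding stacked composite applied to $\mathbf{x}\in\SFH(-X,-\Gamma_X)$.

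The main obstacle is the step $\Phi_{h_k} = C_{h_k}$: one must check that HKM's open-book definition of the contact handle map agrees with the combinatorial handle map on all of sutured Floer homology, not merely on the image of $\EH$. This requires a careful bookkeeping of how the partial open book, and hence its Heegaard diagram, changes under a single contact handle attachment, and of the image of the contact generator $(x_1,\ldots,x_n)$; it is exactly the content of \cite{JZ:contact_handle}, with \cite{LS:HKMequivZarev} providing the cross-check against Zarev's formalism. A secondary subtlety is that $\Phi_\xi$ is a priori characterized only on classes of the form $\EH(\eta)$, so the stacking functoriality invoked above ultimately relies on HKM's independence results for the auxiliary partial open book extension.
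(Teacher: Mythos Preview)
The paper does not give its own proof of this theorem: it is stated as a result of Juh\'asz--Zemke \cite{JZ:contact_handle} and Leigon--Salmoiraghi \cite{LS:HKMequivZarev} and used as a black box, with no accompanying \texttt{proof} environment. There is therefore nothing in the paper to compare your argument against. Your sketch is a faithful outline of the approach in those references---stacking functoriality of the HKM map together with the identification of the single-handle HKM map with the combinatorial $C_{h^k}$---and correctly isolates the nontrivial step as the equality $\Phi_{h_k}=C_{h_k}$, which is precisely the content of \cite{JZ:contact_handle}.
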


\subsection{Bordered Sutured Floer homology}
We briefly recall the definitions and properties of bordered sutured Floer homology. We recommend that readers refer to \cite{LOT:bordered,Zarev:borderedSutured} for more details.

\begin{definition}
  An arc diagram of rank $k$ is a triple $\Z = (\mathbf{Z},\mathbf{a},M)$ such that 
  \begin{itemize}
    \item $\mathbf{Z}$ is a finite collection of oriented arcs,
    \item $\mathbf{a} = \{a_1,\ldots,a_{2k}\}\subset \mathbf{Z}$ is a collection of points,
    \item $M\colon \mathbf{a} \to \{1,\ldots,k\}$ is a $2$ to $1$ matching,  
    \item $\Z$ is either $\alpha$ or $\beta$ type, and
    \item The result of surgeries on all $M^{-1}(i)$ in $\mathbf{Z}$ has no closed components.
  \end{itemize}
\end{definition}

For a rank $k$ arc diagram $\Z$, we can construct a graph $G(\Z)$ by attaching arcs $e_i$ at the pair $M^{-1}(i) \in \Z$ for $i = 1,\ldots,k$.

Let $\Z$ be an $\alpha$ type arc diagram and $\F(\Z) = (F(\Z),\Lambda(\Z))$ a sutured surface constructed from $\mathbf{Z} \times [0,1]$ by attaching $1$-handles along $M^{-1}(i) \times \{0\}$ for $i=1,\ldots,k$. The sutures are $\Lambda({\Z}) = \partial Z \times \{1/2\}$ with the positive region $S_+(\Lambda)$ being the region containing $\mathbf{Z} \times \{1\}$. We say \emph{$\F(\Z)$ is parametrized by $\Z$}. When $\Z$ is of $\beta$ type, everything is the same but we attach the $1$-handles to $\mathbf{Z}\times\{1\}$. To each arc diagram Z, we can associate a dg algebra $\A(\Z)$, defined combinatorially in terms of chords in the diagram. 

\begin{definition}
    A \emph{bordered sutured manifold} $\Y = (Y, \Gamma, \F)$ is a partially sutured $3$-manifold where $\F$ is parameterized by an arc diagram $\Z$. 
\end{definition}

$\Y$ can be considered as a cobordism from $\F(-\Z_1)$ to $\F(\Z_2)$

\begin{definition}
  A \emph{bordered sutured Heegaard diagram}. $\HH = (\Sigma, \bm{\alpha}, \bm{\beta}, \Z)$ consists of 
  \begin{itemize}
    \item $\Sigma$ is a compact oriented surface without a closed component
    \item $\bm{\alpha} = \bm{\alpha}^a \cup \bm{\alpha}^c$ is a collection of pairwise disjoint curves in $\Sigma$ where $\bm{\alpha}^a$ consists of properly embedded arcs and $\bm{\alpha}^c$ consists of simple closed curves.  
    \item $\bm{\beta}$ is a collection of pairwise disjoint simple closed curves in $\Sigma$.
    \item There exists an embedding $G(\Z) \hookrightarrow \Sigma$ where $\mathbf{Z}$ is sent into $\partial \Sigma$ and $e_i \mapsto \alpha_i^c$.
  \end{itemize}
\end{definition}

Given a bordered sutured Heegaard diagram $\HH = (\Sigma, \bm{\alpha}, \bm{\beta}, \Z)$, we can construct a bordered sutured manifold $(Y,\Gamma,\Z)$ as follows. We first obtain $Y$ from $\Sigma \times [0,1]$ by attaching $2$- handles to $\bm{\beta} \times \{1\}$ and $\bm{\alpha}^c \times \{0\}$. Then we construct $\Gamma = (\partial \Sigma \setminus \mathbf{Z}) \times \{1/2\}$. $F(\Z)$ is a neighborhood of $\mathbf{Z} \times [0,1] \cup \bm{\alpha}^c \times \{0\}$.            
To a bordered sutured Heegaard diagram $\HH$ for $\Y=(Y,\Gamma,\Z)$, Zarev associates a right type-$A$ $\Aoo$-module $\BSA(\HH)_{\A(\Z)}$ and a left type-$D$ module $\leftindex[I]^{\A(\Z)}{\BSD(\HH)}$. We write $\BSA(\Y)$ and $\BSD(\Y)$ as a homotopy equivalence class of ${\BSA(\HH)}$ and ${\BSD(\HH)}$, repsectively. 

For $\Y = (Y, \Gamma, \Z_1 \sqcup \Z_2)$, we can also define bordered bimodules 
\begin{align*}
  &\leftindex[I]_{\A(\Z_1)}\BSAA(\Y)_{\A(\Z_2)}\quad \leftindex[I]^{\A(\Z_1)}\BSDD(\Y)^{\A(\Z_2)}\\ &\leftindex[I]^{\A(\Z_1)}\BSDA(\Y)_{\A(\Z_2)}\quad \leftindex[I]_{\A(\Z_1)}\BSAD(\Y)^{\A(\Z_2)} 
\end{align*}

\begin{theorem}[Zarev \cite{Zarev:borderedSutured}]
  \begin{align*}
    \BSA(\Y_1) \boxtimes \BSD(\Y_2) &= \SFC(\Y_1\cup\Y_2)\\
    \BSAA(\Y_1) \boxtimes \BSDD(\Y_2) &= \BSAD(\Y_1 \cup \Y_2).
  \end{align*} 
   Any combination of bimodules for $\Y_1$ and $\Y_2$ can be used, where one is type–$A$ for $\A(\Z)$, and the other is type–$D$ for $A(\Z)$.
\end{theorem}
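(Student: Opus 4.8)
The plan is to follow the strategy of the pairing theorem in bordered Floer homology \cite{LOT:bordered}, adapted to the bordered sutured setting of \cite{Zarev:borderedSutured}. First I would choose bordered sutured Heegaard diagrams $\HH_1 = (\Sigma_1, \bm{\alpha}_1, \bm{\beta}_1, \Z)$ for $\Y_1$ and $\HH_2 = (\Sigma_2, \bm{\alpha}_2, \bm{\beta}_2, -\Z)$ for $\Y_2$ whose parametrized boundaries are identified (with opposite orientation) along the arc diagram $\Z$, arranging provincial admissibility of both and admissibility of the glued diagram. Gluing $\Sigma_1$ and $\Sigma_2$ along the portion of their boundaries carrying $\mathbf{Z}$ produces a sutured Heegaard diagram $\HH = \HH_1 \cup \HH_2$ for $\Y_1 \cup \Y_2$; its generators are pairs $(\mathbf{x}_1,\mathbf{x}_2)$ of generators of $\HH_1$ and $\HH_2$ with complementary idempotents, which are exactly the generators of the box tensor product $\BSA(\HH_1) \boxtimes \BSD(\HH_2)$.

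The heart of the argument is to identify the differential on $\SFC(\HH)$ with the differential on $\BSA(\HH_1) \boxtimes \BSD(\HH_2)$, namely $\partial(\mathbf{x}_1 \otimes \mathbf{x}_2) = \sum_{k} (m_{k+1} \otimes \mathrm{id})\bigl(\mathbf{x}_1 \otimes \delta^k(\mathbf{x}_2)\bigr)$. To do this I would introduce a family of almost complex structures that stretch the neck of $\Sigma$ along the curve where $\Sigma_1$ and $\Sigma_2$ were glued. Gromov compactness shows that, once the neck is sufficiently long, every rigid holomorphic curve in $\HH$ contributing to $\partial$ degenerates into a holomorphic curve in $\Sigma_1 \times [0,1] \times \R$ carrying a sequence of Reeb chords at its east puncture together with a holomorphic curve in $\Sigma_2 \times [0,1] \times \R$ carrying the complementary Reeb chords at its west puncture; these are precisely the data counted by the $m_{k+1}$ operations on the type-$A$ side and by the iterated coefficient map $\delta^k$ on the type-$D$ side. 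Conversely, a gluing theorem for holomorphic curves with matched Reeb asymptotics produces, from each such matched pair, an honest rigid curve in the stretched diagram, so the two moduli counts agree. Finiteness of the sum over $k$ follows from provincial admissibility of $\HH_2$ (equivalently, from boundedness of $\BSD(\HH_2)$), and independence of all the choices then follows from invariance of $\BSA$ and $\BSD$ up to homotopy equivalence together with the fact that $\boxtimes$ descends to homotopy equivalence classes.

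For the bimodule statements one runs the identical argument while holding the remaining boundary component of each $\Y_i$ fixed throughout: the neck-stretching and curve-gluing analysis involves only the glued boundary parametrized by $\Z$, so it yields the analogous identification $\BSAA(\HH_1) \boxtimes \BSDD(\HH_2) \cong \BSAD(\HH_1 \cup \HH_2)$, and likewise for any pairing of a type-$A$ factor with a type-$D$ factor over $\A(\Z)$. The main obstacle is the analytic input behind the moduli-space correspondence: establishing transversality for the stretched-neck moduli spaces, proving the compactness statement that controls boundary degenerations and the bubbling of Reeb chords, and proving the gluing theorem that reconstructs curves in $\HH$ from matched pairs. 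This is exactly the machinery developed by Lipshitz--Ozsv\'ath--Thurston and extended by Zarev, and I would invoke it rather than reprove it, treating the combinatorial verification that the resulting count matches the definition of $\boxtimes$ as the only part requiring explicit checking.
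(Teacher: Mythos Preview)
The paper does not prove this statement at all: it is stated as a background result and attributed to Zarev \cite{Zarev:borderedSutured} without argument. Your outline is a reasonable sketch of the proof Zarev gives (which in turn follows the Lipshitz--Ozsv\'ath--Thurston pairing theorem), so there is nothing to compare on the paper's side.
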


Let $\Y = (Y,\Gamma,\F)$ be a bordered sutured manifold. If $\Gamma_D$ is a dividing set with a single suture on a disk $D$ then $\BSA(\Y)$ and $\BSD(\Y)$ can be identified with $\CFA(\Y)$ and $\CFD(\Y)$, respectively.

\subsection{The Zarev gluing map}\label{subsec:gluingZarev}
Zarev \cite{Zarev:joining} defined a \dfn{gluing map} for two sutured manifolds using bordered sutured Floer homology. Let $(Y_1,\Gamma_1)$ and $(Y_2,\Gamma_2)$ be sutured $3$-manifolds, and $F$ and $-F$ are subsurfaces of $Y_1$ and $Y_2$, respectively. Let $\C_1$ and $\C_2$ be the caps associated to $F_1$ and $F_2$, respectively. Recall that the caps embed in $Y_1$ and $Y_2$, and we obtain the following partially sutured manifolds. $\Y_1 = Y_1 \setminus \C_1$ and $\Y_2 = Y_2 \setminus \C_2$. As discussed in Section~\ref{subsec:gluing}, we can glue the two sutured manifolds by inserting a twisting slice.
\[
  (Y_1,\Gamma_1) \Cup_F (Y_2,\Gamma_2) = \Y_1 \cup \TW^{\pm}_{\F} \cup \Y_2
\]
Let $U = \BSA(\Y_1)$ $V=\BSA(\Y_2)$ $M=\BSD(\C_1)$ $A= \BSDD(\TW^+)$. Then we have $\SFC(Y_1,\Gamma_1) = U \boxtimes M$ $\SFC(\Gamma_2) = M^{\vee} \boxtimes V$ and $\SFC(Y_1 \Cup Y_2) = U \boxtimes A  \boxtimes V$ Under this identification, Zarev defined the gluing map $\Psi_F$ as follows:
\begin{align}
  \nonumber \Psi_F\colon \SFC(\Y_1) \otimes \SFC(\Y_2) &\to \SFC(\Y_1 \Cup_F \Y_2)\\
  \Psi (u\boxtimes m, m^{\vee} \boxtimes v) &= u \boxtimes \iota^{\vee} \boxtimes v \label{eq:ZarevJoin}
\end{align}

Zarev also showed that his gluing map is equivalent to $\Aoo$-operation $m_2$. 

\begin{theorem}\label{thm:m2}
  Let $\Y = (Y,\Gamma,\F)$ be a bordered sutured manifold. Then under the identification in Theorem~\ref{thm:CFA=SFC}, we have
  \[
    \Psi_F(x\boxtimes m,a) = m_2(x,a).
  \]  
  In case of $\Y$ has two boundary components, a similar statement holds for $m_{1|1|0}$ and $m_{0|1|1}$. 
\end{theorem}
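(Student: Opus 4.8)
The plan is to unwind the definition of Zarev's gluing map $\Psi_F$ from (\ref{eq:ZarevJoin}) and match it directly with the $\Aoo$-module operation $m_2$ on $\BSA(\Y)$. The conceptual point is that both maps are governed by the same piece of bordered sutured topology: gluing onto the $\F$-side of $\Y$ a standard product region $\Sigma\times[0,1]$ whose bordered sutured invariant realizes the algebra $\A(\Z)$, via the identifications in (\ref{eq:iso}). Zarev's gluing map is, by construction, pairing with the canonical duality element $\iota^{\vee}\in\BSDD(\TW^+_{\F})$; the module structure map $m_2$ is, by definition of the type-$A$ module, the count of holomorphic curves in a diagram for $\Y$ asymptotic to the Reeb chords representing the algebra element $a$. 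So the statement amounts to showing that these two a priori different descriptions of ``attach a product region'' coincide.

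First I would make precise the correspondence $a\mapsto$ (bordered sutured product region): an algebra element $a\in\A(\Z)$ with $I\to J$ idempotents determines the partially sutured manifold $(\Sigma\times[0,1],\Gamma_{I\to J})$, whose $\BSDD$ (resp.\ $\BSAA$) bimodule is the twisting slice bimodule up to the parametrization shuffle, and a cycle $a$ picks out a class there; gluing this region onto $\Y$ and capping recovers $\Y$. Next I would use Zarev's pairing theorem to rewrite $\Psi_F(x\boxtimes m,a)$ as an iterated box tensor product $x\boxtimes\iota^{\vee}\boxtimes(\text{class of }a)$, and then collapse it using the computation that $\BSAA(\TW^-_{\F})\boxtimes\BSDD(\TW^+_{\F})$ is homotopy equivalent to the identity $\A(\Z)$--$\A(\Z)$ bimodule $\BSAD$ of the trivial product --- this is exactly where the $1/n$-Dehn twist built into $\TW^{\pm}$ does its job, reconciling the interleaving of sutures produced by ``gluing'' with the untwisted ``pairing''. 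Under this collapse the iterated box tensor product becomes the image of $x\otimes a$ under the action $\A(\Z)\otimes\BSA(\Y)\to\BSA(\Y)$, i.e.\ $m_2(x,a)$. An alternative route, closer in spirit to the rest of the paper, is to pass through the equality of Zarev's gluing map with the HKM gluing map \cite{LS:HKMequivZarev} and then decompose the latter into contact handle maps as in Theorem~\ref{thm:digramGluing}, identifying each handle map with the relevant $\Aoo$ term in a bordered sutured Heegaard diagram.

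The hard part will be the bookkeeping: tracking the duality element $\iota^{\vee}$ through the successive box tensor products while keeping the idempotents and gradings straight, and verifying that no spurious higher $\Aoo$ terms survive the collapse --- which ultimately requires a concrete bordered sutured Heegaard diagram for $\TW^{\pm}_{\F}$ together with a (combinatorial or holomorphic) computation of its bimodule, or else a neck-stretching argument comparing curve counts in a diagram for $\Y$ carrying a boundary Reeb chord with curve counts in the concatenated diagram for the glued manifold. For the bimodule versions, where one asserts the analogous identity for $m_{1|1|0}$ and $m_{0|1|1}$ on $\BSAA$, $\BSDA$, and $\BSAD$, the second boundary component of $\Y$ is simply carried along as a spectator and the same argument goes through, with the box tensor products taken over the acted-upon algebra factor.
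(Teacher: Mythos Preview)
The paper does not give its own proof of this statement: Theorem~\ref{thm:m2} is stated in the preliminaries as a result of Zarev \cite{Zarev:joining}, without proof. Your first outlined route---collapse $\BSDD(\TW^+_{\F})\boxtimes\BSAA(\TW^-_{\F})$ to the identity bimodule and track $\iota^{\vee}\boxtimes a$ through to the action of $a$---is essentially Zarev's own argument, with the key technical input being \cite[Lemma~5.6]{Zarev:joining}; this is also precisely the mechanism the paper invokes when proving the closely related Theorem~\ref{thm:bypass}, so your proposal is on target.
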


\subsection{Elementary modules, caps, and twisting slices} \label{subsec:elementary}
Here, we review the bordered modules of elementary caps and twisting slices. For more details, see \cite{Zarev:joining}. We first recall the definition of \emph{elementary modules}. 

\begin{definition}
  A type-$A$ module $M_A$ is called \emph{elementary} if
  \begin{itemize}
    \item $M$ is generated by a single element $m$,
    \item all $\Aoo$-operations on $M$ vanish except for multiplication by an idempotent. 
  \end{itemize}

  A type-$D$ module $\leftindex[I]^A{M}$ is called \emph{elementary} if
  \begin{itemize}
    \item $M$ is generated by a single element $m$,
    \item $\delta(m) = 0$.
  \end{itemize}
\end{definition}

Let $\F$ be a surface parametrized by an arc diagram $\Z$ of rank $n$. For a subset $I \subset \{1,\ldots,n\}$, we define the \emph{elementary cap} $\C_I$ corresponding to $\F$ and $I$ to be the bordered sutured manifold $\C_I(\F) = (\F\times [0,1],\F\times\{0\},\Gamma_I),$ where the dividing curve $\Gamma_I$ is defined to be:
\[
  \Gamma_I=\partial\left(R_0 \cup \bigcup_{i\in I} \nu(e_i)\right)\setminus S_+
\]

Roughly, the dividing curves run ``parallel'' to the parametrizing arcs that are included in $I;$ see Figures \ref{fig:cap_construction} and \ref{fig:cap-torus} for examples.

\begin{figure}
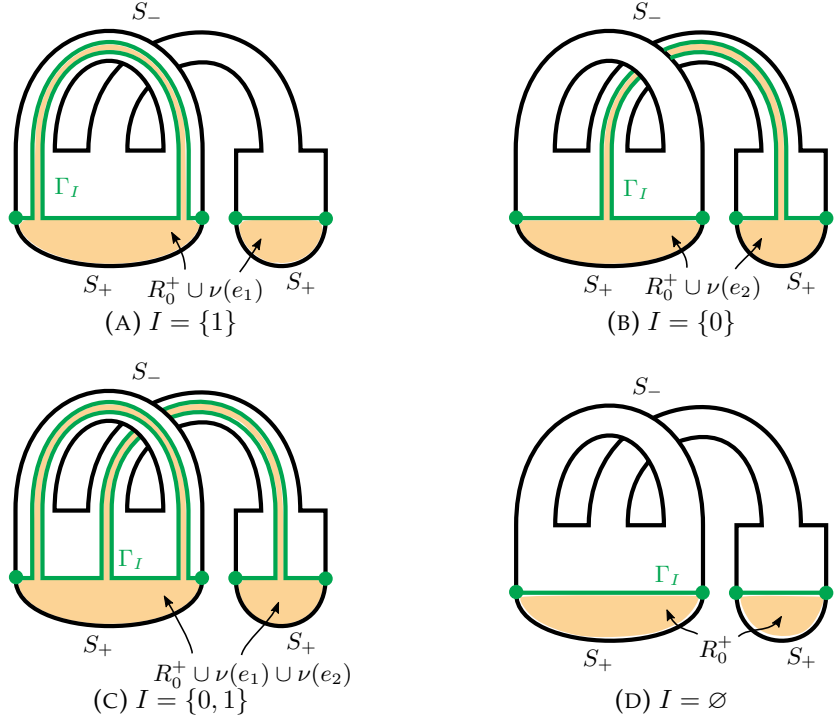

  \centering
  \footnotesize
  \begin{subfigure}{0.4\textwidth}
    \centering
    \import{figures}{Annulus_cap_sutures_1.pdf_tex}
    \caption{$I=\{1\}$}
  \end{subfigure}
  \hspace{.5cm}
  \begin{subfigure}{0.4\textwidth}
    \centering
    \import{figures}{Annulus_cap_sutures_2.pdf_tex}
    \caption{$I=\{0\}$}
  \end{subfigure}\\
  \vspace{0.3cm}
  \begin{subfigure}{0.4\textwidth}
    \centering
    \import{figures}{Annulus_cap_sutures_3.pdf_tex}
    \caption{$I=\{0,1\}$}
  \end{subfigure}
  \hspace{.5cm}
  \begin{subfigure}{0.4\textwidth}
    \centering
    \import{figures}{Annulus_cap_sutures_4.pdf_tex}
    \caption{$I=\varnothing$}
  \end{subfigure}
  \caption{Constructing the dividing curves for an annulus cap}
  \label{fig:cap_construction}
\end{figure}

\begin{figure}[htbp]{\scriptsize
  \begin{overpic}[tics=20]{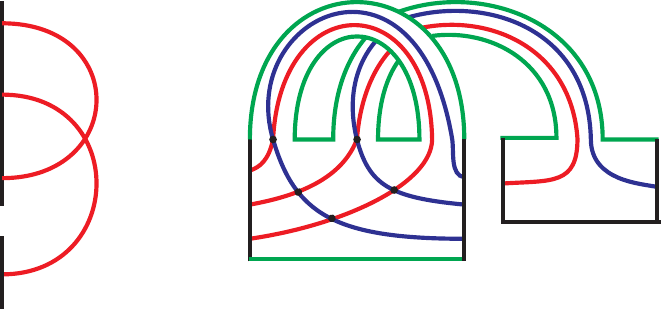}
  \end{overpic}}
  \caption{An arc diagram for an annulus and the corresponding $\overline{\AZ}$ diagram.}
  \label{fig:strandAnnulus}
\end{figure}

\begin{figure}
  \centering
  \includegraphics[scale=.8]{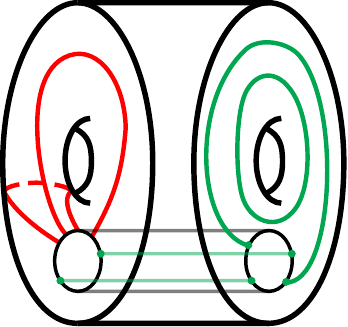}
  \caption{An elementary cap for the torus}
  \label{fig:cap-torus}
\end{figure}

An elementary cap $\C_I$ has a particulary simple bordered sutured Heegaard diagram $\HH_I =  (\Sigma,\boldsymbol{\alpha},\boldsymbol{\beta},\phi:\Z \hookrightarrow \partial \Sigma)$ characterized as follows:
\begin{itemize}
  \item $\boldsymbol{\alpha}$ consists entirely of $\alpha$ arcs such that $\nu(\phi(\Z) \cup \boldsymbol{\alpha})$ is an embedding of $\F$ into $\Sigma$
  \item for each $i\in I,$ there is exactly one $\beta$ circle intersecting $\alpha_i$ exactly once
  \item $\Sigma \setminus (\boldsymbol{\alpha}\cup \boldsymbol{\beta})$ is a disjoint union of disks, each of which contains exactly one component of $\partial \Sigma\setminus \phi(\Z)$
\end{itemize}

\begin{figure}
  \centering
  \begin{subfigure}{.4\textwidth}
    \centering
    \import{figures}{Torus_cap_1.pdf_tex}
    \caption{$I=\varnothing$}
  \end{subfigure}\hspace{.5cm}
  \begin{subfigure}{.4\textwidth}
    \centering
\begingroup%
  \makeatletter%
  \providecommand\color[2][]{%
    \errmessage{(Inkscape) Color is used for the text in Inkscape, but the package 'color.sty' is not loaded}%
    \renewcommand\color[2][]{}%
  }%
  \providecommand\transparent[1]{%
    \errmessage{(Inkscape) Transparency is used (non-zero) for the text in Inkscape, but the package 'transparent.sty' is not loaded}%
    \renewcommand\transparent[1]{}%
  }%
  \providecommand\rotatebox[2]{#2}%
  \newcommand*\fsize{\dimexpr\f@size pt\relax}%
  \newcommand*\lineheight[1]{\fontsize{\fsize}{#1\fsize}\selectfont}%
  \ifx\svgwidth\undefined%
    \setlength{\unitlength}{157.04502977bp}%
    \ifx\svgscale\undefined%
      \relax%
    \else%
      \setlength{\unitlength}{\unitlength * \real{\svgscale}}%
    \fi%
  \else%
    \setlength{\unitlength}{\svgwidth}%
  \fi%
  \global\let\svgwidth\undefined%
  \global\let\svgscale\undefined%
  \makeatother%
  \begin{picture}(1,0.39239705)%
    \lineheight{1}%
    \setlength\tabcolsep{0pt}%
    \put(0,0){\includegraphics[width=\unitlength,page=1]{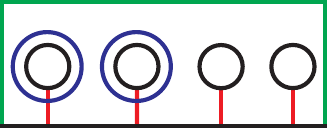}}%
    \put(0.1177901,0.16551058){\color[rgb]{0,0,0}\makebox(0,0)[lt]{\lineheight{1.25}\smash{\begin{tabular}[t]{l}$A$\end{tabular}}}}%
    \put(0.70384349,0.16551058){\color[rgb]{0,0,0}\reflectbox{\makebox(0,0)[lt]{\lineheight{1.25}\smash{\begin{tabular}[t]{l}$A$\end{tabular}}}}}%
    \put(0.39041601,0.16550475){\color[rgb]{0,0,0}\makebox(0,0)[lt]{\lineheight{1.25}\smash{\begin{tabular}[t]{l}$B$\end{tabular}}}}%
    \put(0.92353095,0.16550475){\color[rgb]{0,0,0}\reflectbox{\makebox(0,0)[lt]{\lineheight{1.25}\smash{\begin{tabular}[t]{l}$B$\end{tabular}}}}}%
  \end{picture}%
\endgroup%

    \caption{$I=\{0,1\}$}
  \end{subfigure}\\
  \begin{subfigure}{.4\textwidth}
    \centering
    \vspace{0.3cm}
\begingroup%
  \makeatletter%
  \providecommand\color[2][]{%
    \errmessage{(Inkscape) Color is used for the text in Inkscape, but the package 'color.sty' is not loaded}%
    \renewcommand\color[2][]{}%
  }%
  \providecommand\transparent[1]{%
    \errmessage{(Inkscape) Transparency is used (non-zero) for the text in Inkscape, but the package 'transparent.sty' is not loaded}%
    \renewcommand\transparent[1]{}%
  }%
  \providecommand\rotatebox[2]{#2}%
  \newcommand*\fsize{\dimexpr\f@size pt\relax}%
  \newcommand*\lineheight[1]{\fontsize{\fsize}{#1\fsize}\selectfont}%
  \ifx\svgwidth\undefined%
    \setlength{\unitlength}{99.00050606bp}%
    \ifx\svgscale\undefined%
      \relax%
    \else%
      \setlength{\unitlength}{\unitlength * \real{\svgscale}}%
    \fi%
  \else%
    \setlength{\unitlength}{\svgwidth}%
  \fi%
  \global\let\svgwidth\undefined%
  \global\let\svgscale\undefined%
  \makeatother%
  \begin{picture}(1,1.01387068)%
    \lineheight{1}%
    \setlength\tabcolsep{0pt}%
    \put(0,0){\includegraphics[width=\unitlength,page=1]{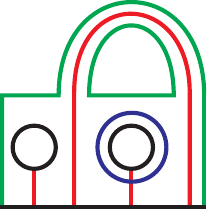}}%
    \put(0.12139559,0.26253516){\color[rgb]{0,0,0}\makebox(0,0)[lt]{\lineheight{1.25}\smash{\begin{tabular}[t]{l}$A$\end{tabular}}}}%
    \put(0.68164736,0.26254117){\color[rgb]{0,0,0}\reflectbox{\makebox(0,0)[lt]{\lineheight{1.25}\smash{\begin{tabular}[t]{l}$A$\end{tabular}}}}}%
  \end{picture}%
\endgroup%

    \caption{$I=\{0\}$}
  \end{subfigure}
  \hspace{.5cm}
  \begin{subfigure}{.42\textwidth}
    \centering
\begingroup%
  \makeatletter%
  \providecommand\color[2][]{%
    \errmessage{(Inkscape) Color is used for the text in Inkscape, but the package 'color.sty' is not loaded}%
    \renewcommand\color[2][]{}%
  }%
  \providecommand\transparent[1]{%
    \errmessage{(Inkscape) Transparency is used (non-zero) for the text in Inkscape, but the package 'transparent.sty' is not loaded}%
    \renewcommand\transparent[1]{}%
  }%
  \providecommand\rotatebox[2]{#2}%
  \newcommand*\fsize{\dimexpr\f@size pt\relax}%
  \newcommand*\lineheight[1]{\fontsize{\fsize}{#1\fsize}\selectfont}%
  \ifx\svgwidth\undefined%
    \setlength{\unitlength}{99.00100348bp}%
    \ifx\svgscale\undefined%
      \relax%
    \else%
      \setlength{\unitlength}{\unitlength * \real{\svgscale}}%
    \fi%
  \else%
    \setlength{\unitlength}{\svgwidth}%
  \fi%
  \global\let\svgwidth\undefined%
  \global\let\svgscale\undefined%
  \makeatother%
  \begin{picture}(1,1.01386559)%
    \lineheight{1}%
    \setlength\tabcolsep{0pt}%
    \put(0,0){\includegraphics[width=\unitlength,page=1]{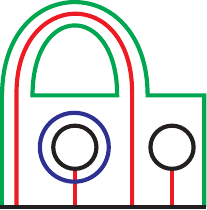}}%
    \put(0.31833986,0.26253964){\color[rgb]{0,0,0}\makebox(0,0)[lt]{\lineheight{1.25}\smash{\begin{tabular}[t]{l}$B$\end{tabular}}}}%
    \put(0.87870349,0.26253964){\color[rgb]{0,0,0}\reflectbox{\makebox(0,0)[lt]{\lineheight{1.25}\smash{\begin{tabular}[t]{l}$B$\end{tabular}}}}}%
  \end{picture}%
\endgroup%

    \caption{$I=\{1\}$}
    \label{fig:caps.d}
  \end{subfigure}
  \caption{Heegaard diagram for elementary caps corresponding to the torus}
  \label{fig:caps}
\end{figure}

\begin{proposition}
  Let $\C_I$ be an elementary cap associated to the elementary dividing set $\Gamma_I$. Then there exists a Heegaard diagram $\HH_I$ such that 
  \begin{itemize}
    \item $\BSA(\HH_I)$ is an elementary typa-$A$ module for $\iota_{I^c}$
    \item $\BSD(\HH_I)$ is an elementary typa-$D$ module for $\iota_{I}$
  \end{itemize}
\end{proposition}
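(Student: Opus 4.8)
The plan is to establish both statements directly from the explicit Heegaard diagram $\HH_I$ described just above the proposition, by reading off the $\Aoo$-operations on $\BSA(\HH_I)$ and the coefficient maps on $\BSD(\HH_I)$ combinatorially. First I would observe that the third bullet in the characterization of $\HH_I$ — that $\Sigma \setminus (\boldsymbol{\alpha}\cup\boldsymbol{\beta})$ is a disjoint union of disks, each containing exactly one component of $\partial\Sigma\setminus\phi(\Z)$ — forces the generator set $\mathfrak{S}(\HH_I)$ to consist of a single generator $\mathbf{x}$: each $\beta$-circle (one for each $i\in I$) meets its unique $\alpha_i$-arc in exactly one point, and the remaining $\alpha$-arcs are unoccupied, so there is no combinatorial freedom in choosing intersection points. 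Hence $\BSA(\HH_I)$ and $\BSD(\HH_I)$ are each generated by a single element $\mathbf{x}$, matching the first condition in the definition of elementary module.

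Next I would identify the idempotent. The idempotent associated to $\mathbf{x}$ records which $\alpha$-arcs are \emph{not} occupied; since the only occupied arcs are the $\alpha_i$ for $i\in I$, the complementary set of unoccupied arcs is indexed by $I^c$, giving $\iota_{I^c}$ on the type-$A$ side, while the type-$D$ idempotent (which records the occupied arcs, dually) is $\iota_I$. This is a bookkeeping step but it is the point where the conventions of \cite{Zarev:borderedSutured} for idempotents of elementary caps must be invoked carefully; I would state it as a direct consequence of the construction of $\Gamma_I$ as $\partial(R_0 \cup \bigcup_{i\in I}\nu(e_i))\setminus S_+$ together with the correspondence $e_i\mapsto\alpha_i^c$.

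Then I would show all higher operations vanish. For the type-$D$ module this is the cleanest: a nonzero coefficient in $\delta^1(\mathbf{x})$ would require a nontrivial positive domain (a provincial or boundary-degenerate holomorphic curve) from $\mathbf{x}$ to $\mathbf{x}$ with appropriate asymptotics, but since every region of $\Sigma\setminus(\boldsymbol{\alpha}\cup\boldsymbol{\beta})$ is a disk meeting $\partial\Sigma$ in a single connected arc, there is no periodic domain and no nonnegative domain connecting $\mathbf{x}$ to itself other than the trivial one, so $\delta^1(\mathbf{x})=0$, which is precisely the condition for $\leftindex[I]^{\A(\Z)}{M}$ to be elementary. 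For the type-$A$ module, the same domain analysis shows that $m_{k}(\mathbf{x}, a_1,\ldots,a_{k-1})$ vanishes for all $k\ge 1$ except that $m_1(\mathbf{x})=0$ and the idempotent action $m_2(\mathbf{x},\iota_{I^c})=\mathbf{x}$ survives; any nonzero algebra input would demand a positive domain with Reeb chords along $\partial\Sigma$, again ruled out by the disk condition. The main obstacle is carrying out this domain/moduli analysis rigorously enough to exclude boundary-degenerate contributions — the subtlety being that in bordered sutured Floer homology one must also verify there are no unexpected curves contributing to the $\Aoo$-relations via the $\partial\Sigma$ boundary — but this is exactly the computation Zarev performs for elementary caps in \cite{Zarev:joining}, so I would cite that analysis and note that our $\HH_I$ is (isotopic to) his model diagram, thereby concluding that $\BSA(\HH_I)$ is elementary for $\iota_{I^c}$ and $\BSD(\HH_I)$ is elementary for $\iota_I$.
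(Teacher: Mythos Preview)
Your proposal is correct and is essentially the standard argument; note that the paper itself does not give a proof of this proposition --- it is stated in the preliminaries as a review of Zarev's result from \cite{Zarev:joining}, so there is no paper proof to compare against beyond the implicit citation. Your direct computation from the explicit diagram $\HH_I$ is exactly how one verifies it.

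One small sharpening: the reason all nontrivial domains vanish is cleaner than ``no periodic domain and no nonnegative domain other than the trivial one.'' The third bullet in the characterization of $\HH_I$ says that \emph{every} region of $\Sigma\setminus(\boldsymbol\alpha\cup\boldsymbol\beta)$ contains a component of $\partial\Sigma\setminus\phi(\Z)$, i.e., a piece of the sutured (non-bordered) boundary. In bordered sutured Floer homology, admissible domains must have multiplicity zero along the sutured boundary, so every region is forbidden and the only domain from $\mathbf{x}$ to $\mathbf{x}$ is the zero domain. This immediately kills $\delta^1$ on the type-$D$ side and all $m_k$ with nontrivial algebra inputs on the type-$A$ side, leaving only the idempotent action. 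Stating it this way avoids any need to invoke moduli-space regularity or worry about boundary degenerations.
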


Auroux \cite{Auroux:Fukaya} and Zarev \cite{Zarev:joining} constructed a special Heegaard diagram $\AZ$ for a twisting slice $\TW_{\F}^{+}$. See Figure~\ref{fig:strandAnnulus} and \ref{fig:DD_id} for examples.
We can also obtain the Heegaard diagram $\overline{\AZ}$ for $\TW_{\F}^-$ by interchanging the role of $\alpha$ and $\beta$ arcs. 

\begin{proposition}\label{prop:AZ}
  Let $\AZ$ be the Heegaard diagram for $\TW_{\F}^{+}$ and $\overline{\AZ}$ be the Heegaard diagram for $\TW_{\F}^{-}$. Then there exist canonical identifications as follows.
  \begin{align*}
    \BSAA(\AZ) = \A(\Z)^{\vee}, \quad \BSAA(\overline{\AZ}) =\A(\Z).
  \end{align*}
\end{proposition}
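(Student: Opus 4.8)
The plan is to follow the constructions of Auroux \cite{Auroux:Fukaya} and Zarev \cite{Zarev:joining} directly, and to verify that the bimodule structure maps of $\BSAA(\AZ)$ reproduce the multiplication on the strands algebra $\A(\Z)$. First I would pin down the precise shape of the diagram: $\AZ$ is a bordered sutured Heegaard diagram with two boundary components, parametrized by $-\Z$ and $\Z$, whose Heegaard surface is assembled from $\mathbf{Z}\times[0,1]$ together with one band attached along each matched pair $M^{-1}(i)$. The $\alpha$-arcs are the traces $\mathbf{a}\times[0,1]$ running from one boundary to the other, and the $\beta$-arcs are small pushoffs of the $\alpha$-arcs, arranged so that $\alpha_j$ and $\beta_j$ meet in exactly one point and so that the only nontrivial domains in the diagram are ``thin'' strips lying in the $\mathbf{Z}\times[0,1]$ piece. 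This is exactly the combinatorial (``nice'') model in which all holomorphic curve counts degenerate to counts of embedded domains.

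Next I would enumerate the generators. A generator of $\BSAA(\AZ)$ is a tuple of intersection points, one on each $\alpha$-arc that is used, and I would check that these tuples biject with the standard basis of $\A(\Z)$ given by strand diagrams $a_i(S,T,\phi)$: the set $S$ of occupied initial positions records the left idempotent, the set $T$ of terminal positions records the right idempotent, and the partial bijection $\phi$ records which $\beta$-pushoff each intersection point lies on. The two idempotent actions are then read off directly from the occupied $\alpha$- and $\beta$-arcs, matching the $\A(\Z)$--$\A(\Z)$ bimodule idempotent structure.

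The heart of the argument is the computation of the structure maps $m_{1|1|1}$ (the internal differential), $m_{1|1|0}$ (the left action) and $m_{0|1|1}$ (the right action), together with the vanishing of all higher $\Aoo$ operations. Because the diagram is of the combinatorial type above, each structure map is a mod $2$ count of embedded domains with prescribed Reeb chord asymptotics on the two boundaries, and each such domain decomposes as a disjoint union of rectangles in $\mathbf{Z}\times[0,1]$. Matching these rectangle configurations against the definition of the strands differential $\partial$ and of left/right multiplication in $\A(\Z)$, term by term, identifies the operations. The duality built into the $\AA$ pairing (i.e.\ the fact that $\BSAA(\AZ)$ is the ``dual diagonal'' bimodule of $\TW_\F^+$) then yields $\BSAA(\AZ)=\A(\Z)^{\vee}$, while interchanging the roles of the $\alpha$- and $\beta$-arcs converts $\AZ$ into $\overline{\AZ}$ (the diagram for $\TW_\F^-$) and removes the dualization, giving $\BSAA(\overline{\AZ})=\A(\Z)$.

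The main obstacle is precisely this last holomorphic-curve bookkeeping: one must show that no domain carrying more than one algebra input on a given side contributes, so that the bimodule really is the algebra with no exotic higher $\Aoo$ terms, and that the boundary degenerations produce exactly the idempotent truncations distinguishing $\A(\Z)$ from $\A(\Z)^{\vee}$. Auroux handles this through a Fukaya-categorical identification of the twisting-slice bimodule with the diagonal bimodule; alternatively one can run Zarev's direct count, using the niceness of $\AZ$ to reduce every relevant moduli space to an explicit Riemann mapping problem with a unique solution. I would cite one of these for the curve count and present in detail only the generator bijection and the low-order operation computations.
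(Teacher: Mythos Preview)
The paper does not actually give a proof of this proposition; it is stated as a known result and attributed to Auroux \cite{Auroux:Fukaya} and Zarev \cite{Zarev:joining} (see also Lipshitz--Ozsv\'ath--Thurston for the closed-boundary case). Your outline is in spirit exactly the argument carried out in those references: set up the explicit $\AZ$ diagram, put the generators in bijection with strand diagrams, and read the algebra multiplication off from the embedded rectangular domains.

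Two points of your description are inaccurate, however, and would cause trouble if you tried to carry them through literally. First, the $\beta$-arcs in $\AZ$ are \emph{not} small pushoffs of the $\alpha$-arcs meeting each in a single point. Rather, the $\alpha$-arcs are the parametrizing arcs for one boundary and the $\beta$-arcs for the other, arranged in a ``staircase'' or triangular pattern inside $\mathbf{Z}\times[0,1]$ so that $\alpha$- and $\beta$-arcs intersect precisely according to the partial order on Reeb chord positions; this is what puts the intersection points in bijection with strand diagrams (cf.\ Figure~\ref{fig:DD_id} for the torus, where there are eight intersections, not two). Second, your explanation of why $\AZ$ yields $\A(\Z)^{\vee}$ while $\overline{\AZ}$ yields $\A(\Z)$ is too vague: the distinction is not an abstract ``duality built into the $\AAA$ pairing'' but comes from which boundary the Reeb chords are read from and the direction in which domains propagate, so that the left and right actions on $\BSAA(\AZ)$ match $a^{\vee}\mapsto (ba)^{\vee}$ and $a^{\vee}\mapsto (ab)^{\vee}$ rather than left/right multiplication in $\A(\Z)$ itself. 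If you want to present a self-contained proof, you should correct these two points and then cite Zarev or Auroux for the nice-diagram curve count, as you suggest.
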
 

Zarev \cite{Zarev:joining} characterized bordered sutured Floer homology in terms of sutured Floer homology.

\begin{theorem}[Zarev \cite{Zarev:joining}]\label{thm:CFA=SFC}
  Let $\Y = (Y,\Gamma,\F)$ be a bordered sutured manifold where $\F$ is parametrized by $\Z$ of rank $n$. Then there is a decomposition 
  \[
    \BSA(\Y)_A \cong \bigoplus_{I\subset \{1,\ldots,n\}} \BSA(\Y) \cdot \iota_I  
  \]
  where 
  \[
    \BSA(\Y) \cdot \iota_I \cong \BSA(\Y)\boxtimes \BSD(\HH_I) \cong \SFC(C_{I}(\Y))
  \]
  A similar statement holds for $\BSAA$. In particular,
  \begin{align*}
    \BSAA(\AZ) \cong &\bigoplus_{I,J\subset \{1,\ldots,n\}} \BSD(-\HH_I) \boxtimes \BSAA(\AZ) \boxtimes \BSD(\HH_J)\\ \cong &\bigoplus_{I,J\subset \{1,\ldots,n\}} SFC(\Sigma \times [0,1], \Gamma_{I\to J}) 
  \end{align*}
\end{theorem}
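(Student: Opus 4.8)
The plan is to prove the three displayed isomorphisms in turn. The first is a purely formal splitting of an $\Aoo$-module over the idempotent ring of $\A(\Z)$; the second is the bordered sutured pairing theorem (stated earlier in this section) applied to gluing in an elementary cap; and the ``in particular'' statement for $\BSAA(\AZ)$ follows by running the same two steps for a bimodule together with Proposition~\ref{prop:AZ}.

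\emph{The idempotent splitting.} The idempotent ring $I(\Z) \subset \A(\Z)$ carries a complete system of mutually orthogonal idempotents $\{\iota_I : I \subset \{1,\ldots,n\}\}$. For any right $\Aoo$-module $M$ over $\A(\Z)$, the operators $x \mapsto m_2(x,\iota_I)$ are therefore orthogonal idempotents whose sum is the identity, so $M = \bigoplus_I M\iota_I$ as $\Aoo$-modules. Applying this with $M = \BSA(\Y)$ gives the first display.

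\emph{Identifying a summand.} Fix $I$. By the Proposition above characterizing the modules of elementary caps, the cap $\C_I$ has a Heegaard diagram $\HH_I$ with $\BSD(\HH_I)$ an elementary type-$D$ module for $\iota_I$: it has a single generator $m$ with $\iota_I m = m$ and $\delta^1(m)=0$. First I would check, directly from the definition of $\boxtimes$, that since $\delta^k(m)=0$ for all $k \ge 1$ there are no Reeb-chord terms to feed into the higher operations of $\BSA(\Y)$, so the differential on $\BSA(\Y)\boxtimes\BSD(\HH_I)$ reduces to the internal differential $m_1$ of $\BSA(\Y)$ restricted to the $\iota_I$-summand; hence $\BSA(\Y)\boxtimes\BSD(\HH_I) \cong \big(\BSA(\Y)\iota_I, m_1\big)$, the first ``$\cong$''. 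Next I would invoke the Zarev pairing theorem with $\Y_1 = \Y$ and $\Y_2 = \C_I$ to get $\BSA(\Y)\boxtimes\BSD(\C_I) \cong \SFC(\Y\cup\C_I)$, and then identify $\Y\cup\C_I$ with the sutured manifold $C_I(\Y)=(Y,\Gamma\cup\Gamma_I)$: pairing with the cap closes off the parametrized surface $\F$ and fills it with the elementary dividing set $\Gamma_I$. This gives the second ``$\cong$''.

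\emph{The bimodule case, and the main obstacle.} For a bimodule one decomposes over pairs $\iota_I,\iota_J$, one for each algebra, and extracts a summand by box-tensoring with $\BSD(\HH_I)$ on one side and $\BSD(\HH_J)$ on the other, exactly as above. Specializing to $\Y = \TW_{\F}^+$, using Proposition~\ref{prop:AZ} to write $\BSAA(\AZ) = \A(\Z)^\vee$, and applying the pairing theorem on each boundary component (with $\BSD(-\HH_I)$ on the orientation-reversed side) expresses each summand as $\SFC$ of the manifold obtained by capping the two ends of $\TW_{\F}^+$ with $\C_I$ and $\C_J$, which is $\Sigma\times[0,1]$ with dividing set $\Gamma_{I\to J}$. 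The formal parts are routine; the step I expect to require the most care is the geometric bookkeeping in identifying $\Y\cup\C_I$ with $C_I(\Y)$ and the doubly-capped $\TW_{\F}^+$ with $(\Sigma\times[0,1],\Gamma_{I\to J})$. This means tracking orientations carefully ($-\overline{\F}$ versus $\overline{\F}$, and the minus sign on $\HH_I$ in the bimodule formula) and verifying that the $1/n$-fractional Dehn twist built into $\TW_{\F}^\pm$ is exactly what cancels the twisting of the sutures created by sharpening and rounding corners under gluing. All of this is due to Zarev \cite{Zarev:joining}; the argument sketched here merely recollects his.
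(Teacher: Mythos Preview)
The paper does not give its own proof of this theorem: it is stated in the preliminaries as a result of Zarev, cited from \cite{Zarev:joining}, and used as a black box thereafter. Your sketch is correct and is the natural route (and, to the extent one can tell from the statement, presumably Zarev's): the idempotent splitting is formal; tensoring with the elementary type-$D$ module $\BSD(\HH_I)$ kills all higher inputs and isolates the $\iota_I$-summand with its $m_1$; and the pairing theorem then identifies this with $\SFC$ of the capped manifold. The bimodule case is the same argument done on both sides.

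One small remark: you do not actually need Proposition~\ref{prop:AZ} for the ``in particular'' display. The pairing theorem applied to $\TW_{\F}^+$ with caps $\BSD(-\HH_I)$ and $\BSD(\HH_J)$ already yields $\SFC(-\C_I \cup \TW_{\F}^+ \cup \C_J) = \SFC(\Sigma\times[0,1],\Gamma_{I\to J})$ directly; the identification $\BSAA(\AZ) \cong \A(\Z)^\vee$ is used elsewhere in the paper (to connect to the strand algebra and the contact category), not for this statement. Your flagged concern about orientation bookkeeping and the sign on $\HH_I$ is well-placed; note also the Remark following the contact-category isomorphism in the paper, which points out that the convention for $\Gamma_{I\to J}$ here (using $\TW^+$) differs slightly from that in \cite{Mathews:contactCategory} and \cite{Zarev:joining} (using $\TW^-$), so some care is indeed required when matching conventions.
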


\subsection{Solid tori and basic slices}\label{subsec:basicSlices}
Here, we recall how to construct a tight contact structure on a solid torus. To do so, we first review the Farey graph. We define the \emph{Farey sum} and \emph{Farey multiplication} of two rational numbers to be 
\begin{align*}
  \frac{a}{b} &\oplus \frac{c}{d} := \frac{a+c}{b+d},\\  
  \frac{a}{b} &\bigcdot \frac{c}{d} := ad - bc.
\end{align*}
Now consider the Poincar\'e disk in $\R^2$ equipped with the hyperbolic metric. Label the points $(0,1)$ by $0=0/1$ and $(0,-1)$ by $\infty=1/0$ and add a hyperbolic geodesic between the two points. Take the half circle with non-negative $x$ coordinate. Label any point half-way between two labeled points with the Farey sum of the two points and connect it to both points by a geodesic. Iterate this process until all the positive rational numbers are a label on some point on the half circle. Now do the same for the half circle with non-positive $x$ coordinate (for $\infty$, use the fraction $-1/0$). See Figure~\ref{fig:Farey}. We note that for two points on the Farey graph labeled $r$ and $s$, respectively, we have $|r \bigcdot s| = 1$ if and only if there is an edge between them. 

\begin{figure}[htbp]{\scriptsize
  \begin{overpic}[tics=20]{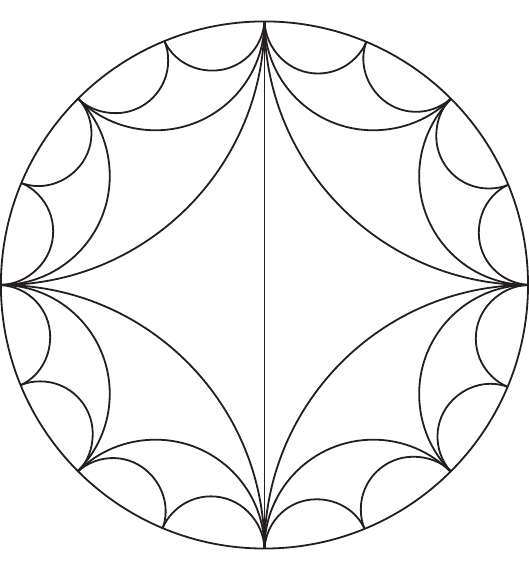}
    \put(123, 0){$\infty$}
    \put(125, 266){$0$}
    \put(-15, 132){$-1$}
    \put(257, 132){$1$}
    \put(20, 38){$-2$}
    \put(222, 38){$2$}
    \put(19, 232){$-1/2$}
    \put(218, 232){$1/2$}
    \put(60, 260){$-1/3$}
    \put(175, 260){$1/3$}
    \put(-13, 185){$-2/3$}
    \put(248, 185){$2/3$}
    \put(-17, 80){$-3/2$}
    \put(248, 80){$3/2$}
    \put(63, 8){$-3$}
    \put(175, 8){$3$}
  \end{overpic}}
  \caption{The Farey graph.}
  \label{fig:Farey}
\end{figure}

Given a rational number $q/p$ we denote by $(q/p)^c$ is the largest rational number $q'/p'$ satisfying $pq' - p'q = 1$ and by $(q/p)^a$ is the smallest rational number $q''/p''$ satisfying $pq'' - p''q = -1$. Notice that this means there is an edge from $q/p$ to $(q/p)^c$ and to $(q/p)^a$. One may also show that there is an edge between $(q/p)^c$ and $(q/p)^a$. 

A \dfn{path from $r$ to $s$} in the Farey graph is a series of rational numbers $\{r=s_0,s_1,\ldots,s_n=s\}$ moving clockwise from $r$ to $s$ so that $s_i$ and $s_{i+1}$ are connected by an edge in the Farey graph. We say a path $P = \{s_0,s_1,\ldots,s_n\}$ is \dfn{minimal} if $|s_i \bigcdot s_j| = 1$ if and only if $j = i\pm1$. Suppose $P$ is not minimal, that is, $|s_i \bigcdot s_{i+k}| = 1$ for some $k>1$. Then we can \dfn{shorten} the path by removing $s_{i+1}, \ldots, s_{i+k-1}$ and obtain a new path $P' = \{s_1,\ldots,s_i,s_{i+k},\ldots,s_n\}$. A \dfn{decorated path} is a path in the Farey graph such that each edge between two consecutive points is assigned the $+$, $-$, or $\circ$ sign. 

We will use decorated paths in the Farey graph to describe contact structures on $T^2 \times [0,1]$ and $S^1 \times D^2$. A \dfn{basic slice} $B_\pm(s,s')$ is defined to be a minimally twisting contact structure on $T^2 \times I$ with convex boundary with two dividing curves of slopes $s$ and $s'$, where $s'$ is clockwise of $s$ and $|s \bigcdot s'| = 1$. Here minimally twisting means that any convex torus in $B_\pm(s,s')$ that is smoothly isotopic to the boundary has dividing curves that are clockwise of $s$ and anti-clockwise of $s'$ in the Farey graph. There are two non-isotopic contact structures satisfying such conditions and they differ by their coorientation. We denote one by $B_+(s,s')$ and the other by $B_-(s,s')$ and call them a positive and negative basic slice, respectively. Since $|s \bigcdot s'| = 1$, there is an edge between $s$ and $s'$ in the Farey graph. Thus we can describe a basic slice $B_\pm(s,s')$ as a decorated path $(s_0=s,s_1=s')$, consisting of a single edge between $s$ and $s'$, and the sign of the edge is the sign of the basic slice.  

A path in the Farey graph is called a \emph{continued fraction block} if there is a change of basis so that it is $(0,1,\ldots,n)$, a minimal path from $0$ to $n \in \mathbb{N}$. A contact structure $\xi$ on $T^2\times [0,1]$ described by a continued fraction block of edges in the Farey graph will also be called a \dfn{continued fraction block}. Giroux \cite{Giroux:classification} and Honda \cite{Honda:classification1} showed that the signs in a continued fraction block can be \emph{shuffled}, i.e., the following two contact structures are isotopic
\[
    B_+(0,1) \cup B_-(1,2) = B_-(0,1) \cup B_+(1,2).
\]

We will now consider tight contact structures on a solid torus $S$ with boundary being convex with two dividing curves of slope $s$. We will denote such a contact structure by $S(s)$. Kanda~\cite{Kanda:torus} showed that there exists a unique tight contact structure on $S(n)$ up to isotopy fixing boundary if $n \in \mathbb{Z}$, i.e., the longitudinal dividing curves. Since there is an edge between $n$ and $\infty$ in the Farey graph, we can describe the tight contact structure on $S(n)$ as a decorated path $(s_0 = \infty, s_1 =n)$, consisting of a single edge between $\infty$ and $n$, and the sign of the edge is $\circ$.              

Next, we described any tight contact structure on $S(s)$ for any $s \in \mathbb{Q}$. Let $(\infty = s_0, \ldots, s_n = s)$ be the minimal path from $\infty$ to $s$ in the Farey graph. Then we can decompose a tight contact structure on $S(s)$ into 
\[
  S(s) = S(s_1) \cup B(s_1,s_2) \cup \cdots \cup B(s_{n-1},s_n).
\] 
Thus we can describe a tight contact structure on $S(s)$ using a decorated path by assigning $\circ$ to the edge $(s_0,s_1)$ and assigning $+$ or $-$ to all other edges $(s_i,s_{i+1})$ according to the sign of the basic slice $B(s_i,s_{i+1})$ for $1 \leq i \leq n-1$.  Giroux \cite{Giroux:classification} and Honda \cite{Honda:classification1} proved that any tight contact structure on $S(s)$ in this way. 

\begin{figure}[htbp]{
\vspace{0.3cm}
\begin{overpic}[tics=20]{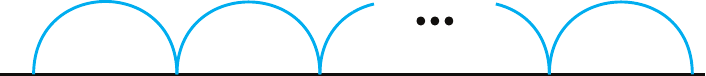}
  \put(47, 10){\LARGE$\circ$}
  \put(116, 10){\large$\pm$}
  \put(296, 10){\large$\pm$}
  \put(10, -10){$\infty$}
  \put(75, -12){$-1$}
  \put(140, -12){$-1/2$}
  \put(328, -12){$1/n$}
\end{overpic}}
\vspace{0.4cm}
\caption{A decorated path in the Farey graph for $S(1/n)$ when $n < 0$.}
\label{fig:decoratedPath1}
\end{figure}

\begin{figure}[htbp]{
  \vspace{0.3cm}
  \begin{overpic}[tics=20]{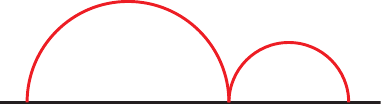}
    \put(57, 12){\LARGE$\circ$}
    \put(136, 10){\large$\pm$}
    \put(8, -10){$\infty$}
    \put(107, -12){$0$}
    \put(160, -12){$1/n$}
  \end{overpic}}
  \vspace{0.4cm}
  \caption{A decorated path in the Farey graph for $S(1/n)$ when $n > 1$.}
  \label{fig:decoratedPath2}
\end{figure}

\begin{theorem}[Giroux \cite{Giroux:classification}, Honda \cite{Honda:classification1}]\label{thm:solid-torus}
  Let $P$ be a decorated path from $-\infty$ to $s$ in the Farey graph and $\xi$ be the corresponding contact structure on $S(s)$. Then
  \begin{enumerate}
    \item if $P$ is minimal, then $\xi$ is tight,
    \item if $P$ is minimal and all edges have the same sign except for the first one which is decorated with $\circ$, then $\xi$ is universally tight, 
    \item if $P$ is minimal and contains both $+$ and $-$ signs, then $\xi$ is tight but virtually overtwisted,  
    \item if $P$ is not minimal and there are two adjacent edges $(s_{i-1},s_{i})$ and $(s_i,s_{i+1})$ with different signs that can be shortened ({\it i.e.\ } $|s_{i-1}\bigcdot s_{i+1}|=1$), then $\xi$ is overtwisted,
    \item two minimal decorated paths describe the same contact structure if the decorations differ only by shuffling in continued fraction blocks.
  \end{enumerate}
\end{theorem}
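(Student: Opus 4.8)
The plan is to reduce every clause to the convex-surface-theoretic analysis of $T^2\times I$ and $S^1\times D^2$ of Giroux and Honda, so in the paper we would simply cite \cite{Giroux:classification, Honda:classification1}; here is the shape of that argument. Fix the decomposition recorded by $P$,
\[
  \xi = S(s_1)\cup B(s_1,s_2)\cup\cdots\cup B(s_{n-1},s_n),
\]
where $S(s_1)$ carries the unique tight structure of Kanda (the first edge of $P$ satisfies $|{\infty}\bigcdot s_1|=1$, so $s_1\in\ZZ$) and each $B(s_i,s_{i+1})$ is the basic slice of the sign prescribed by $P$. Parts (1) and (4) are the two ``direct'' statements, and (2), (3), (5) are then deduced from them.

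For (1), the cleanest route is to realize $\xi$ inside a Stein fillable (hence tight) contact manifold: a minimal decorated path is exactly the combinatorial record of a Legendrian knot together with a sequence of positive/negative stabilizations, and $S(s)$ with the prescribed dividing data is diffeomorphic to the complement of a standard neighborhood of such a Legendrian; minimality is precisely the condition that no stabilization ``cancels'', so no overtwisted bypass is forced, and since a sub-slice of a tight manifold is tight, $\xi$ is tight. (Alternatively one runs the convex-torus argument of \cite{Honda:classification1} directly.) For (4), the statement is local to $B_{\pm}(s_{i-1},s_i)\cup B_{\mp}(s_i,s_{i+1})$ with $|s_{i-1}\bigcdot s_{i+1}|=1$: after a change of basis sending $s_{i-1}$ to $\infty$ and $s_i$ to $-1$ (say), minimal twisting fails, so one finds a convex torus in the interior whose dividing slope can be isotoped back to $s_{i-1}$; rounding the resulting configuration exhibits a homotopically trivial dividing curve, i.e.\ an overtwisted disk. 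This is exactly the failure mode that shuffling avoids, since inside a continued fraction block the two middle slopes are \emph{not} adjacent.

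Given these, (5) follows by applying the identity $B_+(0,1)\cup B_-(1,2)=B_-(0,1)\cup B_+(1,2)$ recorded before the theorem edge-by-edge within each maximal continued fraction block of $P$, so that a block of length $k$ depends only on its number of $+$ edges; this gives a normal form and hence well-definedness up to shuffling. For (3), if $P$ is minimal but mixes signs, pick a maximal continued fraction block containing a sign change and pass to the finite cyclic cover of $T^2$ that unwinds it: the lifted decorated path is non-minimal and contains a shortenable pair of edges of opposite sign, so by (4) the cover is overtwisted, i.e.\ $\xi$ is virtually overtwisted. For (2), the Kanda piece and every basic slice are universally tight, and a positive (resp.\ negative) basic slice pulls back under any finite cover of $T^2$ to a minimally twisting all-positive (resp.\ all-negative) structure; hence every finite cover of an all-same-sign $\xi$ is again described by an all-same-sign minimal path and is tight by (1), and passing to the limit over the tower of covers yields universal tightness.

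The main obstacle is (1) together with the virtual-overtwistedness half of (3): both genuinely require the Giroux–Honda apparatus — Legendrian realization, bypass attachment, and the behaviour of continued fraction blocks under finite covers — and a self-contained proof would essentially reprove \cite{Honda:classification1}. For a preliminaries section I would therefore cite \cite{Giroux:classification, Honda:classification1} and only make explicit the local overtwisted-disk model (4) and the normal form (5), since those are the parts invoked in the later sections.
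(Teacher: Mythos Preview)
The paper does not prove this theorem at all: it is stated in the preliminaries as a result of Giroux and Honda and is simply cited to \cite{Giroux:classification, Honda:classification1}, with no argument given. Your proposal correctly anticipates this (you say explicitly that ``in the paper we would simply cite''), and the sketch you supply is a reasonable outline of the Giroux--Honda convex-surface analysis; in that sense you have done strictly more than the paper does. There is nothing to compare on the paper's side beyond the citation itself.
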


From the above theorem, we derive a useful proposition for later usage.

\begin{proposition}\label{prop:tight-integer-framed}
  Let $n \in \mathbb{Z} \setminus \{0\}$. Then,  
  \begin{itemize}
    \item if $n>1$, there exist two tight contact structures on $S(1/n)$,
    \item if $n=1$, there exists a unique tight contact structure $S(1/n)$,
    \item if $n<0$, there exist $|n|$ distinct tight contact structures on $S(1/n)$.
  \end{itemize}
  Also, any tight contact structure on $S(1/n)$ has non-vanishing contact invariant $\EH$.
\end{proposition}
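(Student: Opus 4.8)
The plan is to read off the number of tight contact structures directly from Theorem~\ref{thm:solid-torus} by identifying the minimal decorated path from $\infty$ to $1/n$ in the Farey graph, and then to prove non‑vanishing of $\EH$ by realizing each of these contact structures inside a closed contact $3$‑manifold with non‑zero Ozsv\'ath--Szab\'o invariant and invoking functoriality of $\EH$ under the HKM gluing map.

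\emph{Counting.} If $n=\pm1$ then $|(1/0)\bigcdot(1/n)|=|n|=1$, so $\infty$ and $1/n$ are joined by an edge; the path consists of this single edge, decorated $\circ$, and carries a unique tight structure by the $n\in\ZZ$ case of Theorem~\ref{thm:solid-torus} (Kanda~\cite{Kanda:torus}). This is $|n|=1$ structure when $n=-1$. If $n>1$ there is no such edge (since $(1/0)\bigcdot(1/n)=n$), but $(\infty,0,1/n)$ is a minimal path, as in Figure~\ref{fig:decoratedPath2}: its first edge $(\infty,0)$ carries the $\circ$ decoration and $(0,1/n)$ is the unique remaining edge, which may be decorated $+$ or $-$; since there is only one non‑$\circ$ edge no shuffling is possible, so by parts (1) and (5) of Theorem~\ref{thm:solid-torus} we obtain exactly two tight structures. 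If $n<-1$, write $n=-m$; then $(\infty,-1,-1/2,\ldots,-1/m)$ is a minimal path from $\infty$ to $1/n=-1/m$, as in Figure~\ref{fig:decoratedPath1}: minimality follows because $(-1/j)\bigcdot(-1/k)=j-k$ and $(1/0)\bigcdot(-1/k)=k$, so non‑consecutive vertices never have Farey product $\pm1$. Its first edge carries the $\circ$ decoration, and the remaining $m-1$ edges form a single continued fraction block, since the change of basis $x\mapsto -1/x$ followed by $x\mapsto x-1$ sends $(-1,-1/2,\ldots,-1/m)$ to $(0,1,\ldots,m-1)$. By Theorem~\ref{thm:solid-torus}(5), two decorations of these $m-1$ edges give the same tight contact structure precisely when they have the same number of $+$ signs; as this number ranges over $\{0,1,\ldots,m-1\}$ we obtain exactly $m=|n|$ distinct tight contact structures, each tight by part (1).

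\emph{Non‑vanishing of $\EH$.} We use the following standard consequence of the naturality of $\EH$ under the HKM gluing map: if a sutured contact manifold $(Y,\Gamma,\xi)$ embeds in a closed contact $3$‑manifold $(M,\eta)$ with $c(M,\eta)\ne0$, and $M\setminus\interior(Y)$ has no isolated component, then $\EH(\xi)\ne0$. Indeed, $M\setminus\interior(Y)$ inherits a contact structure $\xi_Z$ with $\eta=\xi\cup\xi_Z$, and the associated HKM map $\Phi_{\xi_Z}\colon\SFH(-Y,-\Gamma)\to\HFhat(-M)$ is linear and sends $\EH(\xi)$ to $\EH(\eta)=c(M,\eta)\ne0$. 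It therefore suffices to embed each tight $S(1/n)$ into such an $(M,\eta)$ as a Heegaard solid torus, so that the complement is again a solid torus and the no‑isolated‑component hypothesis is automatic. For $n<0$ this can be done concretely: the $|n|$ Legendrian unknots in $(S^3,\xi_{std})$ with $\tb=n$ and distinct rotation numbers have standard neighborhoods whose complements realize, respectively, the $|n|$ tight contact structures on $S(1/n)$, and $c(S^3,\xi_{std})\ne0$. For general $n$ --- in particular for the universally tight structures with positive dividing slope occurring when $n\ge1$ --- Honda's classification \cite{Honda:classification1, Honda:classification2} realizes any tight contact structure on a solid torus as one of the two Heegaard solid tori of a tight contact structure on a lens space; every tight contact structure on a lens space is Stein fillable, hence has non‑vanishing Ozsv\'ath--Szab\'o invariant, so the above applies.

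The main obstacle is this last step: checking that each of the $|n|$ tight contact structures on $S(1/n)$, including the universally tight ones with positive dividing slope, genuinely occurs as a sub‑contact‑manifold of a tight (equivalently, for lens spaces, Stein fillable) closed contact manifold. For $n<0$ this is explicit via Legendrian unknot complements in $(S^3,\xi_{std})$; for $n\ge1$ it rests on Honda's classification of tight contact structures on solid tori and lens spaces together with the gluing theorem guaranteeing tightness of the capped‑off manifold. Granted Theorem~\ref{thm:solid-torus}, the combinatorial counting in the first two paragraphs is routine.
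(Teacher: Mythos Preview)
Your counting argument is essentially identical to the paper's, only with more detail on verifying minimality and the continued–fraction–block structure; nothing to add there.

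For non-vanishing, the paper takes a shorter route than you do: it simply asserts that every tight contact structure on $S(1/n)$ embeds in $(S^3,\xi_{std})$ (citing \cite{KM:classification}), and concludes non-vanishing of $\EH$ from that single embedding. Your split argument—unknot complements for $n<0$, tight lens spaces for $n\geq 1$—is correct and more self-contained, but your expressed uncertainty about the $n\geq 1$ case is unnecessary. Those solid tori embed in $(S^3,\xi_{std})$ as well: since the Dehn twist along a meridian disk is a contactomorphism of the solid torus shifting boundary slope by $1$, we have $S(1/n)\cong S((1-n)/n)$ as sutured contact manifolds, and the latter (having slope in $(-1,0)$) sits inside the complement of the $\tb=-1$ Legendrian unknot. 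So while your lens-space detour is a perfectly valid alternative and has the virtue of not leaning on a citation, the paper's uniform ``everything embeds in the standard tight $S^3$'' is both available and simpler.
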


\begin{proof}
  If $n=1$ the dividing curve is longitudinal, so there exists a unique tight contact structure by Kanda \cite{Kanda:torus} as discussed above.

  Now consider the case $n < 0$. The minimal path from $\infty$ to $1/n$ is $(-\infty, -1, -1/2, -1/3, \ldots, 1/n)$. See Figure~\ref{fig:decoratedPath1}. Thus we can decompose a tight contact structure into 
  \[
    S(1/n) = S(-1) \cup B_{\pm}(-1,-1/2) \cup \cdots \cup B_{\pm}(1/(n+1),1/n).
  \] 
  The subpath $(-1,-1/2, \ldots, 1/n)$ is a continued fraction block, so according to Theorem~\ref{thm:solid-torus}, a contact  structure on $S(1/n)$ is determined by the signs on the edges in the subpath up to shuffling. Since there are $|n|$ different choices of signs on the edges up to shuffling, there are $|n|$ tight contact structures on $S(1/n)$.
   
  If $n > 1$, the minimal path from $\infty$ to $1/n$ is $(\infty, 0, 1/n)$. Therefore, any tight contact structure on $S(1/n)$ can be decomposed into
  \[
  S(0) \cup B_{\pm}(0,1/n)
  \]
  and the sign of the basic slice determines the contact structure. See Figure~\ref{fig:decoratedPath2}.

  The tight contact structures considered so far embed in $(S^3,\xi_{std})$ (c.f. \cite{KM:classification}). Thus they have non-vanishing contact invariant.
\end{proof}

\subsection{Tight contact structures on a twisting slice}
Let $\Sigma$ be a compact oriented connected surface with boundary that is parametrized by an arc diagram $\Z$ of rank $n$. We define $(\Sigma\times [0,1],\Gamma_{I\to J})$ to be $-\C_I \cup \TW^+_{\Sigma} \cup \C_J$ for $I,J \subset \{1,\ldots,n\}$. Mathews \cite{Mathews:contactCategory} showed that there exists an isomorphism between the contact category algebra $CA(\Sigma,\Z)$ and the homology of the strand algebra $H_*(\A(\Z))$, where $CA(\Sigma, \Z)$ is generated by isotopy classes of tight contact structures on $(\Sigma \times [0,1],\Gamma_{I\to J})$ for $I,J \subset \{1,\ldots,n\}$. 

Zarev \cite{Zarev:joining} showed that there exists an isomorphism between the homology of $\iota_I \cdot \A(\Z) \cdot \iota_J$  and the sutured Floer homology of $(-\Sigma\times [0,1],-\Gamma_{I\to J})$ for $I,J \subset \{1,\cdots,n\}$, see Proposition~\ref{prop:AZ}. Combining these, we obtain the following equivalence: 
\begin{align*}
  CA(\Sigma,\Z) \cong H_*(\A(\Z)) \cong \bigoplus_{I,J \subset \{1,\cdots,n\}} \SFH(-\Sigma\times[0,1],-\Gamma_{I\to J}).
\end{align*}

\begin{remark}
  The statements in \cite{Mathews:contactCategory} and \cite{Zarev:joining} are slightly different since $(\Sigma\times [0,1],\Gamma_{I\to J})$ is defined te be $-\C_I \cup \TW^-_{\Sigma} \cup \C_J$. 
\end{remark}

To establish the first isomorphism, Mathews constructed a contact structure $\xi_a$ on $(\Sigma \times [0,1],\Gamma_{I\to J})$ for each generator $a = a_i(S,T,\phi) \in \A(\Z)$. 

\begin{proposition}[Mathews \cite{Mathews:contactCategory}]
  Let $a = a(S,T,\phi)$ and $b = a(S',T',\phi')$ be generators of $\A(\Z)$. Then $\xi_a$ is tight if and only if $[a] \neq 0 \in H(\A(\Z))$. Also, $\xi_a \sim \xi_b$ if and only if $[a]=[b] \in H(\A(\Z))$.
\end{proposition}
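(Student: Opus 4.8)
The plan, following Mathews \cite{Mathews:contactCategory}, is to promote the assignment $a\mapsto\xi_a$ to a dictionary between a complete generating set of $\A(\Z)$ and the contact structures on $\Sigma\times[0,1]$ that are built by bypass attachments along the parametrizing arcs $e_1,\dots,e_n$, and then to match the strand differential $\partial$ with the classification of tight contact structures on $\Sigma\times[0,1]$ rel convex boundary. First I would record the construction of $\xi_a$ precisely: a generator $a=a(S,T,\phi)\in\A(\Z)$ carries an initial idempotent state $I$ (read off from $S$), a terminal state $J$ (read off from $T$), and a set of moving strands $\phi$; each elementary moving strand is a bypass arc on $\Sigma$ abutting the $\alpha$-curves, and $\xi_a$ is obtained from an $I$-invariant neighborhood of $(\Sigma,\Gamma_I)$ by attaching these bypasses in the order prescribed by $\phi$, so that $\xi_a$ lives on $(\Sigma\times[0,1],\Gamma_{I\to J})$. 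For fixed $(I,J)$ this puts the generators with those idempotents in bijection with the admissible collections of bypass arcs.

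For the overtwisted direction I want $[a]=0$ --- meaning $a$ is not a cycle, or $a$ is a boundary --- to force $\xi_a$ overtwisted. If $\partial a\neq 0$, then $a$ contains two strands crossing once that can be uncrossed with a net loss of one inversion; geometrically these are two bypass arcs meeting transversally in a single point, and a local computation near that point shows that attaching both, in either order, produces an overtwisted disk --- essentially the statement that a bypass laid immediately across a just-created folding arc introduces excess twisting. If instead $a$ is a cycle with $a=\partial b$, then $a$ occurs in the resolution of a crossing of $b$, and combining the local model at that crossing with the bypass-triangle relation encoded by $\partial b=\sum_k a_k$ identifies $\xi_a$ with one of the overtwisted resolutions. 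Conversely, the same local analyses show that a cycle which is not a boundary forces no overtwisting, which feeds into the count below.

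For the tight and isotopy statements I would use that every contact structure on $\Sigma\times[0,1]$ with fixed convex boundary arises from a sequence of bypass attachments \cite{Giroux:convex,HH:convex}, so $a\mapsto\xi_a$ surjects onto the contact structures on $(\Sigma\times[0,1],\Gamma_{I\to J})$; it descends to a surjection from the nonzero classes of $\iota_I H(\A(\Z))\iota_J$ onto the tight contact structures once one checks well-definedness, that is, $[a]=[b]\Rightarrow\xi_a\sim\xi_b$, which I would get by realizing each relation $a=b+\partial c$ as a sequence of bypass rotations (the prototype being the continued-fraction-block shuffling of Theorem~\ref{thm:solid-torus}(5), now on $\Sigma\times I$). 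To upgrade this surjection to a bijection I would count both sides: $\dim_{\mathbb{Z}_2}\iota_I H(\A(\Z))\iota_J$ combinatorially from the strand algebra, and the number of tight contact structures on $(\Sigma\times[0,1],\Gamma_{I\to J})$ by the parallel combinatorial recursion, via an induction --- on the rank of $\Z$, or on the number of strands --- that decomposes $\Sigma\times[0,1]$ and applies the relative classification to the building blocks. Equality of the two counts forces injectivity, giving the remaining implications $\xi_a$ tight $\Rightarrow[a]\neq0$ and $\xi_a\sim\xi_b\Rightarrow[a]=[b]$; the leftover case $[a]=[b]=0$ holds because any two such $\xi_a$ are overtwisted and, having the same boundary and homotopy class of two-plane field, are isotopic by Eliashberg's classification.

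The step I expect to be the main obstacle is this two-sided count: arranging the convex-surface-theory induction on $\Sigma\times[0,1]$ so that it reproduces, term for term, the combinatorial dimension formula for $\iota_I H(\A(\Z))\iota_J$, together with the bookkeeping of signs of basic slices and of shuffling inside continued fraction blocks on a general surface rather than a torus. The local lemma ``two transversally crossing bypasses produce an overtwisted disk'' is also delicate, since both the order of attachment and the ambient dividing set matter, so it must be carried out in a carefully chosen normal form.
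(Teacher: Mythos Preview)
The paper does not supply a proof of this proposition: it is stated with attribution to Mathews \cite{Mathews:contactCategory} and used as input, with no argument given in the present paper. So there is no ``paper's own proof'' to compare your proposal against; your sketch is an attempt to reconstruct Mathews's argument, not to reproduce anything in this paper.

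That said, a brief comment on your outline. The overall architecture --- build $\xi_a$ by bypass attachments indexed by strands, identify the strand differential with local overtwistedness, and finish with a counting argument matching $\dim \iota_I H(\A(\Z))\iota_J$ against the number of tight structures on $(\Sigma\times[0,1],\Gamma_{I\to J})$ --- is the right shape, and is indeed how Mathews proceeds. One point to be careful about: your treatment of the case ``$a$ is a cycle but a boundary'' is shaky. You write that then $a$ occurs in the resolution of a crossing of some $b$ with $\partial b = \sum_k a_k$, and you want to conclude $\xi_a$ is overtwisted from this alone; but the bypass triangle does not by itself force any particular term in a resolution sum to be overtwisted. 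In Mathews's argument the logic runs the other way: one first establishes the bijection between nonzero homology classes and tight structures (via the surjection plus the dimension count), and then the implication ``$[a]=0 \Rightarrow \xi_a$ overtwisted'' for cycles-which-are-boundaries falls out as a consequence, rather than being proved by a direct local model. Similarly, your final appeal to Eliashberg's classification for the case $[a]=[b]=0$ needs the homotopy classes of $\xi_a$ and $\xi_b$ to agree, which you have asserted but not argued; this requires tracking the relative Euler class through the bypass construction.
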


The following proposition is a direct consequence of the equivalence above.

\begin{proposition}\label{prop:contactAZ}
  Let $a = a_i(S,T,\phi)$ be a generator of $\A(\Z)$. Then the isomorphism induced from $\AZ$ diagram sends 
  \begin{align*}
    [a] \mapsto \EH(\xi_a)
  \end{align*}
  Moreover, $\EH$ is a complete invariant for contact structures on $(\Sigma\times[0,1], \Gamma_{I\to J})$, i.e., $\EH(\xi_a)$ is nonvanishing if and only if $\xi_a$ is tight, and $\xi_a \sim \xi_b$ if and only if $\EH(\xi_a) = \EH(\xi_b)$.
\end{proposition}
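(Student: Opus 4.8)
The plan is to reduce the statement to its first assertion — that the isomorphism $H_*(\A(\Z))\cong\bigoplus_{I,J}\SFH(-\Sigma\times[0,1],-\Gamma_{I\to J})$ of Proposition~\ref{prop:AZ} and Theorem~\ref{thm:CFA=SFC} carries $[a]$ to $\EH(\xi_a)$ — and to prove that first assertion by induction on the number of strands of $a$. The ``Moreover'' part is then formal: the isomorphism is a bijection, so $\EH(\xi_a)\neq 0$ iff $[a]\neq 0$, which by Mathews's proposition holds iff $\xi_a$ is tight, and $\EH(\xi_a)=\EH(\xi_b)$ iff $[a]=[b]$, which by Mathews's proposition is equivalent to $\xi_a\sim\xi_b$. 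For $a$ not a cycle, $\xi_a$ is overtwisted by Mathews, so $\EH(\xi_a)=0$ by vanishing of the contact invariant for overtwisted sutured contact manifolds \cite{HKM:suturedInvariant}, consistent with the statement; hence we may assume $a$ is a cycle.

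For a cycle $a$, write $\xi_a$ (following Mathews) as a sequence of bypass attachments on an $I$-invariant neighborhood of $(\Sigma,\Gamma_I)$. Peeling off the last bypass expresses $\xi_a=\xi_{a'}\Cup_\Sigma\xi_\rho$, where $\xi_\rho$ is a single-bypass slice and $a'$ is a cycle with one fewer strand, and on the algebra side this gives a factorization $a=a'\cdot\rho$ with $\rho$ a single-chord generator. Now $\xi_{a'}$ and $\xi_a$ are contact structures compatible with the twisting slice $\TW^+_\Sigma$ (their dividing sets are the relevant $\Gamma_{I\to J}$, up to the $\TW^{\pm}$ conventions reconciled in the remark above), so Theorem~\ref{thm:main} produces $c_{\AAA}(\xi_{a'}), c_{\AAA}(\xi_a)\in\BSAA(-\TW^+_\Sigma)$, which by Proposition~\ref{prop:AZ} and Theorem~\ref{thm:CFA=SFC} is identified with $H_*(\A(\Z))$ in such a way that $c_{\AAA}$ of a contact structure corresponds, summand by summand, to its $\EH$ class. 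Applying the bimodule form of Theorem~\ref{thm:bypass} to $\TW^+_\Sigma$,
\[
  c_{\AAA}(\xi_a) = c_{\AAA}(\xi_{a'}\Cup_\Sigma\xi_\rho) = m_{0|1|1}\bigl(c_{\AAA}(\xi_{a'}),\rho\bigr)
\]
(or $m_{1|1|0}$, depending on which end the bypass sits on), and by the inductive hypothesis $c_{\AAA}(\xi_{a'})$ corresponds to $[a']$; hence $c_{\AAA}(\xi_a)$ corresponds to $[a'\cdot\rho]=[a]$, which is exactly $\EH(\xi_a)$ under the identification. Conceptually this is the assertion that the three equivalences in \eqref{eq:iso} intertwine the stacking product on $\Sigma\times[0,1]$ with multiplication in $\A(\Z)$ — Mathews's map is an algebra isomorphism, Zarev's identification realizes stacking as $m_2$ (Theorem~\ref{thm:m2}), and $\EH$ is functorial under gluing — so it is enough to match algebra generators.

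That leaves the base cases, which carry the real content. For an idempotent $a=\iota_I$, the structure $\xi_{\iota_I}$ is supported by the trivial partial open book, whose sutured Heegaard diagram has a single generator, and I would check that this generator is precisely the one the $\overline{\AZ}$ diagram — capped off by the elementary-cap diagrams $\HH_I$ of Theorem~\ref{thm:CFA=SFC} — assigns to $\iota_I$, so that $\iota_I\mapsto\EH(\xi_{\iota_I})$. The genuinely nontrivial base case is a single chord, $a=\rho$: here $\xi_\rho$ is $\xi_{\iota_I}$ with one bypass attached, and one must show that its $\EH$ class — computed from an explicit partial open book for a single bypass, or via the contact $1$- and $2$-handle description of a bypass \cite{JZ:contact_handle} — is the $\overline{\AZ}$-generator labeled $\rho$. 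This chain-level identification of one bypass with one strand is where Mathews's geometry must be matched against the strand-algebra combinatorics, and I expect it to be the main obstacle; everything else is bookkeeping built on top of it. (An alternative is a direct comparison of the sutured Heegaard diagram coming from Mathews's partial open book for a general $\xi_a$ with the capped-off $\overline{\AZ}$ diagram, tracking $\EH(\xi_a)$ to the generator $\mathbf{x}_a$; since both diagrams are built from the combinatorics of the strand diagram of $a$, this should also work, but the crux is the same.)
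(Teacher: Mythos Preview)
Your argument is circular. You invoke Theorem~\ref{thm:bypass} (in its bimodule form) for the inductive step, but in this paper the proof of Theorem~\ref{thm:bypass} \emph{uses} Proposition~\ref{prop:contactAZ}: the key line there is that ``the corresponding sutured contact invariant for $\xi_a$ will be $m_I^{\vee}\boxtimes a\boxtimes m_J$ \ldots\ by Proposition~\ref{prop:contactAZ}.'' In other words, Theorem~\ref{thm:bypass} takes as input precisely the identification $c_{\AAA}(\xi_a)=a$ that you are trying to establish. The same goes for Theorem~\ref{thm:pairing} (via Claim~1 in Lemma~\ref{lem:pairing}), which also cites Proposition~\ref{prop:contactAZ}. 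Since Proposition~\ref{prop:contactAZ} sits in the preliminaries section and is logically prior to all of Section~\ref{sec:invariants}, you cannot lean on those results here.

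The paper's own treatment is quite different in spirit: it asserts that the proposition is ``a direct consequence of the equivalence above,'' namely the chain of isomorphisms~\eqref{eq:iso} due to Mathews and Zarev. Mathews's isomorphism $CA(\Sigma,\Z)\cong H_*(\A(\Z))$ is constructed exactly so that $\xi_a\leftrightarrow[a]$, and Zarev's identification $H_*(\A(\Z))\cong\bigoplus_{I,J}\SFH(-\Sigma\times[0,1],-\Gamma_{I\to J})$ is the one coming from the $\AZ$ diagram; the content is that these are compatible, which is what the cited works (together with \cite{Cooper:contactCategory}) provide. Your ``Moreover'' paragraph is correct and is indeed the formal part. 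Your parenthetical alternative --- a direct diagrammatic comparison of Mathews's partial open book with the capped-off $\overline{\AZ}$ diagram --- is much closer to what is actually needed, and would avoid the circularity; that, not the induction via $m_2$, is the right line to pursue if you want a self-contained argument.
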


\subsection{Heegaard Floer invariants for knots}\label{subsec:knots}
Let $Y$ be a closed $3$-manifold and $K$ a null-homologous knot in $Y$. Then we have a filtration of $\CFhat(Y)$:
\begin{align*}
  \cdots \subset \F_{m-1} \subset \F_m \subset \F_{m+1} \subset \cdots \subset \CFhat(Y)
\end{align*}
such that $\F_m = 0$ for $m << 0$ and $\F_m = \CFhat(Y)$ for $m >> 0$.

Once we fix a non-zero element $x \in \HFhat(Y)$, then we obtain an integer-valued invariant $\tau_x(Y,K)$:
\begin{align*}
  \tau_x(Y,K) = \min \{ s \mid x \in \mathrm{Im}(H_*(\F_s) \to \HFhat(Y))\}
\end{align*}

\begin{definition}[Hedden \cite{Hedden:tau}]
  Let $(Y,\xi)$ be a closed contact 3-manifold and $K$ a null-homologous knot in $Y$. Suppose the contact class $c(\xi) \in \HFhat(-Y)$ is nonzero. Then the \emph{contact tau-invariant} of $K$ is defined to be
  \begin{align*}
    \tau_{\xi}(Y,K) := -\tau_{c(\xi)}(-Y,K)
  \end{align*}
\end{definition}

\begin{theorem}[Hedden \cite{Hedden:tau}]
  Let $(Y,\xi)$ be a closed contact $3$-manifold with $c(\xi) \neq 0$. Then for any null-homologous Legendrian knot $L$ in $Y$, the following inequality holds.
  \begin{align*}
    \tb(L) + |\rot(L)| \leq 2\tau_{\xi}(Y,L) - 1
  \end{align*}
\end{theorem}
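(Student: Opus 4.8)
\emph{Strategy and reduction.} The plan is to exhibit a cycle in $\CFhat(-Y)$ representing the contact class $c(\xi)$ whose Alexander filtration level, for the filtration induced by $L$, is as small as $-\tfrac{\tb(L)-\rot(L)+1}{2}$; feeding this into the definition of $\tau$ then gives the bound. It suffices to prove the one-sided inequality $\tb(L)-\rot(L)\le 2\tau_{\xi}(Y,L)-1$ for every oriented null-homologous Legendrian $L$: applying it to $L$ with the reversed orientation, which leaves $\tb(L)$ and $\tau_{\xi}(Y,L)$ (the latter being insensitive to the orientation of $L$) unchanged but negates $\rot(L)$, gives $\tb(L)+\rot(L)\le 2\tau_{\xi}(Y,L)-1$, and taking the maximum of the two yields the theorem.

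\emph{The cycle.} By the relative Giroux correspondence I would fix an open book $(\Sigma,\phi)$ supporting $\xi$ in which $L$ sits on a page with its page framing equal to its contact framing. From such an adapted open book, the Honda--Kazez--Mati\'c/Ozsv\'ath--Szab\'o construction recalled in Section~\ref{subsec:invariant} produces a Heegaard diagram for $-Y$; placing a second basepoint $z$ on a page records the Alexander filtration of $L$, and the distinguished contact generator $\mathbf{x}_{0}$ (the tuple of intersections of the co-cores with their pushoffs) is then a cycle in $\CFhat(-Y)$ with $[\mathbf{x}_{0}]=c(\xi)$, which is nonzero by hypothesis. This $\mathbf{x}_{0}$ is exactly the cycle underlying the Lisca--Ozsv\'ath--Stipsicz--Szab\'o Legendrian invariant $\Lhat(L)\in\HFKhat(-Y,L)$. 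The decisive point is a grading computation: evaluating the $L$-Alexander grading of $\mathbf{x}_{0}$ --- by intersecting a relative $1$-cycle built from a Seifert surface for $L$ with the $\bm{\alpha}$- and $\bm{\beta}$-curves --- should give $A(\mathbf{x}_{0})=-\tfrac{\tb(L)-\rot(L)+1}{2}$, the $\tb(L)$ term coming from the agreement of page framing and contact framing and the $\rot(L)$ term being extracted from the $\spinc$ structure determined by the contact planes along $L$.

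\emph{Conclusion.} Granting this, $\mathbf{x}_{0}\in\F_{s_{0}}$ with $s_{0}=-\tfrac{\tb(L)-\rot(L)+1}{2}$ and $[\mathbf{x}_{0}]=c(\xi)\neq 0$, so $\tau_{c(\xi)}(-Y,L)=\min\{s:c(\xi)\in\mathrm{Im}(H_{*}(\F_{s})\to\HFhat(-Y))\}\le s_{0}$. Therefore $\tau_{\xi}(Y,L)=-\tau_{c(\xi)}(-Y,L)\ge\tfrac{\tb(L)-\rot(L)+1}{2}$, i.e.\ $\tb(L)-\rot(L)\le 2\tau_{\xi}(Y,L)-1$, which together with the reduction above finishes the proof.

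\emph{Main obstacle.} The crux is the grading computation $A(\mathbf{x}_{0})=-\tfrac{\tb(L)-\rot(L)+1}{2}$: it demands a careful choice of doubly-pointed Heegaard diagram adapted to $L$ lying on a page and an honest count of how a Seifert surface meets the compressing disks, with the $\rot(L)$ contribution the delicate part, since it is invisible at the level of the smooth framing and must be read off from the contact $2$-plane field via a relative Euler-class / $\spinc$ evaluation. An alternative that trades this for a surgery computation would perform $(+1)$-contact surgery on $L$ and on a negatively stabilized pushoff and combine the surgery exact triangle with the known behavior of $c(\xi)$ under Legendrian surgery \cite{OS:contactInvariant}; but that bookkeeping is of comparable difficulty, so I would pursue the open-book route.
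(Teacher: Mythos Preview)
This theorem is cited in the paper as a result of Hedden \cite{Hedden:tau} and is not proved in the paper itself; it appears in the preliminaries (Section~\ref{subsec:knots}) purely as background. So there is no ``paper's own proof'' to compare your proposal against.

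That said, your outline is essentially the modern route to this inequality, going through the LOSS invariant rather than Hedden's original surgery-based argument: one exhibits $\Lhat(L)$ as a filtered cycle representing $c(\xi)$ and reads off the bound from its Alexander grading. You correctly identify the grading computation as the crux and honestly flag it as the main obstacle. One caution: the sign you write, $A(\mathbf{x}_0)=-\tfrac{\tb-\rot+1}{2}$, is opposite to the formula the paper itself quotes later (in the proof of Lemma~\ref{lem:loss_gen}, citing \cite{OS:grading}, namely $2A(\LOSS(K))=\tb(K)-\rot(K)+1$). Both conventions appear in the literature depending on whether one tracks the Alexander grading of $K$ in $Y$ or in $-Y$ and how the filtration is oriented; your sign is the one that makes the inequality come out, but you should make the convention explicit and check it against whichever source you cite for the grading formula, since getting this wrong flips the inequality to a triviality. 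Your orientation-reversal reduction to the one-sided bound is clean and correct.
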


Now let $Y$ be an L-space and $\mathfrak{s}$ a spin$^c$ structure of $Y$. In this case, there is a horizontally simplified basis $\{x_i\}$ of $\CFK(Y,K,\mathfrak{s})$ such that some $x_j$ is the distinguished generator of a vertically simplified basis. Then we define the \emph{epsilon-invariant of $K$ with respect to $\mathfrak{s}$} to be

\begin{itemize}
  \item $\epsilon_{\mathfrak{s}}(Y,K) = 1$ if $x_j$ is in the image of the horizontal differential,
  \item $\epsilon_{\mathfrak{s}}(Y,K) = 0$ if $x_j$ is in the kernel but not the image of the horizontal differential, 
  \item $\epsilon_{\mathfrak{s}}(Y,K) = -1$ if $x_j$ is not in the kernel of the horizontal differential.
\end{itemize}

\begin{definition}
  Let $(Y,\xi)$ be a contact L-space with $c(\xi) \neq 0$. Then the \emph{contact epsilon-invariant} of $K$ is defined to be
  \begin{align*}
    \epsilon_{\xi}(Y,K) := -\epsilon_{\mathfrak{s}_{\xi}}(-Y,K)
  \end{align*}
  where $\mathfrak{s}_{\xi}$ is the spin$^c$ structure induced from the contact structure $\xi$.
\end{definition}

\subsection{Torus Boundary and Immersed curves}\label{sec:immersed}
In the case of a manifold $Y$ with torus boundary, the simplest case is where the sutured part consists of a single disk with a single boundary-parallel dividing curve; see Figure \ref{fig:parametrizedTorus.c}. This corresponds to the ``classic'' torus algebra $\A(T^2)$ and bordered modules $\CFA(Y)$ and $\CFD(Y)$ of \cite{LOT:bordered}. We recall some of the key facts about this case. The summand of the algebra that is relevant for pairing is the level-0: $\A(T^2,0),$ which has 8 generators as an $\F$-vector space:
\begin{itemize}
  \item two idempotents $\iota_0$ and $\iota_1$
  \item three ``basic'' generators $\rho_1,$ $\rho_2,$ and $\rho_3$ satisfying: $\rho_1 = \iota_0 \rho_1 \iota_1,$  $\rho_2 = \iota_1 \rho_2 \iota_0,$ and $\rho_3 = \iota_0 \rho_3 \iota_1$
  \item $\rho_{12} = \rho_1 \rho_2,$ $\rho_{23} = \rho_2 \rho_3,$ and $\rho_{123} = \rho_1 \rho_2 \rho_3$ 
\end{itemize}
Note: all other kinds of products not indicated are zero (e.g., $\rho_2\rho_1=0$).

\begin{figure}
  \centering
  \begin{subfigure}{.4\textwidth}
    \centering
\begingroup%
  \makeatletter%
  \providecommand\color[2][]{%
    \errmessage{(Inkscape) Color is used for the text in Inkscape, but the package 'color.sty' is not loaded}%
    \renewcommand\color[2][]{}%
  }%
  \providecommand\transparent[1]{%
    \errmessage{(Inkscape) Transparency is used (non-zero) for the text in Inkscape, but the package 'transparent.sty' is not loaded}%
    \renewcommand\transparent[1]{}%
  }%
  \providecommand\rotatebox[2]{#2}%
  \newcommand*\fsize{\dimexpr\f@size pt\relax}%
  \newcommand*\lineheight[1]{\fontsize{\fsize}{#1\fsize}\selectfont}%
  \ifx\svgwidth\undefined%
    \setlength{\unitlength}{62.90328598bp}%
    \ifx\svgscale\undefined%
      \relax%
    \else%
      \setlength{\unitlength}{\unitlength * \real{\svgscale}}%
    \fi%
  \else%
    \setlength{\unitlength}{\svgwidth}%
  \fi%
  \global\let\svgwidth\undefined%
  \global\let\svgscale\undefined%
  \makeatother%
  \begin{picture}(1,1.49826446)%
    \lineheight{1}%
    \setlength\tabcolsep{0pt}%
    \put(0,0){\includegraphics[width=\unitlength,page=1]{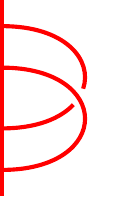}}%
    \put(0.7301011,0.50469622){\color[rgb]{0,0,0}\makebox(0,0)[lt]{\lineheight{1.25}\smash{\begin{tabular}[t]{l}$\alpha_0$\end{tabular}}}}%
    \put(0.7301011,0.9816188){\color[rgb]{0,0,0}\makebox(0,0)[lt]{\lineheight{1.25}\smash{\begin{tabular}[t]{l}$\alpha_1$\end{tabular}}}}%
  \end{picture}%
\endgroup%

    \caption{An alpha-type diagram for the (once punctured) torus algebra}
  \end{subfigure}\hspace{.5cm}
  \begin{subfigure}{.4\textwidth}
    \centering
\begingroup%
  \makeatletter%
  \providecommand\color[2][]{%
    \errmessage{(Inkscape) Color is used for the text in Inkscape, but the package 'color.sty' is not loaded}%
    \renewcommand\color[2][]{}%
  }%
  \providecommand\transparent[1]{%
    \errmessage{(Inkscape) Transparency is used (non-zero) for the text in Inkscape, but the package 'transparent.sty' is not loaded}%
    \renewcommand\transparent[1]{}%
  }%
  \providecommand\rotatebox[2]{#2}%
  \newcommand*\fsize{\dimexpr\f@size pt\relax}%
  \newcommand*\lineheight[1]{\fontsize{\fsize}{#1\fsize}\selectfont}%
  \ifx\svgwidth\undefined%
    \setlength{\unitlength}{57.8009491bp}%
    \ifx\svgscale\undefined%
      \relax%
    \else%
      \setlength{\unitlength}{\unitlength * \real{\svgscale}}%
    \fi%
  \else%
    \setlength{\unitlength}{\svgwidth}%
  \fi%
  \global\let\svgwidth\undefined%
  \global\let\svgscale\undefined%
  \makeatother%
  \begin{picture}(1,1.63052188)%
    \lineheight{1}%
    \setlength\tabcolsep{0pt}%
    \put(0,0){\includegraphics[width=\unitlength,page=1]{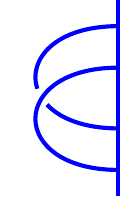}}%
    \put(-0.00392832,0.53802269){\color[rgb]{0,0,0}\makebox(0,0)[lt]{\lineheight{1.25}\smash{\begin{tabular}[t]{l}$\beta_0$\end{tabular}}}}%
    \put(-0.00392832,1.05704527){\color[rgb]{0,0,0}\makebox(0,0)[lt]{\lineheight{1.25}\smash{\begin{tabular}[t]{l}$\beta_1$\end{tabular}}}}%
  \end{picture}%
\endgroup%

    \caption{A beta-type diagram for the torus algebra}
  \end{subfigure}\\
  \vspace{.3cm}
  \begin{subfigure}{.4\textwidth}
    \centering
    \import{figures}{Torus_param.pdf_tex}
    \caption{The bordered sutured parametrization with a single disk suture}\label{fig:parametrizedTorus.c}
  \end{subfigure}
  \hspace{.5cm}
  \begin{subfigure}{.42\textwidth}
    \centering
    \import{figures}{Torus_arc_HD.pdf_tex}
    \caption{Arc diagram for the torus embedded in the once-punctured torus}
  \end{subfigure}
  \caption{Arc diagrams and parametrization for the torus with single disk  suture}
  \label{fig:parametrizedTorus}
\end{figure}

We shall often describe the type-$A$ and type-$D$ structrues over this algebra as directed graphs with edges labelled by strings of algebra elements as follows:\\
For a type-$D$ structure, an edge from $x$ to $y$ labelled by $a$ (which will always correspond to an algebra element) indicates that $\delta^1 (x) = a \otimes y.$ Hence, the higher differentials $\delta^{n}$ correspond to directed paths of length $n.$\\
For a type-$A$ structure, if there is a directed path from $x$ to $z$ with labels $a_1,\dots, a_n,$ then this will correspond to a higher multiplication of the form $m_i(x,\dots)=y.$ More precisely, the strings $a_1,\dots, a_n,$ are first concatenated, then re-bracketed in order to multiply as many adjacent terms as possible in a non-vanishing way. For example, if we had $a_1= \rho_2 \rho_1$ and $a_2=\rho_2\rho_3$ then this would be grouped as: $\rho_2, \rho_1\rho_2\rho_3$ or $\rho_2,\rho_{123}.$

Bordered modules over this torus algebra as well as the pairing theorem have been reinterpreted in terms of immersed curves on the punctured torus by Hanselman, Rasmussen, and Watson \cite{HRW:immersed}. We briefly recall the construction:
For the type-$A$ module, given a directed graph as above, we place each generator on either the vertical or horizontal ``edge'' of a punctured torus depending on its idempotent: those with $\iota_0$ on the vertical and those with $\iota_1$ on the horizontal. Then For each arrow between generators, we insert the corresponding kind of arc as in Figure \ref{fig:typeA}. Notice that the puncture is in the top-right corner.\\
For the type-$D$ structure, we proceed similarly, except this time, the generators in the $\iota_0$ idempotent are on the horizontal edge while the ones in $\iota_1$ are on the vertical edge. Once again, we follow the instructions in Figure \ref{fig:typeD} for each arrow occurring in the directed graph. Note this time the puncture is in the bottom-left corner.\\
In either case, we obtain a collection of immersed \emph{train tracks} which may be arranged as described in \cite{HRW:immersed} such that the result may be interpreted as an immersed multicurve with \emph{local systems}.
\begin{figure}
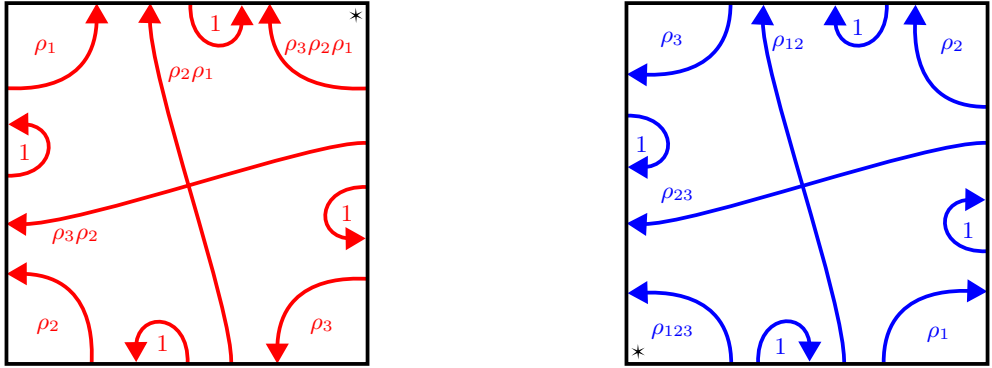

  \begin{subfigure}{.45\textwidth}
    \centering
    \footnotesize
    \import{figures}{TypeA_labels.pdf_tex}
    \caption{Type-$A$ immersed curves corresponding to various edge labels in a directed graph representation}
    \label{fig:typeA}
  \end{subfigure} \hfill
  \begin{subfigure}{.45\textwidth}
    \centering
    \footnotesize
    \import{figures}{TypeD_labels.pdf_tex}
    \caption{type-$D$ immersed curves corresponding to various edge labels in a directed graph representationa}
    \label{fig:typeD}
  \end{subfigure}
  \caption{Translating a type-$A$ or type-$D$ module over the torus algebra into immersed curves}
  \label{fig:Imm_AD}
\end{figure}

Given immersed curves $\overrightarrow{\Gamma_A}$ and $\overrightarrow{\Gamma_D}$ representing $\CFA$ and $\CFD$, respectively, we can compute $\CFA \boxtimes \CFD$ geometrically as follows: we scale the immersed curves by a factor of $1/2$ and embed them into a torus in the top-right and bottom-left quadrants, respectively, extending the ``endpoints'' vertically and horizontally as appropriate. Then the Lagrangian Floer complex between $\overrightarrow{\Gamma_A}$ and $\overrightarrow{\Gamma_D}$ in $T^2_{\bullet}$ computes the Heegaard Floer complex of $Y_1\cup Y_2$: the intersections between the vertical and horizontal extended lines are the generators, and the differential amounts to counting immersed bigons between them. In this setup:
\[
  \CFhat(Y_1\cup Y_2) \cong \CFA \boxtimes \CFD \cong CF\left(\overrightarrow{\Gamma_A}, \overrightarrow{\Gamma_D}\right)
\]
Figure \ref{fig:Imm_AD_pair} shows an example (local) computation. Here we have generators $a$ and $b$ in the type-$A$ module and generators $x$ and $y$ in the type-$D$ structure. Moreover, $m_2(a,\rho_2)=b$ and $\delta^1(x)=\rho_2\otimes y.$ The bigon from $a\boxtimes x$ to $b\boxtimes y$ represents a differential in the chain complex, as expected.

\begin{figure}
  \centering
  \small
\begingroup%
  \makeatletter%
  \providecommand\color[2][]{%
    \errmessage{(Inkscape) Color is used for the text in Inkscape, but the package 'color.sty' is not loaded}%
    \renewcommand\color[2][]{}%
  }%
  \providecommand\transparent[1]{%
    \errmessage{(Inkscape) Transparency is used (non-zero) for the text in Inkscape, but the package 'transparent.sty' is not loaded}%
    \renewcommand\transparent[1]{}%
  }%
  \providecommand\rotatebox[2]{#2}%
  \newcommand*\fsize{\dimexpr\f@size pt\relax}%
  \newcommand*\lineheight[1]{\fontsize{\fsize}{#1\fsize}\selectfont}%
  \ifx\svgwidth\undefined%
    \setlength{\unitlength}{169.79603685bp}%
    \ifx\svgscale\undefined%
      \relax%
    \else%
      \setlength{\unitlength}{\unitlength * \real{\svgscale}}%
    \fi%
  \else%
    \setlength{\unitlength}{\svgwidth}%
  \fi%
  \global\let\svgwidth\undefined%
  \global\let\svgscale\undefined%
  \makeatother%
  \begin{picture}(1,0.99999541)%
    \lineheight{1}%
    \setlength\tabcolsep{0pt}%
    \put(0,0){\includegraphics[width=\unitlength,page=1]{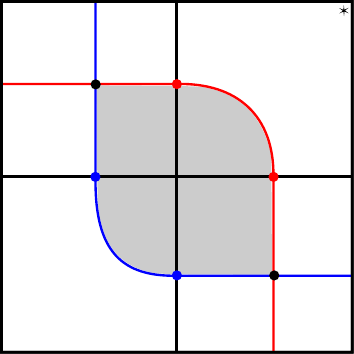}}%
    \put(0.11592896,0.78340085){\color[rgb]{0,0,0}\makebox(0,0)[lt]{\lineheight{1.25}\smash{\begin{tabular}[t]{l}$b\boxtimes y$\end{tabular}}}}%
    \put(0.78420363,0.16988071){\color[rgb]{0,0,0}\makebox(0,0)[lt]{\lineheight{1.25}\smash{\begin{tabular}[t]{l}$a\boxtimes x$\end{tabular}}}}%
    \put(0.79842578,0.52266946){\color[rgb]{1,0,0}\makebox(0,0)[lt]{\lineheight{1.25}\smash{\begin{tabular}[t]{l}$a$\end{tabular}}}}%
    \put(0.43496546,0.17507618){\color[rgb]{0,0,1}\makebox(0,0)[lt]{\lineheight{1.25}\smash{\begin{tabular}[t]{l}$x$\end{tabular}}}}%
    \put(0.22044004,0.46280648){\color[rgb]{0,0,1}\makebox(0,0)[lt]{\lineheight{1.25}\smash{\begin{tabular}[t]{l}$y$\end{tabular}}}}%
    \put(0.52510141,0.7848916){\color[rgb]{1,0,0}\makebox(0,0)[lt]{\lineheight{1.25}\smash{\begin{tabular}[t]{l}$b$\end{tabular}}}}%
  \end{picture}%
\endgroup%

  \caption{Pairing immersed curves recovers the box tensor product for pairing type-$A$ and type-$D$ modules over the torus algebra}
  \label{fig:Imm_AD_pair}
\end{figure}

\begin{remark}\label{rmk:reparam}
  The immersed curve invariant $\overrightarrow{\Gamma}$ in fact may be canonically thought of as living on the (bordered) boundary of the manifold itself, so that a reparametrization simply corresponds to applying an appropriate mapping class to $\overrightarrow{\Gamma}.$
\end{remark}

An important class of manifolds with torus boundary that we shall be interested in is complements of knots in $S^3$ (or more generally, in L-spaces).

Secion 4.2 of \cite{HRW:immersed2} describes an algorithm that translates the knot Floer chain complex to immersed curves, based on the correspendence between knot Floer homology and the bordered Floer homology of the knot exterior described in Chapter 11 of \cite{LOT:bordered}. Indeed, for a null-homologous knot, there is a canonical lift of the immersed curves to the infinitely punctured cylinder $\overline{T^2}_{\bullet}$ such that the Seifert longitude lifts to a simple closed curve, while the meridian lifts to a vertical line. It is conventional to parametrize this cylinder as $[0,1]/0\sim 1 \times \R $ The  punctures usually are chosen to be $\{\frac{1}{2}\}\times(\ZZ+\frac{1}{2}),$ although in Figure \ref{fig:Imm-HFK}, we have moved them slightly down and to the left, so that the midline $\left\{\frac{1}{2}\right\}\times \R$ and the lifts of the Seifert longitude $S^1 \times \{n+\frac{1}{n}\}$ are more clearly visible. Indeed, the various fundamental domains, which are lifts of the punctured torus, are the sets: $\left(\left[\frac{1}{2},1\right]\cup\left[0,\frac{1}{2}\right]\right)\times \left(n-\frac{1}{2},n+\frac{1}{2}\right).$ This integer $n$ is called the \emph{height} of the region, and intersection points between the lift of $\overrightarrow{\Gamma}$ and the midline occurring in such a region are also said to have height $n.$ These heights correspond to the various spin$^c$ gradings (or Alexander gradings in the knot Floer viewpoint).

The properties of the knot Floer homology for a knot $K$ in $S^3$ (or more generally in an L-space $Y$) translate as follows in this setup:
\begin{itemize}
  \item The entire collection is rotationally symmetric about $\left(\frac{1}{2},0\right)$; this is a restatement of the $i,j$-symmetry in knot Floer homology
  \item There is a distinguished curve $\gamma_{\mathfrak{s}}$ for each $\mathfrak{s}\in Spin^c(Y)$ which wraps once in the longitudinal direction; this corresponds to the ``staircase'' component of knot Floer homology. All other curves may be made to lie within an arbitrarily small neighborhood of the midline $\left\{\frac{1}{2}\right\}\times \R$ by an appropriate homotopy. Moreover, this distinguished curve has \emph{trivial} local system; this reflects the fact that $\rk(\HFhat(Y,\mathfrak{s}))=1$ for an L-space
  \item Following the distinguished curve from left to right, starting at $(0,0)$, the height at which it ``first'' intersects the midline is $\tau_{x},$ where $x$ is a generator of $\HFhat(Y,\mathfrak{s})$; indeed, this intersection point represents the homology class $x,$ which generates the vertical ($U=0$, $V=1$) quotient complex of $\CFK(Y,K)/(UV=0).$ Similarly, if one follows the distinguished curve from right to left starting at $(1,0),$ then the first intersection point with the midline will occur at height $-\tau_{x}$ and will correspond to a generator of the \emph{horizontal} ($V=0,$ $U=1$) complex.
  \item  Immediately to the right of the generator of the vertical complex (from above), if the curve proceeds to turn upward, then $\epsilon_{\mathfrak{s}} = -1;$ if it turns downward, then $\epsilon_{\mathfrak{s}} = 1.$ Otherwise, if it is horizontal, $\epsilon_{\mathfrak{s}} = 0;$ by symmetry, this actually implies $\tau_{x}=0$ as well.
\end{itemize}

\begin{figure}
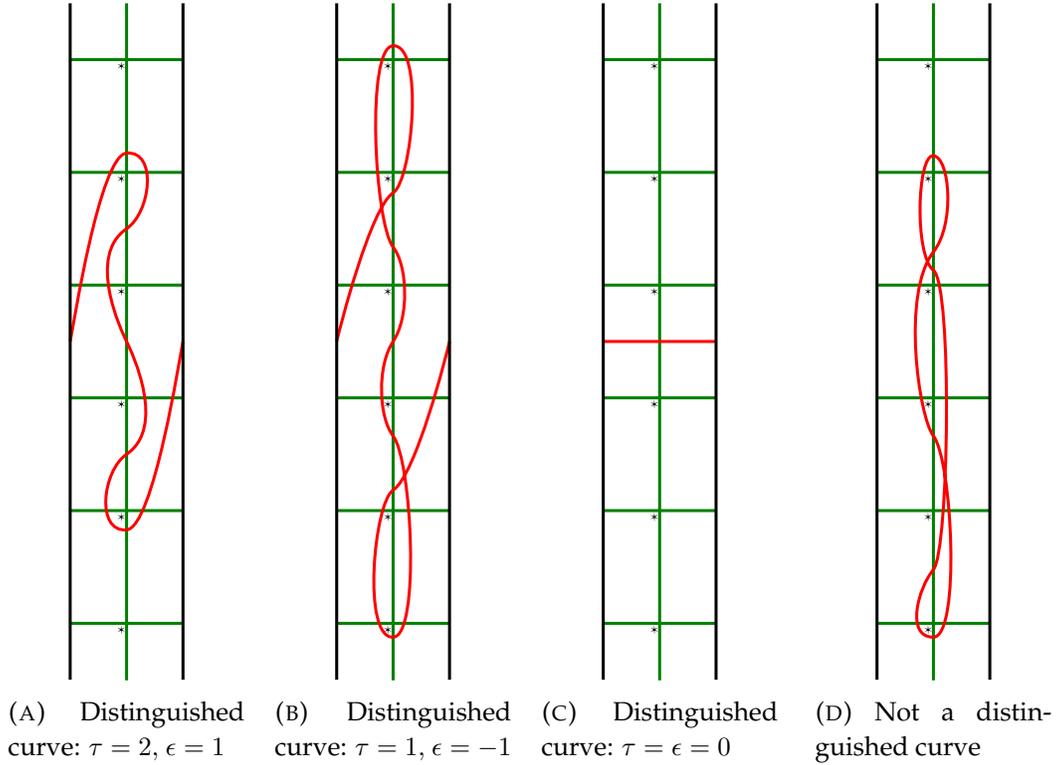

  \centering
  \begin{subfigure}{.21\textwidth}
    \centering
    \import{figures}{Imm_HFK_eP.pdf_tex}
    \caption{Distinguished curve: ${\tau=2},$ ${\epsilon =1}$}
  \end{subfigure}\hspace{.3cm}
  \begin{subfigure}{.21\textwidth}
    \centering
    \import{figures}{Imm_HFK_eN.pdf_tex}
    \caption{Distinguished curve: ${\tau=1},$ ${\epsilon =-1}$}
  \end{subfigure}\hspace{.3cm}
  \begin{subfigure}{.21\textwidth}
    \centering
    \import{figures}{Imm_HFK_eZ.pdf_tex}
    \caption{Distinguished curve: $\tau=\epsilon =0$}
  \end{subfigure}
  \hspace{.3cm}
  \begin{subfigure}{.21\textwidth}
    \centering
    \import{figures}{Imm_HFK_NS.pdf_tex}
    \caption{Not a distinguished curve}
  \end{subfigure}
  \caption{Various behaviors for lifted immersed curve invariants for knot exteriors}
  \label{fig:Imm-HFK}
\end{figure}

\section{Bordered contact invariants}\label{sec:invariants}

In this section, we define contact invariants in bordered (bi)modules and prove the invariance and the pairing theorems.

\subsection{Definition and invariance}
Let $\Y=(Y,\Gamma,\F_1 \sqcup \F_2)$ be a bordered sutured manifold where $\F_1$ and $\F_2$ are surfaces parametrized by arc diagrams $\Z_1$ and $\Z_2$ of rank $n_1$ and $n_2$, respectively. Recall from Section~\ref{subsec:elementary} that for $I \subset \{1,\ldots,n_1\}$ and $J \subset \{1,\ldots,n_2\}$, there exist elementary caps $\C_I$ and $\C_J$ and we obtain a sutured manifold by attaching the caps to $\Y$:
\[
  C(\Y) := \C_I \cup \Y \cup \C_J = (Y, \Gamma_I \cup \Gamma \cup \Gamma_J).
\]    
We say a contact structure $\xi$ on $C(\Y)$ is \emph{compatible with} $\Y$, and call $(\Y,\xi)$ a \emph{bordered sutured contact manifold}. 

Let $(\Y,\xi)$ be a bordered sutured contact manifold. We first define a type-$AA$ contact invariant of $\xi$. To do so, we recall from Section~\ref{subsec:elementary} that for a bordered Heegaard diagram $\HH$ of $-\Y$, we can construct a sutured Heegaard diagram $C(\HH)$ of $C(-\Y)$ by attaching the elementary Heegaard diagrams $\HH_{I^c}$ and $\HH_{J^c}$. Also, we say two cycles in a type-$AA$ module $M$ are \emph{equivalent} if they are the same in the homology; i.e., $a\sim b$  if $[a] = [b] \in H_*(M,\partial)$ where $\partial = m_{0|1|0}$.

\begin{definition}
Let $\Y=(Y,\Gamma,\F_1\sqcup\F_2)$ be a bordered sutured manifold and $\xi$ a contact structure compatible with $\Y$. For an admissible Heegaard diagram $\HH$ for $-\Y$, we define $c_{\AAA}(\xi, \HH)$ to be the equivalence class of cycles in $\BSAA(\HH)$ such that 
\[
  \left[\iota_I \cdot c_{\AAA}(\xi,\HH) \cdot \iota_J\right]=\EH(\xi, C(\HH)) \in \SFH(C(\HH)).
\]
\end{definition}

\begin{proof}[Proof of Theorem~\ref{thm:main}] 
  We show that $c_{\AAA}(\xi)$ is well-defined and is indeed an invariant of the bordered sutured contact manifold, and then define other flavors. First, recall from Section~\ref{subsec:invariant} that $\EH(\xi)$ is independent of the choice of a Heegaard diagram for $-\Y$ and hence well-defined.  
  Now we show that $c_{\AAA}(\xi)$ also does not depend on the choice of bordered sutured Heegaard diagram of $\Y$. Let $\HH$ and $\HH'$ be two Heegaard diagrams of $\Y$, and denote $M=\BSAA(\HH)$ and $M'=\BSAA(\HH')$. Then by \cite[Proposition~4.5]{Zarev:borderedSutured}, there exists a sequence of Heegaard moves sending $\HH$ to $\HH'$, supported on the interior of the Heegaard surfaces. These moves induce an $\Aoo$ homotopy equivalence: 
  \[
    f=\bigoplus f_{i,j}\colon \A(\Z)^{\otimes (i-1)} \otimes M \otimes \A(\Z)^{\otimes (j-1)} \to M'.
  \]
  This map respects idempotents, and hence the restriction of $f_{0,0}$ to $\iota_I \cdot M \cdot \iota_J$ is a chain homotopy equivalence. We obtain an induced chain homotopy equivalence by pairing with elementary caps:
  \[
    F=id \boxtimes f \boxtimes id\colon \iota_I \cdot M \cdot \iota_J \to \iota_I \cdot M' \cdot \iota_J
  \]
  Since the type-$D$ structures for the elementary caps have a unique generator with eo differential, we must have that:
  \[
    m_I^{\vee} \boxtimes c_{\AAA}(\xi,\HH) \boxtimes m_J \mapsto m_I^{\vee} \boxtimes f_{0,0}(c_{\AAA}(\xi,\HH)) \boxtimes m_J
  \]
  More explicitly, the above Heegaard moves induce Heegaard moves between the sutured Heegaard diagrams $C(\HH)$ and $C(\HH')$, inducing the (natural) chain homotopy equivalence $F\colon \SFC(C(\HH)) \to \SFC(C(\HH'))$ from \cite{JTZ:naturality} (see Section~\ref{subsec:invariant}).
 
  Hence, under the identification $\iota_I \cdot H(\BSA(\HH))\cdot \iota_J \cong \SFH(C(\HH))$, the induced chain homotopy equivalence $f_{0,0}$ is the same as $F$. As before, naturality of $\SFC$ implies that $c(\xi,\overline{\HH}) \mapsto c(\xi,\overline{\HH'})$ under the map $F$ (up to a boundary), and so $f(c_{\AAA}(\xi,\HH)) = c_{\AAA}(\xi,\HH')$ again up to a boundary. Hence, the equivalence class of $c_{\AAA}$ is independent of the choice of a Heegaard diagram.

  From $c_{\AAA}(\xi)$, we can define other flavors as follows: 
  \begin{align*}
    &c_{\DA}(\xi) := \iota_I^{\vee} \boxtimes c_{\AAA}(\xi)\\
    &c_{\AD}(\xi) := c_{\AAA}(\xi) \boxtimes \iota_J^{\vee}\\
    &c_{\DD}(\xi) := \iota_I^{\vee} \boxtimes c_{\AAA}(\xi) \boxtimes \iota_J^{\vee}
  \end{align*}
  where $\iota_I^{\vee} \in \BSDD(\AZ(\F_1))$ and $\iota_J^{\vee} \in \BSDD(\AZ(\F_2))$

  To finish the proof of Theorem \ref{thm:main}, we remark that these other flavors defined above are well-defined  cycles, since $\iota^{\vee}_{I^c}$ is a cycle in $\BSDD(\AZ).$
\end{proof}

\begin{remark}
  In the above, the element $\iota_I^{\vee}$ belongs to the \emph{specific} model of $\BSDD(\TW_+)$ corresponding to the $\AZ$ diagram. In principle, the choice of diagram should not make a difference, but to sidestep potential issues of naturality, we define it in this way.
\end{remark}

\begin{remark}\label{rmk:friendly}
  Alishahi, F\"oldv\'ari, Hendricks, Licata, Petkova, and V\'ertesi showed \cite[Theorem 3]{AFHLPV:borderedInvariants} that their type-$A$ invariant (also denoted $c_A$) satisfies $c_A(Y,\xi,\F) \cdot \iota_+ = EH(Y,\Gamma(\F),\xi),$ where $\iota_+$ is the idempotent in $\A$ corresponding to the elementary cap. Thus, by definition, their $c_A$ agrees with the $c_A$ we defined above. However, their $c_D$ is defined from the same kind of diagram as $c_A,$ unlike in our case, where we attach the $\AZ$ diagram corresponding to a twisting slice. As a result, their invariants are only defined on a more restricted set of bordered sutured manifolds. We suspect that in this restricted context, their $c_D$ is, in fact, equivalent to ours.
\end{remark}

\subsection{Pairing bordered contact invariants}

Our goal now is to prove Theorem \ref{thm:pairing}. The Theorem will follow from the following:

\begin{lemma}\label{lem:pairing}
  Let $\Y_1=(Y_1, \Gamma_1, \F)$ and $\Y_2=(Y_2, \Gamma_2, -\F)$ be bordered sutured manifolds, and suppose $\xi_1$ and $\xi_2$ are contact structures on $C(\Y_1)=(Y_1,\Gamma_1 \cup \Gamma_I)$ and $C(\Y_2)=(Y_2,\overline{\Gamma}_I\cup \Gamma_2)$, respectively, such that $\Gamma_I$ is an elementary dividing set for $I\subset \{1,\ldots,n\}$. Then we have 
  \[
    [c_A(\xi_1) \boxtimes \iota_{I^c}^{\vee}\boxtimes c_A(\xi_2)]  = \EH(\xi \Cup \xi_2) \in \SFH(C(\Y_1)\Cup C(\Y_2)), 
  \]
  where $\iota_{I^c}^{\vee}\in\BSDD(\AZ(\F))$.
\end{lemma}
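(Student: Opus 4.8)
\textbf{Proof proposal for Lemma~\ref{lem:pairing}.}

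The plan is to reduce the statement to a combination of the pairing theorem for bordered sutured modules (Zarev), the identification of contact invariants with $\EH$ built into the definition of $c_A$, and the decomposition of the HKM gluing map into contact handle maps (Theorem~\ref{thm:digramGluing}). First I would unwind the left-hand side using the definition of $c_A(\xi_i)$ and the description of $\BSD$ of the elementary cap from Section~\ref{subsec:elementary}: since $\BSD(\HH_{I})$ is an elementary type-$D$ module with a single generator $m_I$ and zero differential, box-tensoring $\BSA(-\Y_1)$ with it picks out exactly the summand $\iota_{I^c}\cdot\BSA(-\Y_1)\cong\SFC(C(-\Y_1))$ in which $c_A(\xi_1)$ represents $\EH(\xi_1)$, by Theorem~\ref{thm:CFA=SFC} and the defining property of $c_A$. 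The object $\iota_{I^c}^{\vee}\in\BSDD(\AZ(\F))$ is precisely the cycle from Proposition~\ref{prop:AZ} that realizes, after pairing on both sides, the Zarev gluing map $\Psi_F$ of Equation~\eqref{eq:ZarevJoin}; I would invoke this to rewrite $c_A(\xi_1)\boxtimes\iota_{I^c}^{\vee}\boxtimes c_A(\xi_2)$ as $\Psi_F$ applied to (representatives of) $\EH(\xi_1)$ and $\EH(\xi_2)$, landing in $\SFC(C(\Y_1)\Cup C(\Y_2))$ since $C(\Y_1)\Cup C(\Y_2)=\Y_1\cup\TW^-_{\F}\cup\Y_2$ with the caps reattached.

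The heart of the argument is then to show that $\Psi_F(\EH(\xi_1),\EH(\xi_2))=\EH(\xi_1\Cup_{\F}\xi_2)$ at the level of homology. I would do this by identifying $\Psi_F$ with the HKM gluing map $\Phi_F$ associated to the negative twisting slice (together with the $I$-invariant collar of the glued surface), for which the gluing property $\Phi_F(\EH(\xi_1)\otimes\EH(\xi_2))=\EH(\xi_1\Cup\xi_2)$ is exactly the statement of \cite{HKM:suturedTQFT}. The equivalence of $\Psi_F$ and $\Phi_F$ is the content that has been established by Leigon--Salmoiraghi \cite{LS:HKMequivZarev} and in the handle-decomposition language of Juh\'asz--Zemke \cite{JZ:contact_handle} (Theorem~\ref{thm:digramGluing}): one writes the negative twisting slice $\TW^-_{\F}$ (rel the $I$-invariant collars coming from the caps) as a contact handle decomposition $\F\times[0,1]\cup h_1\cup\cdots\cup h_k$ consisting only of contact $1$- and $2$-handles (no $0$- or $3$-handles, and no isolated components), and checks that the Zarev map induced by $\iota_{I^c}^{\vee}$ agrees, handle by handle, with the composition $C_{h_k}\circ\cdots\circ C_{h_1}\circ\Phi_{\xi|_{\F\times I}}$. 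On the one hand $C_{h^1}$ acts as the identity on generators and $C_{h^2}$ appends a single intersection point $x_0$; on the other hand the $\AZ$ diagram is assembled from exactly such elementary pieces, so the two prescriptions match on the nose at the chain level.

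The main obstacle I anticipate is precisely this diagrammatic bookkeeping: verifying that the specific model $\iota_{I^c}^{\vee}\in\BSDD(\AZ(\F))$ — as opposed to some homotopy-equivalent representative — induces the HKM gluing map with its naturality intact, and not merely a map equal to it up to homotopy. This is the reason the paper fixes $\iota^{\vee}$ in a particular model (see the Remark following the proof of Theorem~\ref{thm:main}); I would need to track that the chain homotopy equivalences relating different Heegaard diagrams for $C(\Y_1)\Cup C(\Y_2)$ respect the splitting into the $\Y_i$-pieces and the twisting slice, so that the equality descends to $\SFH$ as stated. A secondary, more routine point is admissibility: one must choose the Heegaard diagrams for $\Y_1$, $\Y_2$, and $\AZ(\F)$ so that the box tensor products are all defined, which follows from standard winding/stabilization arguments as in \cite{Zarev:borderedSutured}. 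Once these are in place, Theorem~\ref{thm:pairing} follows by taking $\F_1=\F_2=\varnothing$ (or carrying extra boundary components along passively) and passing between $\BSAA$ and $\BSA/\BSD$ via the caps exactly as in the proof of Theorem~\ref{thm:main}.
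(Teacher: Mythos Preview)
Your proposal is correct in outline and would yield a valid proof, but it takes a genuinely different route from the paper.

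You reduce everything to the black-box equivalence $\Psi_F=\Phi_F$ between Zarev's gluing map and the HKM gluing map (Leigon--Salmoiraghi), then invoke the HKM gluing property $\Phi_F(\EH(\xi_1)\otimes\EH(\xi_2))=\EH(\xi_1\Cup\xi_2)$. The paper does \emph{not} use this equivalence as input. Instead it works directly on a single Heegaard diagram $\HH_1\cup\AZ\cup\HH_2$: it decomposes $C(\Y_2)$ (not the twisting slice) into contact $1$- and $2$-handles attached to the double cap $\mathcal{D}(\C_I)$, identifies the contact class of the $I$-invariant structure on that double as $m_I^{\vee}\boxtimes\iota_I\boxtimes m_I$ via Proposition~\ref{prop:contactAZ}, and then uses the Juh\'asz--Zemke diagrammatic handle maps to build up $c_A(\xi_2)$ inside $\HH_2=\overline{\AZ}\cup\HH_I\cup(\text{handles})$. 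The remaining step is to check that $c_A(\xi_1)\boxtimes\iota_{I^c}^{\vee}\boxtimes\iota_I\boxtimes m_I$ agrees with $c_A(\xi_1)\boxtimes m_I$ (i.e.\ that inserting $\AZ\cup\overline{\AZ}$ is trivial on the nose), which is done using the explicit homotopy equivalence of \cite[Lemma~5.6]{Zarev:joining} and the rigidity of the one-generator type-$D$ module $\BSD(\HH_I)$.

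What each approach buys: yours is shorter and conceptually cleaner, but imports a substantial external theorem and still leaves exactly the ``diagrammatic bookkeeping'' you flag as the main obstacle---namely, that the specific cycle $\iota_{I^c}^{\vee}$ in the $\AZ$ model induces the Zarev map as stated in \eqref{eq:ZarevJoin}, with the correct idempotent. The paper's approach is more self-contained and sidesteps the full $\Psi_F=\Phi_F$ statement; in particular, your sketch ``the $\AZ$ diagram is assembled from exactly such elementary pieces, so the two prescriptions match on the nose'' is not how the paper proceeds and would need real work to make precise---the handles in the paper are attached to $\Y_2$, not to the twisting slice.
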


\begin{proof}
  Let us remark first that the homology class of $c_A(\xi_1) \boxtimes \iota_{I^c}^{\vee}\boxtimes c_A(\xi_2)$ is indeed independent of any diagrammatic choices. As above, this follows from the fact that $\iota_{I^c}^{\vee}$ is a cycle in $\BSDD(\AZ(\F)),$ so that any type-$A$ isomorphism mapping $c_A(\xi_1)$ to $c'_A(\xi_1)$ will induce an isomorphism mapping $c_A(\xi_1) \boxtimes \iota_{I^c}^{\vee}\boxtimes c_A(\xi_2)$ to $c'_A(\xi_1) \boxtimes \iota_{I^c}^{\vee}\boxtimes c_A(\xi_2)$.

  We shall therefore prove the lemma by computing the contact class in $C(\Y_1)\Cup C(Y_2)$ in two ways, both resulting in the same Heegaard diagram.

  First, consider a contact handle decomposition of $-C(\Y_2)$ such that contact 1- and 2-handles are attached to an $I$-invariant contact structure $\xi_I$ on $(F\times [0,1], \Gamma_{I\to I}, \xi_I)$. Here $(F\times[0,1], \Gamma_{I\to I})$ can be considered as the double $\mathcal{D}(\C_I) = -\C_I \cup \F \times [0,1] \cup \C_I$. Thus, $\Y_2$ itself may be expressed as $\F \times [0,1]\cup \C_I$ with contact handles attached away from the bordered surface $\F.$ We may choose the convenient (bordered sutured) Heegaard diagram for $\F \times [0,1] \cup \C_I$ obtained as the union of $\overline{AZ}(\F)$ with the Heegaard diagram $\HH_I$ for the cap $\C_I$.  Thus, starting with the generator representing the contact class $c_A(\xi_I) \in \BSA(\F \times [0,1] \cup \C_I),$ repeatedly applying the diagrammatic handle attachments of Juh\'asz-Zemke \cite{JZ:contact_handle} (Theorem~\ref{thm:digramGluing}), we obtain a Heegaard diagram $\HH_{2}$ for $-\Y_2$ by applying Heegaard diagramatic handles $h_1,\dots,h_n$ to $\HH_C,$ together with a representative of the contact class $c_A(\xi_2) \in \BSA(-\Y_2).$
    
  \textbf{Claim 1:} The contact class $c(\xi_I) \in \SFC(-\mathcal{D}(\C_I))$ is represented by the diagonal class $m_I^{vee}\boxtimes \iota_I \boxtimes m_I.$

  proof: By Proposition~\ref{prop:contactAZ}, we know that $a \in \A(\Z)$ is $c_{\AAA}(\xi_a)$. An idempotent $\iota_I$ represents $I$-invariant contact structure, so the isomorphism sends $\iota \to m_I^{\vee}\boxtimes \iota_I \boxtimes m_I = c(\xi_I)$. 

  Therefore, taking any Heegaard diagram $\HH_{1}$ for $-\Y_1,$ we may obtain a Heegaard diagram representing $-Y_1 \cup \TW^+ \cup -Y_2$ by the union: $\HH_{1} \cup \AZ \cup \HH_{2}$ and a generator representing $c_A(\xi_1)\boxtimes \iota_{I^c}^{\vee} \boxtimes c_A(\xi_2).$ See Figure \ref{fig:Pairing_HD}.
    
  We now wish to argue that this generator in this same Heegaard diagram represents the contact class $c(\xi_1\Cup\xi_2).$ First, we notice that $\HH_{Y_1}\cup \AZ \cup \overline{AZ} \cup \HH_C$ is a Heegaard diagram representing $-C(\Y_1).$ This is because $\AZ \cup \overline{\AZ}$ represents $\TW^+\cup\TW^-,$ which is simply $F \times I$ with vertical dividing curves without twisting.

  \textbf{Claim 2:} $c_A(\xi_1)\boxtimes \iota^{\vee}_{I^c} \boxtimes \iota_I \boxtimes m_I = c_A(\xi_1)\boxtimes m_I$

  To see this, we observe that $\TW^+\cup\TW^- \cup \C_I$ is topologically the same as $-\C_I$, and thus $\AZ \cup \overline{AZ}\cup \HH_I$ represents the same bordered sutured manifold as $\HH_I.$ Thus, there is a sequence of Heegaard moves taking one to the other, which induces a type-$D$ homotopy equivalence $f:\BSD(\HH_I)\to \A \otimes \BSD\left(\AZ \cup \overline{AZ}\cup \HH_I\right).$ On the other hand, by \cite[Lemma~5.6]{Zarev:joining}, there is a type-$D$ homotopy equivalence $g:\BSD\left(\AZ \cup \overline{AZ}\cup \HH_I\right) \to \A \otimes \BSD(\HH_I)$ which maps $\iota^{\vee}_{I^c} \boxtimes \iota_I \boxtimes m_I$ to $m_I \otimes 1.$ The composition of $f$ and $g$ is an automorphism of $\BSD(\HH_I),$ and this module has a unique generator and no differential, so that there is a unique automorphism sending $m_I$ to $m_I \otimes 1.$ (This can be seen purely algebraically: otherwise, one could find type-$A$ modules that would pair in inequivalent ways before versus after any other kind of morphism). It follows that $g$ is homotopy inverse of $f$ and thus is homotopic to the map obtained by performing the Heegaard moves in the reverse order. These Heegaard moves induce the natural map $Id \boxtimes g: \BSA(\HH_{1})\boxtimes\BSD\left(\AZ \cup \overline{AZ}\cup \HH_I\right) \to \BSA(\HH_{1})\boxtimes\BSD(\HH_I)$  (recalling that both of these may be identified with $\SFC(-C(\Y_1)).$ Therefore, by definition of $c_A,$ the claim follows.

  Now we may finish the proof of the Lemma. Since $c_A(\xi_1)\boxtimes \iota^{\vee}_{I^c} \boxtimes \iota_I \boxtimes m_I$ represents the contact class, we again appeal to the diagrammatic handle attachments of \cite{JZ:contact_handle} to see that the resulting generator in $\HH_{1} \cup \AZ \cup \HH_{2}$ represents the contact class $c(Y_1\Cup Y_2),$ but we have seen above that this element is precisely $c_A(\xi_1)\boxtimes \iota_{I^c}^{\vee} \boxtimes c_A(\xi_2),$ as required.
\end{proof}

\begin{figure}
  \import{figures/}{Pairing_Heegaard_diagram.pdf_tex}
  \caption{A Heegaard diagram for $-\Y_1 \cup \TW^+ \cup -\Y_2$}
  \label{fig:Pairing_HD}
\end{figure}

\subsection{\texorpdfstring{$\Aoo$}{Aoo} operations and bypass attachments}
Here, we prove 

\bypass*

\begin{proof}
  Let $a = a(S,T,\phi)$ be a cycle representing an element in $H_*(\iota_I \cdot \A(\Z) \cdot \iota_J)$ and $\xi_a$ the corresponding contact structure on $(\Sigma \times [0,1], \Gamma_{I\to J})$. According to Proposition~\ref{prop:AZ}, there is a canonical identification of $H_*\left(\BSAA(\overline{\AZ(\Z)})\right)$ with $H_*\left(\A(\Z)\right),$ and hence we may find a corresponding cycle (which we also denote by $a$) in $\BSAA\left(\overline{\AZ(\Z)}\right).$ Indeed, $\overline{AZ}(\Z)$ is a Heegaard diagram for $\TW^-_{\Sigma}$ and the corresponding sutured contact invariant for $\xi_a$ will be $m_I \boxtimes a \boxtimes m_J \in \BSD(\HH_I)\boxtimes \BSAA\left(\overline{\AZ(\Z)}\right) \boxtimes \BSD(\HH_J)$ by Proposition~\ref{prop:contactAZ}. By the pairing theorem (Theorem \ref{thm:pairing}), attaching $(\Sigma \times [0,1], \xi_a)$ to $(\Y,\xi)$ will result in the contact invariant satisfying: 
  \[
    c_A(\xi \Cup \xi_a) \boxtimes m_J =c_A(\Y,\xi)\boxtimes \iota^{\vee}_{I^c} \boxtimes a \boxtimes m_J.
  \]
  As in the proof of Theorem \ref{thm:pairing}, we apply  \cite[Lemma~5.6]{Zarev:joining}, to obtain a  type-$D$ homotopy equivalence $g:\BSD(\AZ \cup \overline{AZ}\cup \HH_I) \to \A\otimes \BSD(\HH_I)$ which maps $\iota^{\vee}_{I^c} \boxtimes a \boxtimes m_I$ to $a \otimes m_I.$ We have seen above that this map is induced by Heegaard moves, and so, pairing with $\BSA(\Y)$, we obtain the natural map:
  \begin{align*}
    Id\boxtimes g: \BSA(\Y) \boxtimes \BSDD(\AZ(\Z))\boxtimes \BSAA\left(\overline{\AZ}(\Z)\right)\boxtimes \BSD(\HH_J) \to \BSA(\Y)\boxtimes \BSD(\HH_J) 
  \end{align*}
  which maps $c_A(\Y,\xi)\boxtimes \iota^{\vee}_{I^c} \boxtimes a \boxtimes m_J \mapsto m_2(c_A(\xi),a) \boxtimes m_J$. Hence, by definition, we must have that $c_A(\xi \Cup \xi_a) = m_2(c_A(\xi),a),$ as desired.
\end{proof}

\begin{remark}\label{rmk:reattaching}
  This theorem allows us to see geometric reattachment in the bordered algebra. More precisely, suppose we have two contact manifolds  $(\Y_1,\xi_1)$ and $(\Y_2,\xi_2)$ with the same boundary $\Sigma.$ Let us call their corresponding type-$A$ contact invariants $c_1$ and $c_2.$ Suppose further that $\Y_1 \Cup \Sigma \times '[0,1] \Cup \Y_2$ is a permissible contact gluing for some contact structure $\xi_a$ on $\Sigma \times [0,1].$ According to the theorem we may compute the contact invariant of the glued manifold in two different ways, depending on which of $\Y_1$ and $\Y_2$ receives the bypass attachment. They are: $c_1 \boxtimes \iota^{\vee} \boxtimes m_2(c_2,\rho_a)$ and $m_2(c_1,\rho_a) \boxtimes \iota^{\vee} \boxtimes c_2$. But these two generators are homologous since:
  \[
    \partial (c_1 \boxtimes \rho_a^{\vee} \boxtimes c_2) = c_1 \boxtimes \iota^{\vee} \boxtimes m_2(c_2,\rho_a) + m_2(c_1,\rho_a) \boxtimes \iota^{\vee} \boxtimes c_2
  \]
\end{remark}

\section{Torus boundary} \label{sec:torus}

Let $(Y,\Gamma_D, \F)$ be a bordered sutured manifold with torus boundary where $\F = (T^2_{\bullet}, \Lambda)$ is a punctured torus which is parametrized by $\Z$ shown in Figure~\ref{fig:parametrizedTorus} and $\Gamma_D$ is a dividing set consisting a single dividing curve on a disk $D$. For $I \subset \{0,1\}$, we take an elementary cap $\C_I$ and obtain a sutured manifold by attaching it to  $\Y$:
\[
  \Y \cup \C_I = (Y, \Gamma_D \cup \Gamma_I).
\]

If $I = \varnothing$ or $\{0,1\}$, then $\Gamma_D \cup \Gamma_I$ contains a contractible dividing curve, so any contact structure on $Y \cup \C_I$ is overtwisted. If $I = \{0\}$ or $\{1\}$, then $\Gamma$ consists of two homologically essential closed curves. We will denote 
\[
  \Gamma_0 := \Gamma_D \cup \Gamma_{\{0\}} \quad \Gamma_1 := \Gamma_D \cup \Gamma_{\{1\}}.
\]
According to Theorem~\ref{thm:CFA=SFC}, there exist natural homotopy equivalences:
\begin{align*}
  CFA(-\Y)\cdot \iota_0 \simeq \SFC(-Y,-\Gamma_0),\\
  CFA(-\Y)\cdot \iota_1 \simeq \SFC(-Y,-\Gamma_1)
\end{align*}
and they map $c_A(\xi)$ to $c(\xi)$, where $c(\xi)$ is a sutured contact class such that $[c(\xi)] = \EH(\xi)$.  
Recall that the torus algebra $\A(\Z,0)$ is given by
\[
  \xymatrix{
    \iota_0\bullet\ar@/^1pc/[r]^{\rho_1}\ar@/_1pc/[r]_{\rho_3} & \bullet\iota_1\ar[l]_{\rho_2}
  }/(\rho_2\rho_1=\rho_3\rho_2=0)
\]
with $\rho_{12}=\rho_1\rho_2$, $\rho_{23}=\rho_2\rho_3$ and $\rho_{123}=\rho_1\rho_2\rho_3$. Thus $\{\iota_0,\iota_1,\rho_1,\rho_2,\rho_3,\rho_{12},\rho_{e3},\rho_{123}\}$ is a basis for $\A(\Z,0)$.

In this section, we compute the modules $\BSDD(\AZ)$ and for $\TW_{\F}^{+}$. Then using this, we calculate the bordered contact invariants of tight contact structures on integer framed solid tori. We also explicitly describe the contact structures corresponding to the generators of the torus algebra and prove Theorem~\ref{thm:contactTorusAlg}.

\subsection{A model for \texorpdfstring{$\BSDD(\AZ)$}{BSDD(AZ)} for the torus}\label{sec:DD_torus}
Here, we compute $\BSDD(\TW^{+}_{\F})$ using the Heegaard diagram $\AZ$ shown in Figure \ref{fig:DD_id}. We shall only be interested in the generators of $\BSDD$ which arise as intersection points between exactly one $\alpha$-arc and one $\beta$-arc, which are the generators corresponding to the idempotents $\iota_0$ and $\iota_1$. Labelling the generators as in the figure, we see that they satisfy:

\begin{align*}
  \delta^1\left(\rho_{123}^{\vee}\right) &= \rho_3 \otimes \rho_{12}^{\vee} + \rho_{23}^{\vee} \otimes \rho_1, & \delta^1\left(\rho_{23}^{\vee}\right) &= \rho_3 \otimes \rho_{2}^{\vee} + \rho_{3}^{\vee} \otimes \rho_2, \\
  \delta^1\left(\rho_{12}^{\vee}\right) &= \rho_2 \otimes \rho_{1}^{\vee} + \rho_{2}^{\vee} \otimes \rho_1, & \delta^1\left(\rho_{3}^{\vee}\right) &= \rho_3 \otimes \iota_0^{\vee} + \iota_1^{\vee} \otimes \rho_3, \\
  \delta^1\left(\rho_{1}^{\vee}\right) &= \rho_1 \otimes \iota_0^{\vee} + \iota_1^{\vee} \otimes \rho_1, & \delta^1\left(\rho_{2}^{\vee}\right) &= \rho_2 \otimes \iota_1^{\vee} + \iota_0^{\vee} \otimes \rho_2.
\end{align*}

\begin{figure}
  \import{figures/}{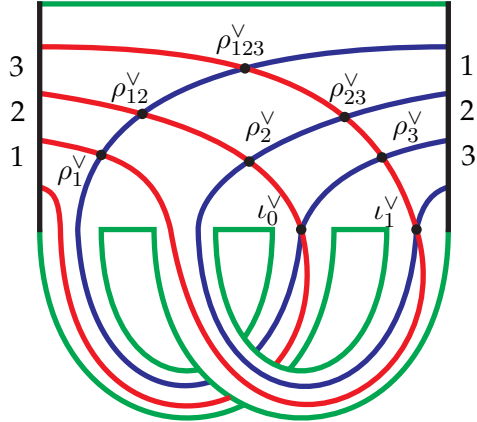}
  \caption{The $\AZ$ diagram for $\TW^+_{\F}$ in the case of $\F = (T^2_{\bullet}, \Lambda).$ The intersections between $\alpha$- and $\beta$-arcs are labelled.}
  \label{fig:DD_id}
\end{figure}

\subsection{Basic slices and \texorpdfstring{$\Aoo$}{Aoo} operations} \label{subsec:Aoo}
We first prove Theorem~\ref{thm:contactTorusAlg}. According to Theorem~\ref{thm:bypass}, a generator $a$ of $\A(\Z)$ corresponds to a contact structure $\xi_a$ on $(\Sigma\times[0,1],\Gamma_{I\to J})$. We show that each $\xi_a$ is a (union of) basic slice.

\begin{proof}[Proof of Theorem~\ref{thm:contactTorusAlg}] 
  It is sufficient to show the statement for $\rho_1$, $\rho_2$ and $\rho_3$. Since none of $\rho_{ij} = \rho_i \rho_j$ for $i<j \in \{1,2,3\}$ is vanishing in the homology, contact structures $\xi_{\rho_1}$, $\xi_{\rho_2}$ and $\xi_{\rho_3}$ are minimally twisting and hence basic slices. Thus it remains to determine the sign of each basic slice. 

  To do so, we will make use of the Legendrian right-handed trefoil $K$ with $\tb=1$ in the standard contact 3-sphere. Let $(Y(K),\xi)$ be the complement of a standard neighborhood of $K$. Also, let $\xi_{\pm}$ denotes the basic slices $B_{\pm}(\infty,1)$. According to \cite{SV:LOSS} we know 
  \begin{align*}
    \Phi(\EH(\xi), EH(\xi_-)) &= \Lhat(K),\\ 
    \Phi(\EH(\xi), EH(\xi_+)) &= \Lhat^*(K).
  \end{align*}
  under the identification of $SFH(-Y,-\Gamma_{\mu})$ and $\HFKhat(-Y,K)$. Here, $\Lhat^*(K)$ is the sutured contact invariant of $\xi \cup \xi_+$, which corresponds to the conjugate contact structure of $\xi\cup \xi_-.$ 
  According to \cite{LOSS:LOSS}, we know $\Lhat(K)$ and $\Lhat^*(K)$ are nonvanishing.

  Figure \ref{fig:RHT_A1} shows the type-$A$ structure for a $-1$-framed left-handed trefoil, where the generators in the  meridional idempotents are shown as filled circles. By computing the spin$^c$ (or Alexander) gradings, we can locate $\Lhat(K)$ and  $\Lhat^*(K)$, as shown in the figure. We see that of these two generators, only $\Lhat$ is of the form $m_2(x,\rho_2)$ for some $x\in\BSA(-Y(K)).$ Hence, $\rho_2$ must correspond to a negative basic slice. 

  The argument for $\rho_1$ and $\rho_3$ is similar. Figure \ref{fig:RHT_A2} shows the type-$A$ module for the ``rotated'' parametrization of $-Y(K)$ (interchanging the roles of meridian and longitude). In this case, we see that $\Lhat$ can be expressed as $m_2(x,\rho_3)$ but not as $m_2(x,\rho_1).$ The exact opposite is true for $\Lhat^*,$ and so we conclude that $\rho_3$ is a negative basic slice, whereas $\rho_1$ is a positive basic slice.
\end{proof}

\begin{figure}
  \centering
  \small
  \begin{subfigure}{.4\textwidth}
    \centering
\begingroup%
  \makeatletter%
  \providecommand\color[2][]{%
    \errmessage{(Inkscape) Color is used for the text in Inkscape, but the package 'color.sty' is not loaded}%
    \renewcommand\color[2][]{}%
  }%
  \providecommand\transparent[1]{%
    \errmessage{(Inkscape) Transparency is used (non-zero) for the text in Inkscape, but the package 'transparent.sty' is not loaded}%
    \renewcommand\transparent[1]{}%
  }%
  \providecommand\rotatebox[2]{#2}%
  \newcommand*\fsize{\dimexpr\f@size pt\relax}%
  \newcommand*\lineheight[1]{\fontsize{\fsize}{#1\fsize}\selectfont}%
  \ifx\svgwidth\undefined%
    \setlength{\unitlength}{116.29399217bp}%
    \ifx\svgscale\undefined%
      \relax%
    \else%
      \setlength{\unitlength}{\unitlength * \real{\svgscale}}%
    \fi%
  \else%
    \setlength{\unitlength}{\svgwidth}%
  \fi%
  \global\let\svgwidth\undefined%
  \global\let\svgscale\undefined%
  \makeatother%
  \begin{picture}(1,0.96509518)%
    \lineheight{1}%
    \setlength\tabcolsep{0pt}%
    \put(0,0){\includegraphics[width=\unitlength,page=1]{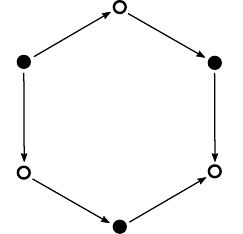}}%
    \put(0.23638354,0.07000909){\color[rgb]{0,0,0}\makebox(0,0)[lt]{\lineheight{1.25}\smash{\begin{tabular}[t]{l}2\end{tabular}}}}%
    \put(0.69105003,0.04661958){\color[rgb]{0,0,0}\makebox(0,0)[lt]{\lineheight{1.25}\smash{\begin{tabular}[t]{l}321\end{tabular}}}}%
    \put(0.91553497,0.48419097){\color[rgb]{0,0,0}\makebox(0,0)[lt]{\lineheight{1.25}\smash{\begin{tabular}[t]{l}3\end{tabular}}}}%
    \put(0.02986671,0.47871408){\color[rgb]{0,0,0}\makebox(0,0)[lt]{\lineheight{1.25}\smash{\begin{tabular}[t]{l}1\end{tabular}}}}%
    \put(0.20275806,0.86469204){\color[rgb]{0,0,0}\makebox(0,0)[lt]{\lineheight{1.25}\smash{\begin{tabular}[t]{l}321\end{tabular}}}}%
    \put(0.69696551,0.84604613){\color[rgb]{0,0,0}\makebox(0,0)[lt]{\lineheight{1.25}\smash{\begin{tabular}[t]{l}2\end{tabular}}}}%
    \put(0.90093179,0.76593711){\color[rgb]{0,0,0}\makebox(0,0)[lt]{\lineheight{1.25}\smash{\begin{tabular}[t]{l}$\Lhat$\end{tabular}}}}%
    \put(-0.00195238,0.76593711){\color[rgb]{0,0,0}\makebox(0,0)[lt]{\lineheight{1.25}\smash{\begin{tabular}[t]{l}$\Lhat^*$\end{tabular}}}}%
  \end{picture}%
\endgroup%

    \caption{A parametrization where $\iota_0$ is the meridional idempotent, and $\iota_1$ is the longitudinal idempotent}
    \label{fig:RHT_A1}
  \end{subfigure}
  \hspace{1cm} 
  \begin{subfigure}{.4\textwidth}
    \centering
\begingroup%
  \makeatletter%
  \providecommand\color[2][]{%
    \errmessage{(Inkscape) Color is used for the text in Inkscape, but the package 'color.sty' is not loaded}%
    \renewcommand\color[2][]{}%
  }%
  \providecommand\transparent[1]{%
    \errmessage{(Inkscape) Transparency is used (non-zero) for the text in Inkscape, but the package 'transparent.sty' is not loaded}%
    \renewcommand\transparent[1]{}%
  }%
  \providecommand\rotatebox[2]{#2}%
  \newcommand*\fsize{\dimexpr\f@size pt\relax}%
  \newcommand*\lineheight[1]{\fontsize{\fsize}{#1\fsize}\selectfont}%
  \ifx\svgwidth\undefined%
    \setlength{\unitlength}{123.79394531bp}%
    \ifx\svgscale\undefined%
      \relax%
    \else%
      \setlength{\unitlength}{\unitlength * \real{\svgscale}}%
    \fi%
  \else%
    \setlength{\unitlength}{\svgwidth}%
  \fi%
  \global\let\svgwidth\undefined%
  \global\let\svgscale\undefined%
  \makeatother%
  \begin{picture}(1,0.90662569)%
    \lineheight{1}%
    \setlength\tabcolsep{0pt}%
    \put(0,0){\includegraphics[width=\unitlength,page=1]{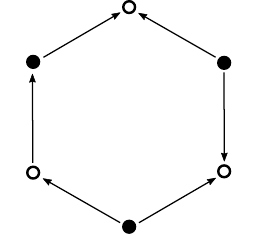}}%
    \put(0.27053033,0.05365064){\color[rgb]{0,0,0}\makebox(0,0)[lt]{\lineheight{1.25}\smash{\begin{tabular}[t]{l}1\end{tabular}}}}%
    \put(0.68553428,0.04379513){\color[rgb]{0,0,0}\makebox(0,0)[lt]{\lineheight{1.25}\smash{\begin{tabular}[t]{l}3\end{tabular}}}}%
    \put(0.89641899,0.45485662){\color[rgb]{0,0,0}\makebox(0,0)[lt]{\lineheight{1.25}\smash{\begin{tabular}[t]{l}321\end{tabular}}}}%
    \put(0.05229123,0.43759458){\color[rgb]{0,0,0}\makebox(0,0)[lt]{\lineheight{1.25}\smash{\begin{tabular}[t]{l}2\end{tabular}}}}%
    \put(0.2631758,0.81230535){\color[rgb]{0,0,0}\makebox(0,0)[lt]{\lineheight{1.25}\smash{\begin{tabular}[t]{l}3\end{tabular}}}}%
    \put(0.70320819,0.80690605){\color[rgb]{0,0,0}\makebox(0,0)[lt]{\lineheight{1.25}\smash{\begin{tabular}[t]{l}1\end{tabular}}}}%
    \put(0.90693416,0.15003857){\color[rgb]{0,0,0}\makebox(0,0)[lt]{\lineheight{1.25}\smash{\begin{tabular}[t]{l}$\Lhat$\end{tabular}}}}%
    \put(-0.00183403,0.15003857){\color[rgb]{0,0,0}\makebox(0,0)[lt]{\lineheight{1.25}\smash{\begin{tabular}[t]{l}$\Lhat^*$\end{tabular}}}}%
  \end{picture}%
\endgroup%

    \caption{A parametrization where $\iota_0$ is the longitudinal idempotent, and $\iota_1$ is the meridional idempotent}
    \label{fig:RHT_A2}
  \end{subfigure}
  \caption{Type-$A$ modules for the $-1$-framed left-handed trefoil complement}
  \label{fig:RHT_typeA}
\end{figure}

\subsection{Solid tori}\label{subsec:solid}
Here, we compute the type-$A$ and type-$D$ contact invariants of some solid tori parametrized by $\beta$-type diagrams, which will be used for integer-framed surgery calculations in Section~\ref{sec:applications}. Let $S = S^1 \times D^2$ a solid torus, $\mu$ a meridian of $S$ and $\lambda$ a preferred longitude of $S$. Now we consider a bordered manifold $\SSS = (S, \F)$ where $\F = (\partial S, \Lambda)$ is parametrized by the arc diagram $\Z$ in Figure~\ref{fig:parametrizedTorus}.(b) such that $(\beta_0 = \lambda,\beta_1 = n\lambda + \mu)$ or $(\beta_0 = n\lambda + \mu,\beta_1 = \lambda)$ for some $n \in \mathbb{Z}\setminus\{0\}$. The type-$A$ module for $\SSS$ is generally well-known, but since we are interested in invariants coming from $\beta$-type diagrams, we compute them from Heegaard diagrams in Figure~\ref{fig:HD-solid-tori}. There are four cases, depending on orientation and choice of parametrization.

\begin{figure}
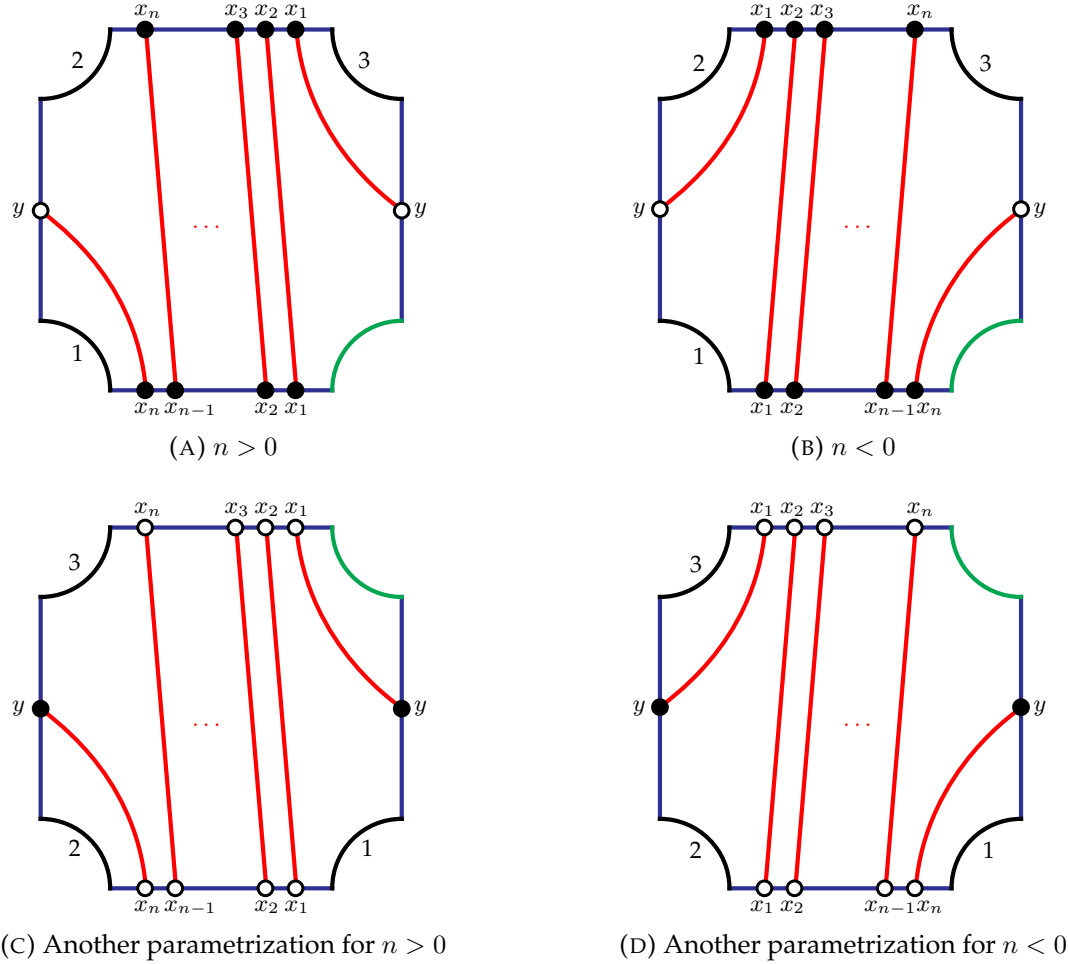

  \centering
  \begin{subfigure}{.45\textwidth}
    \centering
    \footnotesize
    \import{figures}{HD_solid_torus_1.pdf_tex}
    \caption{$n>0$}
    \label{fig:HD-solid-tori.a}
  \end{subfigure}
  \hfill
  \begin{subfigure}{.45\textwidth}
    \centering
    \footnotesize
    \import{figures}{HD_solid_torus_2.pdf_tex}
    \caption{$n<0$}
    \label{fig:HD-solid-tori.b}
  \end{subfigure}\\
  \vspace{0.5cm}
  \begin{subfigure}{.45\textwidth}
    \centering
    \footnotesize
    \import{figures}{HD_solid_torus_3.pdf_tex}
    \caption{Another parametrization for $n>0$}
    \label{fig:HD-solid-tori.c}
  \end{subfigure}
  \hfill
  \begin{subfigure}{.45\textwidth}
    \centering
    \footnotesize
    \import{figures}{HD_solid_torus_4.pdf_tex}
    \caption{Another parametrization for $n<0$}
    \label{fig:HD-solid-tori.d}
  \end{subfigure}
  \caption{Heegaard diagrams for various $n$-framed solid tori}
  \label{fig:HD-solid-tori}
\end{figure}

From these Heegaard diagrams, we compute the type-$A$ modules as shown in Figure~\ref{fig:A-solid-tori}. We see that the modules have $n+1$ generators: in one of the idempotents, there is a single generator, which we call $y,$ whereas the other idempotent has the remaining generators, which we label $x_1,\dots,x_n$. Notice also that all generators are in distinct spin$^c$-structures. 

\begin{figure}
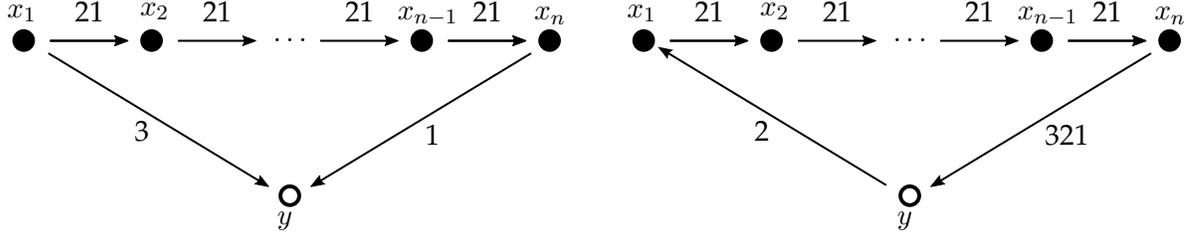
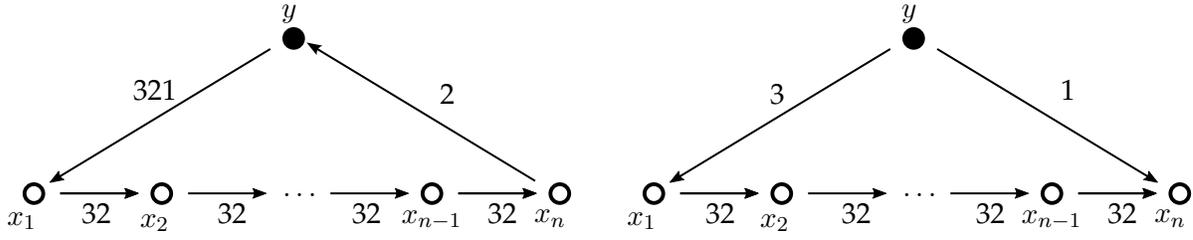

  \centering
  \begin{subfigure}{.45\textwidth}
    \import{figures}{Solid_torus_A_1.pdf_tex}
    \caption{$n>0$, every basis element represents a contact invariant for a  negatively framed solid torus.}
    \label{fig:A-solid-tori.a}
  \end{subfigure}
  \hfill
  \begin{subfigure}{.45\textwidth}
    \import{figures}{Solid_torus_A_2.pdf_tex}
    \caption{$n<0$, only $x_1$, $x_2$ and $y$ represent contact invariants for a positively framed solid torus.}
    \label{fig:A-solid-tori.b}
  \end{subfigure}\\
  \vspace{0.5cm}
  \begin{subfigure}{.45\textwidth}
    \import{figures}{Solid_torus_A_3.pdf_tex}
    \caption{Another parametrization for $n>0$}
    \label{fig:A-solid-tori.c}
  \end{subfigure}
  \hfill
  \begin{subfigure}{.45\textwidth}
    \import{figures}{Solid_torus_A_4.pdf_tex}
    \caption{Another parametrization for $n<0$}
    \label{fig:A-solid-tori.d}
  \end{subfigure}
  \caption{Directed graphs representing the type-$A$ modules for the solid tori in Figure \ref{fig:HD-solid-tori}}
  \label{fig:A-solid-tori}
\end{figure}

As discussed at the beginning of the section, we have 
\begin{align*}
  \BSA(-\SSS)\cdot \iota_0 \simeq \SFC(-S,-\Gamma_0),\\
  \BSA(-\SSS)\cdot \iota_1 \simeq \SFC(-S,-\Gamma_1).
\end{align*}
where $\Gamma_0$ consists of two dividing curves parallel to $\beta_0$ and $\Gamma_1$ consists of two dividing curves parallel to $\beta_1$.

We fix the parametrization by assuming that $\beta_0 = \lambda$ and $\beta_1 = n\lambda + \mu$. This corresponds to Figure~\ref{fig:HD-solid-tori.a}, \ref{fig:HD-solid-tori.b} and Figure~\ref{fig:A-solid-tori.a}, \ref{fig:A-solid-tori.b}. Then there exists a unique tight contact structure on $(S,\Gamma_0)$ by Theorem~\ref{thm:solid-torus}. We denote this contact structure by $\xi_0$. Also, according to Proposition~\ref{prop:tight-integer-framed}, there exist two tight contact structures on $(S,\Gamma_1)$ if $n > 1$, obtained by attaching a basic slice to $(S,\Gamma_0)$. We denote these contact structures $\xi_+$ and $\xi_-$ depending on the sign of the basic slice. There exists a unique tight contact structure if $n=1$. There are $|n|$ tight contact structures if $n < 0$ as it contains a continued fraction block with length $n-1$. We denote these contact structures by $\xi_1, \ldots, \xi_n$. Also suppose that the sign of basic slices in the continued fraction block is all negative for $\xi_1$ and all positive for $\xi_n$. 

Since $\CFA(-\SSS) \cdot \iota_0$ contains the unique generator $y$, it is the contact invariant $c_A(\xi_0)$. $\CFA(-\SSS) \cdot \iota_1$ contains $|n|$ distinct elements. When $n<0$, $c_A(\xi_i)$ corresponds to $x_i$. When $n>0$, we know $c_A(\xi_-) = x_1$ and $c_A(\xi_+) =x_n$. See Figure~\ref{fig:A-solid-tori.a} and \ref{fig:A-solid-tori.b}. The same analysis works for the other parametrization $\beta_0 = n\lambda + \mu$ and $\beta_1 = \mu$. This corresponds to Figure~\ref{fig:A-solid-tori.c} and \ref{fig:A-solid-tori.d}.

Now we can compute type-$D$ invariants by pairing the type-$A$ invariants with the $\DD$-bimodule $\BSDD(\AZ)$ for twisting slice $\TW^+$ from Section~\ref{sec:DD_torus}

\begin{figure}
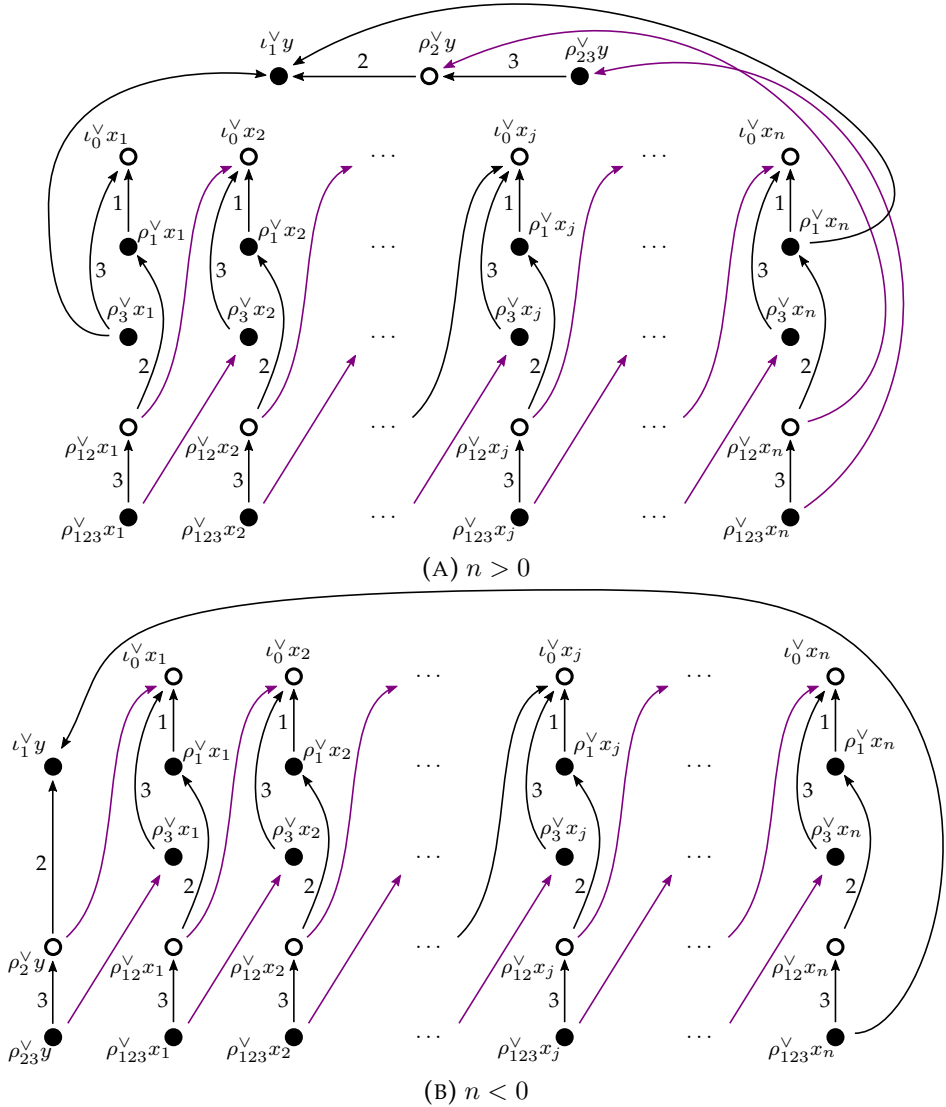

  \centering
  \begin{subfigure}{\textwidth}
    \centering
    \scriptsize
    \import{figures}{Solid_torus_D_1_full.pdf_tex}
    \caption{$n>0$}\label{fig:test}
  \end{subfigure}\\
  \begin{subfigure}{\textwidth}
    \centering
    \scriptsize
    \import{figures}{Solid_torus_D_2_full.pdf_tex}
    \caption{$n<0$}
  \end{subfigure}
  \caption{Directed graphs representing the type-$D$ structures for solid tori obtained by pairing with the twisting slice}
  \label{fig:D-solid-tori}
\end{figure}

\begin{figure}
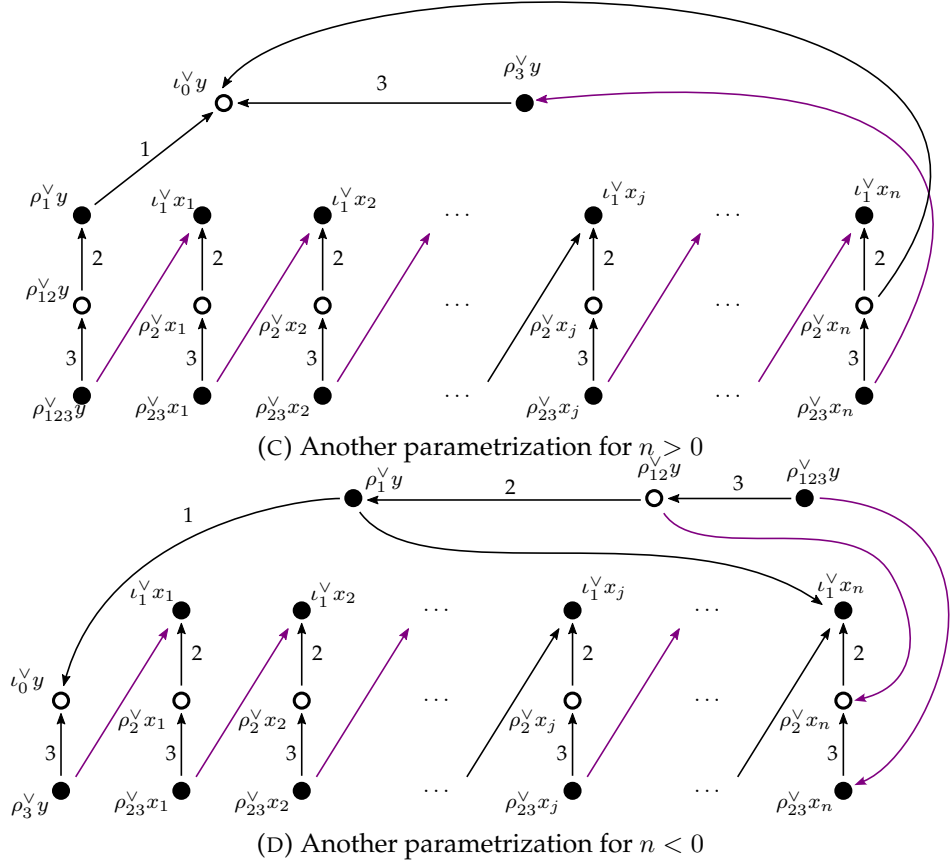
\ContinuedFloat
  \centering
  \begin{subfigure}{\textwidth}
    \centering
    \scriptsize
    \import{figures}{Solid_torus_D_3_full.pdf_tex}
    \caption{Another parametrization for $n>0$}
  \end{subfigure}\\
  \begin{subfigure}{\textwidth}
    \centering
    \scriptsize
    \import{figures}{Solid_torus_D_4_full.pdf_tex}
    \caption{Another parametrization for $n<0$}
  \end{subfigure}
  \caption{(continued)}
\end{figure}

Suppose we are interested in the contact invariant corresponding to $x_j$ on the type-$A$ side. Then the corresponding type-$D$ contact invariant is either $\iota_0^{\vee} \boxtimes x_j$ or $\iota_1^{\vee} \boxtimes x_j$ depending on the parametrization. Figure \ref{fig:D-solid-tori} shows the type-$D$ structures which arise from pairing with the twisting slice bimodule from Section \ref{sec:DD_torus}. We omit the box tensor from the notation of the generators to save space. Contracting the indicated arrows, we simplify the diagram so that it is nearly as reduced as possible, while still preserving our preferred generator. We also keep $\iota_1^{\vee}\boxtimes y$ (or $\iota_0^{\vee}\boxtimes y,$ respectively) in case that generator is of interest to us (that generator is the invariant of the unique tight contact structure with longitudinal suture).
Figure \ref{fig:D-solid-tori-simp} shows the resulting simplified type-$D$ structures.

\begin{figure}
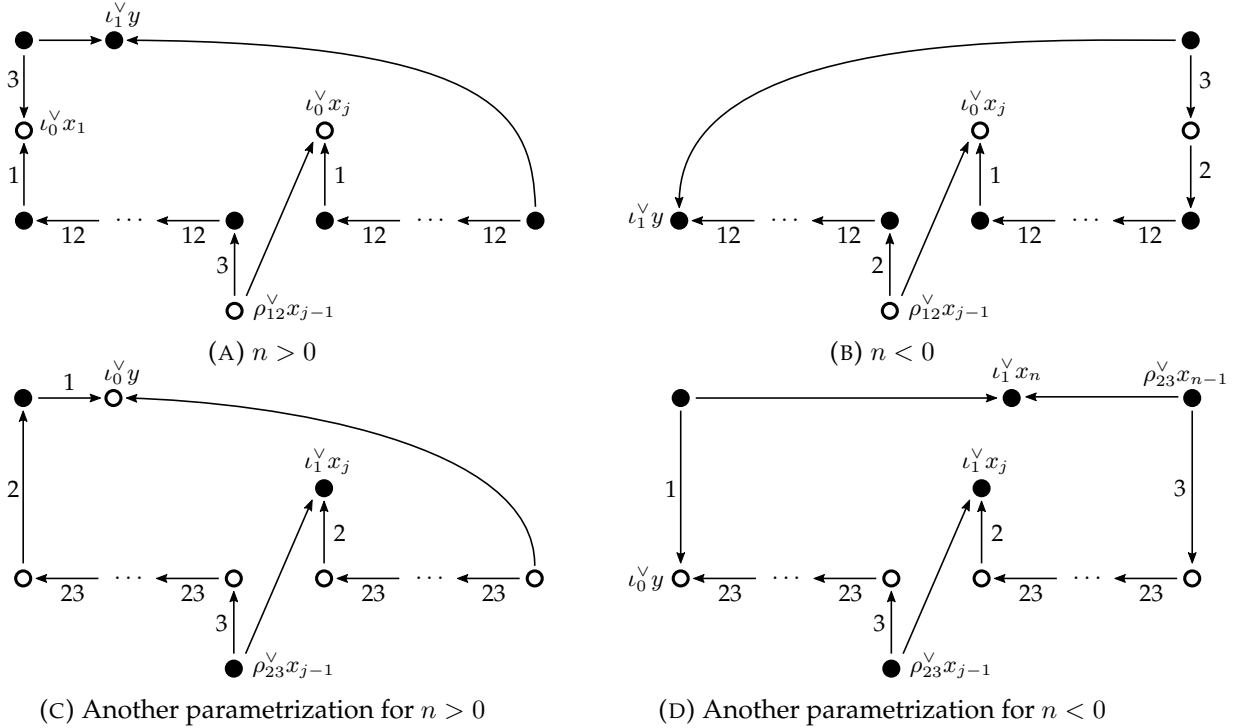

  \centering
  \begin{subfigure}{.45\textwidth}
    \centering
    \footnotesize
    \import{figures}{Solid_torus_D_1_simp.pdf_tex}
    \caption{$n>0$}
  \end{subfigure}\hfill
  \begin{subfigure}{.45\textwidth}
    \centering
    \footnotesize
    \import{figures}{Solid_torus_D_2_simp.pdf_tex}
    \caption{$n<0$}
  \end{subfigure}\\
  \begin{subfigure}{.45\textwidth}
    \centering
    \footnotesize
    \import{figures}{Solid_torus_D_3_simp.pdf_tex}
    \caption{Another parametrization for $n>0$}
  \end{subfigure}
  \hfill
  \begin{subfigure}{.45\textwidth}
    \centering
    \footnotesize
    \import{figures}{Solid_torus_D_4_simp.pdf_tex}
    \caption{Another parametrization for $n<0$}
  \end{subfigure}
  \caption{Simplified type-$D$ structures for the various parametrizations of the $\pm n$-framed solid torus}
  \label{fig:D-solid-tori-simp}
\end{figure}

We can also see these contact invariants in the immersed curve representation of these type-$D$ structures; see Figure \ref{fig:D-solid-tori-imm}.

\begin{figure}
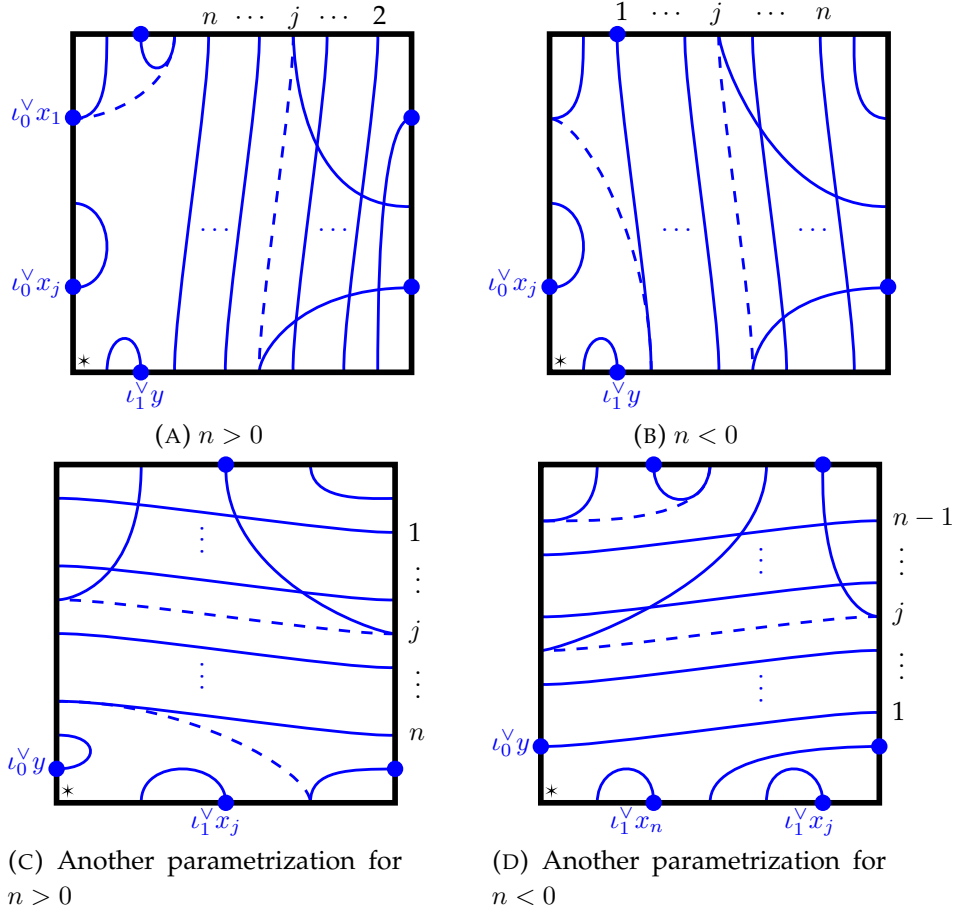

  \centering
  \begin{subfigure}{.35\textwidth}
    \centering
    \small
    \import{figures}{Solid_torus_D_1_imm.pdf_tex}
    \caption{$n>0$}
    \label{fig:solid-torus-imm-P1}
  \end{subfigure}\hspace{1cm}
  \begin{subfigure}{.35\textwidth}
    \centering
    \small
    \import{figures}{Solid_torus_D_2_imm.pdf_tex}
    \caption{$n<0$}
    \label{fig:solid-torus-imm-N1}
  \end{subfigure}
  \begin{subfigure}{.35\textwidth}
    \centering
    \small
    \import{figures}{Solid_torus_D_3_imm.pdf_tex}
    \caption{Another parametrization for $n > 0$}
    \label{fig:solid-torus-imm-P2}
  \end{subfigure}
  \hspace{1cm}
  \begin{subfigure}{.35\textwidth}
    \centering
    \small
    \import{figures}{Solid_torus_D_4_imm.pdf_tex}
    \caption{Another parametrization for $n < 0$}
    \label{fig:solid-torus-imm-N2}
  \end{subfigure}
  \caption{type-$D$ immersed curves for the various parametrizations of the $n$-framed solid torus}
  \label{fig:D-solid-tori-imm}
\end{figure}

The general pattern may be summarized as follows: starting with the (appropriately parametrized) usual immersed curve diagram for an $n$-framed solid torus, which is a single closed curve on $T^2_{\bullet}$ of slope $n,$ (or$-\frac{1}{n}$ depending on parametrization), in order to see the generator corresponding to $x_j,$ one simply performs a ``finger move'' homotopy of the $j$th strand, as indicated in Figure \ref{fig:D-solid-tori-imm}. Similarly for the generator corresponding to $y$ (in some cases, such a finger move is not always necessary).

\section{Applications on contact surgery} \label{sec:applications}

In this section, we provide some examples and applications using the type-$D$ computations from Section~\ref{sec:torus} and the immersed curve technique. 

\subsection{Surgery formula for L-spaces}\label{subsec:surgery}
Here we generalize the result of Mark--Tosun \cite{MT:surgery} and Golla \cite{Golla:surgery} and prove Theorem~\ref{thm:surgery}.

To do so, we first pin down the behavior of the LOSS invariant for knots in L-spaces (cf. Theorem 1.2 of \cite{LOSS:LOSS}). Let $(Y,\xi)$ be a contact L-space with $\chat(\xi) = [c(\xi)] \neq 0$, and $K\subset Y$ be a null-homologous Legendrian knot. By construction, if we choose a representative for $\LOSS(K)$, which we denote $c^-\in \CFK^-(-Y,K,\mathfrak{s}),$ setting $U=1$ will give a chain complex $\CFhat(-Y),$ and $c^- \mapsto c(\xi)$ which we have assumed to be non-null-homologous. For convenience, let us choose a suitable basis $x, x_1,\dots,x_N$ for $\CFK^-(-Y,K,\mathfrak{s})$ wherein $x$ represents a generator of the $U=1$ quotient complex. Thus we must have that $c^- = U^{n_0} x + U^{n_1}x_{i_1}+\dots U^{n_k}x_{i_k}.$ 

Now, we shall see how to translate this information to the bordered contact invariant. Let us consider $Y(K)=Y\setminus \mathring{\nu}(K),$ the exterior of the knot $K.$ The natural contact structure $\xi_{\ell}$ induces the $\tb$-framed dividing set on the boundary, and one can attach a basic slice yielding the contact structure $\xi_{\mu}$ that induces the meridional dividing set. Let us parametrize the torus boundary with the meridian and $\tb$-framed longitude. Stipsicz--V\'ertesi \cite{SV:LOSS} showed that $EH(\xi_{\mu})$ may be identified with $\Lhat(K)$ under the natural identification $\SFH(-Y(K),\Gamma_\mu)=\HFKhat(-Y,K).$  
Lipshitz, Ozsv\'ath, and Thurston \cite{LOT:bordered} showed that there is a natural correspondence between the bordered invariants of a knot complement and the knot Floer invariants. In our case, by \cite[Theorem 11.9]{LOT:bordered}, we can recover the complex $\CFK^-(-Y,K)$ by pairing $\CFA(-Y(K))$ with a ``minus'' version of $\BSD(\HH_I),$ where $\HH_I$ is the cap diagram in Figure \ref{fig:caps.d}. This means that the interior suture is treated as an extra basepoint, in the usual minus manner.
We may perform Heegaard moves to obtain a doubly-pointed Heegaard diagram where we can see $\mathfrak{L}(K),$ and $\Lhat(K)$ is obtained from the same diagram by treating the extra basepoint as suture (algebraically, this corresponds to setting $U=0$). By naturality, this is the equivalent to attaching an ordinary cap, and perfoming the same Heegaard moves, which by definition maps $c_A(\xi_{\mu})$ to $\Lhat(K).$ Observe that $x,x_1,\dots,x_N$ is also a basis for $\CFKhat(-Y,K),$ which we may identify with $\CFA(-Y(K))\cdot \iota_0$ as usual, and as we have seen above, this identification naturally ``factors through'' $\CFK^-(-Y,K),$ so that $c_A(\xi_{\mu} = \left(U^{n_0} x + U^{n_1}x_{i_1}+\dots U^{n_k}x_{i_k}\right)/(U=0).$

\begin{lemma}\label{lem:loss_gen}
  With notation as above, $n_0=0$ if and only if $\tb(K) - \rot(K) = 2\tau_{\xi}(K) - 1.$
\end{lemma}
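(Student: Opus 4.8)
The plan is to reduce the statement to a comparison of two Alexander gradings, after which it follows from Hedden's inequality. Write $A(-)$ for the Alexander grading. Since the LOSS invariant is Alexander-homogeneous, we may take $c^- = U^{n_0}x + U^{n_1}x_{i_1} + \cdots + U^{n_k}x_{i_k}$ with every monomial $U^{n_j}x_{i_j}$ in the same Alexander grading as $\LOSS(K)$; because $U$ shifts the Alexander grading by one, each $n_j$ is then forced, and in particular $n_0 = A(c^-) - A(x)$ (up to the sign with which $U$ acts on gradings, which we fix so that this is non-negative). So the lemma amounts to computing $A(x)$ and $A(c^-) = A(\LOSS(K))$.

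First, $x$ represents the generator of the homology of the vertical complex of $\CFK(-Y,K,\mathfrak{s})$; since $Y$ is an L-space this homology is $\HFhat(-Y,\mathfrak{s})\cong\mathbb{F}$, generated by the (nonzero, by hypothesis) contact class $c(\xi)$. By the characterization of $\tau$ from Section~\ref{subsec:knots}—equivalently, by the immersed-curve picture of Section~\ref{sec:immersed}, in which the distinguished curve for $-Y(K)$ first meets the midline at height $\tau_{c(\xi)}(-Y,K)$—the Alexander grading of this vertical generator is $A(x) = \tau_{c(\xi)}(-Y,K) = -\tau_{\xi}(K)$, the last equality being Hedden's definition. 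Second, the Alexander grading of the LOSS invariant is the standard quantity $A(\LOSS(K)) = -\tfrac12\bigl(\tb(K)-\rot(K)+1\bigr)$ (so that $\Lhat(K)$, when nonzero, lies in $\HFKhat(-Y,K,-\tfrac12(\tb(K)-\rot(K)+1))$); I would cite \cite{LOSS:LOSS} for this and sanity-check conventions against the explicit computations of Section~\ref{sec:torus} (for instance the $\tb=1$, $\rot=0$ right-handed trefoil, for which both gradings equal $-1$).

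Combining the two computations, $n_0 = \tau_{\xi}(K) - \tfrac12\bigl(\tb(K)-\rot(K)+1\bigr) = \tfrac12\bigl((2\tau_{\xi}(K)-1) - (\tb(K)-\rot(K))\bigr)$. By Hedden's inequality $\tb(K)+|\rot(K)|\le 2\tau_{\xi}(K)-1$ we have $\tb(K)-\rot(K)\le 2\tau_{\xi}(K)-1$, so $n_0$ is a non-negative integer that vanishes exactly when $\tb(K)-\rot(K) = 2\tau_{\xi}(K)-1$, which is the claim. (This also yields the stronger identity $n_0 = \tfrac12((2\tau_{\xi}(K)-1) - (\tb(K)-\rot(K)))$, which is what we actually use in the proof of Theorem~\ref{thm:surgery}.)

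The main obstacle is not geometric but a matter of careful bookkeeping across the several models in play: $\CFK^-(-Y,K)$ and its vertical/hat reductions, the bordered description of the former from \cite[Theorem~11.9]{LOT:bordered}, and the orientation reversal $Y\leftrightarrow -Y$ (which flips the Alexander grading). One must verify that ``$x$ represents a generator of the $U=1$ quotient complex'' really does identify $x$ with the distinguished vertical generator—this is what licenses the $\tau$-characterization—and that the exponent $n_0$ equals $A(c^-)-A(x)$ with the correct sign. A related point is that $A(\LOSS(K))$ is the stated formula even when $\Lhat(K)=0$ (i.e.\ when $c^-$ is divisible by $U$); this causes no trouble, since the minus-flavored LOSS invariant is never zero and always occupies the Alexander grading prescribed by $\tb-\rot$.
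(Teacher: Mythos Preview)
Your proposal is correct and follows essentially the same approach as the paper: both reduce to comparing $A(\LOSS(K))$ (expressed via $\tb-\rot$, citing \cite{OS:grading} in the paper, \cite{LOSS:LOSS} in your case) with $A(x)$ (identified with $\pm\tau_{\xi}(K)$), so that $n_0=0$ exactly when these agree. Your version is slightly more explicit in that you compute $n_0$ itself and invoke Hedden's inequality to pin down its sign, whereas the paper only argues the iff; your caution about sign conventions is well-placed (the paper's conventions give $A(x)=\tau_\xi(K)$ and $2A(\LOSS(K))=\tb-\rot+1$, opposite to yours), but since both of your signs flip together the conclusion is unaffected.
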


\begin{proof}
  By Theorem 1.6 of \cite{OS:grading}, $2A(\mathcal{L}(K))=\tb(K)-\rot(K)+1,$ so that $\tb(K) -\rot(K) = 2\tau_{\xi}(K) - 1$ if and only if $A(\LOSS(K))=\tau_{\xi}(K).$ Since $A(x)=\tau_{\xi}(K),$ this is equivalent to $n_0=0.$
\end{proof}

Let $(\Y,\xi)$ be a bordered contact 3-manifold with torus boundary, and suppose $c$ is a generator in a reduced basis for $\BSA(\Y)$; that is, there is a (simplified) immersed multicurve $\overrightarrow{\Gamma} \subset T^2_{\bullet}$ representing $\BSA(\Y)$, and $a$ is a single intersection point between one ``edge'' of $T_{\bullet}^2$ and one of the immersed curves $\gamma \subset \overrightarrow{\Gamma}.$ Suppose we have also parametrized the torus so that the idempotent of $a$ corresponds to the horizontal edge of the torus.  We fill with an $n$-framed solid torus. Recall from Section~\ref{subsec:solid}, that there exist two tight contact structures $\xi_+$ and $\xi_-$. Since we choose only negative stabilization for contact surgery, we pick the contact structure $\xi_-$ that corresponds to $x_1$; that is, $\iota_0^{\vee} x_1$ in $\BSD(S^1\times D^2).$ The relevant immersed curve is shown in Figure \ref{fig:solid-torus-imm-N1}. In what follows, we denote the resulting element $a\boxtimes \iota_1^{\vee} x_1$ in $\CFhat$ by $c$ as a shorthand.

We show several conditions under which positive contact filling results in a null-homologous contact class.
Let us first examine the cases arising from some kinds of ways $\gamma$ behaves when we follow it \emph{upwards} from $a$.

\begin{lemma}\label{lem:surg-V-U}
  If the curve $\gamma$ immediately above $a$ curves:
  \begin{enumerate}
    \item rightward or continues upward, OR
    \item leftward followed by a turn upward, and then (after possibly some vertical passes) turns rightward
  \end{enumerate}
  then $c$ is null-homologous.
\end{lemma}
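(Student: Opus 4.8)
The plan is to run the entire argument inside the immersed-curve pairing picture of Hanselman--Rasmussen--Watson \cite{HRW:immersed}. Represent $\BSA(\Y)$ by a reduced immersed multicurve $\overrightarrow{\Gamma}\subset T^2_{\bullet}$, so that $a$ is the point where the component $\gamma\subset\overrightarrow{\Gamma}$ crosses the horizontal edge and $c_A(\xi)$ is carried by $\gamma$. By Section~\ref{subsec:solid} and Figure~\ref{fig:solid-torus-imm-N1}, filling with the $n$-framed solid torus equipped with $\xi_-$ amounts to pairing $\overrightarrow{\Gamma}$ with the immersed curve $L$ of that solid torus: a straight line of the appropriate slope together with a small ``finger-move'' hook whose only role is to make the intersection point $\iota_1^{\vee}x_1$ appear as a generator. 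After the standard rescaling and embedding of $\overrightarrow{\Gamma}$ and $L$ into $T^2_{\bullet}$ from complementary quadrants, $\CFhat$ of the Dehn filling is the Lagrangian intersection complex $CF(\overrightarrow{\Gamma},L)$, whose differential counts immersed bigons missing the puncture, and $c$ is the intersection point of $\gamma$ with $L$ sitting next to $a$. (Equivalently, one could run this as a box-tensor computation against the simplified type-$D$ structures of Figure~\ref{fig:D-solid-tori-simp}, but the geometric version is cleaner.)

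To prove $[c]=0$ it is enough to exhibit a generator $d$ together with an immersed bigon from $d$ to $c$ contributing $c$ to $\partial d$, and to check that no competing bigon cancels it mod $2$. I would argue case by case, following $\gamma$ upward from $a$. In case~(1), where $\gamma$ turns rightward or continues straight up just above $a$, the finger-move hook of $L$ together with this initial arc of $\gamma$ bounds an innermost embedded bigon whose corners are $c$ and the adjacent intersection point $d=\gamma\cap L$; this bigon contains no basepoint and, being innermost, is the unique disk out of $d$, so $\partial d=c$. In case~(2), where $\gamma$ first turns leftward, then upward, and then (after possibly several vertical passes) turns rightward, the disk is no longer local: it follows $\gamma$ leftward and then up, running parallel to $L$, and the terminal rightward turn of $\gamma$ is exactly what lets the disk close up on the far side of the puncture. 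One then checks that the puncture lies outside this immersed disk — this is precisely where the ``followed by a turn rightward'' hypothesis is used — and that it again contributes $c$ to $\partial d$ for the relevant $d$.

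The main obstacle is the bookkeeping in case~(2): one must confirm that the global immersed bigon genuinely avoids the marked point for every admissible slope $n$ and every pattern of vertical passes of $\gamma$, and that no second immersed disk with the same output $c$ cancels it mod $2$. A convenient way to control this is to first homotope $\overrightarrow{\Gamma}$ so that, away from its distinguished arc, it lies in an arbitrarily small neighborhood of the midline (as recalled in Section~\ref{sec:immersed}); then the only disks that can occur are the ones described above, and the count is forced. A secondary, routine point is to fix the orientation and idempotent conventions of Figures~\ref{fig:typeA}--\ref{fig:Imm_AD_pair} (the $\iota_0$ versus $\iota_1$ placement and which quadrant receives $L$) so that $c$ really is the \emph{output} corner of the bigon rather than its input; once these are pinned down the two cases close as above.
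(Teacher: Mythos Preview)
Your case~(1) is essentially the paper's argument. The gap is in case~(2). You reduce the problem to finding a single generator $d$ with an immersed bigon to $c$ and then checking that ``no second immersed disk with the same output $c$ cancels it mod $2$.'' But that is not the right check: what you need is that $c$ is the \emph{entire} differential of $d$, i.e.\ that there is no other bigon \emph{out of} $d$ landing on some different generator. In fact this fails. Letting $m$ be the number of vertical passes of $\gamma$ before the rightward turn, your description of a single global bigon from $d$ to $c$ is only valid when $m<n$. When $m\geq n$, the curve $L$ and $\gamma$ acquire two further intersection points $y,z$, and the generator $x$ you would take for $d$ satisfies $\partial x = c + y$, not $\partial x = c$. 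One then needs the additional observation that $\partial z = y$ (and nothing else), so that $\partial(x+z)=c$. Your ``single disk closing up on the far side of the puncture'' picture does not produce this; the bookkeeping you flag as the main obstacle is exactly where the argument as written breaks.

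Concretely: split case~(2) into the subcases $m<n$, $m=n$, and $m>n$. In the first, your single-bigon argument goes through with $\partial x=c$. In the latter two you must exhibit the extra pair $y,z$ and verify $\partial x=c+y$, $\partial z=y$, with no further bigons out of $x$ or $z$. The homotopy-to-the-midline trick you mention does not eliminate these extra intersections, since they occur between $\gamma$ and $L$ along the vertical strands, not between $\gamma$ and other components of $\overrightarrow{\Gamma}$.
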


\begin{proof}
  The situation in the first case is illustrated in Figure \ref{fig:surg-U-V1a}. We see that there is a generator $x$ and a bigon between $x$ and $c.$ Furthermore, there are no other bigons with $x$ as a starting point due to the assumption on $\gamma.$ Hence, we have that $\partial x =c$ in $\CFhat,$ so that $c$ is null-homologous.\\
  For the second case, there are several subcases. Let $m \geq 0$ be the number of vertical passes before the rightward turn. If $m<n$ then the situation is shown in Figure \ref{fig:surg-U-V2a}. As before, there is a generator $x$ and a bigon from $x$ to $c.$ No other bigons emanate from $x,$ and so we have that $\partial x =c,$ once again implying that $c$ is null-homologous. Figure \ref{fig:surg-U-V2b} depicts the case where $m=n.$ Here we see that there are generators $x,y,z$ and bigons from $x$ to $c,$ from $x$ to $y,$ and from $z$ to $y.$ No other bigons emanate from $z$ (or $x$) so that we have $\partial x = c+y$ and $\partial z = y.$ Thus $\partial (x+z)=c,$ so that $c$ is null-homologous. The case of $m>n$ is illustrated in Figure \ref{fig:surg-U-V2c} and is analogous to the $m=n$ case: we once again have that $\partial (x+z) = c.$
\end{proof}

\begin{remark}
  Note that the above Lemma (as well as the following two Lemmas) can be expressed in purely algebraic terms instead of immersed curves. For instance, a ``rightward or upward'' turn above $a$ corresponds to requiring that $m_i(a,\rho_2,\dots)=0$ for all $i\geq 2$.
\end{remark}
    
\begin{figure}
  \begin{subfigure}{.45\textwidth}
\begingroup%
  \makeatletter%
  \providecommand\color[2][]{%
    \errmessage{(Inkscape) Color is used for the text in Inkscape, but the package 'color.sty' is not loaded}%
    \renewcommand\color[2][]{}%
  }%
  \providecommand\transparent[1]{%
    \errmessage{(Inkscape) Transparency is used (non-zero) for the text in Inkscape, but the package 'transparent.sty' is not loaded}%
    \renewcommand\transparent[1]{}%
  }%
  \providecommand\rotatebox[2]{#2}%
  \newcommand*\fsize{\dimexpr\f@size pt\relax}%
  \newcommand*\lineheight[1]{\fontsize{\fsize}{#1\fsize}\selectfont}%
  \ifx\svgwidth\undefined%
    \setlength{\unitlength}{194.44844884bp}%
    \ifx\svgscale\undefined%
      \relax%
    \else%
      \setlength{\unitlength}{\unitlength * \real{\svgscale}}%
    \fi%
  \else%
    \setlength{\unitlength}{\svgwidth}%
  \fi%
  \global\let\svgwidth\undefined%
  \global\let\svgscale\undefined%
  \makeatother%
  \begin{picture}(1,0.99220679)%
    \lineheight{1}%
    \setlength\tabcolsep{0pt}%
    \put(0,0){\includegraphics[width=\unitlength,page=1]{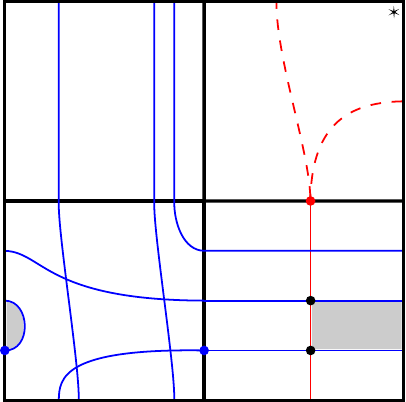}}%
    \put(0.24857876,0.19334885){\color[rgb]{0,0,1}\makebox(0,0)[lt]{\lineheight{1.25}\smash{\begin{tabular}[t]{l}$\dots$\end{tabular}}}}%
    \put(0.72312023,0.0798272){\color[rgb]{0,0,0}\makebox(0,0)[lt]{\lineheight{1.25}\smash{\begin{tabular}[t]{l}$c$\end{tabular}}}}%
    \put(0.72480195,0.26924079){\color[rgb]{0,0,0}\makebox(0,0)[lt]{\lineheight{1.25}\smash{\begin{tabular}[t]{l}$x$\end{tabular}}}}%
    \put(0.79248037,0.51860028){\color[rgb]{1,0,0}\makebox(0,0)[lt]{\lineheight{1.25}\smash{\begin{tabular}[t]{l}$a$\end{tabular}}}}%
    \put(0.02056242,0.08253223){\color[rgb]{0,0,1}\makebox(0,0)[lt]{\lineheight{1.25}\smash{\begin{tabular}[t]{l}$\iota_0^{\vee}x_1$\end{tabular}}}}%
  \end{picture}%
\endgroup%

    \caption{The case when $\gamma$ does not turn left}
    \label{fig:surg-U-V1a}
  \end{subfigure}\hfill
  \begin{subfigure}{.45\textwidth}
\begingroup%
  \makeatletter%
  \providecommand\color[2][]{%
    \errmessage{(Inkscape) Color is used for the text in Inkscape, but the package 'color.sty' is not loaded}%
    \renewcommand\color[2][]{}%
  }%
  \providecommand\transparent[1]{%
    \errmessage{(Inkscape) Transparency is used (non-zero) for the text in Inkscape, but the package 'transparent.sty' is not loaded}%
    \renewcommand\transparent[1]{}%
  }%
  \providecommand\rotatebox[2]{#2}%
  \newcommand*\fsize{\dimexpr\f@size pt\relax}%
  \newcommand*\lineheight[1]{\fontsize{\fsize}{#1\fsize}\selectfont}%
  \ifx\svgwidth\undefined%
    \setlength{\unitlength}{194.44842722bp}%
    \ifx\svgscale\undefined%
      \relax%
    \else%
      \setlength{\unitlength}{\unitlength * \real{\svgscale}}%
    \fi%
  \else%
    \setlength{\unitlength}{\svgwidth}%
  \fi%
  \global\let\svgwidth\undefined%
  \global\let\svgscale\undefined%
  \makeatother%
  \begin{picture}(1,0.99220668)%
    \lineheight{1}%
    \setlength\tabcolsep{0pt}%
    \put(0,0){\includegraphics[width=\unitlength,page=1]{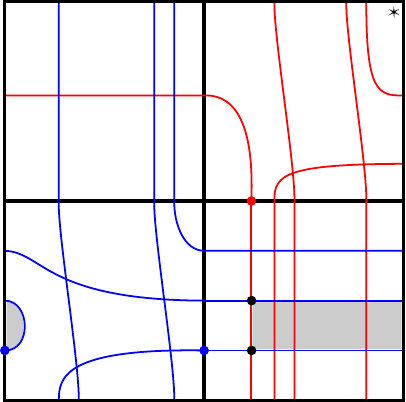}}%
    \put(0.57734186,0.08754134){\color[rgb]{0,0,0}\makebox(0,0)[lt]{\lineheight{1.25}\smash{\begin{tabular}[t]{l}$c$\end{tabular}}}}%
    \put(0.57551932,0.21858304){\color[rgb]{0,0,0}\makebox(0,0)[lt]{\lineheight{1.25}\smash{\begin{tabular}[t]{l}$x$\end{tabular}}}}%
    \put(0.63127404,0.51860039){\color[rgb]{1,0,0}\makebox(0,0)[lt]{\lineheight{1.25}\smash{\begin{tabular}[t]{l}$a$\end{tabular}}}}%
    \put(0.02056233,0.0825323){\color[rgb]{0,0,1}\makebox(0,0)[lt]{\lineheight{1.25}\smash{\begin{tabular}[t]{l}$\iota_0^{\vee}x_1$\end{tabular}}}}%
    \put(0.75544742,0.73308805){\color[rgb]{1,0,0}\makebox(0,0)[lt]{\lineheight{1.25}\smash{\begin{tabular}[t]{l}$\dots$\end{tabular}}}}%
    \put(0,0){\includegraphics[width=\unitlength,page=2]{Imm_surg_U_V_2a.pdf}}%
    \put(0.20229394,0.19334899){\color[rgb]{0,0,1}\makebox(0,0)[lt]{\lineheight{1.25}\smash{\begin{tabular}[t]{l}$\dots$\end{tabular}}}}%
    \put(0.31800589,0.19334899){\color[rgb]{0,0,1}\makebox(0,0)[lt]{\lineheight{1.25}\smash{\begin{tabular}[t]{l}$\dots$\end{tabular}}}}%
  \end{picture}%
\endgroup%

    \caption{One case when $\gamma$ turns left then up then (eventually) right: $n>m$}
    \label{fig:surg-U-V2a}
  \end{subfigure}\\
  \vspace{0.2cm}
  \begin{subfigure}{.45\textwidth}
\begingroup%
  \makeatletter%
  \providecommand\color[2][]{%
    \errmessage{(Inkscape) Color is used for the text in Inkscape, but the package 'color.sty' is not loaded}%
    \renewcommand\color[2][]{}%
  }%
  \providecommand\transparent[1]{%
    \errmessage{(Inkscape) Transparency is used (non-zero) for the text in Inkscape, but the package 'transparent.sty' is not loaded}%
    \renewcommand\transparent[1]{}%
  }%
  \providecommand\rotatebox[2]{#2}%
  \newcommand*\fsize{\dimexpr\f@size pt\relax}%
  \newcommand*\lineheight[1]{\fontsize{\fsize}{#1\fsize}\selectfont}%
  \ifx\svgwidth\undefined%
    \setlength{\unitlength}{194.44838396bp}%
    \ifx\svgscale\undefined%
      \relax%
    \else%
      \setlength{\unitlength}{\unitlength * \real{\svgscale}}%
    \fi%
  \else%
    \setlength{\unitlength}{\svgwidth}%
  \fi%
  \global\let\svgwidth\undefined%
  \global\let\svgscale\undefined%
  \makeatother%
  \begin{picture}(1,0.9922069)%
    \lineheight{1}%
    \setlength\tabcolsep{0pt}%
    \put(0,0){\includegraphics[width=\unitlength,page=1]{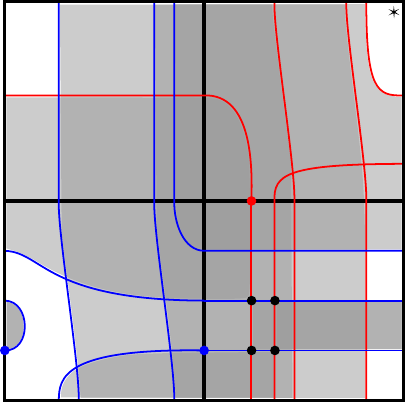}}%
    \put(0.24857861,0.19334897){\color[rgb]{0,0,1}\makebox(0,0)[lt]{\lineheight{1.25}\smash{\begin{tabular}[t]{l}$\dots$\end{tabular}}}}%
    \put(0.57734184,0.14925451){\color[rgb]{0,0,0}\makebox(0,0)[lt]{\lineheight{1.25}\smash{\begin{tabular}[t]{l}$c$\end{tabular}}}}%
    \put(0.62951818,0.28029617){\color[rgb]{0,0,0}\makebox(0,0)[lt]{\lineheight{1.25}\smash{\begin{tabular}[t]{l}$x$\end{tabular}}}}%
    \put(0.63127403,0.5186005){\color[rgb]{1,0,0}\makebox(0,0)[lt]{\lineheight{1.25}\smash{\begin{tabular}[t]{l}$a$\end{tabular}}}}%
    \put(0.02056219,0.08253231){\color[rgb]{0,0,1}\makebox(0,0)[lt]{\lineheight{1.25}\smash{\begin{tabular}[t]{l}$\iota_0^{\vee}x_1$\end{tabular}}}}%
    \put(0.75544745,0.73308821){\color[rgb]{1,0,0}\makebox(0,0)[lt]{\lineheight{1.25}\smash{\begin{tabular}[t]{l}$\dots$\end{tabular}}}}%
    \put(0.68321133,0.08840796){\color[rgb]{0,0,0}\makebox(0,0)[lt]{\lineheight{1.25}\smash{\begin{tabular}[t]{l}$y$\end{tabular}}}}%
    \put(0.68714006,0.21858321){\color[rgb]{0,0,0}\makebox(0,0)[lt]{\lineheight{1.25}\smash{\begin{tabular}[t]{l}$z$\end{tabular}}}}%
  \end{picture}%
\endgroup%

    \caption{One case when $\gamma$ turns left then up then (eventually) right: $n=m$}
    \label{fig:surg-U-V2b}
  \end{subfigure}\hfill
  \begin{subfigure}{.45\textwidth}
    \import{figures}{Imm_surg_U_V_2c.pdf_tex}
    \caption{One case when $\gamma$ turns left then up then (eventually) right: $n<m$}
    \label{fig:surg-U-V2c}
  \end{subfigure}
  \caption{Various behaviors of $\gamma$ directly above $a$ for which pairing with $\iota_0^{\vee}x_1$ results in a null-homologous class}
  \label{fig:surg-U-V}
\end{figure}

Now let us turn to the behavior of $\gamma$ immediately \emph{below} $a$.

\begin{lemma}\label{lem:surg-V-D}
  If the curve $\gamma$ immediately below $a$  after potentially some number $m \geq 0$ of vertical passes turns:
  \begin{enumerate}
    \item to the left, OR
    \item to the right AND $m \geq n,$
  \end{enumerate}
  then $c$ is null-homologous.
\end{lemma}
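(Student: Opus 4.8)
The plan is to reuse the immersed-curve bigon count from the proof of Lemma~\ref{lem:surg-V-U}, now analyzing the part of the component $\gamma\subset\overrightarrow{\Gamma}$ lying \emph{below} $a$ against the fixed solid-torus curve $\iota_0^\vee x_1$ of Figure~\ref{fig:solid-torus-imm-N1}. First I would assemble the pairing diagram in $T^2_\bullet$ as in Section~\ref{sec:immersed}: the simplified multicurve representing $\BSA(\Y)$ in the top-right quadrant, and the type-$D$ immersed curve of the $n$-framed solid torus (a straight line carrying a single finger move that produces $x_1$, as in Figure~\ref{fig:solid-torus-imm-N1}) in the bottom-left quadrant, with endpoints extended vertically and horizontally. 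Under this identification $c = a\boxtimes\iota_1^\vee x_1$ is a distinguished intersection point of the pairing complex $CF(\overrightarrow{\Gamma},\overrightarrow{\Gamma_D})\cong\CFhat$, and it suffices to produce a chain $w$ with $\partial w = c$.

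For case (1), where $\gamma$ turns left below $a$ after $m\ge 0$ vertical passes, I would exhibit a single auxiliary generator $x$, lying one strand away from $c$, together with an immersed bigon from $x$ to $c$; as in the first subcase of Lemma~\ref{lem:surg-V-U}, the leftward turn of $\gamma$ rules out any further bigon out of $x$, so $\partial x = c$, and the value of $m$ is irrelevant. For case (2), where $\gamma$ turns right below $a$ with $m\ge n$, the finger move of the solid-torus curve now interacts with the rightward turn, and I would split into the subcases $m = n$ and $m > n$, drawing the local pictures in direct analogy with Figures~\ref{fig:surg-U-V2b} and \ref{fig:surg-U-V2c}. In each I expect to find three generators $x, y, z$ with $\partial x = c + y$ and $\partial z = y$ and no other outgoing bigons, so that $\partial(x+z) = c$. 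Combining the two cases shows that $c$ is null-homologous.

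The main obstacle is the careful local case analysis: under each hypothesis on $\gamma$ one must verify that the bigons listed are the \emph{only} ones leaving the auxiliary generators, i.e.\ that neither the finger move nor the $m$ vertical passes introduce an extra immersed bigon, and in case (2) one must pin down exactly why $m\ge n$ (as opposed to $m < n$) is what makes the cancellation close up. As with Lemma~\ref{lem:surg-V-U}, this is cleanest to read off from the figures, but — as in the remark following that lemma — it can equivalently be recast algebraically as the vanishing of suitable higher $\Aoo$ operations $m_i(a,\ldots)$ on $\BSA(\Y)$.
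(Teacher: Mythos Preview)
Your overall strategy---reading off bigons in the immersed-curve pairing with the solid-torus curve of Figure~\ref{fig:solid-torus-imm-N1}---is exactly what the paper does, and your conclusion is correct in both cases. However, you have the fine structure of the two cases essentially reversed.

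In case~(1) you assert that ``the value of $m$ is irrelevant,'' but in fact the local picture depends on how $m$ compares to $n$: the paper splits into the three subcases $m<n-1$, $m=n-1$, and $m\geq n$ (Figures~\ref{fig:surg-D-Va}--\ref{fig:surg-D-Vc}), because the position of the single generator $x$ and the shape of the bigon from $x$ to $c$ change according to whether the leftward turn of $\gamma$ happens before, at, or after the finger-moved strand of the solid-torus curve. The conclusion $\partial x = c$ is the same in each subcase, so your answer is right, but the claim that one picture covers all $m$ is not.

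In case~(2) you over-complicate by analogy with the wrong part of Lemma~\ref{lem:surg-V-U}. The right turn of $\gamma$ below $a$ with $m\geq n$ is geometrically simpler than the left-up-right pattern above $a$ in that earlier lemma: there is no need to split into $m=n$ versus $m>n$, and there are not three auxiliary generators but only one. The paper (Figure~\ref{fig:surg-D-V2}) exhibits a single $x$ with a unique outgoing bigon to $c$, so $\partial x = c$ directly. Your proposed chain $x+z$ would not arise, because the extra generators $y,z$ you anticipate are not present in this configuration. This is not a fatal gap---once you actually draw the picture you would discover the simplification---but the analogy with Figures~\ref{fig:surg-U-V2b} and~\ref{fig:surg-U-V2c} is misplaced and would lead you to look for structure that is not there.
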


\begin{proof}
   Let us consider the left-turning cases first. The case where $m<n-1$ is depicted in Figure \ref{fig:surg-D-Va}. Figure \ref{fig:surg-D-Vb} shows the case where $m=n-1,$ and Figure \ref{fig:surg-D-Vc} illustrates the case where $m \geq n.$ In any case, we find  a generator $x$ and a bigon from $x$ to $c.$ Moreover, there cannot be any other bigon emanating from $x,$ and so we conclude that $\partial x=c$ in $\CFhat$ so that $c$ is null-homologous.\\
   Now we assume $\gamma$ turns to the right and $m \geq n.$ Figure \ref{fig:surg-D-V2} shows that there is once again some generator $x$ and a bigon from $x$ to $c.$ As before, this is also the only bigon emanating from $x,$ and so $\partial x= c$ as in the previous cases.
\end{proof}

\begin{figure}
  \centering
  \begin{subfigure}{.45\textwidth}
    \centering
\begingroup%
  \makeatletter%
  \providecommand\color[2][]{%
    \errmessage{(Inkscape) Color is used for the text in Inkscape, but the package 'color.sty' is not loaded}%
    \renewcommand\color[2][]{}%
  }%
  \providecommand\transparent[1]{%
    \errmessage{(Inkscape) Transparency is used (non-zero) for the text in Inkscape, but the package 'transparent.sty' is not loaded}%
    \renewcommand\transparent[1]{}%
  }%
  \providecommand\rotatebox[2]{#2}%
  \newcommand*\fsize{\dimexpr\f@size pt\relax}%
  \newcommand*\lineheight[1]{\fontsize{\fsize}{#1\fsize}\selectfont}%
  \ifx\svgwidth\undefined%
    \setlength{\unitlength}{194.44847047bp}%
    \ifx\svgscale\undefined%
      \relax%
    \else%
      \setlength{\unitlength}{\unitlength * \real{\svgscale}}%
    \fi%
  \else%
    \setlength{\unitlength}{\svgwidth}%
  \fi%
  \global\let\svgwidth\undefined%
  \global\let\svgscale\undefined%
  \makeatother%
  \begin{picture}(1,0.99999255)%
    \lineheight{1}%
    \setlength\tabcolsep{0pt}%
    \put(0,0){\includegraphics[width=\unitlength,page=1]{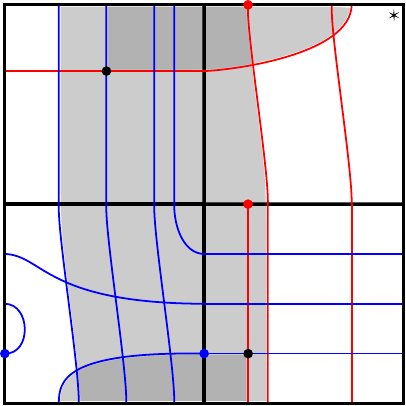}}%
    \put(0.20229397,0.19334886){\color[rgb]{0,0,1}\makebox(0,0)[lt]{\lineheight{1.25}\smash{\begin{tabular}[t]{l}$\dots$\end{tabular}}}}%
    \put(0.56883756,0.14925443){\color[rgb]{0,0,0}\makebox(0,0)[lt]{\lineheight{1.25}\smash{\begin{tabular}[t]{l}$c$\end{tabular}}}}%
    \put(0.2735441,0.77965748){\color[rgb]{0,0,0}\makebox(0,0)[lt]{\lineheight{1.25}\smash{\begin{tabular}[t]{l}$x$\end{tabular}}}}%
    \put(0.56105647,0.51860036){\color[rgb]{1,0,0}\makebox(0,0)[lt]{\lineheight{1.25}\smash{\begin{tabular}[t]{l}$a$\end{tabular}}}}%
    \put(0.02056236,0.08253222){\color[rgb]{0,0,1}\makebox(0,0)[lt]{\lineheight{1.25}\smash{\begin{tabular}[t]{l}$\iota_0^{\vee}x_1$\end{tabular}}}}%
    \put(0.7087629,0.73308417){\color[rgb]{1,0,0}\makebox(0,0)[lt]{\lineheight{1.25}\smash{\begin{tabular}[t]{l}$\dots$\end{tabular}}}}%
    \put(0.31800588,0.19334886){\color[rgb]{0,0,1}\makebox(0,0)[lt]{\lineheight{1.25}\smash{\begin{tabular}[t]{l}$\dots$\end{tabular}}}}%
  \end{picture}%
\endgroup%

    \caption{One case when $\gamma$ turns  (eventually) left: $n>m$}
    \label{fig:surg-D-Va}
  \end{subfigure}\hfill
  \begin{subfigure}{.45\textwidth}
    \centering
\begingroup%
  \makeatletter%
  \providecommand\color[2][]{%
    \errmessage{(Inkscape) Color is used for the text in Inkscape, but the package 'color.sty' is not loaded}%
    \renewcommand\color[2][]{}%
  }%
  \providecommand\transparent[1]{%
    \errmessage{(Inkscape) Transparency is used (non-zero) for the text in Inkscape, but the package 'transparent.sty' is not loaded}%
    \renewcommand\transparent[1]{}%
  }%
  \providecommand\rotatebox[2]{#2}%
  \newcommand*\fsize{\dimexpr\f@size pt\relax}%
  \newcommand*\lineheight[1]{\fontsize{\fsize}{#1\fsize}\selectfont}%
  \ifx\svgwidth\undefined%
    \setlength{\unitlength}{194.44844884bp}%
    \ifx\svgscale\undefined%
      \relax%
    \else%
      \setlength{\unitlength}{\unitlength * \real{\svgscale}}%
    \fi%
  \else%
    \setlength{\unitlength}{\svgwidth}%
  \fi%
  \global\let\svgwidth\undefined%
  \global\let\svgscale\undefined%
  \makeatother%
  \begin{picture}(1,0.99999266)%
    \lineheight{1}%
    \setlength\tabcolsep{0pt}%
    \put(0,0){\includegraphics[width=\unitlength,page=1]{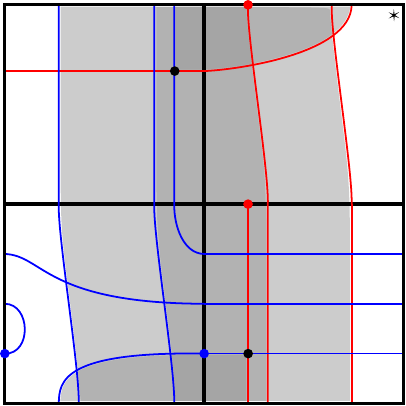}}%
    \put(0.24857879,0.19334888){\color[rgb]{0,0,1}\makebox(0,0)[lt]{\lineheight{1.25}\smash{\begin{tabular}[t]{l}$\dots$\end{tabular}}}}%
    \put(0.56883765,0.14925445){\color[rgb]{0,0,0}\makebox(0,0)[lt]{\lineheight{1.25}\smash{\begin{tabular}[t]{l}$c$\end{tabular}}}}%
    \put(0.56105645,0.51860042){\color[rgb]{1,0,0}\makebox(0,0)[lt]{\lineheight{1.25}\smash{\begin{tabular}[t]{l}$a$\end{tabular}}}}%
    \put(0.02056233,0.08253223){\color[rgb]{0,0,1}\makebox(0,0)[lt]{\lineheight{1.25}\smash{\begin{tabular}[t]{l}$\iota_0^{\vee}x_1$\end{tabular}}}}%
    \put(0.7087629,0.73308425){\color[rgb]{1,0,0}\makebox(0,0)[lt]{\lineheight{1.25}\smash{\begin{tabular}[t]{l}$\dots$\end{tabular}}}}%
    \put(0.44007626,0.77965267){\color[rgb]{0,0,0}\makebox(0,0)[lt]{\lineheight{1.25}\smash{\begin{tabular}[t]{l}$x$\end{tabular}}}}%
  \end{picture}%
\endgroup%

    \caption{One case when $\gamma$ turns  (eventually) left: $n=m$}
    \label{fig:surg-D-Vb}
  \end{subfigure}\\
  \begin{subfigure}{.45\textwidth}
\begingroup%
  \makeatletter%
  \providecommand\color[2][]{%
    \errmessage{(Inkscape) Color is used for the text in Inkscape, but the package 'color.sty' is not loaded}%
    \renewcommand\color[2][]{}%
  }%
  \providecommand\transparent[1]{%
    \errmessage{(Inkscape) Transparency is used (non-zero) for the text in Inkscape, but the package 'transparent.sty' is not loaded}%
    \renewcommand\transparent[1]{}%
  }%
  \providecommand\rotatebox[2]{#2}%
  \newcommand*\fsize{\dimexpr\f@size pt\relax}%
  \newcommand*\lineheight[1]{\fontsize{\fsize}{#1\fsize}\selectfont}%
  \ifx\svgwidth\undefined%
    \setlength{\unitlength}{194.44842722bp}%
    \ifx\svgscale\undefined%
      \relax%
    \else%
      \setlength{\unitlength}{\unitlength * \real{\svgscale}}%
    \fi%
  \else%
    \setlength{\unitlength}{\svgwidth}%
  \fi%
  \global\let\svgwidth\undefined%
  \global\let\svgscale\undefined%
  \makeatother%
  \begin{picture}(1,0.99999277)%
    \lineheight{1}%
    \setlength\tabcolsep{0pt}%
    \put(0,0){\includegraphics[width=\unitlength,page=1]{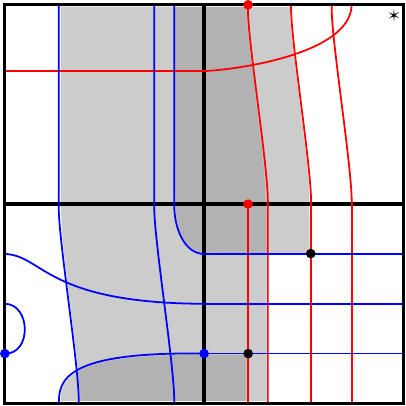}}%
    \put(0.24857864,0.19334886){\color[rgb]{0,0,1}\makebox(0,0)[lt]{\lineheight{1.25}\smash{\begin{tabular}[t]{l}$\dots$\end{tabular}}}}%
    \put(0.56883754,0.14925443){\color[rgb]{0,0,0}\makebox(0,0)[lt]{\lineheight{1.25}\smash{\begin{tabular}[t]{l}$c$\end{tabular}}}}%
    \put(0.77910747,0.33133978){\color[rgb]{0,0,0}\makebox(0,0)[lt]{\lineheight{1.25}\smash{\begin{tabular}[t]{l}$x$\end{tabular}}}}%
    \put(0.56105656,0.51860044){\color[rgb]{1,0,0}\makebox(0,0)[lt]{\lineheight{1.25}\smash{\begin{tabular}[t]{l}$a$\end{tabular}}}}%
    \put(0.02056238,0.0825322){\color[rgb]{0,0,1}\makebox(0,0)[lt]{\lineheight{1.25}\smash{\begin{tabular}[t]{l}$\iota_0^{\vee}x_1$\end{tabular}}}}%
    \put(0.66247866,0.73308429){\color[rgb]{1,0,0}\makebox(0,0)[lt]{\lineheight{1.25}\smash{\begin{tabular}[t]{l}$\dots$\end{tabular}}}}%
    \put(0.7656547,0.73308429){\color[rgb]{1,0,0}\makebox(0,0)[lt]{\lineheight{1.25}\smash{\begin{tabular}[t]{l}$\dots$\end{tabular}}}}%
  \end{picture}%
\endgroup%

    \caption{One case when $\gamma$ turns  (eventually) left: $n<m$}
    \label{fig:surg-D-Vc}
  \end{subfigure}\hfill
  \begin{subfigure}{.45\textwidth}
\begingroup%
  \makeatletter%
  \providecommand\color[2][]{%
    \errmessage{(Inkscape) Color is used for the text in Inkscape, but the package 'color.sty' is not loaded}%
    \renewcommand\color[2][]{}%
  }%
  \providecommand\transparent[1]{%
    \errmessage{(Inkscape) Transparency is used (non-zero) for the text in Inkscape, but the package 'transparent.sty' is not loaded}%
    \renewcommand\transparent[1]{}%
  }%
  \providecommand\rotatebox[2]{#2}%
  \newcommand*\fsize{\dimexpr\f@size pt\relax}%
  \newcommand*\lineheight[1]{\fontsize{\fsize}{#1\fsize}\selectfont}%
  \ifx\svgwidth\undefined%
    \setlength{\unitlength}{194.44847047bp}%
    \ifx\svgscale\undefined%
      \relax%
    \else%
      \setlength{\unitlength}{\unitlength * \real{\svgscale}}%
    \fi%
  \else%
    \setlength{\unitlength}{\svgwidth}%
  \fi%
  \global\let\svgwidth\undefined%
  \global\let\svgscale\undefined%
  \makeatother%
  \begin{picture}(1,0.99220657)%
    \lineheight{1}%
    \setlength\tabcolsep{0pt}%
    \put(0,0){\includegraphics[width=\unitlength,page=1]{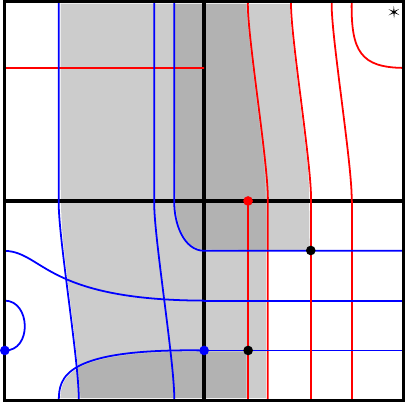}}%
    \put(0.24857849,0.19334875){\color[rgb]{0,0,1}\makebox(0,0)[lt]{\lineheight{1.25}\smash{\begin{tabular}[t]{l}$\dots$\end{tabular}}}}%
    \put(0.56883731,0.14925432){\color[rgb]{0,0,0}\makebox(0,0)[lt]{\lineheight{1.25}\smash{\begin{tabular}[t]{l}$c$\end{tabular}}}}%
    \put(0.7791072,0.33905366){\color[rgb]{0,0,0}\makebox(0,0)[lt]{\lineheight{1.25}\smash{\begin{tabular}[t]{l}$x$\end{tabular}}}}%
    \put(0.56105634,0.51860026){\color[rgb]{1,0,0}\makebox(0,0)[lt]{\lineheight{1.25}\smash{\begin{tabular}[t]{l}$a$\end{tabular}}}}%
    \put(0.02056228,0.08253211){\color[rgb]{0,0,1}\makebox(0,0)[lt]{\lineheight{1.25}\smash{\begin{tabular}[t]{l}$\iota_0^{\vee}x_1$\end{tabular}}}}%
    \put(0.66247841,0.73308406){\color[rgb]{1,0,0}\makebox(0,0)[lt]{\lineheight{1.25}\smash{\begin{tabular}[t]{l}$\dots$\end{tabular}}}}%
    \put(0.76565443,0.73308406){\color[rgb]{1,0,0}\makebox(0,0)[lt]{\lineheight{1.25}\smash{\begin{tabular}[t]{l}$\dots$\end{tabular}}}}%
  \end{picture}%
\endgroup%

    \caption{One case when $\gamma$ turns  (eventually) right and $n\leq m$}
    \label{fig:surg-D-V2}
  \end{subfigure}
  \caption{Various behaviors of $\gamma$ directly below $a$ for which pairing with $\iota_0^{\vee}x_1$ results in a null-homologous class}
  \label{fig:surg-D-V}
\end{figure}

\begin{remark}
  In the above calculations, we have been assuming that the curve $\gamma$ has a trivial local system. However, the result is the same even for nontrivial local systems. This is because, in that case, $\gamma$ can be thought of as $N$ parallel copies of itself, and so each intersection point in the above argument corresponds to $N$ points. The local system alters the differential by a vector space automorphism on $\FF^N$. The above lemmas show that a certain element is in the image of the differential; in the case of nontrivial local system, each of the $N$ corresponding elements must still be in the image, as it only differs by an isomorphism.
\end{remark}

Now we record some nonvanishing conditions:

\begin{lemma}\label{lem:surg-NV-1}
  Let $a$ and $c$ as above. Suppose $\gamma$
  \begin{enumerate}
    \item  immediately above $a$ curves to the left, followed by either a downward turn OR a continuation to the left, AND
    \item immediately below $a$ curves to the right after $m$ vertical passes, and furthermore if
    \begin{enumerate}
      \item $m<n-1,$ OR
      \item $m=n-1$ AND $\gamma$ proceeds to the right OR curves up.
    \end{enumerate}
  \end{enumerate}
  Then $c$ is not null-homologous.
\end{lemma}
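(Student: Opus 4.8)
The plan is to run the same local immersed-curve bigon count used for Lemmas~\ref{lem:surg-V-U} and~\ref{lem:surg-V-D}, but now in the configurations complementary to the vanishing ones, and to read off nonvanishing rather than vanishing. I would work in the pairing diagram $CF(\overrightarrow{\Gamma_A},\overrightarrow{\Gamma_D})\cong\CFhat$ of the filled manifold, where $\overrightarrow{\Gamma_A}$ represents $\BSA(\Y)$ and $\overrightarrow{\Gamma_D}$ is the solid-torus curve for $\xi_-$ from Figure~\ref{fig:solid-torus-imm-N1}, namely the slope-$n$ curve carrying a finger move on the first strand; here $a$ is the intersection point of $\gamma\subset\overrightarrow{\Gamma_A}$ with the horizontal edge of $T^2_\bullet$ and $c=a\boxtimes\iota_1^{\vee} x_1$ is the intersection of $\gamma$ with that finger. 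First I would draw, under hypotheses~(1) and~(2), the local picture near the finger and enumerate the finitely many intersection points and immersed bigons supported in a neighborhood of it; this reproduces the familiar case split ($m<n-1$ versus $m=n-1$ with the extra turning condition, together with the turn above $a$), in parallel with Figures~\ref{fig:surg-U-V} and~\ref{fig:surg-D-V}.

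The main content is then to show $c\notin\operatorname{im}\partial$. Hypothesis~(2) --- the rightward turn below $a$ occurring after only $m\le n-1$ vertical passes, so that the finger does not wrap past $\gamma$ --- should force a \emph{unique} generator $x$ admitting a bigon onto $c$. Hypothesis~(1) --- the leftward excursion above $a$ ending in a downward turn or a further leftward pass, which is exactly the negation of the cancelling configuration of Lemma~\ref{lem:surg-V-U} --- should force $\partial x=c+y$ for an auxiliary generator $y$ that receives no other bigon. A short linear-algebra step then finishes: if $\partial\bigl(\textstyle\sum_i w_i\bigr)=c$ then $x$ must appear among the $w_i$, whence $y$ appears in $\partial\bigl(\sum_i w_i\bigr)$ with nothing available to cancel it, a contradiction; so $c$ is not a boundary, and since $c$ is a cycle (it represents a contact invariant in the intended application) this gives $[c]\ne0$. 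The case of a nontrivial local system on $\gamma$ I would dispatch exactly as in the Remark following Lemma~\ref{lem:surg-V-D}: passing to $N$ parallel copies of $\gamma$ only perturbs the relevant block of the differential by an automorphism of $\FF^N$, so each of the $N$ copies of $c$ still lies outside the image.

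The hard part will be the bookkeeping behind the two uniqueness claims --- that no second generator maps onto $c$, and that the generator $y$ produced by hypothesis~(1) is genuinely terminal in the differential. Both hinge on how far the finger of the $\xi_-$ curve (whose length is governed by $n$) can travel along the up-and-down excursions of $\gamma$ before it closes up, and each must be checked across the subcases $m<n-1$ and $m=n-1$ with the stated turning condition. This is precisely where the configurations $m=n-1$ without the extra condition, and $m\ge n$, have to be excluded, which is what produces the gap recorded after Theorem~\ref{thm:surgery}.
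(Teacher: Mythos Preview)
Your proposal is correct and follows essentially the same approach as the paper: identify the unique generator $x$ with a bigon to $c$, verify $\partial x = c + y$ with $y$ receiving no other bigon, and conclude $c\notin\operatorname{im}\partial$. The paper organizes the roles of the two hypotheses slightly differently---hypothesis~(1) (the behavior above $a$) is what produces both $x$ and $y$ and the relation $\partial x=c+y$, while hypothesis~(2) (the behavior below $a$) is what rules out any further bigon terminating at $c$---but the substance is identical, and your explicit linear-algebra step and local-system remark are welcome additions.
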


\begin{proof}
  We must rule out the possibility of $c$ being in the image of the differential. Hence, we must analyze potential bigons terminating at $c$ both from above and below. Figure \ref{fig:surg-U-NV} shows that the first condition implies the existence of a generator $x$ as well as a generator $y$ or $y',$ depending on whether we have a downward turn or horizontal continuation, respectively, such that $\partial x=c+y$ or $\partial x = c+y',$ respectively. We also see that neither $y$ nor $y'$ is involved in any futher bigons.\\
  Now we turn to the second condition, which has two possibilities, illustrated in Figures \ref{fig:surg-D-NVa} and \ref{fig:surg-D-NVb}. When either of these holds, we see that there are no bigons terminating at $c$ from below. Therefore, we conclude that $\partial x=c+y$ or $\partial x = c+y'$ is the only appearance $c$ has in any differential of a generator (and the same is true for $y$ or $y'$), so that $c$ is indeed not null-homologous.
\end{proof}

\begin{figure}
  \centering
  \begin{subfigure}{.45\textwidth}
\begingroup%
  \makeatletter%
  \providecommand\color[2][]{%
    \errmessage{(Inkscape) Color is used for the text in Inkscape, but the package 'color.sty' is not loaded}%
    \renewcommand\color[2][]{}%
  }%
  \providecommand\transparent[1]{%
    \errmessage{(Inkscape) Transparency is used (non-zero) for the text in Inkscape, but the package 'transparent.sty' is not loaded}%
    \renewcommand\transparent[1]{}%
  }%
  \providecommand\rotatebox[2]{#2}%
  \newcommand*\fsize{\dimexpr\f@size pt\relax}%
  \newcommand*\lineheight[1]{\fontsize{\fsize}{#1\fsize}\selectfont}%
  \ifx\svgwidth\undefined%
    \setlength{\unitlength}{194.44844884bp}%
    \ifx\svgscale\undefined%
      \relax%
    \else%
      \setlength{\unitlength}{\unitlength * \real{\svgscale}}%
    \fi%
  \else%
    \setlength{\unitlength}{\svgwidth}%
  \fi%
  \global\let\svgwidth\undefined%
  \global\let\svgscale\undefined%
  \makeatother%
  \begin{picture}(1,0.99220679)%
    \lineheight{1}%
    \setlength\tabcolsep{0pt}%
    \put(0,0){\includegraphics[width=\unitlength,page=1]{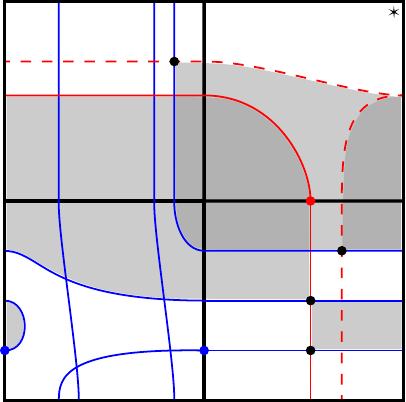}}%
    \put(0.24857882,0.19334899){\color[rgb]{0,0,1}\makebox(0,0)[lt]{\lineheight{1.25}\smash{\begin{tabular}[t]{l}$\dots$\end{tabular}}}}%
    \put(0.72312029,0.08754141){\color[rgb]{0,0,0}\makebox(0,0)[lt]{\lineheight{1.25}\smash{\begin{tabular}[t]{l}$c$\end{tabular}}}}%
    \put(0.72129775,0.21858309){\color[rgb]{0,0,0}\makebox(0,0)[lt]{\lineheight{1.25}\smash{\begin{tabular}[t]{l}$x$\end{tabular}}}}%
    \put(0.77705246,0.51860041){\color[rgb]{1,0,0}\makebox(0,0)[lt]{\lineheight{1.25}\smash{\begin{tabular}[t]{l}$a$\end{tabular}}}}%
    \put(0.02056247,0.08253237){\color[rgb]{0,0,1}\makebox(0,0)[lt]{\lineheight{1.25}\smash{\begin{tabular}[t]{l}$\iota_0^{\vee}x_1$\end{tabular}}}}%
    \put(0.8761918,0.33727846){\color[rgb]{0,0,0}\makebox(0,0)[lt]{\lineheight{1.25}\smash{\begin{tabular}[t]{l}$y$\end{tabular}}}}%
    \put(0.44893247,0.87537624){\color[rgb]{0,0,0}\makebox(0,0)[lt]{\lineheight{1.25}\smash{\begin{tabular}[t]{l}$y'$\end{tabular}}}}%
  \end{picture}%
\endgroup%

    \caption{The case where above $a,$ $\gamma$ turns left and then down or continues left}
    \label{fig:surg-U-NV}
  \end{subfigure}
  \hfill
  \begin{subfigure}{.45\textwidth}
\begingroup%
  \makeatletter%
  \providecommand\color[2][]{%
    \errmessage{(Inkscape) Color is used for the text in Inkscape, but the package 'color.sty' is not loaded}%
    \renewcommand\color[2][]{}%
  }%
  \providecommand\transparent[1]{%
    \errmessage{(Inkscape) Transparency is used (non-zero) for the text in Inkscape, but the package 'transparent.sty' is not loaded}%
    \renewcommand\transparent[1]{}%
  }%
  \providecommand\rotatebox[2]{#2}%
  \newcommand*\fsize{\dimexpr\f@size pt\relax}%
  \newcommand*\lineheight[1]{\fontsize{\fsize}{#1\fsize}\selectfont}%
  \ifx\svgwidth\undefined%
    \setlength{\unitlength}{194.44844884bp}%
    \ifx\svgscale\undefined%
      \relax%
    \else%
      \setlength{\unitlength}{\unitlength * \real{\svgscale}}%
    \fi%
  \else%
    \setlength{\unitlength}{\svgwidth}%
  \fi%
  \global\let\svgwidth\undefined%
  \global\let\svgscale\undefined%
  \makeatother%
  \begin{picture}(1,1.00000389)%
    \lineheight{1}%
    \setlength\tabcolsep{0pt}%
    \put(0,0){\includegraphics[width=\unitlength,page=1]{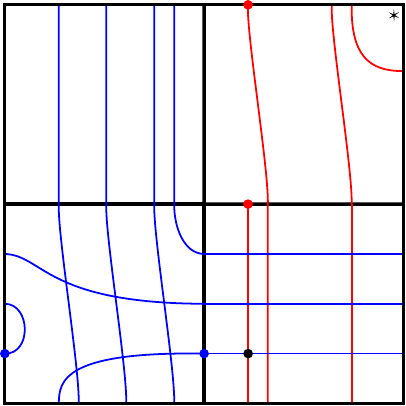}}%
    \put(0.20229402,0.19334876){\color[rgb]{0,0,1}\makebox(0,0)[lt]{\lineheight{1.25}\smash{\begin{tabular}[t]{l}$\dots$\end{tabular}}}}%
    \put(0.56883765,0.14925433){\color[rgb]{0,0,0}\makebox(0,0)[lt]{\lineheight{1.25}\smash{\begin{tabular}[t]{l}$c$\end{tabular}}}}%
    \put(0.56105656,0.5186003){\color[rgb]{1,0,0}\makebox(0,0)[lt]{\lineheight{1.25}\smash{\begin{tabular}[t]{l}$a$\end{tabular}}}}%
    \put(0.02056239,0.08253211){\color[rgb]{0,0,1}\makebox(0,0)[lt]{\lineheight{1.25}\smash{\begin{tabular}[t]{l}$\iota_0^{\vee}x_1$\end{tabular}}}}%
    \put(0.70876301,0.73308413){\color[rgb]{1,0,0}\makebox(0,0)[lt]{\lineheight{1.25}\smash{\begin{tabular}[t]{l}$\dots$\end{tabular}}}}%
    \put(0.31800595,0.19334876){\color[rgb]{0,0,1}\makebox(0,0)[lt]{\lineheight{1.25}\smash{\begin{tabular}[t]{l}$\dots$\end{tabular}}}}%
  \end{picture}%
\endgroup%

    \caption{One case where below $a,$ $\gamma$ turns (eventually) right: $n>m$}
    \label{fig:surg-D-NVa}
  \end{subfigure}\\
  \begin{subfigure}{.45\textwidth}
\begingroup%
  \makeatletter%
  \providecommand\color[2][]{%
    \errmessage{(Inkscape) Color is used for the text in Inkscape, but the package 'color.sty' is not loaded}%
    \renewcommand\color[2][]{}%
  }%
  \providecommand\transparent[1]{%
    \errmessage{(Inkscape) Transparency is used (non-zero) for the text in Inkscape, but the package 'transparent.sty' is not loaded}%
    \renewcommand\transparent[1]{}%
  }%
  \providecommand\rotatebox[2]{#2}%
  \newcommand*\fsize{\dimexpr\f@size pt\relax}%
  \newcommand*\lineheight[1]{\fontsize{\fsize}{#1\fsize}\selectfont}%
  \ifx\svgwidth\undefined%
    \setlength{\unitlength}{194.44844884bp}%
    \ifx\svgscale\undefined%
      \relax%
    \else%
      \setlength{\unitlength}{\unitlength * \real{\svgscale}}%
    \fi%
  \else%
    \setlength{\unitlength}{\svgwidth}%
  \fi%
  \global\let\svgwidth\undefined%
  \global\let\svgscale\undefined%
  \makeatother%
  \begin{picture}(1,1.00000389)%
    \lineheight{1}%
    \setlength\tabcolsep{0pt}%
    \put(0,0){\includegraphics[width=\unitlength,page=1]{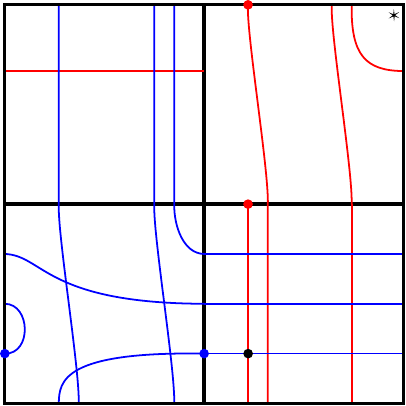}}%
    \put(0.24857879,0.19334876){\color[rgb]{0,0,1}\makebox(0,0)[lt]{\lineheight{1.25}\smash{\begin{tabular}[t]{l}$\dots$\end{tabular}}}}%
    \put(0.56883765,0.14925433){\color[rgb]{0,0,0}\makebox(0,0)[lt]{\lineheight{1.25}\smash{\begin{tabular}[t]{l}$c$\end{tabular}}}}%
    \put(0.56105645,0.5186003){\color[rgb]{1,0,0}\makebox(0,0)[lt]{\lineheight{1.25}\smash{\begin{tabular}[t]{l}$a$\end{tabular}}}}%
    \put(0.02056233,0.08253211){\color[rgb]{0,0,1}\makebox(0,0)[lt]{\lineheight{1.25}\smash{\begin{tabular}[t]{l}$\iota_0^{\vee}x_1$\end{tabular}}}}%
    \put(0.7087629,0.73308413){\color[rgb]{1,0,0}\makebox(0,0)[lt]{\lineheight{1.25}\smash{\begin{tabular}[t]{l}$\dots$\end{tabular}}}}%
    \put(0,0){\includegraphics[width=\unitlength,page=2]{Imm_surg_D_NV_2.pdf}}%
  \end{picture}%
\endgroup%

    \caption{One case where below $a,$ $\gamma$ turns (eventually) right: $n=m$ and then no immediate downward turn}
    \label{fig:surg-D-NVb}
  \end{subfigure}
  \caption{Behaviors of $\gamma$ such that if (a) and one of (b) or (c) is satisfied, result in a nonvanishing generator}
  \label{fig:surg-NV}
\end{figure}

We also record another important special nonvanishing case:

\begin{lemma}\label{lem:surg-NV-2}
  Let $a$ and $c$ as above. Suppose $\gamma$ is a simple closed curve of slope $-n.$ Then $c$ is not null-homologous.
\end{lemma}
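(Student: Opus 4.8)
The plan is to compute the filled complex $\CFhat$ directly as the immersed-curve pairing of $\overrightarrow{\Gamma}$ with the type-$D$ curve of the $n$-framed solid torus, in the same style as Lemmas~\ref{lem:surg-V-U}--\ref{lem:surg-NV-1}, and to show that the distinguished generator $c = a\boxtimes\iota_1^\vee x_1$ is not in the image of $\partial$. First I would overlay, in minimal position on $T^2_\bullet$, the curve $\gamma$ --- which by hypothesis is a single embedded closed curve of slope $-n$ with trivial local system --- against the type-$D$ curve of the $n$-framed solid torus that carries the contact invariant $\iota_1^\vee x_1$; by Section~\ref{subsec:solid} (Figure~\ref{fig:solid-torus-imm-N1}) the latter is the embedded slope-$n$ curve altered by a single ``finger move'' along the first strand. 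Since both curves are embedded, the transverse intersections of their ``background'' (unfingered) parts bound no bigons among themselves, so every bigon in the pairing complex is created by the finger move and is therefore confined to a bounded local picture that does not depend on the global shape of $\gamma$.

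Next I would locate $c$ in this local picture and analyze the bigons incident to it, exactly as in the proof of Lemma~\ref{lem:surg-NV-1} (cf.\ Figures~\ref{fig:surg-D-V} and \ref{fig:surg-NV}). The finger move produces at most one bigon having $c$ as an endpoint, and the claim is that $c$ occurs as its \emph{source}: concretely, there should be a generator $x$ with $\partial x = c + y$ (or simply $\partial x = c$), where $y$ is not the terminal end of any other bigon, so that no generator maps onto $c$. Hence $c\notin \mathrm{im}\,\partial$ and $[c]\neq 0$. As a heuristic consistency check, since $\gamma$ is a single embedded curve of slope $-n$, for the relevant filling slopes the filled manifold is an L-space, so $\rk\HFhat$ is one in each $\mathrm{Spin}^c$-structure and the differential has corank exactly one in each grading --- which is consistent with $c$ surviving as the generator of its grading.

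The step I expect to be the crux is orienting the finger-move bigon correctly, i.e.\ verifying that within the $\mathrm{Spin}^c$-strip containing $c$ the finger move pushes $c$ \emph{across} a neighbouring generator rather than the other way around; this is precisely the distinction that separates the vanishing Lemmas~\ref{lem:surg-V-U}--\ref{lem:surg-V-D} from the nonvanishing Lemma~\ref{lem:surg-NV-1}, and I would settle it by the same case analysis on the number $m$ of vertical passes relative to $n$. A secondary point to check is that no strand of $\gamma$ other than the one through $a$ (and, should $\overrightarrow{\Gamma}$ have further components, no other component) contributes a bigon terminating at $c$: such a bigon would have to meet the finger move, and the embeddedness of $\gamma$ together with the $\mathrm{Spin}^c$-grading rules it out. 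Finally, no local-systems subtlety arises here since $\gamma$ has trivial local system by hypothesis, and in any event the remark following Lemma~\ref{lem:surg-V-D} covers that case.
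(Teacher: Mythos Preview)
Your overall strategy (overlay $\gamma$ with the finger-moved solid-torus curve and do a local bigon count) matches the paper, but your prediction of the local picture is wrong, and this is precisely the point of isolating this case as a separate lemma. When $\gamma$ is a simple closed curve of slope $-n$, the ``above $a$'' and ``below $a$'' portions of $\gamma$ are not independent: they are the same curve wrapping around and closing up. As a result there is a single extra generator $x$ near the finger, and there are \emph{two} bigons from $x$ to $c$ (one coming from each side), which cancel over $\FF_2$. Thus $c$ does not appear in $\partial x$, and there are no other bigons involving $c$, so $c$ survives. This is exactly the content of Figure~\ref{fig:surg-C} in the paper.

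By contrast, you assert that the finger move produces \emph{at most one} bigon with $c$ as an endpoint, and then attempt to mimic the $\partial x = c+y$ mechanism of Lemma~\ref{lem:surg-NV-1}. That mechanism does not apply here, because the second generator $y$ is not present; the would-be $y$ is identified with $c$ itself once the curve closes up. There is also a logical slip: the parenthetical alternative ``or simply $\partial x = c$'' would make $c$ null-homologous, the opposite of what you want. Finally, your ``case analysis on $m$ relative to $n$'' is not needed and would not resolve the issue: for a closed slope-$-n$ curve the number of vertical passes is forced, and it is exactly the boundary case that the previous lemmas do not cover.
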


\begin{proof}
  According to Figure \ref{fig:surg-C}, there is a generator $x,$ and two bigons from $x$ to $c.$ There are no other bigons involving $x$ or $c,$ and so we see that $c$ is not null-homologous.
\end{proof}

\begin{figure}
  \centering
\begingroup%
  \makeatletter%
  \providecommand\color[2][]{%
    \errmessage{(Inkscape) Color is used for the text in Inkscape, but the package 'color.sty' is not loaded}%
    \renewcommand\color[2][]{}%
  }%
  \providecommand\transparent[1]{%
    \errmessage{(Inkscape) Transparency is used (non-zero) for the text in Inkscape, but the package 'transparent.sty' is not loaded}%
    \renewcommand\transparent[1]{}%
  }%
  \providecommand\rotatebox[2]{#2}%
  \newcommand*\fsize{\dimexpr\f@size pt\relax}%
  \newcommand*\lineheight[1]{\fontsize{\fsize}{#1\fsize}\selectfont}%
  \ifx\svgwidth\undefined%
    \setlength{\unitlength}{194.44844884bp}%
    \ifx\svgscale\undefined%
      \relax%
    \else%
      \setlength{\unitlength}{\unitlength * \real{\svgscale}}%
    \fi%
  \else%
    \setlength{\unitlength}{\svgwidth}%
  \fi%
  \global\let\svgwidth\undefined%
  \global\let\svgscale\undefined%
  \makeatother%
  \begin{picture}(1,0.99999733)%
    \lineheight{1}%
    \setlength\tabcolsep{0pt}%
    \put(0,0){\includegraphics[width=\unitlength,page=1]{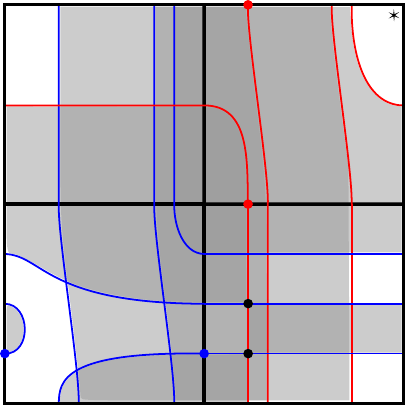}}%
    \put(0.24857896,0.19334874){\color[rgb]{0,0,1}\makebox(0,0)[lt]{\lineheight{1.25}\smash{\begin{tabular}[t]{l}$\dots$\end{tabular}}}}%
    \put(0.56883782,0.14925431){\color[rgb]{0,0,0}\makebox(0,0)[lt]{\lineheight{1.25}\smash{\begin{tabular}[t]{l}$c$\end{tabular}}}}%
    \put(0.56105662,0.51860028){\color[rgb]{1,0,0}\makebox(0,0)[lt]{\lineheight{1.25}\smash{\begin{tabular}[t]{l}$a$\end{tabular}}}}%
    \put(0.0205625,0.08253209){\color[rgb]{0,0,1}\makebox(0,0)[lt]{\lineheight{1.25}\smash{\begin{tabular}[t]{l}$\iota_0^{\vee}x_1$\end{tabular}}}}%
    \put(0.70876307,0.73308411){\color[rgb]{1,0,0}\makebox(0,0)[lt]{\lineheight{1.25}\smash{\begin{tabular}[t]{l}$\dots$\end{tabular}}}}%
    \put(0.62283709,0.2726809){\color[rgb]{0,0,0}\makebox(0,0)[lt]{\lineheight{1.25}\smash{\begin{tabular}[t]{l}$x$\end{tabular}}}}%
  \end{picture}%
\endgroup%

  \caption{The case where $\gamma$ is a simple closed curve of slope $-n:$ the  resulting generator is not null-homologous}
  \label{fig:surg-C}
\end{figure}

\begin{remark}
  It is possible to prove analogous results for positive surgery using the alternative parametrization and the immersed curve in Figure \ref{fig:solid-torus-imm-N2}.
\end{remark}

Now we are ready to prove Theorem~\ref{thm:surgery}. We prove an equivalent statement.

\begin{theorem}
  Let $(Y,\xi)$ be a contact L-space with $\chat(\xi) \neq 0$ and $K$ a null-homologous Legendrian knot in $Y$. Denote by $\xi_{(n)}$ the contact structure obtained by a positive contact $(n)$-surgery on $K$ for $n \in \mathbb{N}$, in which all stabilizations are chosen to be negative. Let $s = n + \tb(K)$ be the corresponding smooth surgery coefficient.

  \begin{itemize}
    \item If $\tb(K) - \rot(K) < 2\tau_{\xi}(K) - 1$, then $\chat(\xi_{(n)}) = 0$,
    \item if $\tb(K) - \rot(K) = 2\tau_{\xi}(K) - 1$ and
    \begin{itemize}
      \item if $\epsilon_{\xi}(K) = 0,1$, then $\chat(\xi_{(n)})\neq 0$ if and only if $s \geq 2\tau_{\xi}(K)$,
      \item If $\epsilon_{\xi}(K) = -1$, then $\chat(\xi_{(n)})=0$.
    \end{itemize}
  \end{itemize}
\end{theorem}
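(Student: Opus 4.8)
The plan is to realize $\xi_{(n)}$ as a contact gluing, invoke the pairing theorem to rewrite $\chat(\xi_{(n)})$ as a box tensor product of bordered contact invariants, and then evaluate that box tensor product using the immersed-curve lemmas proved above. Throughout I work with the equivalent integer statement: $n\in\mathbb N$ and $s = n + \tb(K)$.

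\textbf{Reduction to a box tensor product.} Write $Y(K) = Y\setminus\mathring{\nu}(K)$ and parametrize its torus boundary by the meridian and the $\tb$-framed longitude, so that the natural contact structure $\xi_\mu$ on the exterior induces the meridional dividing set. Positive contact $(n)$-surgery on $K$ amounts to gluing in, along this boundary, the $n$-framed solid torus $\SSS$ of Section~\ref{subsec:solid}; the hypothesis that every stabilization is negative forces its contact structure to be $\xi_-$, whose type-$D$ contact invariant, after pairing with the twisting-slice bimodule of Section~\ref{sec:DD_torus}, is the generator $\iota_1^{\vee}x_1$ (the immersed curve of Figure~\ref{fig:solid-torus-imm-N1}). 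As recalled at the start of Section~\ref{subsec:surgery}, under $\CFA(-Y(K))\cdot\iota_0\simeq\HFKhat(-Y,K)$ the invariant $c_A(\xi_\mu)$ is the LOSS generator $\Lhat(K)$, which I regard as a single marked intersection point $a$. Applying Theorem~\ref{thm:pairing} together with Corollary~\ref{cor:main} then gives $\chat(\xi_{(n)}) = [\,c\,]$ with $c = a\boxtimes\iota_1^{\vee}x_1$, exactly the element analyzed in Lemmas~\ref{lem:surg-V-U}--\ref{lem:surg-NV-2}.

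\textbf{Locating $a$ and the first bullet.} Since $(Y,\xi)$ is a contact L-space, $\CFA(-Y(K))$ is represented by an immersed multicurve on $T^2_{\bullet}$ whose only component not confinable to a neighborhood of the midline is a distinguished ``staircase'' curve $\gamma$ carrying a trivial local system. By Lemma~\ref{lem:loss_gen}, $\tb(K)-\rot(K) = 2\tau_\xi(K)-1$ exactly when $n_0=0$, i.e. when $a$ is the vertical generator of $\gamma$, sitting at height $\tau_\xi(K)$. If instead $\tb(K)-\rot(K) < 2\tau_\xi(K)-1$, then $n_0\ge 1$, so $a$ either vanishes (and then $\chat(\xi_{(n)})=0$ trivially) or is supported on components confined near the midline; in the latter situation the local picture of the relevant component at $a$ is always one of the configurations of Lemma~\ref{lem:surg-V-U}(2) or Lemma~\ref{lem:surg-V-D}(1), so $c$ is null-homologous. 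This settles the first bullet.

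\textbf{The case $\tb(K)-\rot(K) = 2\tau_\xi(K)-1$.} Now $a$ is the vertical generator of the staircase, and the immersed-curve dictionary reads $\epsilon_\xi(K)$ off the direction $\gamma$ takes immediately past $a$: upward if $\epsilon_\xi(K)=-1$, downward if $\epsilon_\xi(K)=1$, horizontal if $\epsilon_\xi(K)=0$ (which forces also $\tau_\xi(K)=0$). For $\epsilon_\xi(K)=-1$ this, together with the shape of $\gamma$ just above $a$, matches a hypothesis of Lemma~\ref{lem:surg-V-U} for every $n$, so $\chat(\xi_{(n)})=0$. For $\epsilon_\xi(K)\in\{0,1\}$, the curve $\gamma$ makes some number $m\ge 0$ of vertical passes below $a$ and then turns rightward, and the combination of $m$ with the framing of $\SSS$ (converted through the chosen parametrization) puts us in the nonvanishing regime of Lemma~\ref{lem:surg-NV-1} (or Lemma~\ref{lem:surg-NV-2} in the extremal case) exactly when $s\ge 2\tau_\xi(K)$, and in the vanishing regime of Lemma~\ref{lem:surg-V-D}(2) exactly when $s\le 2\tau_\xi(K)-1$. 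As $s$ is an integer, these two ranges are complementary, which proves the second bullet.

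\textbf{Main obstacle.} The delicate part is the last step: reading the exact local shape of $\gamma$ near its vertical generator off the L-space hypothesis and the value of $\epsilon_\xi(K)$, and then matching the count $m$ of vertical passes with the framing and parametrization of $\SSS$ so that the split between the vanishing Lemmas~\ref{lem:surg-V-U}--\ref{lem:surg-V-D} and the nonvanishing Lemmas~\ref{lem:surg-NV-1}--\ref{lem:surg-NV-2} lands precisely at $s = 2\tau_\xi(K)$; the $Y$ versus $-Y$ orientation conventions and the sign of $\tau_\xi$ must be tracked carefully throughout, and the ``confined to the midline implies vanishing'' claim used in the first bullet, though routine, should be stated explicitly.
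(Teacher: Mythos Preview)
Your overall architecture—express $\chat(\xi_{(n)})$ as a box tensor of bordered contact invariants and evaluate via the immersed-curve Lemmas—is exactly the paper's. But two genuine gaps make the argument as written incorrect, and they are not the ``routine'' bookkeeping you flag at the end.

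\textbf{Wrong idempotent / wrong contact class on the exterior.} Contact $(n)$-surgery removes a standard neighborhood of $K$, leaving the exterior with the \emph{$\tb$-framed} dividing set; the type-$A$ contact class to pair with the solid torus therefore lives in the longitudinal idempotent, not the meridional one. You instead feed in $c_A(\xi_\mu)=\Lhat(K)$, which sits in the meridional idempotent and cannot be paired directly with $\iota_0^{\vee}x_1$. In the paper the role of $\Lhat$ is different: the relevant generator $a$ (in the longitudinal idempotent, on the non-vertical segment of the distinguished curve) satisfies $m_2(a,\rho_2)=L$, and $L$ is the generator of $\HFhat(-Y,\mathfrak{s})$; one then checks via Lemma~\ref{lem:loss_gen} whether $L$ appears as a summand of $\Lhat$, which is equivalent to $a$ appearing as a summand of $c_A(\xi_\ell)$.

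\textbf{$c_A$ is not a single generator.} You ``regard $\Lhat(K)$ as a single marked intersection point $a$,'' and later, when $n_0\ge 1$, assert that the surviving part of $c_A$ is supported on midline-confined components. Neither is justified: in a reduced (pulled-tight) basis one has $c_A=a_1+\cdots+a_N$, and the summands $a_j$ can lie anywhere on the multicurve, including on the distinguished component. The paper's argument runs in the opposite direction: for \emph{every} $a_j$, Lemmas~\ref{lem:surg-V-U}–\ref{lem:surg-V-D} force $a_j\boxtimes\iota_0^{\vee}x_1$ to be null-homologous unless $a_j$ lies on the non-vertical segment of $\gamma_{\xi}$; this segment carries at most one candidate (two when $\epsilon_\xi=-1$, each handled separately). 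Only then does Lemma~\ref{lem:loss_gen} enter, to decide whether that unique candidate is actually one of the $a_j$. Your ``first bullet'' thus has the logic inverted: $\tb-\rot<2\tau_\xi-1$ does not push $c_A$ onto non-distinguished components; it merely says the one generator that \emph{could} have contributed non-trivially is absent from the sum. Likewise, your $\epsilon_\xi=-1$ case is under-argued: there are two potential contributors on the non-vertical segment, and each must be killed by a separate application of the vanishing lemmas (one via Lemma~\ref{lem:surg-V-U}, the other via Lemma~\ref{lem:surg-V-D}).
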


\begin{proof}
  We choose a reduced basis corresponding to some ``pulled tight'' immersed curve. In this basis, we have $c_A=a_1+\dots+a_N.$ We first observe that the result of pairing with a positive framed unknot $a_j \boxtimes \iota_0^{\vee} \boxtimes x_1$ will be null-homologous for all but at most one generator $a_j$. In particular, by Lemma \ref{lem:surg-V-U}, this will be the case unless the curve immediately above $a_j$ turns leftward. Thus, we may as well assume $a_j$ is one such generator. We claim that $a_j$ must belong to the distingushed component $\gamma_{\xi},$ as otherwise, following $\Gamma$ downward from $a_j$ will result in a turn to the left, as all non-distinguished components in this Spin$^c$-grading are constrained to the midline. Hence, by Lemma \ref{lem:surg-V-D} if $a_j$ were on one of those non-distinguished components, it would again result in a null-homologous contribution. Moreover, by Lemma \ref{lem:surg-V-D} again, we see that not only must $a_j$ belong to $\gamma_{\xi},$ it must belong to the ``non-vertical'' part.\\
  Therefore, we now turn to the cases where $a_j$ belongs to the ``non-vertical'' segment of the immersed curve. Recall from Section \ref{sec:immersed} that with the standard meridian and Seifert longitude parametrization, the ``non-vertical'' segment has slope $\epsilon_{\xi}-2\tau_{\xi}$ (recall our invariant lives in the homology of the mirror knot). Since we are using a parametrization with the $\tb$ framing instead, this results in the segment having slope $\tb-2\tau_{\xi} + \epsilon_{\xi}.$ Notice that $\tb-2\tau_{\xi} + \epsilon_{\xi} \leq \tb-2\tau_{\xi} -1 \leq 0,$ with equality only possible when $\epsilon_{\xi}=1.$ Figure \ref{fig:nonVert} illustrates the two kinds of behaviors of the non-vertical segment when $\epsilon_{\xi} = 1$ or $-1.$ Let us put $m=\tb-2\tau_{\xi} + \epsilon_{\xi}.$ We consider the various cases:\\
  \\
  \textbf{Case I:} $\epsilon_{\xi} = -1$\\
  The first part of Lemma \ref{lem:surg-V-U} implies that only two generators might have a nonvanishing contribution to $c_A \boxtimes \iota^{\vee}_0 \boxtimes x_1;$ namely the two points labelled $a$ and $a'$ in Figure \ref{fig:eN}. From the Figure, we also see that the curve above $a'$, after turning left proceeds to turn upwards. Since it is at the ``leftmost'' end of the nonvertical segment, the curve must immediately enter the part that is constrained to the vertical parametrizing segment, and so must eventually (after some vertical passes) turn right again. Thus, Lemma \ref{lem:surg-V-U} implies that this generator would result in a null-homologous contribution. The other generator $a$ is on the ``rightmost'' of the nonvertical segment, and so following the curve downwards, it must become vertical and turn towards the left. Hence by Lemma \ref{lem:surg-V-D}, it must give a null-homologous contribution. Thus, we see that when $\epsilon_{\xi} = -1,$ all possible contributions vanish in homology, so that $c(\xi)$ itself is null-homologous.\\
  \\
  \textbf{Case II:} $\epsilon_{\xi} = 1$\\
  By Lemma \ref{lem:surg-V-U} and Figure \ref{fig:eP}, we see that there is only one potential generator on the non-vertical segment which may contribute; namely the one labelled as $a.$ So the vanishing of $c_A \boxtimes \iota_0^{\vee}\boxtimes x_1$ is equivalent to the vanishing of $a \boxtimes \iota_0^{\vee}\boxtimes x_1.$ From the figure, we see that there is a generator $L=m_2(a,\rho_2)$ in the idempotent corresponding to the meridian. This generator is at the ``top'' of the non-vertical segment and represents the generator of $\HFhat(Y,\mathfrak{s})\cong \HFK^-(-Y,K)/{U=1}.$ If $a$ is one of the $a_j$ generators included in $c_A,$ then $L$ is included in $m_2(c_A)=\widehat{\mathfrak{L}}.$ By Lemma \ref{lem:loss_gen}, this occurs if and only if $\tb-\rot = 2\tau_{\xi} -1.$ Thus, if $\tb-\rot < 2\tau_{\xi} -1,$ $c(\xi)$ is null-homologous. On the other hand, if equality is satisfied, we have from Lemma \ref{lem:surg-V-D} that if $-m \geq n,$ then $c(\xi)$ must be null-homologous. Note that $m=\tb-2\tau_{\xi} +1$ in this case, so that $c(\xi)$ vanishes when $\tb+n \leq  2\tau_{\xi}-1.$ If, however, $-m < n,$ then we see that Lemma \ref{lem:surg-NV-1} implies that $c(\xi)$ is not null-homologous.\\
  \textbf{Case III:} $\epsilon_{\xi} = 0$\\
  In this case, the non-vertical segment is a simple closed curve, which, in the Seifert framing is horizontal. Thus in our framing, it is an embedded curve of slope $\tb.$ The situation is similar to the $\epsilon =1$ case: there is once again a single generator $a$ which can contribute, and once again, $m_2(a,\rho_2)$ corresponds to the generator of $\HFhat(Y,\mathfrak{s})\cong \HFK^-(-Y,K)/{U=1}$ so that $a$ is included in $c_A$ exactly when $\tb-\rot=2\tau_{\xi}-1 = -1$ ($\epsilon_{\xi}=0$ forces $\tau_{\xi}$ to be zero as well). So once again, $c(\xi)$ will be null-homologous unless that equality is satisfied. In that case, notice that the line of slope $\tb$ has $-\tb-1$ vertical passes. Therefore, by Lemma \ref{lem:surg-V-D}, if $-\tb-1 \geq n$ (equivalently, $\tb+n \leq -1$), then $c(\xi)$ will again be null-homologous. If $-\tb-1 < n-1,$ then Lemma \ref{lem:surg-NV-1} implies that $c(\xi)$ is nonvanishing, whereas if $-\tb=n,$ then Lemma \ref{lem:surg-NV-2} also guarantees that $c(\xi)$ is nonvanishing. In summary, when $\epsilon = 0,$ we have that $c(\xi)$ is null-homologous if and only if $\tb+n <0.$
\end{proof}

\begin{figure}
\centering
  \begin{subfigure}{.45\textwidth}
  \centering
\begingroup%
  \makeatletter%
  \providecommand\color[2][]{%
    \errmessage{(Inkscape) Color is used for the text in Inkscape, but the package 'color.sty' is not loaded}%
    \renewcommand\color[2][]{}%
  }%
  \providecommand\transparent[1]{%
    \errmessage{(Inkscape) Transparency is used (non-zero) for the text in Inkscape, but the package 'transparent.sty' is not loaded}%
    \renewcommand\transparent[1]{}%
  }%
  \providecommand\rotatebox[2]{#2}%
  \newcommand*\fsize{\dimexpr\f@size pt\relax}%
  \newcommand*\lineheight[1]{\fontsize{\fsize}{#1\fsize}\selectfont}%
  \ifx\svgwidth\undefined%
    \setlength{\unitlength}{136.24685092bp}%
    \ifx\svgscale\undefined%
      \relax%
    \else%
      \setlength{\unitlength}{\unitlength * \real{\svgscale}}%
    \fi%
  \else%
    \setlength{\unitlength}{\svgwidth}%
  \fi%
  \global\let\svgwidth\undefined%
  \global\let\svgscale\undefined%
  \makeatother%
  \begin{picture}(1,0.96794677)%
    \lineheight{1}%
    \setlength\tabcolsep{0pt}%
    \put(0,0){\includegraphics[width=\unitlength,page=1]{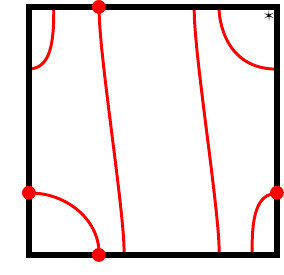}}%
    \put(0.51302349,0.50604254){\color[rgb]{1,0,0}\makebox(0,0)[lt]{\lineheight{1.25}\smash{\begin{tabular}[t]{l}$\dots$\end{tabular}}}}%
    \put(0.36559605,0.0045282){\color[rgb]{1,0,0}\makebox(0,0)[lt]{\lineheight{1.25}\smash{\begin{tabular}[t]{l}$a$\end{tabular}}}}%
    \put(-0.0019442,0.27261091){\color[rgb]{1,0,0}\makebox(0,0)[lt]{\lineheight{1.25}\smash{\begin{tabular}[t]{l}$L$\end{tabular}}}}%
  \end{picture}%
\endgroup%

    \caption{$\epsilon =1$}
    \label{fig:eP}
  \end{subfigure}\hfill
  \begin{subfigure}{.45\textwidth}
  \centering
\begingroup%
  \makeatletter%
  \providecommand\color[2][]{%
    \errmessage{(Inkscape) Color is used for the text in Inkscape, but the package 'color.sty' is not loaded}%
    \renewcommand\color[2][]{}%
  }%
  \providecommand\transparent[1]{%
    \errmessage{(Inkscape) Transparency is used (non-zero) for the text in Inkscape, but the package 'transparent.sty' is not loaded}%
    \renewcommand\transparent[1]{}%
  }%
  \providecommand\rotatebox[2]{#2}%
  \newcommand*\fsize{\dimexpr\f@size pt\relax}%
  \newcommand*\lineheight[1]{\fontsize{\fsize}{#1\fsize}\selectfont}%
  \ifx\svgwidth\undefined%
    \setlength{\unitlength}{121.02987815bp}%
    \ifx\svgscale\undefined%
      \relax%
    \else%
      \setlength{\unitlength}{\unitlength * \real{\svgscale}}%
    \fi%
  \else%
    \setlength{\unitlength}{\svgwidth}%
  \fi%
  \global\let\svgwidth\undefined%
  \global\let\svgscale\undefined%
  \makeatother%
  \begin{picture}(1,1.12683997)%
    \lineheight{1}%
    \setlength\tabcolsep{0pt}%
    \put(0,0){\includegraphics[width=\unitlength,page=1]{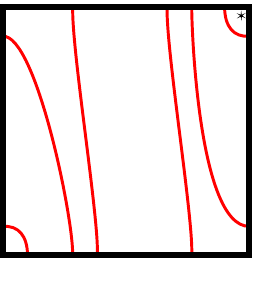}}%
    \put(0.47129232,0.60618894){\color[rgb]{1,0,0}\makebox(0,0)[lt]{\lineheight{1.25}\smash{\begin{tabular}[t]{l}$\dots$\end{tabular}}}}%
    \put(0,0){\includegraphics[width=\unitlength,page=2]{Imm_nonVert_eN.pdf}}%
    \put(0.31322008,0.01146428){\color[rgb]{1,0,0}\makebox(0,0)[lt]{\lineheight{1.25}\smash{\begin{tabular}[t]{l}$a'$\end{tabular}}}}%
    \put(0.12675017,0.00509758){\color[rgb]{1,0,0}\makebox(0,0)[lt]{\lineheight{1.25}\smash{\begin{tabular}[t]{l}$a$\end{tabular}}}}%
  \end{picture}%
\endgroup%

    \caption{$\epsilon =-1$}
    \label{fig:eN}
  \end{subfigure}
  \caption{The general behavior of the distinguished curve $\gamma_{\mathfrak{s}}$ in the meridian-$\tb$-framed longitude parametrization ($\epsilon \neq 0$)}
  \label{fig:nonVert}
\end{figure}

\subsection{Legendrian surgery on non-loose torus knots}\label{subsec:torusknots}
Here, we prove Theorem~\ref{thm:T34} and \ref{thm:lht}. Since the classification of strongly non-loose Legendrian representative of the left-handed trefoil is already known, we first consider Theorem~\ref{thm:lht}.

According to \cite[Theorem~1.12]{EMM:nonloose}, there are strongly non-loose Legendrian representatives $L^n_{\pm}$ with $\tb=n \in \mathbb{Z}$ in an overtwisted $S^3$. Let $K = L^n_-$ and consider the complement $(Y(K), \Gamma_n, \xi^-_n)$. By \cite[Lemma 6.2]{EMM:nonloose}, it can be decomposed into a negative basic slice and the knot complement with meridional sutures: 
\[
  (Y(K), \Gamma_n, \xi^-_n) = (Y(K),\Gamma_{\mu}, \xi_{in}) \cup B_-(n,\infty).
\]
Similarly, when $K = L^n_+$ we have 
\[
  (Y(K), \Gamma_n, \xi^+_n) = (Y(K),\Gamma_{\mu}, \xi_{in}) \cup B_+(n,\infty).
\]

\begin{figure}
  \centering
  \begin{subfigure}{.35\textwidth}
    \centering
    \footnotesize
\begingroup%
  \makeatletter%
  \providecommand\color[2][]{%
    \errmessage{(Inkscape) Color is used for the text in Inkscape, but the package 'color.sty' is not loaded}%
    \renewcommand\color[2][]{}%
  }%
  \providecommand\transparent[1]{%
    \errmessage{(Inkscape) Transparency is used (non-zero) for the text in Inkscape, but the package 'transparent.sty' is not loaded}%
    \renewcommand\transparent[1]{}%
  }%
  \providecommand\rotatebox[2]{#2}%
  \newcommand*\fsize{\dimexpr\f@size pt\relax}%
  \newcommand*\lineheight[1]{\fontsize{\fsize}{#1\fsize}\selectfont}%
  \ifx\svgwidth\undefined%
    \setlength{\unitlength}{104.39360371bp}%
    \ifx\svgscale\undefined%
      \relax%
    \else%
      \setlength{\unitlength}{\unitlength * \real{\svgscale}}%
    \fi%
  \else%
    \setlength{\unitlength}{\svgwidth}%
  \fi%
  \global\let\svgwidth\undefined%
  \global\let\svgscale\undefined%
  \makeatother%
  \begin{picture}(1,1.97049338)%
    \lineheight{1}%
    \setlength\tabcolsep{0pt}%
    \put(0,0){\includegraphics[width=\unitlength,page=1]{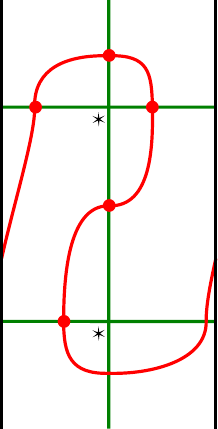}}%
    \put(0.5356353,0.94485009){\color[rgb]{1,0,0}\makebox(0,0)[lt]{\lineheight{1.25}\smash{\begin{tabular}[t]{l}$c_A(\xi_{in})$\end{tabular}}}}%
    \put(0.72656212,1.53405444){\color[rgb]{1,0,0}\makebox(0,0)[lt]{\lineheight{1.25}\smash{\begin{tabular}[t]{l}$c_A(\xi_{1})$\end{tabular}}}}%
    \put(0.52795126,1.77181913){\color[rgb]{1,0,0}\makebox(0,0)[lt]{\lineheight{1.25}\smash{\begin{tabular}[t]{l}$c_A(\xi_{12})$\end{tabular}}}}%
    \put(0.03581341,0.38630024){\color[rgb]{1,0,0}\makebox(0,0)[lt]{\lineheight{1.25}\smash{\begin{tabular}[t]{l}$c_A(\xi_{3})$\end{tabular}}}}%
    \put(0.17510072,1.53393362){\color[rgb]{1,0,0}\makebox(0,0)[lt]{\lineheight{1.25}\smash{\begin{tabular}[t]{l}$c_A(\xi_{123})$\end{tabular}}}}%
  \end{picture}%
\endgroup%

    \caption{The (lift) of the immersed curve invariant of the right-handed trefoil}
    \label{fig:LHT_immersed}
  \end{subfigure}
  \hfill
  \begin{subfigure}{.5\textwidth}
    \centering
\begingroup%
  \makeatletter%
  \providecommand\color[2][]{%
    \errmessage{(Inkscape) Color is used for the text in Inkscape, but the package 'color.sty' is not loaded}%
    \renewcommand\color[2][]{}%
  }%
  \providecommand\transparent[1]{%
    \errmessage{(Inkscape) Transparency is used (non-zero) for the text in Inkscape, but the package 'transparent.sty' is not loaded}%
    \renewcommand\transparent[1]{}%
  }%
  \providecommand\rotatebox[2]{#2}%
  \newcommand*\fsize{\dimexpr\f@size pt\relax}%
  \newcommand*\lineheight[1]{\fontsize{\fsize}{#1\fsize}\selectfont}%
  \ifx\svgwidth\undefined%
    \setlength{\unitlength}{194.54507663bp}%
    \ifx\svgscale\undefined%
      \relax%
    \else%
      \setlength{\unitlength}{\unitlength * \real{\svgscale}}%
    \fi%
  \else%
    \setlength{\unitlength}{\svgwidth}%
  \fi%
  \global\let\svgwidth\undefined%
  \global\let\svgscale\undefined%
  \makeatother%
  \begin{picture}(1,1.000456)%
    \lineheight{1}%
    \setlength\tabcolsep{0pt}%
    \put(0,0){\includegraphics[width=\unitlength,page=1]{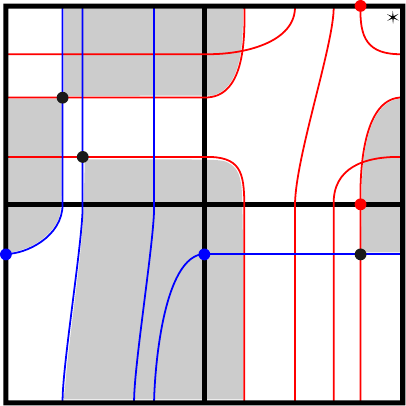}}%
    \put(0.90786664,0.52693709){\color[rgb]{1,0,0}\makebox(0,0)[lt]{\lineheight{1.25}\smash{\begin{tabular}[t]{l}$c_A$\end{tabular}}}}%
    \put(0.89481451,0.32565637){\color[rgb]{0.10196078,0.10196078,0.10196078}\makebox(0,0)[lt]{\lineheight{1.25}\smash{\begin{tabular}[t]{l}$c(\xi)$\end{tabular}}}}%
    \put(0.24573517,0.22354997){\color[rgb]{0,0,1}\makebox(0,0)[lt]{\lineheight{1.25}\smash{\begin{tabular}[t]{l}$\dots$\end{tabular}}}}%
    \put(0.0340999,0.31668913){\color[rgb]{0,0,1}\makebox(0,0)[lt]{\lineheight{1.25}\smash{\begin{tabular}[t]{l}$\iota_0^{\vee}x_1$\end{tabular}}}}%
    \put(0.08570451,0.77814851){\color[rgb]{0.10196078,0.10196078,0.10196078}\makebox(0,0)[lt]{\lineheight{1.25}\smash{\begin{tabular}[t]{l}$x$\end{tabular}}}}%
    \put(0.21718969,0.63913289){\color[rgb]{0.10196078,0.10196078,0.10196078}\makebox(0,0)[lt]{\lineheight{1.25}\smash{\begin{tabular}[t]{l}$y$\end{tabular}}}}%
  \end{picture}%
\endgroup%

    \caption{Computing the $c(\xi)$ for $n$-framed surgery on the left-handed trefoil, $n<0$}
    \label{fig:LHT_neg}
  \end{subfigure}
  \caption{The immersed curve invariant for the trefoil and a computation of the contact invariant of negative surgery}
  \label{fig:LHT}
\end{figure}

We see from Figure \ref{fig:LHT_immersed} that in the idempotent corresponding to the meridian, there are three generators, with distinct spin$^c$-gradings (corresponding to Alexander gradings $-1,0,$ and $1$). Since $\xi_{in}$ has a self-conjugate spin$^c$ structure, $c_A(\xi_{in})$ must be the middle generator (Alexander grading 0). From  Figure \ref{fig:LHT_immersed}, we also see that this generator admits both a nontrival $\rho_1$ and $\rho_3$ multiplication, corresponding to attaching positive and negative basic slices to $\xi_{in}.$ We denote these by $\xi_1$ and $\xi_3,$ respectively. Similarly, $\xi_1$ admits two more consecutive negative basic slice attachments (corresponding to $\rho_2$ and $\rho_3$ actions), and we denote the corresponding contact structures by $\xi_{12}$ and $\xi_{123}.$

\begin{remark}\label{rmk:LHT_rho13}
  It is known that attaching a basic slice $B_{\pm}(0,\infty)$ to $\xi_{in}$ results in a tight contact structure with nonvanishing $\EH$ invariant. This property also allows us to locate the contact class $c_A,$ since it is the unique generator with nontrivial $\rho_1$ and $\rho_3$ multiplications.
\end{remark}

\lht*

\begin{proof}
  It is sufficient to prove the statement for $K = L^n_{\pm}$ for sufficiently negative integer $n$ since Legendrian surgery on $L^{n+k}_{\pm}$ for $k\geq 0$ can be obtained by a sequence of negative contact surgeries on $L^{n}_{\pm}$. Also, Legendrian surgery on $L^n_{\pm}$ is equivalent to a contact Dehn filling on $(Y(K),\Gamma_{0},\xi^{\pm}_{0})$ using an $n-1$ framed solid torus consisting of only negative (\emph{resp.} positive) basic slices. According to Section~\ref{subsec:solid}, the contact structures on the solid torus correspond to the elements $x_1$ and $x_{n-1}$ in $\CFA$.   
  
  We first consider $L^{n}_-$ and $(Y(K),\Gamma_{0},\xi^{-}_{0})$. Notice that $\xi^-_0$ corresponds to $\xi_3$ and the contact structure on the solid torus corresponds to $x_1$. The computation is shown in Figure \ref{fig:LHT_neg}. We see that there are generators $x$ and $y$ such that $\partial x = c(\xi)+ y$ where $\xi$ is the result of Legendrian surgery. There are no other bigons involving $x,$ $y,$ or $c(\xi),$ and so $c(\xi)$ survives in homology. Therefore, $\xi$ is tight. 

  A similar argument also works for $L^+_n$.
\end{proof}

\begin{remark} 
  We may obtain equivalent contact structures by ``peeling off'' a basic slice from the knot complement and attaching it to the solid torus. Similarly, we can replace the negative basic slice with a positive one, and re-attach it to the knot exterior. The following computations show that indeed we obtain homologous contact invariants (cf. Remark \ref{rmk:reattaching}):
  \begin{align*}
    c(\xi) &= c_A(\xi_1') \boxtimes \iota_0^{\vee}\boxtimes x_1 \\
    &= m_2(c_A(\xi_{in},\rho_3)\boxtimes \iota_0^{\vee}\boxtimes x_1 \\
    &\sim c_A(\xi_{in}) \boxtimes \iota_1^{\vee} m_2(x_1,\rho_3)\\
    &= c_A(\xi_{in}) \boxtimes \iota_1^{\vee} y\\
    &=c_A(\xi_{in}) \boxtimes \iota_1^{\vee} m_2(x_n,\rho_1)\\
    &\sim m_2(c_A(\xi_{in},\rho_1)\boxtimes \iota_0^{\vee}\boxtimes x_n \\
    &= c_A(\xi_1)\boxtimes \iota_0^{\vee}\boxtimes x_n
  \end{align*}
\end{remark}

Next, we consider the torus knot $T_{3,4}$. We first classify non-loose Legendrian representatives of non-loose torus knot $T_{3,4}$ by applying the algorithm in \cite[Section 3]{EMM:nonloose} to the decorated Farey graph representing $T_{3,4}$ shown in Figure~\ref{fig:T34_path}.

\begin{figure}[htbp]{\scriptsize
  \begin{overpic}[tics=20]{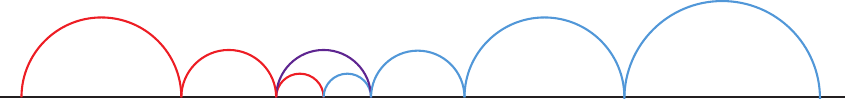}
    \put(6, -10){$\infty$}
    \put(85, -10){$0$}
    \put(132, -10){$1$}
    
    \put(150, -10){$4/3$}
    
    \put(175, -10){$3/2$}
    \put(220, -10){$2$}
    \put(293, -10){$\infty$}
    \put(392, -10){$0$}
    
    \put(45, 10){\LARGE$\circ$}
    \put(107, 5){$\pm$}
    \put(141, 3){\scriptsize$\pm$}
    \put(164, 3){\scriptsize$\pm$}
    \put(198, 5){\scriptsize$\pm$}
    \put(258, 8){\large$\pm$}
    \put(344, 10){\LARGE$\circ$}
  \end{overpic}}
  \vspace{0.1cm}
  \caption{The decorated path for non-loose torus knots $T_{3,4}$.}
  \label{fig:T34_path}
\end{figure}

\begin{proposition}\label{prop:34classification}
  The $(3,4)$-torus knot has non-loose Legendrian representative in $\xi^1$, $\xi^{-1}$, $\xi^{0}$ $\xi^{-5}$. Here, $\xi^i$ denotes the overtwisted contact structure in $S^3$ with $d_3(\xi^i) = i$. The classification in each of these contact structure is as follows. 
  \begin{enumerate}
    \item In $(S^3,\xi^1)$, there exist non-loose $L^i_{\pm}$ for $i > 7$ and $L^7$ such that 
    \begin{alignat*}{3}
      &\tb(L^i_{\pm}) = i, \quad &&\tb(L^7) = 7\\ 
      &\rot(L^i_{\pm}) = \pm(7-i), \quad &&\rot(L^7)=0. 
    \end{alignat*}
    They satisfy $S_{\pm}(L^i_{\pm})=L^{i-1}_{\pm}$ for $i>8$ and $S_{\pm}(L^8_{\pm}) = L^7$, and $S_{\mp}(L^i_{\pm})$ and $S_{\pm}(L^7)$ are loose. In addtion, there are Legendrian knots $K^{12}_{\pm}$ with 
    \[
      \tb(K^{12}_{\pm}) = 12, \quad \rot(K^{12}_{\pm})=\pm1.
    \]
    When they stabilize to have the same invariants, they become equivalent and they are non-loose until stabilized outside the $\mathsf{V}$ defined by $L^{i}_{\pm}$ and $L^{12}$. None of these Legendrian knots have half Giroux torsion in their complement. See Figure~\ref{fig:T34_V}.
       
    \begin{figure}[htbp]{\scriptsize
      \vspace{0.2cm}
      \begin{overpic}[tics=20]{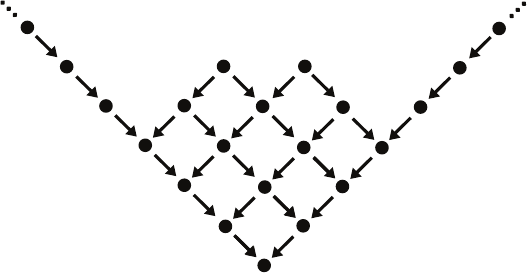}
        \put(-10, 96){$12$}
        \put(-10, 77){$11$}
        \put(-10, 58){$10$}
        \put(-10, 39){$9$}
        \put(-10, 19){$8$}
        \put(-10, 1){$7$}
        
        \put(125, 130){$0$}
        \put(145, 130){$1$}
        \put(99, 130){$-1$}
      \end{overpic}}
      \caption{The mountain range of non-loose $T_{3,4}$ in $\xi^1$}
      \label{fig:T34_V}
    \end{figure}
            
    \item In $(S^3,\xi^{-1})$, there are non-loose $L^i_{\pm}$ for $i \in \mathbb{Z}$ such that
    \begin{alignat*}{3}
      \tb(L^i_{\pm})=i,\quad \rot(L^i_{\pm}) = \pm(1-i) 
    \end{alignat*}
    They satisfy $S_{\pm}(L^i_{\pm})=L^{i-1}_{\pm}$ and $S_{\mp}(L^i_{\pm})$ are loose. None of these Legendrian knots have half Giroux torsion in their complement. See Figure~\ref{fig:T34_X} for the mountain range.
       
    \item In $\xi^{-5}$, there are $L^{i,k}_{\pm}$ and in $\xi_0$ there are  $L^{i,k+1/2}_{\pm}$ for $i\in \mathbb{Z}$ and  $k\geq 0$ such that $\tb(L^{i,k}_{\pm}) = \tb(L^{i,k+1/2}_{\pm}) = i$ and 
    \begin{align*}
        \rot(L^{i,k}_{\pm}) = \pm(-5-i),\quad \rot(L^{i,k+1/2}_{\pm}) = \pm(5-i)        
    \end{align*}
    They satisfy $S_{\pm}(L^{i,k}_{\pm})=L^{i-1,k}_{\pm}$ and $S_{\mp}(L^{i,k}_{\pm})$ are loose. A similar statement holds for $L^{i,k+1/2}_{\pm}$. Also, $L^{i,k}_{\pm}$ has $k$-torsion in its complement if $i > 5$ and $k+1/2$-torsion if $i\leq 5$. $L^{i,k+1/2}_{\pm}$ has $k+1/2$-torsion in its complement if $i > 5$ and $k+1$-torsion if $i \leq 5$. See Figure~\ref{fig:T34_X} for the mountain range.
  \end{enumerate}
\end{proposition}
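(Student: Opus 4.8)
The plan is to reduce the classification to combinatorics on decorated paths in the Farey graph and then run the algorithm of \cite[Section~3]{EMM:nonloose} on the decorated path of $T_{3,4}$ recorded in Figure~\ref{fig:T34_path}. The starting point is the standard dictionary between non-loose Legendrian representatives and tight contact structures on the knot exterior: since $M = S^3 \setminus \nu(T_{3,4})$ is Seifert fibered over the disk with two singular fibers of orders $3$ and $4$, cutting along vertical convex annuli exhibits $M$ as a union of two tight solid tori (neighborhoods of the singular fibers) together with product $T^2 \times I$ pieces. By the relative Giroux correspondence and Honda's classification of tight contact structures on solid tori and on $T^2 \times I$, a non-loose Legendrian $T_{3,4}$ with $\tb = n$ is the same datum as a tight contact structure on $M$ with convex boundary of dividing slope $n$ whose meridional filling is overtwisted, and such structures are enumerated by the decorated paths obtained from Figure~\ref{fig:T34_path} by assigning the $\pm$ signs. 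The drawn path breaks into two maximal continued-fraction blocks, of lengths determined by the multiplicities $3$ and $4$, flanked by the two $\circ$-edges that record Kanda's uniqueness on the singular-fiber solid tori; the outer vertex records $\tb$.

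First I would enumerate the admissible sign assignments using Theorem~\ref{thm:solid-torus}: within each continued-fraction block only the multiset of signs matters (shuffling), the complement is tight precisely when no opposite-sign pair can be shortened, and the non-minimal configurations that resist shortening contribute exactly the contact structures carrying Giroux torsion in the complement, which are permissible for a non-loose knot. Among the tight complements I then discard the finitely many patterns whose meridional filling is the standard tight $S^3$; everything else is a non-loose representative, and the remaining bookkeeping is to organize them. From the signed data of each surviving decorated path I would extract the stated invariants: (i) the $d_3$-invariant of the ambient overtwisted $S^3$ via the standard formula in terms of the numbers of positive and negative basic slices and the framing, which partitions the representatives among $\xi^1, \xi^{-1}, \xi^0, \xi^{-5}$; (ii) $\tb$ from the outer slope and $\rot$ from the signed count of basic slices; (iii) the stabilizations $S_\pm$, identified with appending a basic slice of the opposite sign modulo shuffling, together with a determination of which stabilizations remain non-loose (precisely those staying inside the claimed valley); and (iv) the amount of half-Giroux torsion in the complement, read off from a cancelling $+-$ pattern inside a block, producing the values $k$ and $k + \tfrac{1}{2}$ in part~(3). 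Matching this against the mountain range in Figure~\ref{fig:T34_V} and the stated formulas for $\tb$ and $\rot$ finishes each case.

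The main obstacle is precisely this bookkeeping: organizing the (infinitely many, once torsion is allowed) sign patterns up to shuffling into the three mountain-range pictures with consistent sign conventions. The subtlest point is the $\xi^1$ case, where one must reconcile the two a priori distinct families $L^i_{\pm}$ and $K^{12}_{\pm}$ --- showing that they coincide after sufficient stabilization and that each stays non-loose exactly up to the boundary of the valley $\mathsf{V}$ spanned by $L^i_{\pm}$ and $L^{12}$ --- while also tracking $d_3$ correctly across the meridional gluing. A secondary but essential step is verifying completeness of the decorated-path classification, i.e.\ that no non-loose representative is overlooked; this rests on the relative Giroux correspondence together with Honda's $T^2 \times I$ classification, which guarantee that the contact structure on $M$ is reconstructed from its behavior on the pieces of the convex decomposition. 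The computations of $\rot$ and of the Giroux torsion in each of the many cases are routine but must be carried out with fixed orientation conventions to match the signs appearing in the statement.
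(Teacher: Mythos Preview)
Your proposal is correct and takes essentially the same approach as the paper: both simply apply the algorithm of \cite[Section~3]{EMM:nonloose} to the decorated Farey path for $T_{3,4}$ in Figure~\ref{fig:T34_path}. The paper gives no further detail beyond citing that algorithm, so your outline of how the algorithm runs (enumerating sign patterns up to shuffling, discarding those whose meridional filling is tight, and reading off $d_3$, $\tb$, $\rot$, stabilization behavior, and Giroux torsion from the surviving decorations) is exactly what is being invoked.
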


\begin{figure}
  \centering
  \begin{subfigure}{.47\textwidth}
    \centering
    \includegraphics{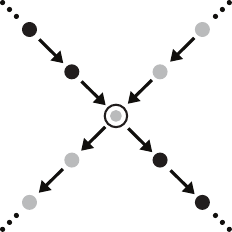}
    \caption{Each dot represents a unique Legendrian representative.}
    \label{fig:T34_X1}
  \end{subfigure}\hfill
  \begin{subfigure}{.47\textwidth}
    \centering    
    \includegraphics{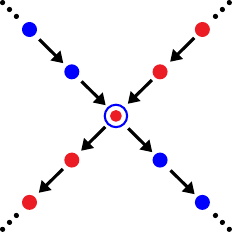}
    \caption{Each dot represents an infinite family of Legendrian representatives.}
    \label{fig:T34_X2}
  \end{subfigure}
  \caption{The mountain ranges of non-loose $T_{3,4}$ in $\xi^{-1}$ (left), $\xi^{0}$ and $\xi^5$ (right).}
  \label{fig:T34_X}
\end{figure}

Now we are ready to prove Theorem~\ref{thm:T34}.

\ptorus*

\begin{proof}
  Since we only consider strongly non-loose $T_{3,4}$, we can exclude Legendrian knots in $\xi^0$ and knots with $\tb < 7$ in $\xi^1$ and $\xi^{-5}$ by Proposition~\ref{prop:34classification}. Lemma~7.1 and 7.4 in \cite{EMM:nonloose} implies that Legendrian surgery on the strongly non-loose knots in $\xi^1$ and $\xi^{-5}$ with $\tb \geq 7$ is equivalent to a positive contact surgery on Legendrian $T_{3,4}$ with $\tb=5$ in $(S^3,\xi_{std})$, and we know the resulting contact manifolds are tight by Theorem~\ref{thm:surgery}. Thus it remains to consider strongly non-loose knots in $\xi^{-1}$. Like the left-handed trefoil case, \cite[Lemma~6.1]{EMM:nonloose} implies that the knot complement is decomposed into $\xi_{in}$ and $B_{\pm}(n,\infty)$ and it is sufficient to conisder a Dehn filling on $\xi^{\pm}_{0} = \xi_{in} \cup B_{\pm}(0,\infty)$ with sufficiently negative integer surgery coefficients. Since $\xi_{in}$ has a self-conjugate spin$^c$ structure, $c_A(\xi_{in})$ must be the middle generator (Alexander grading 0). From  Figure \ref{fig:T34_neg.a}, we also see that this generator admits both a nontrival $\rho_1$ and $\rho_3$ multiplication, corresponding to attaching positive and negative basic slices to $\xi_{in}.$ We denote these by $\xi_1 (=\xi^+_0)$ and $\xi_3 (= \xi^-_0),$ respectively.
    (We could also deduce the generator in a similar way to Remark \ref{rmk:LHT_rho13}, although we would need a slightly different parametrization).
  \begin{figure}
    \centering
    \begin{subfigure}{.3\textwidth}
      \centering
      \scriptsize
\begingroup%
  \makeatletter%
  \providecommand\color[2][]{%
    \errmessage{(Inkscape) Color is used for the text in Inkscape, but the package 'color.sty' is not loaded}%
    \renewcommand\color[2][]{}%
  }%
  \providecommand\transparent[1]{%
    \errmessage{(Inkscape) Transparency is used (non-zero) for the text in Inkscape, but the package 'transparent.sty' is not loaded}%
    \renewcommand\transparent[1]{}%
  }%
  \providecommand\rotatebox[2]{#2}%
  \newcommand*\fsize{\dimexpr\f@size pt\relax}%
  \newcommand*\lineheight[1]{\fontsize{\fsize}{#1\fsize}\selectfont}%
  \ifx\svgwidth\undefined%
    \setlength{\unitlength}{97.43399192bp}%
    \ifx\svgscale\undefined%
      \relax%
    \else%
      \setlength{\unitlength}{\unitlength * \real{\svgscale}}%
    \fi%
  \else%
    \setlength{\unitlength}{\svgwidth}%
  \fi%
  \global\let\svgwidth\undefined%
  \global\let\svgscale\undefined%
  \makeatother%
  \begin{picture}(1,2.98634102)%
    \lineheight{1}%
    \setlength\tabcolsep{0pt}%
    \put(0,0){\includegraphics[width=\unitlength,page=1]{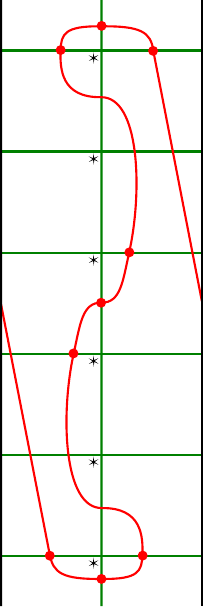}}%
    \put(0.52803225,1.41977259){\color[rgb]{1,0,0}\makebox(0,0)[lt]{\lineheight{1.25}\smash{\begin{tabular}[t]{l}$c_A(\xi_{in})$\end{tabular}}}}%
    \put(0.67311073,1.78599973){\color[rgb]{1,0,0}\makebox(0,0)[lt]{\lineheight{1.25}\smash{\begin{tabular}[t]{l}$c_A(\xi_{1})$\end{tabular}}}}%
    \put(0.11033871,1.29474508){\color[rgb]{1,0,0}\makebox(0,0)[lt]{\lineheight{1.25}\smash{\begin{tabular}[t]{l}$c_A(\xi_{3})$\end{tabular}}}}%
  \end{picture}%
\endgroup%

      \caption{The (lift of) the immersed curve invariant for $T_{-3,4}$}
      \label{fig:T34_neg.a}
    \end{subfigure}
    \hfill
    \begin{subfigure}{.6\textwidth}
      \centering
\begingroup%
  \makeatletter%
  \providecommand\color[2][]{%
    \errmessage{(Inkscape) Color is used for the text in Inkscape, but the package 'color.sty' is not loaded}%
    \renewcommand\color[2][]{}%
  }%
  \providecommand\transparent[1]{%
    \errmessage{(Inkscape) Transparency is used (non-zero) for the text in Inkscape, but the package 'transparent.sty' is not loaded}%
    \renewcommand\transparent[1]{}%
  }%
  \providecommand\rotatebox[2]{#2}%
  \newcommand*\fsize{\dimexpr\f@size pt\relax}%
  \newcommand*\lineheight[1]{\fontsize{\fsize}{#1\fsize}\selectfont}%
  \ifx\svgwidth\undefined%
    \setlength{\unitlength}{262.25390337bp}%
    \ifx\svgscale\undefined%
      \relax%
    \else%
      \setlength{\unitlength}{\unitlength * \real{\svgscale}}%
    \fi%
  \else%
    \setlength{\unitlength}{\svgwidth}%
  \fi%
  \global\let\svgwidth\undefined%
  \global\let\svgscale\undefined%
  \makeatother%
  \begin{picture}(1,1.00045611)%
    \lineheight{1}%
    \setlength\tabcolsep{0pt}%
    \put(0,0){\includegraphics[width=\unitlength,page=1]{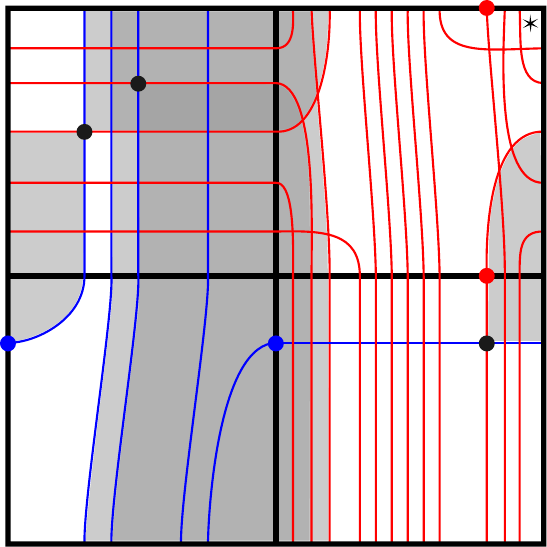}}%
    \put(0.83343104,0.515387){\color[rgb]{1,0,0}\makebox(0,0)[lt]{\lineheight{1.25}\smash{\begin{tabular}[t]{l}$c_A$\end{tabular}}}}%
    \put(0.81862209,0.32789101){\color[rgb]{0.10196078,0.10196078,0.10196078}\makebox(0,0)[lt]{\lineheight{1.25}\smash{\begin{tabular}[t]{l}$c(\xi)$\end{tabular}}}}%
    \put(0.26153613,0.22241866){\color[rgb]{0,0,1}\makebox(0,0)[lt]{\lineheight{1.25}\smash{\begin{tabular}[t]{l}$\dots$\end{tabular}}}}%
    \put(0.03414957,0.31569346){\color[rgb]{0,0,1}\makebox(0,0)[lt]{\lineheight{1.25}\smash{\begin{tabular}[t]{l}$\iota_0^{\vee}x_1$\end{tabular}}}}%
    \put(0.1167154,0.77267721){\color[rgb]{0.10196078,0.10196078,0.10196078}\makebox(0,0)[lt]{\lineheight{1.25}\smash{\begin{tabular}[t]{l}$x$\end{tabular}}}}%
    \put(0.21952041,0.87123281){\color[rgb]{0.10196078,0.10196078,0.10196078}\makebox(0,0)[lt]{\lineheight{1.25}\smash{\begin{tabular}[t]{l}$y$\end{tabular}}}}%
  \end{picture}%
\endgroup%

      \caption{Computing $c(\xi_n)$ for $n$-framed surgery on $T_{3,4}$ when $n<-2$}
      \label{fig:imm_T34}
    \end{subfigure}
    \caption{The immersed curve invariant for the torus knot $T_{3,4}$ and the computation of sufficiently negative contact surgery on it}
    \label{fig:T34_neg}
  \end{figure}
    
  The situation is quite similar to the trefoil case discussed above. Figure \ref{fig:imm_T34} shows the computation when $n<-2$ (the cases $n=-1,-2$ are similar, but they also follow from the general case by Legendrian surgery). We see that there are generators $x$ and $y$ such that $\partial x = y+c(\xi).$ There are no other differentials involving these three generators, and so we conclude that $c(\xi)$ is not null-homologous, so that $\xi$ is tight.
\end{proof}

\bibliography{references}

\begin{thebibliography}{10}

\bibitem{AFHLPV:borderedInvariants}
Akram Alishahi, Vikt\'{o}ria F\"{o}ldv\'{a}ri, Kristen Hendricks, Joan Licata, Ina Petkova, and Vera V\'{e}rtesi.
\newblock Bordered {F}loer homology and contact structures.
\newblock {\em Forum Math. Sigma}, 11:Paper No. e30, 44, 2023.

\bibitem{ALPV:friendly}
Akram Alishahi, Joan~E. Licata, Ina Petkova, and Vera V\'{e}rtesi.
\newblock A friendly introduction to the bordered contact invariant.
\newblock In {\em Gauge theory and low-dimensional topology---progress and interaction}, volume~5 of {\em Open Book Ser.}, pages 1--30. Math. Sci. Publ., Berkeley, CA, 2022.

\bibitem{Auroux:Fukaya}
Denis Auroux.
\newblock Fukaya categories and bordered {H}eegaard-{F}loer homology.
\newblock In {\em Proceedings of the {I}nternational {C}ongress of {M}athematicians. {V}olume {II}}, pages 917--941. Hindustan Book Agency, New Delhi, 2010.

\bibitem{Conway:surgery}
James Conway.
\newblock Transverse surgery on knots in contact 3-manifolds.
\newblock {\em Trans. Amer. Math. Soc.}, 372(3):1671--1707, 2019.

\bibitem{Cooper:contactCategory}
Benjamin Cooper.
\newblock Formal contact categories.
\newblock {\em Algebr. Geom. Topol.}, 24(5):2389--2449, 2024.

\bibitem{EO:GriouxCorrespondence}
Tolga Etg\"{u} and Burak Ozbagci.
\newblock On the relative {G}iroux correspondence.
\newblock In {\em Low-dimensional and symplectic topology}, volume~82 of {\em Proc. Sympos. Pure Math.}, pages 65--78. Amer. Math. Soc., Providence, RI, 2011.

\bibitem{EO:pobd}
Tolga Etg\"{u} and Burak \"{O}zba\u{g}c\i.
\newblock Partial open book decompositions and the contact class in sutured {F}loer homology.
\newblock {\em Turkish J. Math.}, 33(3):295--312, 2009.

\bibitem{Etnyre:openbook}
John~B. Etnyre.
\newblock Lectures on open book decompositions and contact structures.
\newblock In {\em Floer homology, gauge theory, and low-dimensional topology}, volume~5 of {\em Clay Math. Proc.}, pages 103--141. Amer. Math. Soc., Providence, RI, 2006.

\bibitem{EMM:nonloose}
John~B. Etnyre, Hyunki Min, and Anubhav Mukherjee.
\newblock Non-loose torus knots, 2022.
\newblock \href{https://arxiv.org/abs/2206.14848}{\tt arXiv:2206.14848}.

\bibitem{EMTV:torus}
John~B. Etnyre, Hyunki Min, B\"{u}lent Tosun, and Konstantinos Varvarezos.
\newblock Tight surgeries on torus knots, \it{In preparation}.

\bibitem{Geiges:book}
Hansj\"{o}rg Geiges.
\newblock {\em An introduction to contact topology}, volume 109 of {\em Cambridge Studies in Advanced Mathematics}.
\newblock Cambridge University Press, Cambridge, 2008.

\bibitem{Ghiggini:fillability}
Paolo Ghiggini.
\newblock Strongly fillable contact 3-manifolds without {S}tein fillings.
\newblock {\em Geom. Topol.}, 9:1677--1687, 2005.

\bibitem{GLS:classification}
Paolo Ghiggini, Paolo Lisca, and Andr\'{a}s~I. Stipsicz.
\newblock Tight contact structures on some small {S}eifert fibered 3-manifolds.
\newblock {\em Amer. J. Math.}, 129(5):1403--1447, 2007.

\bibitem{Giroux:convex}
Emmanuel Giroux.
\newblock Convexit\'{e} en topologie de contact.
\newblock {\em Comment. Math. Helv.}, 66(4):637--677, 1991.

\bibitem{Giroux:classification}
Emmanuel Giroux.
\newblock Structures de contact en dimension trois et bifurcations des feuilletages de surfaces.
\newblock {\em Invent. Math.}, 141(3):615--689, 2000.

\bibitem{Golla:surgery}
Marco Golla.
\newblock Ozsv\'{a}th-{S}zab\'{o} invariants of contact surgeries.
\newblock {\em Geom. Topol.}, 19(1):171--235, 2015.

\bibitem{HRW:immersed2}
Jonathan Hanselman, Jacob Rasmussen, and Liam Watson.
\newblock Heegaard {F}loer homology for manifolds with torus boundary: properties and examples.
\newblock {\em Proc. Lond. Math. Soc. (3)}, 125(4):879--967, 2022.

\bibitem{HRW:immersed}
Jonathan Hanselman, Jacob Rasmussen, and Liam Watson.
\newblock Bordered {F}loer homology for manifolds with torus boundary via immersed curves.
\newblock {\em J. Amer. Math. Soc.}, 37(2):391--498, 2024.

\bibitem{Hedden:tau}
Matthew Hedden.
\newblock An {O}zsv\'{a}th-{S}zab\'{o} {F}loer homology invariant of knots in a contact manifold.
\newblock {\em Adv. Math.}, 219(1):89--117, 2008.

\bibitem{HP:surgery}
Matthew Hedden and Olga Plamenevskaya.
\newblock Dehn surgery, rational open books and knot {F}loer homology.
\newblock {\em Algebr. Geom. Topol.}, 13(3):1815--1856, 2013.

\bibitem{Honda:classification1}
Ko~Honda.
\newblock On the classification of tight contact structures. {I}.
\newblock {\em Geom. Topol.}, 4:309--368, 2000.

\bibitem{Honda:classification2}
Ko~Honda.
\newblock On the classification of tight contact structures. {II}.
\newblock {\em J. Differential Geom.}, 55(1):83--143, 2000.

\bibitem{HH:convex}
Ko~Honda and Yang Huang.
\newblock Convex hypersurface theory in contact topology, 2019.
\newblock \href{https://arxiv.org/abs/1907.06025}{\tt arXiv:1907.06025}.

\bibitem{HKM:suturedTQFT}
Ko~Honda, William~H. Kazez, and Gordana Mati\'{c}.
\newblock Contact structures, sutured {F}loer homology and {T}{Q}{F}{T}, 2008.
\newblock \href{https://arxiv.org/abs/0807.2431}{\tt arXiv:0807.2431}.

\bibitem{HKM:suturedInvariant}
Ko~Honda, William~H. Kazez, and Gordana Mati\'{c}.
\newblock The contact invariant in sutured {F}loer homology.
\newblock {\em Invent. Math.}, 176(3):637--676, 2009.

\bibitem{Juhasz:sutured}
Andr\'{a}s Juh\'{a}sz.
\newblock Holomorphic discs and sutured manifolds.
\newblock {\em Algebr. Geom. Topol.}, 6:1429--1457, 2006.

\bibitem{JTZ:naturality}
Andr\'{a}s Juh\'{a}sz, Dylan Thurston, and Ian Zemke.
\newblock Naturality and mapping class groups in {H}eegard {F}loer homology.
\newblock {\em Mem. Amer. Math. Soc.}, 273(1338):v+174, 2021.

\bibitem{JZ:contact_handle}
Andr\'{a}s Juh\'{a}sz and Ian Zemke.
\newblock Contact handles, duality, and sutured {F}loer homology.
\newblock {\em Geom. Topol.}, 24(1):179--307, 2020.

\bibitem{Kadeishvili:Aoo}
Tornike Kadeishvili.
\newblock On the homology theory of fibre spaces.
\newblock {\em Russ. Math. Surv.}, 35(3):231, 1980.

\bibitem{KM:classification}
Tam\'{a}s K\'{a}lm\'{a}n and Daniel~V. Mathews.
\newblock Tight contact structures on {S}eifert surface complements.
\newblock {\em J. Topol.}, 13(2):730--776, 2020.

\bibitem{Kanda:torus}
Yutaka Kanda.
\newblock The classification of tight contact structures on the {$3$}-torus.
\newblock {\em Comm. Anal. Geom.}, 5(3):413--438, 1997.

\bibitem{KM:invariants}
P.~B. Kronheimer and T.~S. Mrowka.
\newblock Monopoles and contact structures.
\newblock {\em Invent. Math.}, 130(2):209--255, 1997.

\bibitem{LS:HKMequivZarev}
Ryan Leigon and Federico Salmoiraghi.
\newblock Equivalence of contact gluing maps in sutured floer homology, 2020.
\newblock \href{https://arxiv.org/abs/2005.04827}{\tt arXiv:2005.04827}.

\bibitem{LOT:bordered}
Robert Lipshitz, Peter~S. Ozsvath, and Dylan~P. Thurston.
\newblock Bordered {H}eegaard {F}loer homology.
\newblock {\em Mem. Amer. Math. Soc.}, 254(1216):viii+279, 2018.

\bibitem{LOSS:LOSS}
Paolo Lisca, Peter Ozsv\'{a}th, Andr\'{a}s~I. Stipsicz, and Zolt\'{a}n Szab\'{o}.
\newblock Heegaard {F}loer invariants of {L}egendrian knots in contact three-manifolds.
\newblock {\em J. Eur. Math. Soc. (JEMS)}, 11(6):1307--1363, 2009.

\bibitem{LS:surgery}
Paolo Lisca and Andr\'{a}s~I. Stipsicz.
\newblock Ozsv\'{a}th-{S}zab\'{o} invariants and tight contact three-manifolds. {I}.
\newblock {\em Geom. Topol.}, 8:925--945, 2004.

\bibitem{MT:surgery}
Thomas~E. Mark and B\"{u}lent Tosun.
\newblock Naturality of {H}eegaard {F}loer invariants under positive rational contact surgery.
\newblock {\em J. Differential Geom.}, 110(2):281--344, 2018.

\bibitem{Mathews:contactCategory}
Daniel~V. Mathews.
\newblock Strand algebras and contact categories.
\newblock {\em Geom. Topol.}, 23(2):637--683, 2019.

\bibitem{Mathews:Ainifinity}
Daniel~V. Mathews.
\newblock A-infinity algebras, strand algebras, and contact categories.
\newblock {\em Algebr. Geom. Topol.}, 21(3):1093--1207, 2021.

\bibitem{Matkovic:nonloose}
Irena Matkovi\v{c}.
\newblock Non-loose negative torus knots.
\newblock {\em Quantum Topol.}, 13(4):669--689, 2022.

\bibitem{Ozbagci:handle}
Burak Ozbagci.
\newblock Contact handle decompositions.
\newblock {\em Topology Appl.}, 158(5):718--727, 2011.

\bibitem{OS:grading}
Peter Ozsv\'{a}th and Andr\'{a}s~I. Stipsicz.
\newblock Contact surgeries and the transverse invariant in knot {F}loer homology.
\newblock {\em J. Inst. Math. Jussieu}, 9(3):601--632, 2010.

\bibitem{OS:HF2}
Peter Ozsv\'{a}th and Zolt\'{a}n Szab\'{o}.
\newblock Holomorphic disks and three-manifold invariants: properties and applications.
\newblock {\em Ann. of Math. (2)}, 159(3):1159--1245, 2004.

\bibitem{OS:HF1}
Peter Ozsv\'{a}th and Zolt\'{a}n Szab\'{o}.
\newblock Holomorphic disks and topological invariants for closed three-manifolds.
\newblock {\em Ann. of Math. (2)}, 159(3):1027--1158, 2004.

\bibitem{OS:contactInvariant}
Peter Ozsv\'{a}th and Zolt\'{a}n Szab\'{o}.
\newblock Heegaard {F}loer homology and contact structures.
\newblock {\em Duke Math. J.}, 129(1):39--61, 2005.

\bibitem{SV:LOSS}
Andr\'{a}s~I. Stipsicz and Vera V\'{e}rtesi.
\newblock On invariants for {L}egendrian knots.
\newblock {\em Pacific J. Math.}, 239(1):157--177, 2009.

\bibitem{Tosun:classification}
B\"{u}lent Tosun.
\newblock Tight small {S}eifert fibered manifolds with {$e_0= -2$}.
\newblock {\em Algebr. Geom. Topol.}, 20(1):1--27, 2020.

\bibitem{Wand:surgery}
Andy Wand.
\newblock Tightness is preserved by {L}egendrian surgery.
\newblock {\em Ann. of Math. (2)}, 182(2):723--738, 2015.

\bibitem{Zarev:borderedSutured}
Rumen Zarev.
\newblock Bordered floer homology for sutured manifolds, 2009.
\newblock \href{https://arxiv.org/abs/0908.1106}{\tt arXiv:0908.1106}.

\bibitem{Zarev:joining}
Rumen Zarev.
\newblock Joining and gluing sutured floer homology, 2010.
\newblock \href{https://arxiv.org/abs/1010.3496}{\tt arXiv:1010.3496}.

\end{thebibliography}
\bibliographystyle{plain}
\end{document}